\definecolor{darkgreen}{rgb}{0,0.45,0}
\definecolor{darkred}{rgb}{0.75,0,0}
\definecolor{darkblue}{rgb}{0,0,0.545}
\tikzset{
  module/.style={
    postaction={decorate},
    decoration={
      markings,
      mark=at position #1 with {\arrow{|}}}},
  module/.default=0.5,
  we/.style=
  { postaction={%
      decorate,
      decoration={
        markings,
        mark=at position #1 with {%
          \node[transform shape, yshift=.2em]{%
            \resizebox{0.5em}{!}{$\sim$}};}}}},
  we/.default=0.5,
  we'/.style=
  { postaction={%
      decorate,
      decoration={
        markings,
        mark=at position #1 with {%
          \node[transform shape, yshift=-.2em, rotate=180]{%
            \resizebox{0.5em}{!}{$\sim$}};}}}},
  we'/.default=0.5,
  iso/.style=
  { postaction={%
      decorate,
      decoration={
        markings,
        mark=at position #1 with {%
          \node[transform shape, yshift=.2em]{%
            \resizebox{0.5em}{!}{$\simeq$}};}}}},
  iso/.default=0.5,
  iso'/.style=
  { postaction={
      decorate,
      decoration={
        markings,
        mark=at position #1 with {%
          \node[transform shape, yshift=-.2em, rotate=180]{%
            \resizebox{0.5em}{!}{$\simeq$}};}}}},
  iso'/.default=0.5,
}
\tikzset{
  proarrow/.style={->, module},
  proequal/.style={-, double, module},
  prodotted/.style={->,dotted, module},
  prodashed/.style={->,dashed, module},
  wearrow/.style={->, we},
  wedashed/.style={->, dashed, we},
  wedotted/.style={->, dotted, we},
  tfibarrow/.style={->>, we=0.45},
  tfibdotted/.style={->>,dotted, we=0.45},
  tfibdashed/.style={->>,dashed, we=0.45},
  tcofarrow/.style={>->, we},
  tcofdashed/.style={>->, dashed, we},
  uwearrow/.style={->, we'},
  uwedashed/.style={->, dashed, we'},
  uwedotted/.style={->, dotted, we'},
  utfibarrow/.style={->>, we'=0.45},
  utfibdotted/.style={->>,dotted, we'=0.45},
  utfibdashed/.style={->>,dashed, we'=0.45},
  utcofarrow/.style={>->, we'},
  isoarrow/.style={->, iso},
  isodashed/.style={->, dashed, iso},
  uisoarrow/.style={->, iso'},
  uisodashed/.style={->, dashed, iso'},
  isocell/.style={=>, iso},
  isocelldashed/.style={=>, dashed, iso},
  uisocell/.style={=>, iso'},
  uisocelldashed/.style={=>, dashed, iso'}
}
\def\makeslashed#1#2#3#4#5{#1{\mathpalette{\sla@{#2}{#3}{#4}}{#5}}}
\def\@mathlower#1#2#3{\setbox0=\hbox{$\m@th#2#3$}\lower#1\ht0\box0}
\def\mathlower#1#2{\mathpalette{\@mathlower{#1}}{#2}}
\newcommand\dhxrightarrow[2][]{%
  \mathrel{\ooalign{$\xrightarrow[#1\mkern4mu]{#2\mkern4mu}$\cr%
  \hidewidth$\rightarrow\mkern4mu$}}
}
\newcommand\tailxrightarrow[2][]{%
  \mathrel{\ooalign{$\xrightarrow[#1\mkern4mu]{#2\mkern4mu}$\cr%
  \hidewidth$\Yright\mkern14mu$}}
}
\newcommand{\fto}{\twoheadrightarrow}
\newcommand{\cto}{\rightarrowtail}
\newcommand{\wto}{\xrightarrow{{\smash{\mathlower{0.8}{\sim}}}}}
\newcommand{\cwto}{\tailxrightarrow{{\smash{\mathlower{0.8}{\sim}}}}}
\newcommand{\fwto}{\dhxrightarrow{{\smash{\mathlower{0.8}{\sim}}}}}
\DeclareMathAlphabet{\mathbbe}{U}{bbold}{m}{n}
\setlist{}
\newtheorem{thm}{Theorem}[subsection]
\newtheorem{lem}[thm]{Lemma}
\newtheorem{prop}[thm]{Proposition}
\newtheorem{cor}[thm]{Corollary}
\theoremstyle{definition}
\newtheorem{defn}[thm]{Definition}
\newtheorem{con}[thm]{Construction}
\newtheorem{ex}[thm]{Example}
\newtheorem{nt}[thm]{Notation}
\theoremstyle{remark}
\newtheorem{rmk}[thm]{Remark}
\newtheorem{dig}[thm]{Digression}
\DeclareSymbolFontAlphabet{\mathbbl}{bbold}
\DeclareMathSymbol\DDelta \mathord{bbold}{"01}
\DeclareMathSymbol\SSigma\mathord{bbold}{"06}
\newcommand{\OOmega}{\mbox{$\,\mbox{\raisebox{0.45ex}{$\shortmid$}}\hspace{-0.37em}\Omega$}}
\DeclareMathSymbol\PPhi\mathord{bbold}{"08}
\newcommand{\CCube}{\mbox{$\,\mbox{\raisebox{0.3ex}{$\shortmid$}}\hspace{-0.35em}\square$}}
\DeclareMathSymbol{\LLat}\mathord{bbold}{"03}
\let\c@equation\c@thm
\numberwithin{equation}{subsection}
\newcommand{\op}{\textup{op}}
\newcommand{\id}{\textup{id}}
\newcommand{\cod}{\textup{cod}}
\newcommand{\dom}{\textup{dom}}
\newcommand{\ob}{\textup{ob}}
\newcommand{\mor}{\textup{mor}}
\newcommand{\To}{\Rightarrow}
\newcommand{\ev}{\textup{ev}}
\renewcommand{\deg}{\textup{deg}}
\newcommand{\colim}{\textup{colim}}
\renewcommand{\AA}{\mathbb{A}}
\newcommand{\BB}{\mathbb{B}}
\newcommand{\CC}{\mathbb{C}}
\newcommand{\FF}{\mathbb{F}}
\newcommand{\II}{\mathbb{I}}
\newcommand{\JJ}{\mathbb{J}}
\newcommand{\KK}{\mathbb{K}}
\newcommand{\NN}{\mathbb{N}}
\newcommand{\PP}{\mathbb{P}}
\newcommand{\RR}{\mathbb{R}}
\newcommand{\TT}{\mathbb{T}}
\newcommand{\UU}{\mathbb{U}}
\newcommand{\VV}{\mathbb{V}}
\newcommand{\XX}{\mathbb{X}}
\newcommand{\YY}{\mathbb{Y}}
\newcommand{\ZZ}{\mathbb{Z}}
\newcommand{\1}{\mathbbe{1}}
\newcommand{\2}{\mathbbe{2}}
\newcommand{\mathscr}{\mathcal}
\newcommand{\coloneq}{\coloneqq}
\newcommand{\eqcolon}{\eqqcolon}
\newcommand{\cat}[1]{\textup{\textsf{#1}}}% for categories
\newcommand{\elf}{\cat{el}}%For use when regarding $\el$ as a functor without filling in the blank.
\newcommand{\hato}{\mathbin{\hat{\circ}}}
\newcommand{\cA}{\mathsf{A}}
\newcommand{\cB}{\mathsf{B}}
\newcommand{\cC}{\mathsf{C}}
\newcommand{\cD}{\mathsf{D}}
\newcommand{\cE}{\mathsf{E}}
\newcommand{\cG}{\mathsf{G}}
\newcommand{\cH}{\mathsf{H}}
\newcommand{\cI}{\mathsf{I}}
\newcommand{\cK}{\mathsf{K}}
\newcommand{\cM}{\mathsf{M}}
\newcommand{\cN}{\mathsf{N}}
\newcommand{\cX}{\mathsf{X}}
\newcommand{\Left}{\mathcal{L}}
\newcommand{\Right}{\mathcal{R}}
\newcommand{\Mono}{\mathcal{M}}
\newcommand{\NMono}{\mathcal{N}}
\newcommand{\Epi}{\mathcal{E}}
\newcommand{\sE}{\mathfrak{E}}
\newcommand{\sF}{\mathfrak{F}}
\newcommand{\Fin}{\cat{Fin}}
\newcommand{\Set}{\cat{Set}}
\newcommand{\sSet}{\cat{sSet}}
\newcommand{\cSet}{\cat{cSet}}
\newcommand{\lSet}{\ell\cat{Set}}
\newcommand{\Cat}{\cat{Cat}}
\newcommand{\Ladj}{\cat{Ladj}}
\newcommand{\Radj}{\cat{Radj}}
\newcommand{\TF}{\TT\FF}%for the representing object, not the class of maps
\font\maljapanese=dmjhira at 2ex % you can change this "2ex" value
\def\yo{\textrm{\maljapanese\char"48}}
\newcommand{\Leftadj}{\mathrm{L}}
\newcommand{\isContr}{\textup{isContr}}
\newcommand{\Eq}{\textup{Eq}}
\newcommand{\leib}[1]{\mathbin{\hat{#1}}}
\newcommand{\sk}{\textup{sk}}
\newcommand{\latch}[1]{\widehat{\ell}_{#1}}
\newcommand{\lbound}[1]{\overleftarrow{\partial}\!{#1}}
\newcommand{\rbound}[1]{\overrightarrow{\partial}\!{#1}}
\newcommand{\utimes}{\underline{\times}}
\newcommand{\Aut}{\textup{Aut}}
\newcommand{\inj}{\textup{inj}}
\definecolor{darkorange}{rgb}{0.8,0.4,0}
\definecolor{darkblue}{rgb}{0.08,0.38,0.74}
\definecolor{darkpurple}{rgb}{0.6,0.2,0.8}
\definecolor{darkgreen}{rgb}{0,0.5,0}
\definecolor{darkteal}{rgb}
{0.0, 0.5, 0.5}
\definecolor{darkred}{rgb}{0.5,0.0, 0.13}
\begin{document}

\title{The equivariant model structure on cartesian cubical sets}
\author{Steve Awodey}
\address{Departments of Philosophy and Mathematics\\
Carnegie Mellon University\\
Pittsburgh, PA 15213, USA}
\email{awodey@cmu.edu}

\author{Evan Cavallo}
\address{Department of Computer Science and Engineering  \\ Chalmers University of Technology and University of Gothenburg \\ 405 30 G\"oteborg, Sweden}
\email{evan.cavallo@gu.se}

\author{Thierry Coquand}
\address{Department of Computer Science and Engineering \\ Chalmers University of Technology and University of Gothenburg \\ 405 30 G\"oteborg, Sweden}
\email{coquand@chalmers.se}

\author{Emily Riehl}
\address{Department of Mathematics\\Johns Hopkins University \\ Baltimore, MD 21218 USA}
\email{eriehl@jhu.edu}

\author{Christian Sattler}
\address{Department of Computer Science and Engineering \\ Chalmers University of Technology and University of Gothenburg \\ S-412 96 G\"oteborg, Sweden}
\email{sattler@chalmers.se}

\date{\today}

\begin{abstract} We develop a constructive model of homotopy type theory in a Quillen model category that classically presents the usual homotopy theory of spaces.  Our model is based on presheaves over the cartesian cube category, a well-behaved Eilenberg--Zilber category. The key innovation is an additional equivariance condition in the specification of the cubical Kan fibrations, which can be described as the pullback of an interval-based class of uniform fibrations in the category of symmetric sequences of cubical sets. The main technical results in the development of our model have been formalized in a computer  proof assistant.
\end{abstract}

\maketitle

\setcounter{tocdepth}{2}
\tableofcontents
%\clearpage

\section{Introduction}

\subsection{Interpreting homotopy type theory}\label{subsec:cubical_models_of_HoTT}

Martin-L\"{o}f's dependent type theory \cite{ml75,nordstrom-petersson-smith:ml} provides a foundation for constructive mathematics.
It functions both as a formal language for mathematical arguments and as a programming language: proofs of mathematical statements in Martin-L\"of type theory can be regarded as functions or algorithms with computational content.
At the turn of the 21st century, higher-dimensional and ultimately homotopical interpretations of Martin-L\"of type theory were discovered \cite{HofmannStreicher:1997lg,awodey-warren:2009,BergGarner:2012,KapulkinLumsdaine:2021sm}.
The novelty of these interpretations is concentrated in their treatment of \emph{identities}, i.e.\ equalities: an identity between two elements of a type is interpreted as a \emph{path} or higher cell connecting them.
\emph{Homotopy type theory} (HoTT) or \emph{univalent foundations} (UF) \cite{UF:2013} refers to the formal system of Martin-L\"{o}f type
theory augmented by Voevodsky's \emph{univalence axiom}, which asserts that a certain canonical map is an equivalence
\begin{equation}\label{eq:UA}
(A =_U B)\ \simeq\  (A\simeq B)
\end{equation}
between the type $A =_U B$ of identities between types $A,B$ in a universe $U$ and the type $A \simeq B$ of homotopy equivalences between them.

To establish the consistency of the univalence axiom with the rules of Martin-L\"of type theory (relative to the consistency of the rest of mathematics), Voevodsky \cite{KapulkinLumsdaine:2021sm} built a model of homotopy type theory using the standard model of homotopy theory in simplicial sets.
The construction makes use of the \emph{Quillen model structure} on simplicial sets \cite{Quillen:1967ha}, which exhibits this category as a setting for abstract homotopy theory.
In particular, dependent type families are interpreted as the fibrations of this model structure (the \emph{Kan fibrations}), and the interpretations of type formers rely on established properties of the model structure; for example, the interpretation of $\Pi$-types rests on the fact that the model structure is \emph{right proper} \cite[2.3.1]{KapulkinLumsdaine:2021sm}.

Voevodsky's definition of the model relies on classical principles of reasoning such as the law of excluded middle and the axiom of choice, a surprising dependency given the constructive character of type theory itself.
Bezem, Coquand, and Parmann \cite{BezemCoquand:2015,BezemCoquandParmann:2015,Parmann:2018} showed that components of the model are in fact inherently non-constructive (though see \S\ref{sssec:constructive-simplicial} below).
Thus one is also interested in finding models that can be defined using only constructively valid reasoning. Such a model would, in particular, construct an explicit equivalence inverse to the map \eqref{eq:UA}, supplying computational content to proofs that invoke the univalence axiom.

\subsection{Cubical interpretations}

The first step towards a constructive model was taken by Bezem, Coquand, and Huber (BCH) \cite{BCH:2014mt}, who gave a partial constructive interpretation of homotopy type theory that was later completed in \cite{BCH:2019ua}.
Their interpretation replaces simplicial sets by a form of \emph{cubical} sets, i.e.~presheaves on a \emph{cube category},\footnote{There is a wide variety of cube categories in common use. In all instances, the objects are indexed by the natural numbers, defining an ``$n$-cube'' for each $n \in \NN$, and there are \emph{face} and \emph{degeneracy} maps that include exterior faces or project away from one of the $n$ dimensions of an $n$-cube. Other optional structure is given by maps that encode automorphisms of an $n$-cube in the form of either \emph{symmetries} or \emph{reversals}, \emph{diagonal} face maps, or extra degeneracies in the form of \emph{connections}.} thereby avoiding non-constructive elements of Voevodsky's model.
Cohen, Coquand, Huber, and M\"{o}rtberg (CCHM) \cite{CCHM:2018ctt} gave a complete constructive interpretation in a second, highly structured form of cubical sets; in this setting they were also able to interpret \emph{higher inductive types} \cite{CHM18}.
Angiuli, Favonia, and Harper \cite{AngiuliFavoniaHarper:2018} described a computational interpretation based on a third cube category proposed by Awodey
\cite{awodey:2018}, the \emph{cartesian cubes}, using a definition of fibration for these cubical sets proposed by Coquand \cite{Coquand:2014vc}.
This work cuts down the cube category from the CCHM model, retaining only the diagonal face maps that are apparently essential for interpreting higher inductive types.
The computational interpretation was then translated to a cubical set interpretation by Angiuli et al.~\cite{ABCFHL}.
The CCHM and cartesian interpretations can now be understood as instances of a single construction that works for any cube category with at least cartesian structure \cite{CMS20}.

The inspiration for these models traces back to Kan's early work on abstract homotopy theory in cubical sets \cite{Kan:1955}.
His \emph{E-complexes} (now called \emph{cubical Kan complexes}) are the fibrant objects of a Quillen model structure whose fibrations are the maps satisfying a \emph{box-filling} property \cites[\S3]{Jardine:2002}[8.4.38]{Cisinski:2006lp}.
Relative to simplicial sets, one essential feature of cubical sets for constructive interpretation of type theory seems to be the closure of cubes under a symmetric monoidal product---the product of the $m$-cube and $n$-cube is the $(m+n)$-cube---which plays a key role in the construction of universes.
The monoidal product of cubes in Kan's cubical sets is \emph{not} symmetric, so the techniques do not seem to yield a constructive interpretation in these cubical sets.
There is, however, a great variety of symmetric cube categories that one can consider (see, e.g., Grandis and Mauri \cite{GrandisMauri:2003} and Buchholtz and Morehouse \cite{BuchholtzMorehouse:2017vo}), hence the proliferation of cubical interpretations.

\subsection{Cubical model structures}
\label{ssec:cubical-model-structures}

Although the cubical interpretations of HoTT/UF were not introduced using Quillen model categories, a posteriori each can be seen \cite{Sattler:2017ee,CMS20,Awodey:2023} to determine a Quillen model structure in the following sense.%
\footnote{
  The existence of a model structure associated to the BCH interpretation has not appeared explicitly in the literature, to our knowledge, but it can be deduced from known results.
  Swan constructs the two factorization systems \cites{Swan:2016awfs}[\S7.5.3]{swan18lp}, and these form a cylindrical premodel structure in the sense of \S\ref{sec:cylindrical} with all objects cofibrant.
  The 2-out-of-3 property then follows from the existence of fibrant universes \cite[{\S}I.4]{huber16phd} via Lemma \ref{lem:fibU_fibext} and Proposition \ref{prop:cylindrical-model-category}.
}
Given a model of Martin-L\"{o}f type theory in the form of a suitably-structured category with families \cite{Dybjer:1996} or natural model \cite{Awodey:2018nm} (such as the aforementioned cubical interpretations), we consider its category of contexts.
On this category, we have a candidate class of \emph{fibrations}: the retracts of context projections $p_A \colon \Gamma.A \to \Gamma$ associated to semantic types $\Gamma \vdash A$.
We also have a candidate class of \emph{trivial fibrations}: those fibrations derived from \emph{contractible} semantic types in the sense of HoTT \cite[3.11]{UF:2013}.%
\footnote{
  We do not claim that this is the only sensible way to associate a candidate model structure to a model of type theory.
  Note that all objects are cofibrant in any model structure of this form, as every contractible type has a section.
  By contrast, in work on constructive simplicial models of homotopy theory and type theory (discussed in \S\ref{sssec:constructive-simplicial} below), one works with a model structure in which not all objects are cofibrant and uses the full subcategory of cofibrant objects as the category of contexts for the model of type theory.
}
A Quillen model structure is completely characterized by its classes of fibrations and trivial fibrations, but not every choice of classes forms a Quillen model structure.
First, each class should form the right class of a weak factorization system.
If this is the case, we have a \emph{premodel structure} in the sense of Barton \cite{Barton:2019pm}.
A premodel structure determines a candidate class of weak equivalences, and a premodel structure is a Quillen model structure exactly if this class satisfies the 2-of-3 property.
When said property is satisfied, we may speak of the \emph{Quillen model structure associated to} the model of type theory.

As the semantic types of the cubical interpretations are defined by right lifting properties, it is not so surprising that the induced classes of fibrations and trivial fibrations indeed define a premodel structure.
The main technical challenge lies in checking the 2-of-3 property.
In a reversal of the history of Voevodsky's simplicial model, the property is verified using components of the model of type theory.
In particular, the \emph{fibration extension property} and \emph{equivalence extension property}, used to interpret universes and univalence respectively, play a direct role \cite{Sattler:2017ee}, as does the \emph{Frobenius condition} \cites[3.3.3]{BergGarner:2012}{GambinoSattler:2017fc} used to interpret $\Pi$-types.

\subsection{Standard homotopy theory}

Having associated a Quillen model structure to each cubical interpretation, we are in a position to ask what \emph{homotopy theory} each presents, i.e., to characterize the $(\infty,1)$-categories they present.
In particular, we would like to know if any constructive interpretation of HoTT presents the $(\infty,1)$-category of \emph{spaces} (or \emph{homotopy types} or \emph{$\infty$-groupoids}), as does Voevodsky's classical model in simplicial sets.
While we might ultimately hope to interpret HoTT in all $\infty$-toposes, as accomplished by Shulman in the classical setting \cite{Shulman:2019ai}, interpretation in spaces is a fundamental motivation for synthetic homotopy theory in HoTT.

In model-categorical language, we seek a \emph{Quillen equivalence} (or zigzag of Quillen equivalences) between a model structure associated to a cubical interpretation and some model category known to present spaces, such as the classical Kan--Quillen model structure on simplicial sets.
In fact, we can compare more directly with existing classical model structures on cubical sets.
Buchholtz and Morehouse \cite{BuchholtzMorehouse:2017vo} observe that each of the cube categories used to model type theory is a so-called \emph{test category}.
The theory of test categories, initiated by Grothendieck \cite{Grothendieck:pursuing} and continued by Maltsiniotis \cite{Maltsiniotis:2005} and Cisinski \cite{Cisinski:2006lp}, guarantees (using classical logic) that the category of presheaves on any test category admits a Quillen model structure presenting the homotopy theory of spaces. Thus, we may also ask whether the Quillen model structure associated to a cubical interpretation coincides with the test model structure. This is a stronger condition, as multiple equivalent but non-identical Quillen model structures can exist on the same underlying category.

These questions were first discussed in 2018, at the Hausdorff Institute Trimester, and a number of negative results became folklore, discussed on the Homotopy Type Theory mailing list \cite{CounterexampleDiscussion:2018} and sketched in \cite{Sattler:2018dc}.
The upshot is that many cubical interpretations do not present spaces, and a fortiori do not coincide with the corresponding test model structures.
In particular, the BCH model in the minimal symmetric monoidal cube category does not.
The later model constructions yield interpretations in any cube category with cartesian products, so there are many candidates to consider here.
However, neither the \emph{De Morgan cube category} with connections and reversals, which is the focus of \cite{CCHM:2018ctt}, nor the minimal \emph{cartesian cube category} considered in \cite{ABCFHL,Awodey:2023}, gives a model of spaces.
It is an open question whether the interpretation in the \emph{Dedekind cube category} (with cartesian structure and connections) presents spaces.

This brings us to the main result of this article, the construction of a cubical interpretation that \emph{does} classically present the homotopy theory of spaces.

\subsection{The equivariant cubical model}

We define a new model of HoTT with an associated Quillen model structure, the \emph{equivariant cartesian model}, by modifying the original cartesian cubical set model of Angiuli et al., replacing its fibrations with a more restrictive class of \emph{equivariant fibrations}.

\subsubsection{The problem}

Our definition of equivariant fibration is motivated by a specific pathology in the original Quillen model structure associated to the model of type theory on cartesian cubical sets \cite{CMS20,Awodey:2023}, namely the \emph{non-contractibility of automorphism quotients of cubes}.
In cartesian cubical sets, the group of automorphisms of the representable $n$-cube $I^n$ is the symmetric group $\Sigma_n$: the only automorphisms are the permutations of the axes of the cube.
For any subgroup $H \subset \Sigma_n$, we then have a quotient $I^n_{/H} \in \cSet$, the colimit of the $H$-indexed diagram sending a permutation to the corresponding automorphism of $I^n$.
When $H$ is non-trivial, $I^n_{/H}$ is not contractible in this model structure.

First, to see why this is problematic, let us consider a natural comparison to a model category presenting the homotopy theory of spaces: the adjunction
\[
  \begin{tikzcd} \cSet \arrow[rr, bend left, "T"] \arrow[rr, phantom, "\bot"] && \sSet \arrow[ll, bend left]
  \end{tikzcd}
\]
between cartesian cubical and simplicial sets whose left adjoint, \emph{triangulation}, sends the $n$-cube to the $n$-ary cartesian product of the 1-simplex.
This adjunction is in fact a Quillen adjunction, but the triangulations $TI^n_{/H} \in \sSet$ are all contractible; for example, the quotient $I^2_{/\Sigma_2}$ is isomorphic to $\Delta^2$.
As the left adjoint of a Quillen equivalence reflects contractibility of cofibrant objects \cite[1.3.16]{Hovey:1999}, this adjunction cannot be a Quillen equivalence if $I^n_{/H}$ is not contractible.
Of course, the model structure on $\cSet$ could present spaces without this particular adjunction being a Quillen equivalence.
However, it is worth noting that triangulation \emph{does} define a Quillen equivalence from the \emph{test} model structure on $\cSet$ to the Kan--Quillen model structure on $\sSet$ (see \S\ref{ssec:test}).

Second, let us give some intuition as to why $I^n_{/H}$ is not contractible in the model structure of \cite{CMS20,Awodey:2023}.
We recall the ``uniform unbiased box-filling'' characterization of its fibrations alluded to above.
Briefly, a map $f \colon Y \twoheadrightarrow X$ is a fibration when it admits a choice of lifts
\[
  \begin{tikzcd}
    I^n \cup_{C} C \times I^1 \arrow[d,tail,"{\langle [\xi], c \times I^1 \rangle}" left] \ar[r] & Y \ar[->>,d,"f"] \\
    I^n \times I^1 \ar[r] \ar[ur,dashed] & X
  \end{tikzcd}
\]
for each lifting problem against an \emph{open box inclusion} (determined by a subobject $c \colon C \rightarrowtail I^n$ and generalized point $\xi \colon I^n \to I^1$) in such a way that for every morphism of cubes $\alpha \colon I^m \to I^n$, the resulting triangle
\[
  \begin{tikzcd}
    I^m \cup_{D} D \times I^1 \arrow[dr, phantom, "\lrcorner" very near start] \ar[d,tail] \ar[r] & I^n \cup_{C} C \times I^1 \arrow[d,tail] \ar[r] & Y \ar[->>,d,"f"] \\
    I^m \times I^1 \ar[urr,dashed] \ar[r, "\alpha \times I^1" below] & I^n \times I^1 \ar[r] \ar[ur,dashed] & X
  \end{tikzcd}
\]
formed by the two chosen lifts commutes.

In the language of \emph{algebraic weak factorization systems (awfs)}, the class of fibrations is generated by the category of open box inclusions and pullback squares between them.
In fact, this open box category generates \emph{categories} of \emph{trivial cofibration coalgebras} and \emph{fibration algebras}, which by Garner's algebraic small object argument \cite{Garner:USOA} constitute an awfs. There is then an underlying weak factorization system whose left and right maps are the retracts of maps admitting trivial cofibration coalgebra and fibration algebra structures respectively.\footnote{Since the awfs under consideration is cofibrantly generated by a category, the monad algebras are already retract closed.
  Equivalently, the left and right maps are those admitting coalgebra structures for the copointed endofunctor underlying the comonad and algebra structures for the pointed endofunctor underlying the monad, respectively.}
The forgetful functor sending a trivial cofibration coalgebra to its underlying map creates colimits, and
the open box category embeds in the category of coalgebras; thus, in particular, the colimit in $\cSet^\2$ of any diagram of open box inclusions and pullback squares is a trivial cofibration.

With this definition, it is immediate that the 1-cube is contractible: the endpoint $0 \colon 1 \rightarrowtail I$ is the open box formed by the subobject $\emptyset \rightarrowtail I^0$ and point $0 \colon I^0 \to I^1$, thus a trivial cofibration.
That the 2-cube is contractible is slightly less immediate: we can write $\vec{0} \colon 1 \to I^2$ as a composite of generating trivial cofibrations
\[
  \begin{tikzcd}[column sep=large]
    1 \ar[r,tcofarrow,"0" below] & I^1 \ar[r,tcofarrow,"I^1 \times 0" below]{I^1} & I^2
  \end{tikzcd}
\]
where the second map is the open box formed by $\emptyset \rightarrowtail I^1$ and the constant map $0 \colon I^1 \to I^1$.
We can continue inductively to see that $\vec{0} \colon 1 \to I^n$ is a trivial cofibration for all $n$, the composite of $n$ generating trivial cofibrations.
Observe, however, that this construction is inherently \emph{asymmetric}: we collapse a 2-cube by collapsing first along one axis and then along the other.
This prevents us, for example, from deriving a trivial cofibration coalgebra structure on $\vec{0} \colon 1 \to I^2_{/\Sigma_2}$ by taking a colimit: writing $\SSigma_2$ for the one-object groupoid corresponding to $\Sigma_2$, the diagram $\SSigma_2 \to \cSet^\2$ sending the object to $\vec{0} \colon 1 \cwto I^2$ and $\sigma \in \Sigma_2$ to
\[
  \begin{tikzcd}
    1 \ar[d,tcofarrow,"\vec{0}" left] \ar[r] & 1 \ar[d,tcofarrow,"\vec{0}" left] \\
    I^2 \ar[r,"\sigma"] & I^2
  \end{tikzcd}
\]
does \emph{not} lift to a diagram of trivial cofibration coalgebras.
In fact, one can show that if $A \cwto B$ is a trivial cofibration and $B$ contains a non-trivial (in an appropriate sense) copy of $I^2_{/\Sigma_2}$, then so does $A$ \cite[\S4]{Coquand:2018tcf}: trivial cofibrations cannot collapse copies of $I^2_{/\Sigma_2}$.
It follows that $I^2_{/\Sigma_2}$ is not contractible \cite[\S5]{Coquand:2018tcf}; the same argument applies to quotients of higher cubes.

\subsubsection{The solution} \label{sssec:equivariant-solution}

Our solution to this problem is to require a more general \emph{equivariant} uniform box-filling structure on our fibrations.
First, we generalize the open box inclusions, replacing generalized points $\xi \colon I^n \to I^1$ on the 1-cube with points $\xi \colon I^n \to I^k$ in arbitrary $k$-cubes, so that we ask for lifts
\[
  \begin{tikzcd}
    I^n \cup_{C} C \times I^k \arrow[d,tail,"{\langle [\xi], c \times I^k \rangle}" left] \ar[r] & Y \ar[->>,d,"f"] \\
    I^n \times I^k \ar[r] \ar[ur,dashed] & X \rlap{.}
  \end{tikzcd}
\]
This generalization alone does not change the class of fibrations.
The key is in our generalization of the uniformity condition: for every morphism of cubes $\alpha \colon I^m \to I^n$ \emph{and automorphism $\sigma \colon I^k \cong I^k$}, the resulting triangle of lifts
\[
  \begin{tikzcd}
    I^m \cup_{D} D \times I^k \arrow[dr, phantom, "\lrcorner" very near start] \ar[d,tail] \ar[r] & I^n \cup_{C} C \times I^k \arrow[d,tail] \ar[r] & Y \ar[->>,d,"f"] \\
    I^m \times I^k \ar[urr,dashed] \ar[r, "\alpha \times \sigma" below] & I^n \times I^k \ar[r] \ar[ur,dashed] & X
  \end{tikzcd}
\]
must commute.

With this definition, the vertex inclusion $\vec{0} \colon 1 \to I^n$ is immediately a trivial cofibration: it is the open box formed by $\emptyset \rightarrowtail 1$ and the point $\vec{0} \colon 1 \to I^n$.
Moreover, for any $H \subset \Sigma_n$, the diagram $\cH \to \cSet^2$ sending the object to $\vec{0} \colon 1 \cwto I^n$ and $\sigma \in H$ to
\[
  \begin{tikzcd}
    1 \ar[d,tcofarrow,"\vec{0}" left] \ar[r] & 1 \ar[d,tcofarrow,"\vec{0}" left] \\
    I^n \ar[r,"\sigma"] & I^n
  \end{tikzcd}
\]
now \emph{does} lift to a diagram of trivial cofibration coalgebras; its colimit exhibits the point $\vec{0} \colon 1 \to I^n_{/H}$ as a trivial cofibration, making $I^n_{/H}$ contractible.

These observations led us to a construction of the generating categories of cofibrations and trivial cofibrations for the equivariant model structure in Summer 2019.
While we felt confident that these categories were canonical---since we had arrived at their definition simultaneously through two different constructions, one category theoretic and one type theoretic---the corresponding model structure felt somewhat ad hoc, not fitting into known paradigms for constructions of model categorical models of homotopy type theory.
A few years later, we realized that the equivariant premodel structure could be transferred from a premodel structure on the category $\cSet^{\SSigma}$ of \emph{cubical species} (i.e., symmetric sequences of cubical sets), where there exists a canonical equivariant interval object $\II = (I^n)_{n \geq 1}$.
There the generating cofibrations and trivial cofibrations fit into a known paradigm where the latter are defined from the former using the generic point of the interval $\II$ (as in \cite{ABCFHL,CMS20,Awodey:2023}).\footnote{Interestingly, while the equivariant premodel structure is lifted along a right adjoint, the constant functor $\Delta \colon \cSet \to \cSet^{\SSigma}$, the model structure itself is not: the fibrations and trivial fibrations are created by $\Delta$, but the weak equivalences between non-fibrant objects are not.}

\subsection{Results}

Our results are summarized by the following theorem.

\begin{thm}
  \label{thm:main}
  There is a constructively definable model of HoTT in cartesian cubical sets with an associated constructively definable Quillen model structure that is classically Quillen equivalent to the Kan--Quillen model structure on simplicial sets.
\end{thm}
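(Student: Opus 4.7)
The plan is to first construct the equivariant cartesian cubical interpretation of HoTT, then derive an associated Quillen model structure, and finally classically compare it to simplicial sets.

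First I would construct the equivariant premodel structure. Following the solution sketched in Section~\ref{sssec:equivariant-solution}, the generating trivial cofibrations are the generalized open box inclusions $I^n \cup_C C \times I^k \rightarrowtail I^n \times I^k$, closed under pullback along arbitrary cube maps $\alpha \colon I^m \to I^n$ and under precomposition with automorphisms $\sigma \colon I^k \cong I^k$ of the direction cube. Garner's algebraic small object argument then produces an algebraic weak factorization system whose right maps are the equivariant Kan fibrations, and a companion awfs handles the cofibration/trivial fibration factorization. To organize this conceptually rather than presenting it as an ad hoc modification, I would lift the premodel structure along the constant-diagram functor $\Delta \colon \cSet \to \cSet^{\SSigma}$ from a premodel structure on cubical species, where the canonical equivariant interval $\II = (I^n)_{n\geq 1}$ supplies a clean interval-based description of the generators in the paradigm of \cite{ABCFHL,CMS20,Awodey:2023}.

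Next I would verify that the resulting premodel structure supports an interpretation of HoTT, showing that equivariant fibrations are closed under the semantic type formers: $\Sigma$-types and identity types are essentially automatic, $\Pi$-types require a Frobenius argument compatible with the new equivariance, fibrant universes require a fibration extension property, and univalence requires an equivalence extension property. The main technical difficulty is that equivariance must propagate through each construction---particularly through the pushout-like steps in fibration and equivalence extension---so one must build lifting operations whose chosen fillers commute with automorphisms of the direction cube as well as with ordinary cube maps. This is presumably where the bulk of the computer-formalized portion of the work concentrates. Once a model of HoTT with a fibrant univalent universe is in hand, the 2-out-of-3 property for the candidate class of weak equivalences follows by the standard argument referenced in the footnote of Section~\ref{ssec:cubical-model-structures}, upgrading the premodel structure to a Quillen model structure and establishing the first two assertions of the theorem.

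For the final claim, I would work with the triangulation adjunction $T \dashv S \colon \sSet \to \cSet$. By construction, the equivariant model structure makes every automorphism quotient $I^n_{/H}$ contractible, which removes the central obstruction identified in the introduction to $T$ being a Quillen equivalence. Two natural strategies are then available: identify the equivariant model structure with the test model structure on $\cSet$---which is classically Quillen equivalent to $\sSet$ via triangulation, since the cartesian cube category is a test category---or directly verify that $T$ is left Quillen and that the derived unit and counit are weak equivalences on cofibrant and fibrant objects, respectively. I expect this comparison to be the main obstacle: one must show that the newly-imposed equivariance condition, which has no intrinsic simplicial analogue, is strong enough to force full Quillen equivalence (or coincidence with the classical test model structure), rather than merely sufficient to make automorphism quotients contractible. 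A plausible route is to exhibit explicit Reedy-type arguments comparing equivariant Kan fibrations with Kan fibrations of triangulations of cubical sets, leveraging the Eilenberg--Zilber structure on the cartesian cube category emphasized in the abstract.
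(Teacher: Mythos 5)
Your overall architecture matches the paper's: define the equivariant (trivial) fibrations by lifting a premodel structure along $\Delta \colon \cSet \to \cSet^{\SSigma}$ from an interval-based premodel structure on cubical species; verify Frobenius, the equivalence extension property, and fibrant univalent universes; deduce the fibration extension property and hence 2-of-3; then compare with $\sSet$ via triangulation. The first two stages are essentially identical to the paper's Sections 4--5.

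For the final comparison, however, you leave the key mechanism unspecified, and the framing you choose would make it harder than necessary. You phrase the comparison in terms of the adjunction $T \dashv S$ and propose checking the derived unit and counit ``on cofibrant and fibrant objects, respectively,'' which would require analyzing the right adjoint $S$ and the fibrant objects of one or both model structures. The paper instead exploits Barton's observation that $T$ is \emph{restriction} $i^*$ along a functor $i \colon \DDelta \to \CCube$, so it sits in an adjoint triple $i_! \dashv i^* \dashv i_*$ and the comparison is between the \emph{two left Quillen functors} $i_!$ and $i^*$. Since all objects are cofibrant on both sides, Ken Brown's lemma makes both functors preserve all weak equivalences, and the Eilenberg--Zilber cell decomposition of monomorphisms (every monomorphism is a cell complex of boundary inclusions of quotients $\cA^a_{/H}$ of representables by automorphism subgroups) reduces checking that the unit and counit are natural weak equivalences to checking it on the objects $I^n_{/H}$ and $\Delta^n$ --- which is automatic by 2-of-3 because both functors preserve the terminal object and these objects are weakly contractible. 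This is exactly where equivariance enters: contractibility of $I^n_{/H}$ is what the new fibrations buy you, and it is the \emph{only} input needed beyond formal EZ combinatorics, so your worry that equivariance might be ``merely sufficient to make automorphism quotients contractible'' but not enough for a full Quillen equivalence dissolves. Two technical points your sketch omits that carry real content: showing $T = i^*$ is left Quillen requires proving that Kan fibrations of simplicial sets are themselves equivariant fibrations (which uses the min-connection available on triangulated cubes), and showing $i_!$ preserves monomorphisms requires a pullback-preservation argument in the augmented simplex category. Finally, the paper obtains the identification with the test model structure as a \emph{corollary} of the Quillen equivalence (via asphericity of $i$), rather than using it as the route to the equivalence as your first alternative suggests; that route would also work classically but is not what is done.
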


By \emph{associated Quillen model structure}, we mean as in \S\ref{ssec:cubical-model-structures} a model structure whose fibrations are the retracts of context extensions of the model of HoTT and whose trivial fibrations are the retracts of context extensions by contractible types.

By a \emph{model of HoTT} we mean a model of Martin-L\"of type theory validating the univalence axiom, and by \emph{model of Martin-L\"of type theory} we mean a natural model \cite{Awodey:2018nm} equipped with $\Pi$-types, $\Sigma$-types, identity types, and universes closed under these.
More precisely, what we construct is a \emph{natural pseudo-model} in the sense of Shulman \cite[{\S}A]{Shulman:2019ai} with weakly stable equivalents of this structure (a weakly stable class of $\Pi$-types, etc.); one can then apply Lumsdaine and Warren's \emph{left adjoint splitting} coherence construction \cites{LumsdaineWarren:2015}{Awodey:2018nm}[{\S}A]{Shulman:2019ai} to obtain a natural model with strictly stable structure.
Concretely, our category of contexts is the category $\cSet$ of cartesian cubical sets, and the natural pseudo-model specifying the types and terms is the \emph{notion of fibred structure} encoding the equivariant fibrations (Lemma \ref{lem:equiv-loc-rep-fib}).
The interpretation of type formers is as follows.

\begin{itemize}
\item Weakly stable $\Sigma$-types and identity types arise immediately from the model structure (see, e.g., \cite[\S4.2]{LumsdaineWarren:2015}). $\Sigma$-types are interpreted by composition of fibration algebras, while the identity type on $A \fto \Gamma$ is interpreted by the (trivial cofibration, fibration) factorization of its diagonal $A \to A \times_\Gamma A$ (as in \cite{awodey-warren:2009}).
\item Weakly stable $\Pi$-types come from the \emph{Frobenius condition} \cites[3.3.3]{BergGarner:2012}{GambinoSattler:2017fc}, that is the closure of fibrations under pushforward along fibrations, which is verified in Proposition \ref{prop:equivariant-frobenius}.
\item Universes are interpreted by classifiers for the notions of fibred structure encoding $\kappa$-small equivariant fibrations for sufficiently large inaccessible cardinals $\kappa$ (Proposition \ref{prop:equivariant-has-universes}).
  Importantly, these classifiers have fibrant base objects (Proposition \ref{prop:equivariant-fibrant-universe}) and are univalent (Proposition \ref{prop:equivariant-univalent}).
  The former property is closely connected to the model-categorical \emph{fibration extension property} (Proposition \ref{prop:equivariant-FEP}), the latter to the \emph{equivalence extension property} (Proposition \ref{prop:equivariant-EEP}).
\end{itemize}
The main technical work lies in the construction of univalent universes.

In the course of proving the main theorem, we actually construct \emph{two} models of homotopy type theory and associated Quillen model structures: a model on the category $\cSet^{\SSigma}$ of cubical species, which does not model classical homotopy theory, and a model on $\cSet$, which does.
To avoid repetition and with an eye towards future applications, we prove the core theorems that will establish the necessary properties of these model categories in more general axiomatic settings, proving results that are of independent interest.

\subsubsection{Outline}

Our development proceeds as follows.

\begin{itemize}
\item In \S\ref{sec:fibred}, we recall Shulman's \emph{notions of fibred structure}, which in particular include categories of right maps obtained from an algebraic weak factorization system.
  Again following Shulman, we define a \emph{universe} for a notion of fibred structure to be a representable ``resolution'' via an acyclic fibration.
  We define our first example of a notion of fibred structure, the \emph{uniform trivial fibrations}, following \cite{Awodey:2023}.

\item In \S\ref{sec:cylindrical}, we work in the abstract setting of a \emph{cylindrical premodel structure} as defined in \cites{Sattler:2020cms}[\S3]{CavalloSattler:2022re}.
  We establish, individually, sufficient conditions under which a cylindrical premodel structure
  \begin{itemize}
  \item satisfies the equivalence extension property;
  \item satisfies the Frobenius condition,
  \item supports fibrant and univalent universes of fibrations;
  \item defines a Quillen model structure.
  \end{itemize}
  These constructions form the backbone of existing model-categorical cubical interpretations and could be applied with appropriate inputs from  \cite{ABCFHL} or \cite{Awodey:2023} to recover the known model structures on, e.g., cartesian or De Morgan cubical sets.
  In the following sections, we apply them to two cylindrical premodel structures: first to $\cSet^{\SSigma}$ and then to $\cSet$ itself.
  As a rule of thumb, properties whose proofs rely only on \emph{closure} properties of fibrations (such as the equivalence extension property) are derived directly in $\cSet$, while properties whose proofs rely on the \emph{generation} of fibrations by box filling (such as the Frobenius condition) are first proven in $\cSet^{\SSigma}$ and then transferred to $\cSet$.

\item In \S\ref{sec:symmetric}, we introduce the category $\cSet^{\SSigma}$ of cubical species.
  We define the \emph{symmetric interval} $\II \in \cSet^{\SSigma}$ and use it to define, by essentially the same construction used for the ordinary cartesian cubical set model \cite{ABCFHL,CMS20,Awodey:2023}, a model of HoTT and Quillen model structure on $\cSet^{\SSigma}$.

\item In \S\ref{sec:equivariant}, we transfer the cylindrical premodel structure on $\cSet^{\Sigma}$ to $\cSet$ by means of the constant functor $\Delta \colon \cSet \to \cSet^{\SSigma}$, defining the equivariant (trivial) fibrations to be those sent to (trivial) fibrations in $\cSet^{\SSigma}$ by $\Delta$.
  We show that this premodel structure satisfies 2-of-3, proving the first part of Theorem \ref{thm:main}: the existence of a constructively definable model of HoTT and associated Quillen model structure on $\cSet$ whose fibrations are the equivariant fibrations.
\item In \S\ref{sec:classical}, we prove the second part of Theorem \ref{thm:main}, building a Quillen equivalence between the equivariant model structure on $\cSet$ and the Kan--Quillen model structure on $\sSet$.
  The left adjoint of this equivalence is the triangulation functor $T \colon \cSet \to \sSet$ mentioned above; we rely on a characterization, due to Reid Barton, of $T$ as restriction along a functor $i \colon \DDelta \to \CCube$.
  Key to the proof is that $\DDelta$ and $\CCube$ are \emph{Eilenberg--Zilber categories}, which implies that the monomorphisms in their respective presheaf categories are cell complexes of quotients by automorphism groups of boundary inclusions into representables.
  In this way, the fact that $T$ reflects weak equivalences comes to rest on the contractibility of the quotients $I^n_{/H} \in \cSet$, which we have seen in \S\ref{sssec:equivariant-solution} is ensured by the definition of equivariant fibration.
  Using the Quillen equivalence, we also prove that our model structure on $\cSet$ coincides with the test model structure.
\end{itemize}

Finally, we devote an appendix (\S\ref{app:type-theoretic}) to a second perspective on the construction of the equivariant model of HoTT in $\cSet$.
There we outline a translation of HoTT into the internal extensional type theory of cubical sets augmented by an axiomatisation of the interval and cofibration classifier, which is backed by a complete formalisation in the proof assistant Agda \cite{formalization} following Orton and Pitts \cite{OrtonPitts:2018af}.
This also demonstrates that, as usual for cubical models of HoTT, a coherence construction is not actually needed to obtain a model of Martin-Löf type theory: our types are sufficiently structured to directly interpret strictly stable structure (without the need to, e.g., choose lifts).

\subsubsection{Constructivity}
\label{ssec:constructivity}

Part of the aim of this paper is to describe a \emph{constructive} model of HoTT.
Thus \S\S\ref{sec:fibred}--\ref{sec:equivariant}, which culminate in the construction of the equivariant model of HoTT and Quillen model structure on $\cSet$, can be made completely constructive.
Note, however, that one must replace the use of monomorphisms in presheaf categories everywhere with \emph{levelwise decidable monomorphisms}, that is, $m \in (\Set^{\cC^\op})^\2$ such that $m_c$ is isomorphic to a coproduct coprojection for all $c \in \cC$. Constructively, the Hofmann--Streicher classifiers form universes in the sense of \S\ref{ssec:realignment} only with this modification, as used in the CCHM model \cite[15]{CCHM:2018ctt} and observed explicitly by Orton and Pitts \cite[8.4]{OrtonPitts:2018af}.
Note that this replaces the subobject classifier with a classifier for subobjects whose corresponding sieve is decidable.
This also has the effect of making the development \emph{predicative}.

We justify constructivity of our use of Garner's algebraic small object argument in Constructions \ref{con:generating-cof}, \ref{con:generating-tcof}, \ref{con:equivariant-generating-cof}, and \ref{con:equivariant-generating-tcof}.
Note that the set of morphisms of the cartesian cube category $\CCube$ has decidable equality.
The Eilenberg--Zilber category structure on $\CCube$ is constructively definable and has finite slices of face maps.
By induction, we show that any object $S$ with a levelwise decidable inclusion into $\yo a$ with $a \in \CCube$ is compact: if this map contains the generic element of $\yo a$, then then $S \simeq \yo a$ is representable; otherwise, it is a finite cell complex of boundary inclusions of degree lower than $a$.
Note that any double left adjoint preserves compact objects.
The generating (trivial) cofibrations in cubical species are levelwise decidable subobjects of representables.
By Lemma \ref{lem:symmetric-subobjects}, these arise via left Kan extension from levelwise decidable subobjects of representables in cubical sets.
Then the generating (trivial) cofibrations in cubical sets are created from these via the double left adjoint $L$ in \S\ref{ssec:species-to-sets}.
Therefore, all the generating categories for each of our uses of the algebraic small object argument consist of maps with compact domain.
This makes the pointed endofunctor for the one-step factorization preserve filtered colimits.
Hence, its free monad sequence converges at stage $\omega$.
For more details on the algebraic small object argument in a constructive context, we refer to Henry \cite[{\S}C.2]{Henry:2019}.

The Kan--Quillen model structure on simplicial sets is definable constructively~\cite{Henry:2019,GambinoSattlerSzumilo:2022}.
However, our proof of the Quillen equivalence between the equivariant model structure and the Kan--Quillen model structure in \S\ref{sec:classical} is not constructive.
In its heart, it relies on the non-constructive presentation of (levelwise decidable) monomorphisms in $\cSet$ as Reedy cell complexes.
Constructively, only the \emph{Reedy decidable} monomorphisms---that is, those whose latching maps are decidable---can be presented in this way (compare \cite[\S1.4]{GambinoSattlerSzumilo:2022}).
While it may be possible to define an analogue of the equivariant fibration model structure with Reedy decidable monomorphisms as cofibrations (as in \cite{Henry:2019,GambinoSattlerSzumilo:2022} for simplicial sets), this choice interferes with coherence constructions used to interpret HoTT, as Gambino and Henry find \cite[8.5]{GambinoHenry:2022}.

It is more generally unclear how to judge whether a homotopy theory is ``the homotopy theory of spaces'' constructively.
Indeed, it is likely that there are multiple constructively distinct homotopy theories that are all classically equivalent to the Kan--Quillen model structure.
Shulman does some preliminary analysis of this question through the lens of \emph{derivators} \cite{Shulman:2023ds}.

\subsection{Related and future work}

\subsubsection{Models of HoTT in higher toposes}

Shulman has shown that every Grothendieck $\infty$-topos admits a presentation by a
\emph{type-theoretic model topos}, a Quillen model category with structure sufficient to interpret HoTT \cite{Shulman:2019ai}.
His setup uses classical logic and is inherently simplicial: a type-theoretic model topos is by definition a simplicial model category.
As far as we know, our model structure falls outside of this framework: we are not aware of any appropriate simplicial enrichment on $\cSet$.
The natural candidate definition, taking the mapping space between $X,Y \in \cSet$ to be the triangulated internal hom $T[X,Y] \in \sSet$, does not yield a simplicial model structure, essentially because the left adjoint to $T$ (constructed in \S\ref{sec:classical}) does not preserve products.

Shulman uses the description of Grothendieck $\infty$-toposes as left exact localizations of presheaf $\infty$-toposes, showing that the class of type-theoretic model toposes is closed under model-categorical constructions presenting left exact localizations and categories of presheaves.
The base case is then the $\infty$-topos of spaces: the category of simplicial sets with the Kan--Quillen model structure is a type-theoretic model topos.
Our work suggests a future path towards constructivizing (at least some of) Shulman's results, namely by developing a \emph{cubical} notion of type-theoretic model topos and using the equivariant model structure on $\cSet$ as the base model of spaces.

\subsubsection{Other cubical models}

In 2022, the second and fifth author discovered a second cubical interpretation of type theory whose associated Quillen model structure presents spaces, this one in presheaves on the category of \emph{cartesian cubes with one connection} \cite{CavalloSattler:2022re}.
In this setting, it is not necessary to introduce the notion of equivariant fibration: applying the original cartesian cubical model construction as in \cite{ABCFHL,CMS20,Awodey:2023} yields a Quillen model structure presenting spaces.
This can be explained by the fact that any fibration in presheaves over this cube category is \emph{automatically} equivariant, as sketched in \cite[4.25]{CavalloSattler:2022re} (compare our Proposition \ref{prop:kan-is-equivariant}).
A downside of this model is that the cube category with one connection is less well-behaved: while the cartesian cube category is an Eilenberg--Zilber category \cite[8.12(1)]{Campion:2023ez}, the cube category with one connection is not a Reedy category \cite[{\S}A.1]{CavalloSattler:2022re}.
The main task of \cite{CavalloSattler:2022re} is to develop a generalization of Eilenberg--Zilber category which can be used in this case.

Equivariance is not a catch-all solution: it is not the case that we can take any of the existing cubical interpretations and impose an equivariance condition on fibrations to obtain a model for spaces.
For example, as in the one connection case, fibrations in Dedekind cubical sets (i.e., over the cartesian cube category with two connections) are automatically equivariant, but we still do not know if this model presents spaces, essentially because this cube category is even farther from being an Eilenberg--Zilber category than the one-connection category \cite[{\S}A.2]{CavalloSattler:2022re}.
Over the BCH cube category, which \emph{is} an Eilenberg--Zilber category \cite[7.10]{Campion:2023ez}, adding equivariance would have the effect of making the cube quotients $I^n_{/H}$ contractible, as it does in cartesian cubical sets. This is desirable from the point of view of triangulation. Note however that in the test model structure on BCH cubical sets, the quotient $I^2_{/\Sigma_2}$ is \emph{not} contractible but rather presents the suspension of $\RR P^\infty$.

\subsubsection{Constructive simplicial models}
\label{sssec:constructive-simplicial}

The Kan--Quillen model structure on simplicial sets has been developed constructively by Henry \cite{Henry:2019} and Gambino, Sattler, and Szumi{\l}o \cite{GambinoSattlerSzumilo:2022}.
However, Bezem, Coquand, and Parmann show that Voevodsky's model in simplicial sets relies in an essential way on classical principles \cite{BezemCoquand:2015,BezemCoquandParmann:2015,Parmann:2018}.
Essentially, this is because the Kan--Quillen model structure cannot generally be shown to have cofibrant objects; indeed, the cofibrant objects are the \emph{Reedy decidable objects}, those for which we can decide if a cell is degenerate.
In particular, the interpretation of $\Pi$-types is problematic: constructively, the exponential $Y^X$ need not be a Kan complex even if $X$ and $Y$ are---cofibrancy of $X$ is required.

As mentioned in \S\ref{ssec:constructivity}, Gambino and Henry \cite{GambinoHenry:2022} give a constructive reformulation of Voevodsky's simplicial model of HoTT in \emph{cofibrant simplicial sets}.
However, the restriction to cofibrant objects interferes with the coherence constructions needed to obtain a strict model, meaning that the end result falls short of its classical equivalent.

Van den Berg and Faber \cite{BergFaber:2022ek} present a second approach to constructivizing Voevodsky's model replacing Kan fibrations with a restricted notion of \emph{effective Kan fibration}.
As in our own work, the idea is to impose additional uniformity conditions on lifts.
Although this approach does not require restricting cofibrations to Reedy decidable monomorphisms and thus may avoid the coherence issues of \cite{GambinoHenry:2022}, it is still work in progress: to our knowledge, neither an interpretation of universes nor a Quillen model structure have been established thus far.

\subsubsection{Cubical type theories}

Cohen, Coquand, Huber, and M\"{o}rtberg \cite{CCHM:2018ctt} present not only a model of homotopy type theory but also a \emph{cubical type theory}, an extension of Martin-L\"of type theory with new judgments and type formers that reflect the structure of the De Morgan cubical sets model.
Angiuli et al.~\cite{AngiuliFavoniaHarper:2018,ABCFHL} likewise devise a cubical type theory interpreting in cartesian cubical sets.
Unlike HoTT as formulated in \cite{UF:2013}, these theories enjoy canonicity: any closed natural number computes definitionally to a numeral \cite{AngiuliFavoniaHarper:2018,Huber:2019}.

The cartesian cubical type theory of \cite{ABCFHL} can also be interpreted in the equivariant cartesian model: every equivariant fibration is in particular a fibration in the sense of the original cartesian cubical set model, so interprets the filling operator (sometimes also called the \emph{composition} operator) of cartesian cubical type theory \cite[\S1.2]{ABCFHL}.
Thus, cartesian cubical type theory has a model presenting the homotopy theory of spaces.

One could imagine extending cartesian cubical type theory with an equivariant filling operator.
Such an operator could be introduced by the rule
\begin{mathpar}
  \inferrule
  {k \in \NN \\
    \Gamma, \vec{\imath} : I^k \vdash A\ \mathrm{type} \\
    \Gamma \vdash \phi\ \mathrm{cof} \\
    \Gamma \vdash \vec{r},\vec{s} : I^k \\\\
    \Gamma, \phi, \vec{\imath} : I^k \vdash u : A \\
    \Gamma \vdash u_0 : A[\vec{r}/\vec{\imath}] \\
    \Gamma, \phi \vdash u[\vec{r}/\vec{\imath}] = u_0 : A
    }
  {\Gamma \vdash \mathsf{comp}_{\vec{\imath}.A}^{\vec{r}\rightarrow \vec{s}}~[\phi \mapsto \vec{\imath}.u]~u_0 : A[\vec{r}/\vec{\imath}]}
\end{mathpar}
which straightforwardly generalizes the ordinary filling operator by replacing the interval $I$ with an arbitrary $k$-cube $I^k$, together with the usual equations
\begin{align*}
  \mathsf{comp}_{\vec{\imath}.A}^{\vec{r}\rightarrow \vec{s}}~[\phi \mapsto \vec{\imath}.u]~u_0 &= u[\vec{s}/\vec{\imath}] &&\text{when $\phi$ holds} \\
  \mathsf{comp}_{\vec{\imath}.A}^{\vec{r}\rightarrow \vec{s}}~[\phi \mapsto \vec{\imath}.u]~u_0 &= u_0 &&\text{when $\vec{r} = \vec{s}$}
\end{align*}
which specify that the output of $\mathsf{comp}$ is a filler for the input box.
\emph{Equivariance} states that, for each $\sigma \in \Sigma_k$, we have the equation
\begin{align*}
  \mathsf{comp}_{\vec{\imath}.A}^{\sigma^*\vec{r} \rightarrow \sigma^*\vec{s}}~[\phi \mapsto \vec{\imath}.u]~u_0
  = \mathsf{comp}_{\vec{\jmath}.A[\sigma^*\vec{\jmath}/\vec{\imath}]}^{\vec{r}\rightarrow \vec{s}}~[\phi \mapsto \vec{\jmath}.u[\sigma^*\vec{\jmath}/\vec{\imath}]]~u_0 \rlap{,}
\end{align*}
where $\sigma^*$ is the action of $\sigma$ on $k$-tuples of terms in $I$.

We are, however, not aware of any practical use for the equivariant filling operator in cubical type theory.
Synthetic homotopy theorists working in cubical type theories have yet to encounter any fundamental difference in expressivity between, e.g., cartesian and De Morgan cubical type theories, or even between cubical type theories and HoTT \`a la \cite{UF:2013}, and the situation seems to be the same here.
It would also be expensive and complicated to type-check equivariant filling operators: to compare two $k$-dimensional $\mathsf{comp}$ terms for equality requires testing whether they agree modulo any of the $k!$ permutations.

\subsection{Acknowledgments}

The discovery of the equivariant model occurred at the Centre for Advanced Study (CAS) at the Norwegian Academy of Science and Letters in Oslo, Norway, in the academic year 2018--19 research project on Homotopy Type Theory and Univalent Foundations organized by Marc Bezem and Bj{\o}rn Dundas.  We gratefully acknowledge their support.  The first and third authors are also grateful to the
Institut des Hautes \'Etudes Scientifiques for hosting two weeks of very nice discussions in June 2022.

The perspective of the generating categories of cofibrations and trivial cofibrations as internally indexed by cubical species (see \S\ref{ssec:species-cylindrical}) was informed by discussions with Andrew Swan. Reid Barton's recent insights into the triangulation functor enabled us to considerably simplify the proofs of the results in \S\ref{sec:classical}.

The first and fourth author were supported by the US Air Force Office of Scientific Research under award number FA9550-21-1-0009 as well as, for the first author, award number FA9550-20-1-0305. The second author was supported by the US Air Force Office of Scientific Research under award number FA9550-19-1-0216 and by the Knut and Alice Wallenberg Foundation (KAW) under grant numbers 2020.0266 and 2019.0116.  The third author was supported by the ForCUTT project, ERC advanced grant number 101053291.
The fourth author is also supported by US National Science Foundation via the grants DMS-2204304 and DMS-2507077 and by the President's Frontier Award at Johns Hopkins, which supported visits to the other authors.
The fifth author was supported by the Swedish Research Council under grant number 2019-03765 and the US Air Force Office of Scientific Research under award number FA9550-24-1-0302.

%%%%%%%%%%%%%%%%%%%%%%%%%%%%%%%%%%%%%%%%%%%%%%%%%%%%
\section{Notions of fibred structure and universes}\label{sec:fibred}

A (model-categorical) model of HoTT comes with two classes of ``right'' maps: the \emph{fibrations}, which model type families, and the \emph{trivial fibrations}, which model contractible type families. A key feature of both classes of maps is their stability under pullbacks along arbitrary maps, which models substitution of terms for variables in type theory.

In this section, we consider such ``notions of fibred structure'' abstractly, proving general results that will apply to both the fibrations and the trivial fibrations in the model categories we construct.  In \S\ref{ssec:fibred-str}, we recall the precise, technical meaning of the phrase ``notion of fibred structure'' and explore what it means when such fibred structure is \emph{locally representable}. In \S\ref{ssec:trivial-fibrations}, we specialize to elementary toposes and show that suitably structured maps that lift against the monomorphisms define a locally representable notion of fibred structure. In \S\ref{ssec:realignment}, introduce our notion of universe and, in the case of presheaf toposes, construct universes for locally representable notions of fibred structure from the Hofmann--Streicher classifiers.

\subsection{Locally representable and relatively acyclic notions of fibred structure}\label{ssec:fibred-str}

The maps in a 1-category $\cE$ with pullbacks assemble into a contravariant groupoid-valued pseudofunctor on $\cE$ sending an object $X$ to the large groupoid of maps with codomain $X$. This pseudofunctor $\sE$ is referred to as the \textbf{core of self-indexing}---the ``self-indexing'' referring to the slice categories $\cE_{/X}$ and the ``core'' referring to their groupoid cores. In \cite[3.1]{Shulman:2019ai}, Shulman defines a \textbf{notion of fibred structure} on a category $\cE$ with pullbacks as a strict discrete fibration with small fibers $\psi \colon \sF \to \sE$ in the 2-category of contravariant groupoid-valued pseudofunctors on $\cE$ and pseudonatural transformations between them. Here, a \emph{strict discrete fibration} is a strictly natural transformation whose components are fibrations of groupoids.

Unpacking this, a notion of fibred structure is given by:
\begin{enumerate}
\item for each map $f \colon Y \to X$ of $\cE$, a set of ``fibration structures'',
\item for each pullback square
\begin{equation}\label{eq:F-morphism} \begin{tikzcd} W\arrow[d, "g^*f"'] \arrow[dr, phantom, "\lrcorner" very near start] \arrow[r, "f^*g"] & Y \arrow[d, "f"] \\ Z \arrow[r, "g"'] & X\rlap{,} \end{tikzcd}
\end{equation} a function from the set of fibration structures on $f$ to the set of fibration structures on $g^*f$ that is pseudofunctorial in pullback squares.
\end{enumerate}
See \cite[\S3]{Shulman:2019ai} for considerably more discussion. Following Shulman, we refer to the ``structured fibrations'' associated to a notion of fibred structure $\sF$ as $\sF$-\textbf{algebras} and then refer to a pullback square \eqref{eq:F-morphism} in which the $\sF$-algebra structure on $g^*f$ is induced from the $\sF$-algebra structure on $f$ as an $\sF$-\textbf{morphism}.

\begin{defn}[{\cite[3.2]{Shulman:2019ai}}]
  A notion of fibred structure $\psi \colon \sF \to \sE$ is \textbf{full} if $\sF(X) \to \sE(X)$ is fully faithful for each object $X$ of $\cE$.\footnote{Shulman's definition asks that $\psi \colon \sF \to \sE$ is a subfunctor inclusion; this is equivalent because $\psi$ is a discrete fibration.}
\end{defn}

That is, $\sF$ is full if every pullback square between $\sF$-algebras uniquely extends to an $\sF$-morphism.

Shulman then axiomatizes various conditions associated to such a notion of fibred structure that can be used to build a classifying universe. The first of these conditions is the following:
\begin{defn}[{\cite[3.10]{Shulman:2019ai}}]\label{defn:locally-representable} A notion of fibred structure $\sF$ is \textbf{locally representable} if each pullback in the category of contravariant groupoid-valued pseudofunctors
  \[ \begin{tikzcd} \bullet \arrow[d] \arrow[r] \arrow[dr, phantom, "\lrcorner" very near start] & \sF \arrow[d, "\psi"] \\ \cE(-,X) \arrow[r, "f"'] & \sE \end{tikzcd}
  \]
is representable.
Explicitly, every map $f \colon Y \to X$ has a \emph{classifier} $\psi_f \colon \sF(f) \to X$ for $\sF$-algebra structures on $f$, meaning that that for all $g \colon Z \to X$, $\sF$-algebra structures on $g^*f$ correspond bijectively to lifts of $g$ through $\psi_f$, naturally in $g$:
\[
\begin{tikzcd}  & \sF(f) \arrow[d, "\psi_f"]  \\ Z \arrow[ur, dashed] \arrow[r, "g"'] & X \rlap{.}
\end{tikzcd}
\]
\end{defn}
In particular, sections of the canonical map $\psi_f\colon\sF(f) \to X$ correspond uniquely to $\sF$-algebra structures on $f \colon Y \to X$.

\begin{lem}\label{lem:locally-representable-stability} Let $\sF$ be a locally representable notion of fibred structure.
\begin{enumerate}
\item\label{itm:loc-rep-alg} The pullback of any map $f \colon Y \to X$ along $\psi_f \colon \sF(f) \to X$ has a canonical $\sF$-algebra structure.
\item\label{itm:loc-rep-pb} If $g^*f$ is a pullback of $f$ along $g$, then $\sF(g^*f)$ is a pullback of $\sF(f)$ along $g$, i.e.\  $\sF(g^*f) \cong g^*\sF(f)$.
\end{enumerate}
\end{lem}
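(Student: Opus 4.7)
The plan is to exploit the defining universal property of the classifier $\psi_f \colon \sF(f) \to X$ from Definition \ref{defn:locally-representable} in both parts, applying it in each case to a well-chosen lift.

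For part \ref{itm:loc-rep-alg}, I would apply the classifying bijection of Definition \ref{defn:locally-representable} with $g \colon Z \to X$ taken to be $\psi_f \colon \sF(f) \to X$ itself. The tautological lift $\id_{\sF(f)} \colon \sF(f) \to \sF(f)$ of $\psi_f$ through $\psi_f$ corresponds under the bijection to an $\sF$-algebra structure on the pullback of $f$ along $\psi_f$; this is the canonical structure asserted.

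For part \ref{itm:loc-rep-pb}, I would verify that the pullback
\[
\begin{tikzcd} g^*\sF(f) \arrow[d] \arrow[r] \arrow[dr, phantom, "\lrcorner" very near start] & \sF(f) \arrow[d, "\psi_f"] \\ Z \arrow[r, "g"'] & X \rlap{} \end{tikzcd}
\]
exhibits its left vertical as a classifier for $\sF$-algebra structures on $g^*f$, whence the desired isomorphism $\sF(g^*f) \cong g^*\sF(f)$ follows by uniqueness of representing objects. For any $h \colon W \to Z$, a lift of $h$ through $g^*\sF(f) \to Z$ is equivalent, by the universal property of the pullback, to a lift of the composite $gh \colon W \to X$ through $\psi_f$. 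By the defining property of the classifier, such lifts correspond to $\sF$-algebra structures on $(gh)^*f \cong h^*(g^*f)$, which are precisely $\sF$-algebra structures on pullbacks of $g^*f$ along $h$. These bijections are manifestly natural in $h$.

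The only potential subtlety is checking compatibility with the pseudofunctoriality of $\sF$ in pullback squares, but this is automatic once both $\sF(g^*f)$ and $g^*\sF(f)$ are exhibited as representing the same groupoid-valued pseudofunctor on the slice over $Z$. I anticipate no substantive obstacle; the argument is a direct unpacking of the universal property encoded in Definition \ref{defn:locally-representable}.
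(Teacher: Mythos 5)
Your proof is correct and follows essentially the same route as the paper: part (i) is the generic-element/Yoneda argument (your identity lift of $\psi_f$ corresponds exactly to the top map $\gamma_f$ of the defining pullback square), and part (ii) is the pullback-pasting argument, which the paper phrases as pullback cancellation of the two squares over $\cE(-,Z) \to \cE(-,X) \to \sE$ plus full faithfulness of the Yoneda embedding, while you phrase it as a direct verification that $g^*\sF(f)$ represents the same functor of lifts.
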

\begin{proof}
The top horizontal map in the pullback square
  \[ \begin{tikzcd} \cE(-,\sF(f)) \arrow[d, "\psi_f"'] \arrow[r, "\gamma_f"] \arrow[dr, phantom, "\lrcorner" very near start] & \sF \arrow[d, "\psi"] \\ \cE(-,X) \arrow[r, "f"'] & \sE \end{tikzcd}
  \]
  specifies an $\sF$-algebra structure $\gamma_f$ on the map $\psi_f^*f$.

  By pullback cancelation and fully faithfulness of the Yoneda embedding, local representability implies that the left-hand square is a pullback in contravariant groupoid-valued pseudofunctors and thus also in $\cE$:
  \[ \begin{tikzcd}[baseline=(current bounding box.south)] \cE(-,\sF(g^*f))\arrow[d, "\psi_{g^*f}"']\arrow[dr, phantom, "\lrcorner" very near start] \arrow[r, "i_g"] & \cE(-,\sF(f)) \arrow[d, "\psi_f"'] \arrow[r, "\gamma_f"] \arrow[dr, phantom, "\lrcorner" very near start] & \sF \arrow[d, "\psi"] \\ \cE(-,Z) \arrow[r, "g"'] & \cE(-,X) \arrow[r, "f"'] & \sE \rlap{.} \end{tikzcd} \qedhere
  \]
\end{proof}

\begin{rmk}\label{rmk:F-morphism} Recall that a pullback of $\sF$-algebras as in \eqref{itm:loc-rep-pb} is an $\sF$-morphism just when the $\sF$-algebra structure on $g^*f$ is created from the $\sF$-algebra structure on $f$. The naturality condition in Definition \ref{defn:locally-representable} tells us that this is the case just when the square defined by the corresponding sections of the representing morphisms commute:
\[ \begin{tikzcd}  \sF(g^*f) \arrow[d, "{\psi_{g^*f}}"'] \arrow[r, "i_g"] & \sF(f) \arrow[d, "\psi_f"] \\ \arrow[u, bend right, dashed, "s_{g^*f}"'] Z \arrow[r, "g"'] & X \arrow[u, bend left, dashed, "s_f"] \rlap{.}
  \end{tikzcd}
  \]
\end{rmk}

A large family of examples of locally representable notions of fibred structure are considered in \cite[\S 3]{Shulman:2019ai}. We mention just one, which will be applied in the following section.

\begin{ex}[{\cite[3.7,3.14]{Shulman:2019ai}}]\label{ex:loc-rep-from-cartesian-factorization} From a functorial factorization on $\cE$ one obtains a notion of fibred structure $\sF$ whose $\sF$-algebras are maps with chosen solutions to the canonical lifting problem against their left factor:
  \[ \begin{tikzcd} Y \arrow[r, equals] \arrow[d, "Lf"'] & Y \arrow[d, "f"] \\ Ef \arrow[ur, "j^f", dashed] \arrow[r, "Rf"'] & X \rlap{.} \end{tikzcd}\]
  If $\cE$ is locally cartesian closed and the functorial factorization is cartesian, in the sense that the functors $L,R \colon \cE^\2 \to \cE^\2$ carry pullback squares to pullback squares, then this notion of fibred structure is locally representable. Explicitly, $j^f$ may be encoded as an element in the internal hom $[Rf,f]_X \coloneq (Rf)_* (Rf)^* f$ from $Rf$ to $f$ in $\cE_{/X}$
  \[ \begin{tikzcd}[sep=small] X \arrow[dr, equals] \arrow[rr, dashed, "{j^f}"] & & [-1em] \Pi_{Ef} (Ef \times_X Y) \arrow[dl, "{[Rf,f]_X}"] \\ & X \end{tikzcd}
  \]
   which restricts along $Lf$ to the identity at $Y$. Thus, we define $\phi_f \colon \sF(f) \to X$ to be the pullback
  \[ \begin{tikzcd} \sF(f) \arrow[d, "\phi_f"'] \arrow[dr, phantom, "\lrcorner" very near start] \arrow[r] & \Pi_{Ef} (Ef \times_X Y) \arrow[d, "-\circ L_f"] \\ X \arrow[r, "\id_Y"] & \Pi_{Y} (Y \times_X Y) \end{tikzcd}\]
  of this restriction map.\footnote{The map $-\cdot L_f$ is the restriction between internal homs in the cartesian closed category $\cE_{/X}$. A construction of this map in $\cE$ may be found in \cite[3.9]{HazratpourRiehl:2022tc}.}
\end{ex}
\begin{defn}[{\cite[5.11]{Shulman:2019ai}}]\label{defn:relatively-acyclic} A notion of fibred structure $\sF$ is \textbf{relatively acyclic} if for any pullback square
 \[ \begin{tikzcd} Y' \arrow[d, "f'"'] \arrow[r, tail, "i'"] \arrow[dr, phantom, "\lrcorner" very near start] & Y \arrow[d, "f"] \\ X' \arrow[r, "i"', tail] & X
 \end{tikzcd}\]
with $\sF$-algebra structures $x$ on $f$ and $x'$ on $f'$, there is an $\sF$-algebra structure $\overline{x}$ on $f$ making the square an $\sF$-morphism from $x'$ to $\overline{x}$.
\end{defn}

Recall from~\cite[2.8]{Shulman:2019ai} the bicategorical notion of lifting property in a 2-category $\cK$: morphisms $i \colon A \to B$ and $f \colon Y \to X$ have the lifting property when the map
$\cK(B,Y) \to \cK(A,Y) \times^h_{\cK(A,X)} \cK(B,X)$ is essentially surjective, where $\times^h$ is a weak bicategorical pullback.

\begin{defn}[{\cite[5.1]{Shulman:2019ai}}]
A morphism in contravariant groupoid-valued pseudofunctors on $\cE$ is an \textbf{acyclic fibration} if it right lifts bicategorically against images of monomorphisms under the Yoneda embedding.
\end{defn}

\begin{rmk}
  For strict discrete fibrations in contravariant groupoid-valued pseudofunctors on $\cE$, the bicategorical right lifting property is equivalent to the categorical right lifting property~\cite[2.10]{Shulman:2019ai}.
  In particular, this applies to notions of fibred structure and their pullbacks.
\end{rmk}

\begin{lem}\label{lem:rel-acyclic} Given a notion of fibred structure $\psi \colon \sF \to \sE$, the following conditions are equivalent:
  \begin{enumerate}
    \item \label{lem:rel-acyclic:rel-acyclic} $\psi \colon \sF \to \sE$ is relatively acyclic,
    \item \label{lem:rel-acyclic:kernel} each kernel pair projection of $\psi$ is an acyclic fibration.
  \end{enumerate}
\end{lem}
\begin{proof}
  For a diagram
  \begin{equation}
    \label{eq:lem:rel-acyclic}
    \begin{tikzcd} Y' \arrow[d, "f'"'] \arrow[r, tail, "i'"] \arrow[dr, phantom, "\lrcorner" very near start] & Y \arrow[d, "f"] \\ X' \arrow[r, "i"', tail] & X \rlap{,}
    \end{tikzcd}
  \end{equation}
  a pair of $\sF$-algebra structures on $f$ and $f'$ consists of a pair of maps $x \colon \cE(-,X) \to \sF$ and $x' \colon \cE(-,X') \to \sF$ such that the outer square
  \[
    \begin{tikzcd}
      \cE(-,X') \arrow[rr, bend left=20, "x'"]\arrow[d, "i"'] \arrow[r, dashed] & \sF \times_{\sE} \sF \arrow[d, dotted] \arrow[r, dotted] \arrow[dr, phantom, "\lrcorner" very near start] & \sF \arrow[d, "\psi"] \\
      \cE(-,X) \arrow[r, "x"'] & \sF \arrow[r, "{\psi}"'] & \sE \rlap{,}
    \end{tikzcd}
  \]
  commutes, i.e., corresponds to a lifting problem against the kernel pair of $\psi$.
  A solution to such a lifting problem is determined by a map $\overline{x} \colon \cE(-,X) \to \sF$ such that $\overline{x} i = x'$ and $\psi \overline{x} = \psi x$, which is to say an $\sF$-algebra structure on $f$ such that \eqref{eq:lem:rel-acyclic} is an $\sF$-morphism from $x'$ to $\overline{x}$.
\end{proof}

\begin{lem}\label{lem:loc-rep-rel-acyclic-kernel-pair} When $\sF$ is a locally representable and relatively acyclic notion of fibred structure on $\cE$ then for any map $f \colon Y \to X$ the maps in the kernel pair of $\psi_f \colon \sF(f) \to X$ lift against monomorphisms in $\cE$.
\end{lem}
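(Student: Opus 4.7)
The plan is to translate the lifting problem into a question about extending $\sF$-algebra structures across a pullback square, which is then settled directly by the relative acyclicity hypothesis.

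Consider a lifting problem of a monomorphism $i \colon A \cto B$ against the first kernel pair projection $\pi_1 \colon \sF(f) \times_X \sF(f) \to \sF(f)$. Using local representability, I would decode its bottom map $b \colon B \to \sF(f)$ as a map $g \coloneq \psi_f b \colon B \to X$ together with an $\sF$-algebra $\beta$ on $g^*f$, and its top map $\langle a_1, a_2 \rangle \colon A \to \sF(f) \times_X \sF(f)$ as the map $h \coloneq gi$ together with two $\sF$-algebras $\alpha_1, \alpha_2$ on $h^*f$. Commutativity of the lifting square, together with the naturality clause of local representability (Remark \ref{rmk:F-morphism}), identifies $\alpha_1$ with the restriction of $\beta$ along $i$, and a solution amounts to producing a second $\sF$-algebra $\beta'$ on $g^*f$ whose restriction along $i$ equals $\alpha_2$.

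To construct such a $\beta'$, I would apply relative acyclicity (Definition \ref{defn:relatively-acyclic}) to the pullback square exhibiting $h^*f$ as the pullback of $g^*f$ along $i$, equipped with the supplied $\sF$-algebras $\beta$ on $g^*f$ and $\alpha_2$ on $h^*f$. This yields an algebra $\bar{\alpha}_2$ on $g^*f$ making the square an $\sF$-morphism from $\alpha_2$ to $\bar{\alpha}_2$; the latter condition is precisely that the restriction of $\bar{\alpha}_2$ along $i$ is $\alpha_2$. Setting $\beta' \coloneq \bar{\alpha}_2$ then solves the lifting problem, and the case of $\pi_2$ is entirely symmetric.

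The main subtlety is the dictionary in the first step between sections of representing morphisms and $\sF$-algebra structures on pullbacks, which requires Lemma \ref{lem:locally-representable-stability}(ii) together with Remark \ref{rmk:F-morphism}. Once this dictionary is in hand, the statement reduces to a direct application of the relative acyclicity hypothesis.
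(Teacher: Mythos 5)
Your proof is correct, but it takes a more hands-on route than the paper. The paper's proof stays at the level of the 2-category of groupoid-valued pseudofunctors: it observes that $\psi_f \colon \cE(-,\sF(f)) \to \cE(-,X)$ is by definition a pullback of $\psi \colon \sF \to \sE$, that kernel pairs of pullbacks are pullbacks of kernel pairs, that the kernel pair projections of $\psi$ are acyclic fibrations by the equivalence (i)$\Leftrightarrow$(iii) of Lemma \ref{lem:rel-acyclic}, and that the lifting property against representable monomorphisms transfers back to $\cE$ via full faithfulness and limit preservation of the Yoneda embedding. You instead unwind the lifting problem against $\pi_1 \colon \sF(f) \times_X \sF(f) \to \sF(f)$ explicitly: decoding the bottom map as $(g,\beta)$ and the top map as $(h,\alpha_1,\alpha_2)$ with $\alpha_1 = i^*\beta$, reducing a lift to the existence of an algebra $\beta'$ on $g^*f$ restricting to $\alpha_2$, and obtaining $\beta'$ directly from Definition \ref{defn:relatively-acyclic}. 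In effect you have inlined the (i)$\Rightarrow$(iii) direction of Lemma \ref{lem:rel-acyclic} together with the pullback-stability step into one concrete computation. Your version is more elementary and self-contained (it never invokes Lemma \ref{lem:rel-acyclic} or the formal kernel-pair-of-a-pullback argument); the paper's version is shorter given the machinery already established and makes the structural reason for the statement more visible. Your handling of the dictionary via Lemma \ref{lem:locally-representable-stability} and Remark \ref{rmk:F-morphism}, including the naturality needed to identify $\alpha_1$ with $i^*\beta$ and to recognize the lift as an algebra structure with prescribed restriction, is exactly the right care to take.
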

\begin{proof} Recall the definition of the maps in question:
  \[ \begin{tikzcd} \cE(-,\sF(f))\arrow[d, "\psi_f"'] \arrow[r] \arrow[dr, phantom, "\lrcorner" very near start] & \sF \arrow[d, "\psi"] \\ \cE(-,X) \arrow[r, "f"'] & \sE \rlap{.} \end{tikzcd}
  \]
As the kernel pair of a pullback is the pullback of the kernel pair, the kernel pair of the representable map $\psi_f \colon \cE(-,\sF(f)) \to \cE(-,X)$ lifts against representable monomorphisms. But since the Yoneda embedding is fully faithful and preserves limits, this means that the kernel pair of the map $\psi_f \colon \sF(f) \to X$ lifts against monomorphisms in $\cE$.
\end{proof}

By Lemma \ref{lem:rel-acyclic}, a full notion of fibred structure, such as the following example, is automatically relatively acyclic.

\begin{ex}[{\cite[6.1.6.4--7]{Lurie:2009}\cite[4.18]{Shulman:2019ai}}]\label{ex:small-fibred-structure}
For any locally presentable and locally cartesian closed category $\cE$, for sufficiently large regular cardinals $\kappa$, the relatively $\kappa$-presentable morphisms form a locally representable and relatively acyclic full notion of fibred structure $\sE^\kappa$.\footnote{In a presheaf topos $\cE=\Set^{\cC^\op}$ where $\cC$ is $\kappa$-small, the relatively $\kappa$-presentable morphisms coincide with the \textbf{$\kappa$-small morphisms}, those maps whose fibers have cardinality less that $\kappa$ \cite[4.10]{Shulman:2019ai}.}
\end{ex}

Locally representable notions of fibred structure may also be transferred from one category to another via various devices. Here we make use of a transfer result involving the \emph{Leibniz construction} of \cite[\S4--5]{RiehlVerity:2014rc}, deployed in the following setting.

\begin{defn}\label{defn:leibniz-application} Consider the application bifunctor
  \[ \begin{tikzcd}[row sep=small] \cE^\cD \times \cD  \arrow[r, "\circ"] & \cE \\ (F, X) \arrow[r, maps to] & FX \end{tikzcd}\]
associated to a pair of categories $\cD$ and $\cE$. Assuming $\cE$ has pushouts and pullbacks, this induces \textbf{Leibniz pushout application} and \textbf{Leibniz pullback application} bifunctors
\[ \begin{tikzcd} \cE^{\cD\times\2} \times \cD^\2 \arrow[r, "\check{\circ}"] & \cE^\2 &  \cE^{\cD\times\2} \times \cD^\2  \arrow[r, "\hato "] & \cE^\2
\end{tikzcd}
\]
which, respectively, send a natural transformation $\alpha \colon F \Rightarrow G$ and an arrow $f \colon Y \to X$  to the induced maps in the naturality squares:
\[ \begin{tikzcd}[sep=small] FY \arrow[ddd, "Ff"'] \arrow[rrr, "\alpha_Y"] \arrow[ddrr, phantom, "\ulcorner" very near end] & & & GY  \arrow[ddl, dotted] \arrow[ddd, "Gf"]&  & &FY \arrow[ddd, "Ff"'] \arrow[dr, dashed, "{\alpha\hato f}"]\arrow[rrr, "\alpha_Y"] & & & GY \arrow[ddd, "Gf"] \\ & & & &&  &  & \bullet  \arrow[ddrr, phantom, "\lrcorner" very near start] \arrow[urr, dotted] \arrow[ddl, dotted] \\ & & \bullet \arrow[dr, dashed, "{\alpha\check{\circ}f}"'] \\ FX \arrow[rrr, "\alpha_X"'] \arrow[urr, dotted] & & &  GX&  && FX \arrow[rrr, "\alpha_X"'] & & & GX \rlap{.}
\end{tikzcd}
\]
\end{defn}

\begin{lem}\label{lem:leibniz-application-transposition} Suppose $\cD$ and $\cE$ have weak factorization systems $(\Left,\Right)$ and $(\Mono,\Epi)$ respectively. Then the Leibniz pushout application of a natural transformation $\alpha \colon F \Rightarrow L$ between left adjoints preserves the left classes if and only if the Leibniz pullback application of the conjugate natural transformation $\alpha \colon R \Rightarrow U$ between the right adjoints preserves right classes.
\end{lem}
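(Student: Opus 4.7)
The plan is to reduce the statement to a two-variable Leibniz adjunction, in the spirit of \cite{RiehlVerity:2014rc}. Concretely, I would establish a bijection
\[
\cE^{\2}(\alpha \check{\circ} f,\, g) \;\cong\; \cD^{\2}(f,\, \alpha \hato g)
\]
natural in $f \in \cD^{\2}$ and $g \in \cE^{\2}$, under which diagonal fillers on either side correspond. Granting this, the lemma follows by the usual shuffling: $\alpha \check{\circ}(-)$ sends $\Left$ into $\Mono$ if and only if for every $f \in \Left$ and $g \in \Epi$ the square $\alpha \check{\circ} f \to g$ admits a filler (by the wfs $(\Mono,\Epi)$ on $\cE$), if and only if every square $f \to \alpha \hato g$ with $f \in \Left$ and $g \in \Epi$ admits a filler (by the adjunction above), if and only if $\alpha \hato(-)$ sends $\Epi$ into $\Right$ (by the wfs $(\Left,\Right)$ on $\cD$).

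To verify the Leibniz adjunction, I would unpack the pushout defining $\alpha \check{\circ} f$: a commutative square $\alpha \check{\circ} f \to g$ in $\cE^{\2}$ is specified by maps $a \colon LY \to V$, $b \colon FX \to V$, and $k \colon LX \to W$ satisfying $a \circ \alpha_Y = b \circ Ff$, $g \circ a = k \circ Lf$, and $g \circ b = k \circ \alpha_X$. Transposing $a$ and $k$ under $L \dashv R$ and $b$ under $F \dashv U$, and invoking the defining property of the mate---namely that the $F \dashv U$ transpose of $k \circ \alpha_X$ equals $\alpha_W \circ \tilde{k}$, where $\tilde{k}$ denotes the $L \dashv R$ transpose of $k$---these three constraints rearrange into exactly the data of a commutative square $f \to \alpha \hato g$ in $\cD^{\2}$ whose codomain leg factors through the pullback $UV \times_{UW} RW$. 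The same adjoint transposition takes a diagonal filler $LX \to V$ on the pushout side bijectively to a diagonal filler $X \to RV$ on the pullback side.

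The only delicate step is the mate-calculus bookkeeping in the paragraph above; once that is in hand the argument is purely formal. In particular, the two-variable adjunction is essentially the Leibniz-bifunctor instance of the general principle, already developed in \cite{RiehlVerity:2014rc}, that conjugate natural transformations transpose lifting problems, so in the actual writeup one could shorten the verification considerably by appeal to that reference.
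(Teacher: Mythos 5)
Your proposal is correct and follows essentially the same route as the paper: the paper also observes that the Leibniz pushout application of left adjoints and the Leibniz pullback application of the conjugate right adjoints form a parametrized (two-variable) adjunction, citing \cite[4.10, 4.11]{RiehlVerity:2014rc} for this, and then concludes via the induced bijection between lifting problems of $\alpha\check{\circ}\ell$ against $e$ and of $\ell$ against $\alpha\hato e$. Your hands-on mate-calculus verification of the adjunction is exactly the content the paper outsources to that reference, so the two arguments coincide.
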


\begin{proof}
  Write $\Ladj(\cD,\cE) \subset \cE^\cD$ and $\Radj(\cE,\cD) \subset \cD^\cE$ for the full subcategories spanned by the left and right adjoint functors, respectively. Note we have an equivalence of categories $\Ladj(\cD,\cE)^\op \simeq \Radj(\cE,\cD)$ which exchanges left and right adjoints and conjugate transformations. Moreover, via this equivalence, the restricted application bifunctors
  \[ \begin{tikzcd} \Ladj(\cD,\cE) \times \cD \arrow[r, "\circ"] & \cE & \Radj(\cE,\cD) \times \cE \arrow[r, "\circ"] & \cD
  \end{tikzcd}
  \] are parametrized adjoints. Thus, by \cite[4.10, 4.11]{RiehlVerity:2014rc}, the Leibniz pushout application of left adjoints bifunctor and Leibniz pullback application of right adjoints bifunctor are parametrized adjoints, inducing a bijective correspondence between lifting problems:
  \[ \begin{tikzcd} F B \cup_{FA} LA \arrow[d, "{\alpha\check{\circ}\ell}"'] \arrow[r] & Y \arrow[d, "e"] \arrow[dr, phantom, "\leftrightsquigarrow"] & A \arrow[d, "\ell"'] \arrow[r] & R Y \arrow[d, "{\alpha{\hato}e}"] \\ LB \arrow[r] \arrow[ur, dashed]& X & B \arrow[r] \arrow[ur, dashed] & RX \times_{UX} UY
  \end{tikzcd}
  \] for $\ell \colon A \to B$ in $\Left$ and $e \colon Y \to X$ in $\Epi$. The claim follows.
\end{proof}

\begin{lem}\label{lem:leibniz-pullback-application-loc-rep-fibred-structure} Suppose $\cE$ and $\cE'$ have pullbacks, $\alpha : L\Rightarrow K : \cE' \to \cE$ is a natural transformation between pullback-preserving functors, and  $L$ has an indexed right adjoint:
  \[
  \begin{tikzcd}
  \cE' \arrow[rr, bend left, "L"] \arrow[rr, phantom, "\Downarrow\alpha"] \arrow[rr, bend right, "K"'] && \cE  \end{tikzcd}
  \qquad \qquad
  \begin{tikzcd} \cE'_{/X} \arrow[rr, bend left, "L_{/X}"] \arrow[rr, phantom, "\bot"] && \cE_{/LX} \arrow[ll, bend left, "R_X"] & [-3em] ,\ \forall X \in \cE'
  \end{tikzcd}
   \]
 Then if $\cE$ has a notion of fibred structure $\sF$, then $\cE'$ has a notion of fibred structure $\sF'$ in which $\sF'$-algebras are created from $\sF$-algebras under the Leibniz pullback application of $\alpha$. Moreover,
 \begin{enumerate}
  \item if $\sF$ is relatively acyclic, so is $\sF'$, and
  \item if $\cE$ is locally cartesian closed and $\sF$ is locally representable, so is $\sF'$.
 \end{enumerate}
\end{lem}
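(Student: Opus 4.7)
The plan is to define an $\sF'$-algebra structure on a map $f \colon Y \to X$ in $\cE'$ to be an $\sF$-algebra structure on its Leibniz pullback application $\alpha \hato f \colon LY \to KY \times_{KX} LX$. Because $L$ and $K$ preserve pullbacks, a routine pullback-pasting argument shows that for any pullback square $g^*f \to f$ in $\cE'$, the induced commutative square relating $\alpha \hato(g^*f)$ and $\alpha \hato f$ (with domains $LW$, $LY$ and codomains $P_{g^*f} \coloneq KW \times_{KZ} LZ$, $P_f \coloneq KY \times_{KX} LX$) is a pullback in $\cE$. The pseudofunctorial $\sF$-algebra structure thus pulls back along any $g \colon Z \to X$ in $\cE'$ to define the $\sF'$-algebra structure on $g^*f$, and pseudofunctoriality of $\sF'$ is inherited from that of $\sF$.

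For (i), consider a pullback square of monomorphisms in $\cE'$ carrying $\sF'$-algebras on both vertical maps. Any pullback-preserving functor also preserves monomorphisms (since $m$ is mono iff its diagonal square is a pullback), so applying the Leibniz pullback application of $\alpha$ yields a pullback square in $\cE$ whose horizontal maps remain monomorphisms and whose vertical maps carry $\sF$-algebras. Relative acyclicity of $\sF$ then supplies an $\sF$-algebra on $\alpha \hato f$ making this an $\sF$-morphism, which by construction is an $\sF'$-algebra on $f$ making the original square into an $\sF'$-morphism.

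For (ii), fix $f \colon Y \to X$ in $\cE'$, view $P_f$ as an object of $\cE_{/LX}$ via projection, and let $\psi \colon \sF(\alpha \hato f) \to P_f$ be the classifier provided by local representability of $\sF$. Local cartesian closure of $\cE$ lets us form the pushforward $\Pi_{P_f/LX}\bigl(\sF(\alpha \hato f)\bigr) \in \cE_{/LX}$, and I define
\[
\sF'(f) \coloneq R_X\Bigl(\Pi_{P_f/LX}\bigl(\sF(\alpha \hato f)\bigr)\Bigr) \in \cE'_{/X}.
\]
For $g \colon Z \to X$ in $\cE'$, the adjunction $L_{/X} \dashv R_X$ identifies morphisms $Z \to \sF'(f)$ over $X$ with morphisms $Lg \to \Pi_{P_f/LX}(\sF(\alpha \hato f))$ over $LX$, which by the universal property of pushforward correspond to sections over $LZ \times_{LX} P_f$ of the pullback of $\psi$ along $LZ \times_{LX} P_f \to P_f$. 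A direct calculation using $K$'s preservation of pullbacks yields $LZ \times_{LX} P_f \cong P_{g^*f}$, and Lemma~\ref{lem:locally-representable-stability} identifies the pulled-back classifier with $\sF(\alpha \hato(g^*f))$; its sections are precisely the $\sF$-algebra structures on $\alpha \hato(g^*f)$, that is, $\sF'$-algebra structures on $g^*f$, as required.

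The main obstacle will be the representability calculation in (ii): the bijection between lifts of $g$ through $\sF'(f)$ and $\sF'$-algebra structures on $g^*f$ must be tracked through two successive adjunctions (the pushforward and the indexed $L_{/X} \dashv R_X$) and through the canonical identifications $LZ \times_{LX} P_f \cong P_{g^*f}$ that vary with $g$. Establishing naturality requires careful bookkeeping but no new ideas beyond the universal properties already in play.
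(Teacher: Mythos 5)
Your proof is correct and follows essentially the same route as the paper: define $\sF'$-algebras via $\alpha\hato(-)$, transfer relative acyclicity using that pullback-preserving functors preserve monomorphisms, and build the classifier by pushing the classifier for $\alpha\hato f$ forward along the projection $P_f \to LX$ and then applying the indexed right adjoint $R_X$. The only cosmetic difference is that you construct $\sF'(f)$ once and verify its universal property against all $g$, whereas the paper exhibits each $\sF'(g^*f)$ as a pullback of $R_X\Pi\sF(\alpha\hato f)$ along $g$; these are interchangeable.
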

\begin{proof}
Since the functor $\alpha\hato{-} \colon (\cE')^\2 \to \cE^\2$ preserves pullbacks, $\sF'$ defines a notion of fibred structure on $\cE'$. Since $L$ and $K$ preserve pullbacks, they preserve monomorphisms, so the functor $\alpha\hato -$ preserves the monomorphisms in Definition \ref{defn:relatively-acyclic}, and thus if $\sF$ is relatively acyclic, so is $\sF'$.

It remains to verify local representability.
To that end, consider a pullback in $\cE'$
\[
\begin{tikzcd}
  W
  \ar[d, "g^*f"']
  \ar[r, "f^*g"]
  \ar[dr, phantom, "\lrcorner" very near start]
&
  Y
  \ar[d, "f"]
\\
  Z \ar[r, "g"']
&
  X
\end{tikzcd}
\]
inducing a pullback in $\cE$ as below-left:
\[
\begin{tikzcd} LW \arrow[d, "\alpha\hato g^*f"'] \arrow[dr, phantom, "\lrcorner" very near start] \arrow[r, "Lf^*g"] & LY \arrow[d, "\alpha\hato f"] & \sF(\alpha\hato g^*f) \arrow[d, "\phi_{\alpha\hato g^*f}"'] \arrow[r] \arrow[dr, phantom, "\lrcorner" very near start] & \sF(\alpha\hato f) \arrow[d, "\phi_{\alpha\hato f}"] \\
KW \times_{KZ} LZ \arrow[r, "{Kf^*g \times_{Kg} Lg}"'] & KY \times_{KX} LX & KW \times_{KZ} LZ \arrow[u, bend right, dotted] \arrow[ur, dashed] \arrow[r, "{Kf^*g \times_{Kg} Lg}"'] & KY \times_{KX} LX \rlap{.}
\end{tikzcd}
\]
By definition $\sF'$-algebra structures on $g^*f$ correspond to $\sF$-algebra structures on $\alpha\hato g^*f$.
Since $\sF$ is locally representable, these correspond to sections and thus lifts in the pullback square above-right constructed in Lemma \ref{lem:locally-representable-stability}.
Transposing across the pullback $\dashv$ pushforward adjunction associated to the projection $\alpha_X^*Kf \colon KY \times_{KX} LX \to LX$, such dashed lifts correspond bijectively to lifts as below-left
\[
\begin{tikzcd} & \Pi \sF(\alpha\hato f) \arrow[d, "(\alpha_X^*Kf)_*\phi_{\alpha\hato f}"] & & \sF'(g^*f) \arrow[d, "\psi_{g^*f}"'] \arrow[r] \arrow[dr, phantom, "\lrcorner" very near start] & R_X \Pi \sF(\alpha \hato f) \arrow[d, "R_X (\alpha^*_XKf)_* \phi_{\alpha\hato f}"] \\ LZ \arrow[r, "Lg"'] \arrow[ur, dashed] & LX & & Z \arrow[ur, dashed] \arrow[u, bend right, dotted]\arrow[r, "g"'] & X \rlap{,}
\end{tikzcd}
\]
and since $L$ has an indexed right adjoint $R_X$ \cite[B1.2.3]{Johnstone:2002}, such dashed lifts correspond bijectively to dashed lifts as above right. By the universal property of the pullback, we can thus define $\psi_{g^*f} \colon \sF'(g^*f) \to Z$ as the pullback displayed above-right.
\end{proof}

  \begin{ex}\label{ex:local-rep-transfer} For instance, $L \colon \cE' \to \cE$ might have an ordinary right adjoint and, supposing $\cE$ has a terminal object, $K \colon \cE' \to \cE$ may be taken to be the terminal functor. In this setting, Leibniz pullback application reduces to application of $L$ and Lemma \ref{lem:leibniz-pullback-application-loc-rep-fibred-structure} specializes to Shulman's observation that locally representable notions of fibred structure may be lifted along pullback-preserving left adjoints \cite[3.5, 3.12]{Shulman:2019ai}, though for that result $\cE$ needs only to have pullbacks and need not be locally cartesian closed.

  For instance, for any object $X \in \cE$, such an adjunction is given by the pullback functor along $X \to 1$:
      \[
        \begin{tikzcd} \cE_{/X} \arrow[rr, bend left, "U"] \arrow[rr, phantom, "\bot"] && \cE \rlap{.} \arrow[ll, bend left, "X^*"]
        \end{tikzcd}
        \]
    Thus a locally representable notion of fibred structure on $\cE$ may be lifted to its slice categories.
    \end{ex}

\subsection{Monomorphisms and uniform trivial fibrations}\label{ssec:trivial-fibrations}

Let $\cE$ be an elementary topos and write  $\top \colon 1 \to \Omega$  for its subobject classifier.
We consider a class of ``trivial fibrations'' characterized by the right lifting property against the monomorphisms and show that it underlies a notion of fibred structure which we call \emph{uniform trivial fibration} structure.
We then show that this notion of fibred structure is locally representable.

First, since elementary toposes are in particular locally cartesian closed, every map $f \colon X \to Y$ in $\cE$ induces an adjoint triple of functors
\[
\begin{tikzcd}[sep=large]
\cE_{/X} \arrow[r, bend left=45, "f_!", "\bot"'{yshift=-3pt}] \arrow[r, bend right=45, "f_*"', "\bot"{yshift=3pt}] & \arrow[l, "f^*" description] \cE_{/Y}
\end{tikzcd}
\]
where $f_!$ is post-composition, $f^*$ is pullback, and $f_*$ is (by definition) pushforward.
Furthermore, the following applies to $\cE$:

\begin{lem}\label{lem:mono-pushforward}
  In a locally cartesian closed category, the pullback-pushforward adjunction $i^* \dashv i_*$ along a monomorphism $i$ forms a reflective embedding.
  \end{lem}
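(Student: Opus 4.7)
The plan is to reduce the claim to showing that the counit $\epsilon \colon i^* i_* \Rightarrow \id_{\cE_{/A}}$ of the adjunction $i^* \dashv i_*$ is a natural isomorphism. This is the standard characterization of a reflective embedding: a right adjoint is fully faithful precisely when its counit is invertible. So the target is to exhibit an isomorphism $i^* i_* \cong \id_{\cE_{/A}}$ and verify that it agrees with the counit.

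The crucial observation driving the argument is that since $i \colon A \to B$ is a monomorphism, its diagonal $\Delta_i \colon A \to A \times_B A$ is an isomorphism. Equivalently, the square
\[
\begin{tikzcd}
A \ar[r,equals] \ar[d,equals] \ar[dr,phantom,"\lrcorner" very near start] & A \ar[d,"i"] \\
A \ar[r,"i"] & B
\end{tikzcd}
\]
is a pullback in which both projections are identities. In a locally cartesian closed category, each arrow $f$ comes with an adjunction $f^* \dashv f_*$, and the Beck--Chevalley condition holds for pullback squares: the mate of the canonical isomorphism of pullback functors is itself an isomorphism of pushforward composites. Applying this standard fact to the pullback displayed above yields a canonical isomorphism
\[
i^* i_* \;\cong\; (\id_A)_* (\id_A)^* \;=\; \id_{\cE_{/A}}.
\]

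Finally, I would verify that the Beck--Chevalley isomorphism produced in the previous step is in fact (the inverse of) the counit $\epsilon$. This is the only step that is not entirely automatic and is the main, if mild, obstacle: it is a routine mate computation, essentially unwinding that the Beck--Chevalley transformation on this particular square is obtained by transposing the identity on $i^*$ once through each adjunction, and identifying the result with $\epsilon$ via the triangle identities. Once this bookkeeping is done, the conclusion that $i^* \dashv i_*$ is a reflective embedding is immediate.
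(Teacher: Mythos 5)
Your proof is correct, but it takes a different route from the paper's. The paper observes that the counit of $i^* \dashv i_*$ is invertible if and only if its conjugate (mate), the unit of $i_! \dashv i^*$, is invertible; and the latter is immediate, since at an object $f \colon X \to A$ of $\cE_{/A}$ that unit is the canonical comparison $X \to A \times_B X$, an isomorphism because the pullback of the monomorphism $i$ along itself is an isomorphism. You instead compute $i^* i_*$ directly via the Beck--Chevalley isomorphism for pushforwards applied to the pullback square with identities on two sides, obtaining $i^* i_* \cong (\id_A)_*(\id_A)^* = \id$, and then check that this isomorphism is the counit. Both arguments ultimately rest on the same fact ($A \times_B A \cong A$), and your final identification step is genuinely routine: specializing the definition of the Beck--Chevalley transformation to this square, with $f' = g' = \id_A$, the composite collapses to exactly the counit of $i^* \dashv i_*$, so nothing further needs to be verified. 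The trade-off is that the paper's argument is leaner --- it needs only the general principle that a transformation is invertible iff its conjugate is, plus an elementary computation with the left adjoints --- whereas yours invokes Beck--Chevalley for the pushforward functors, a heavier standard tool whose own proof is typically obtained by taking mates of the Beck--Chevalley isomorphism for the left adjoints, i.e., by the very conjugation the paper uses directly. Your version has the mild advantage of producing the inverse of the counit explicitly as a named canonical isomorphism.
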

  \begin{proof}
  The counit of $i^* \dashv i_*$ is an isomorphism just when its conjugate, the unit of $i_! \dashv i^*$, is an isomorphism, but the latter is clear, since the pullback of $i$ along itself is an isomorphism.
\end{proof}

We note the following closure property of monomorphisms in a topos, for later use:

\begin{rmk}\label{rmk:adhesive-pushout-products} Since elementary toposes are \emph{adhesive}, the class of monomorphisms is closed under pushout products, and the same is true in slice categories: given a pair of monomorphisms $i \colon A \rightarrowtail B$ and $j \colon C \rightarrowtail D$, the pushout product is the join of the subobjects $i \times D \colon A \times D \rightarrowtail B \times D$ and $B \times j \colon B \times C \rightarrowtail B \times D$ \cite[17]{LackSobocinski:2004ac}.
\end{rmk}

We now use the subobject classifier to define partial map classifiers (called partial-map representers in \cite[\S{A.2.4}]{Johnstone:2002}).
In turn, these will be used to define our trivial fibrations.
The following two propositions are proven in \cite[\S{3}]{Awodey:2023} (see also \cite[A2.4.7]{Johnstone:2002} and \cite[9.8--9]{GambinoSattler:2017fc}):

\begin{prop}\label{prop:partial-map-classifier}
For any $Y \in \cE$, there is a pullback square as below-left with the property that any partial map as below-right
  \[
  \begin{tikzcd} Y \arrow[d, tail, "\eta_Y"'] \arrow[r, "!"] \arrow[dr, phantom, "\lrcorner" very near start] & 1 \arrow[d, tail, "\top"] & &  C \arrow[d, tail, "c"'] \arrow[r, "y"] & Y \\ Y^+ \arrow[r, "\top_*Y"'] & {\Omega} & & Z
  \end{tikzcd}
\]
  is classified by a unique map $\zeta_c^y \colon Z\to Y^+$ defining a pullback square
  \[
  \begin{tikzcd} C \arrow[d, tail, "c"'] \arrow[dr, phantom, "\lrcorner" very near start] \arrow[r, "y"] & Y \arrow[d, tail, "\eta_Y"'] \arrow[r, "!"] \arrow[dr, phantom, "\lrcorner" very near start] & 1 \arrow[d, tail, "\top"] \\ Z \arrow[r, dashed, "\zeta_c^y"] \arrow[rr, bend right, "\chi_c"'] & Y^+ \arrow[r, "\top_*Y"] & \Omega \rlap{.}
  \end{tikzcd}
  \]
  Moreover, for any $X \in \cE$, the same results are true in $\cE_{/X}$, and these classifying squares are stable under pullback. \qed
\end{prop}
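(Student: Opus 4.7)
The plan is to build $Y^+$ as the pushforward of $Y \to 1$ along the subobject classifier $\top \colon 1 \to \Omega$, and then derive both the pullback square and the classifying property from the $\top^* \dashv \top_*$ adjunction, using Lemma \ref{lem:mono-pushforward}.

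First I would define $Y^+ \coloneq \top_* Y$ by taking the right adjoint of pullback along $\top$ applied to $Y$ viewed in $\cE_{/1} = \cE$. By Lemma \ref{lem:mono-pushforward}, the counit $\top^* \top_* Y \to Y$ is an isomorphism. Since $\top^* \top_* Y$ is by construction the pullback of the structure map $\top_* Y \colon Y^+ \to \Omega$ along $\top \colon 1 \to \Omega$, this produces the required pullback square, with $\eta_Y \colon Y \rightarrowtail Y^+$ obtained from the counit isomorphism and the top-left corner. Note that $\eta_Y$ is monic, being the pullback of the monomorphism $\top$.

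Next I would establish the classifying property by decomposing an arbitrary map $\zeta \colon Z \to Y^+$ into two pieces. First, composition with $\top_* Y \colon Y^+ \to \Omega$ yields a map $\chi \colon Z \to \Omega$, which classifies some subobject $c \colon C \rightarrowtail Z$. By pullback pasting with the square from the previous step, any classifying square for a partial map $(c, y)$ forces this composite to equal $\chi_c$. Having fixed this, $\zeta$ is precisely a lift of $\chi_c$ through $\top_* Y$, i.e. a morphism in $\cE_{/\Omega}$ from $\chi_c$ to $\top_* Y$. By the adjunction $\top^* \dashv \top_*$ (restricted fiberwise to $\cE_{/\Omega}$), such morphisms are in natural bijection with morphisms $\top^*(Z, \chi_c) \to Y$ in $\cE$; but $\top^*(Z, \chi_c)$ is precisely the pullback of $\top$ along $\chi_c$, which is $C$. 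Hence lifts $\zeta$ correspond bijectively to maps $y \colon C \to Y$, proving the existence and uniqueness of $\zeta_c^y$. Pullback pasting with the square from the first step then confirms that $\zeta_c^y$ indeed induces $(c, y)$ as claimed.

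For the final clauses, I would observe that $\cE_{/X}$ is again an elementary topos, so the same construction internal to $\cE_{/X}$ produces a partial map classifier there; and stability under pullback is immediate from the uniqueness clause together with pullback pasting—given $f \colon W \to Z$, the composite $\zeta_c^y \circ f$ satisfies the universal property for the pullback partial map $(f^*c, y \circ f^*f)$, so it must coincide with the classifier of that partial map. The main technical point is really just the unpacking of the $\top^* \dashv \top_*$ adjunction in $\cE_{/\Omega}$ and the verification that $\top^*$ sends the structure map $\chi_c$ back to the original domain $C$; this is where the reflectivity content of Lemma \ref{lem:mono-pushforward} does the essential work, and once set up correctly the rest is routine pullback pasting.
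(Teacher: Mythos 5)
Your construction is correct and coincides with the one the paper relies on: the paper gives no proof here, deferring to \cite[\S 2]{Awodey:2023}, \cite[A2.4.7]{Johnstone:2002}, and \cite[9.8--9]{GambinoSattler:2017fc}, all of which define $Y^+$ as the pushforward $\Omega_!\top_*Y$ (as the paper's own notation $\top_*Y$ for the bottom map, and Remark \ref{rmk:partial-map-awfs}, confirm) and derive the classifying property from the $\top^*\dashv\top_*$ adjunction together with the counit isomorphism of Lemma \ref{lem:mono-pushforward}, exactly as you do.
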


We refer to the monomorphism $\eta_Y \colon Y \rightarrowtail Y^+$ as the \emph{partial map classifier for $Y$}, since partial maps from $Z$ to $Y$ are classified by (total) maps $Z \to Y^+$.
We write $f^+ \colon Y^{+_X} \to X$ for the codomain of the partial map classifier for $(Y,f) \in \cE_{/X}$, so that we have $\eta_f \colon Y \to Y^{+_X}$.

\begin{defn}
  \label{defn:relative-+-algebra}
  A \textbf{relative $+$-algebra} structure on $f \colon Y \to X$ is a retraction over $X$ to the map $\eta_f \colon Y \rightarrowtail Y^{+_X}$ over $X$:
  \begin{equation}\label{eq:relative-plus-algebra}
    \begin{tikzcd} Y \arrow[d, tail, "\eta_f"'] \arrow[r, equals] & Y \arrow[d, "f"] \\ Y^{+_X} \arrow[ur, dashed, "\rho_f"] \arrow[r, "f^+"'] & X \rlap{.}
    \end{tikzcd}
  \end{equation}
  The \textbf{category of relative $+$-algebras} has relative $+$-algebras as objects and, as morphisms $f' \to f$, squares as below-left such that the induced diagram below-right commutes:
  \[ \begin{tikzcd} Y' \arrow[d, "f'"'] \arrow[r] & Y \arrow[d,"f"] \\ X' \arrow[r] & X \end{tikzcd}
    \qquad\qquad
    \begin{tikzcd} Y' \arrow[r] & Y \\ {Y'}^{+_{X'}} \arrow[u,"\rho_{f'}"] \arrow[r] & Y^{+_{X}} \arrow[u,"\rho_f" right] \rlap{.} \end{tikzcd}
  \]
\end{defn}

\begin{rmk}\label{rmk:partial-map-stable-functorial-factorization}
  The relative version of the construction of Proposition \ref{prop:partial-map-classifier} defines a pullback-preserving functorial factorization:
  \[
    \begin{tikzcd} W \arrow[d, "g^*f"'] \arrow[r, "f^*g"] \arrow[dr, phantom, "\lrcorner" very near start] & Y \arrow[d,"f"] \\ Z \arrow[r, "g"'] & X \end{tikzcd}\qquad = \qquad \begin{tikzcd} W \arrow[d, tail, "\eta_{g^*f}"'] \arrow[r, "f^*g"] \arrow[dr, phantom, "\lrcorner" very near start] & Y \arrow[d, tail, "\eta_f"] \\ W^{+_Z} \arrow[r] \arrow[dr, phantom, "\lrcorner" very near start] \arrow[d, "g^*f^+"'] & Y^{+_X} \arrow[d, "f^+"] \\ Z \arrow[r, "g"'] & X \end{tikzcd}
  \]
  satisfying the hypotheses of Example \ref{ex:loc-rep-from-cartesian-factorization}.
  This defines a weak factorization system whose left maps are the monomorphisms and whose right maps are those admitting a relative $+$-algebra structure.
\end{rmk}

\begin{rmk}\label{rmk:partial-map-awfs}
  The partial map classifier $\eta_Y \colon Y \rightarrowtail Y^+$ is the component at $Y$ of a unit natural transformation which is part of a monad structure on the (fibred) endofunctor $(-)^+ : \cE\to \cE$. Thus the object $Y^+ = \Omega_! \top_*Y$ is itself a (free) $+$-algebra.
  This can be used to show that the functorial factorization of Remark \ref{rmk:partial-map-stable-functorial-factorization} underlies an algebraic weak factorization system.
  See \cite[9.5]{GambinoSattler:2017fc} or \cite[\S{3}]{Awodey:2023} for details.
\end{rmk}

By the following proposition, we can see a relative $+$-algebra structure as consisting of a uniform choice of lifts against all monomorphisms.

\begin{prop}
  \label{prop:relative-plus-algebras-uniform}
  The category of relative $+$-algebras is isomorphic to the category whose
  \begin{enumerate}
  \item objects are maps $f \colon Y \to X$ paired with a choice of lifts against all monomorphisms uniformly in all pullback squares:
    \[
      \begin{tikzcd}[sep=1.5cm]  D \arrow[d, tail, "d"'] \arrow[r, "{\alpha}"] \arrow[dr, phantom, "\lrcorner" very near start]&  C \arrow[d, "c" pos=.6, tail] \arrow[r] & Y \arrow[d, "f"] \\ B \arrow[urr, dashed] \arrow[r, "{\alpha}"'] & A \arrow[r] \arrow[ur, dashed] & X \rlap{,}
      \end{tikzcd}
    \]
    and
  \item morphisms $f' \to f$ are commutative squares compatible with the choices of lifts.
  \end{enumerate}
\end{prop}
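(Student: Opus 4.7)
The plan is to exhibit a canonical isomorphism of categories by using the universal property of the partial map classifier to translate between a single retraction $\rho_f \colon Y^{+_X} \to Y$ and a uniform family of lifts. The partial map classifier serves as a generic lifting problem: every problem of lifting against a monomorphism $c$ over $X$ is a pullback of the ``universal'' problem $\eta_f$ along its classifier.

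First I would construct the functor from relative $+$-algebras to uniform lifting structures. Given $\rho_f$ and a lifting problem
\[
\begin{tikzcd}
  C \ar[d, tail, "c"'] \ar[r, "y"] & Y \ar[d, "f"] \\
  A \ar[r] & X
\end{tikzcd}
\]
applying Proposition \ref{prop:partial-map-classifier} relative to $X$ yields a unique classifying map $\zeta_c^y \colon A \to Y^{+_X}$ over $X$ with $\zeta_c^y \circ c = \eta_f \circ y$; I would then define the lift to be $\rho_f \circ \zeta_c^y \colon A \to Y$. The upper triangle commutes because $\rho_f \circ \eta_f = \id_Y$, and the lower triangle commutes because $\rho_f$ and $\zeta_c^y$ are both maps over $X$. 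Uniformity across pullback squares follows from the stability of the classifying square under pullback (also part of Proposition \ref{prop:partial-map-classifier}): pulling back the lifting problem along $\alpha \colon B \to A$ produces the problem classified by $\zeta_c^y \circ \alpha$, so the induced lift is $\rho_f \circ \zeta_c^y \circ \alpha$, which is exactly the pre-composition of the original lift with $\alpha$.

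Next I would construct the inverse functor. Given a uniform choice of lifts, I would apply it to the generic lifting problem
\[
\begin{tikzcd}
  Y \ar[d, tail, "\eta_f"'] \ar[r, equals] & Y \ar[d, "f"] \\
  Y^{+_X} \ar[r, "f^+"'] & X
\end{tikzcd}
\]
and take the resulting diagonal as $\rho_f$; the upper triangle makes it a retraction of $\eta_f$ and the lower triangle makes it a map over $X$. To see these assignments are mutually inverse, starting from $\rho_f$, the generic problem is trivially self-classified by $\id_{Y^{+_X}}$, so the lift it produces is $\rho_f \circ \id = \rho_f$. Conversely, starting from a uniform structure, an arbitrary lifting problem is the pullback of the generic one along $\zeta_c^y$, and uniformity forces the chosen lift to equal $\rho_f \circ \zeta_c^y$, recovering the original uniform data.

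Finally I would check the morphism correspondence: a commutative square $f' \to f$ in the sense of Definition \ref{defn:relative-+-algebra} asks that $\rho_{f'}$ and $\rho_f$ commute with the induced map ${Y'}^{+_{X'}} \to Y^{+_X}$; naturality of classifiers under pullback (and the description of the latter map as such a classifier, see Remark \ref{rmk:partial-map-stable-functorial-factorization}) translates this precisely into compatibility with the chosen lifts in the second category. The main potential obstacle is book-keeping around the relative (over $X$) version of the classifier, but since Proposition \ref{prop:partial-map-classifier} already gives the construction in $\cE_{/X}$ together with its stability under pullback, the argument reduces to routine diagram chases.
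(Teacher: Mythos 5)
Your proposal is correct and follows essentially the same route as the paper: both arguments rest on the observation that every lifting problem against a monomorphism factors uniquely through the generic problem $\eta_f$ via its classifying map, so that a retraction $\rho_f$ and a uniform lifting structure determine one another by composing with (respectively, specializing to) classifying maps. The paper states this more tersely, but your expanded diagram chases, including the uniformity check via stability of classifying squares under pullback and the two roundtrip verifications, are exactly the details being elided there.
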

\begin{proof}
  By Proposition \ref{prop:partial-map-classifier} any lifting problem between a monomorphism and a map $f$ factors uniquely as
  \[
  \begin{tikzcd} C \arrow[d, tail, "c"'] \arrow[r, "x"] & Y \arrow[d, "f"] \arrow[dr, phantom, "="] & C \arrow[d, tail, "c"'] \arrow[r, "x"]  \arrow[dr, phantom, "\lrcorner" very near start] & Y \arrow[d, tail, "\eta_f"'] \arrow[r, equals] & Y \arrow[d, "f"] \\ Z \arrow[r, "y"'] & X &  Z \arrow[r, "\zeta_c^{x,y}"''] \arrow[rr, bend right, "y"']  & Y^{+_X} \arrow[ur, dashed, "\rho"] \arrow[r, "f^+"] & X
    \end{tikzcd}
 \]
 Thus a relative $+$-algebra structure uniquely equips $f$ with a uniform choice of lifts against all monomorphisms and conversely such lifts specialize to equip $f$ with a relative $+$-algebra structure.
 Likewise, compatibility of a square $f' \to f$ with chosen lifts against all monomorphisms reduces to compatibility with the retractions $\rho_{f'}$ and $\rho_f$.
 See \cite[3.7]{Awodey:2023} and \cite[9.9(i)]{GambinoSattler:2017fc}.
\end{proof}

\begin{defn}
  \label{defn:uniform-tfib-structure}
  Write $\mathscr{TF}$ for the notion of fibred structure on $\cE$ obtained by applying Example \ref{ex:loc-rep-from-cartesian-factorization} with the partial map factorization of Remark \ref{rmk:partial-map-stable-functorial-factorization}.
  We call $\mathscr{TF}$ the notion of \textbf{uniform trivial fibration} structure.
\end{defn}

The $\mathscr{TF}$-algebras are then exactly the relative $+$-algebras, while the $\mathscr{TF}$-morphisms are those pullback squares which are also relative $+$-algebra morphisms.
By Proposition \ref{prop:relative-plus-algebras-uniform}, the $\mathscr{TF}$-algebras are equivalently maps equipped with a choice of lifts against all monomorphisms uniformly in pullback squares, and a pullback square $f' \to f$ is a $\mathscr{TF}$-morphism when the chosen lifts against $f'$ are determined by restriction of those against $f$.

\begin{lem}\label{lem:loc-rep-triv-fib} The notion of fibred structure $\mathscr{TF}$ in an elementary topos is relatively acyclic and locally representable.
\end{lem}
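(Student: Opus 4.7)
The plan is to handle local representability and relative acyclicity separately. For \emph{local representability}, I would invoke Example~\ref{ex:loc-rep-from-cartesian-factorization}: an elementary topos is locally cartesian closed, and the partial-map functorial factorization of Remark~\ref{rmk:partial-map-stable-functorial-factorization} is cartesian (its two constituent squares are both pullbacks, the lower one by the pullback-stability of the partial map classifier from Proposition~\ref{prop:partial-map-classifier}). Since $\mathscr{TF}$ is by Definition~\ref{defn:uniform-tfib-structure} exactly the notion of fibred structure arising from this factorization, the example delivers local representability immediately.

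For \emph{relative acyclicity}, I would use the retraction picture of $\mathscr{TF}$-algebras to realign the structure on $f$ so that it matches the given structure on $f'$. Given a pullback square
\[
\begin{tikzcd} Y' \arrow[d, "f'"'] \arrow[r, tail, "i'"] \arrow[dr, phantom, "\lrcorner" very near start] & Y \arrow[d, "f"] \\ X' \arrow[r, "i"', tail] & X \end{tikzcd}
\]
with $i$ a monomorphism and retractions $\rho_f \colon Y^{+_X} \to Y$, $\rho_{f'} \colon {Y'}^{+_{X'}} \to Y'$ exhibiting the two $\mathscr{TF}$-algebra structures, pullback-stability identifies ${Y'}^{+_{X'}}$ with $i^*(Y^{+_X})$ and yields a monomorphism $j \colon {Y'}^{+_{X'}} \rightarrowtail Y^{+_X}$ whose intersection with $\eta_f \colon Y \rightarrowtail Y^{+_X}$ is exactly $Y'$.

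I would then form the union $m \colon Y \cup {Y'}^{+_{X'}} \rightarrowtail Y^{+_X}$---a monomorphism since monomorphisms are closed under unions in a topos---paired with the map to $Y$ which is the identity on $Y$ and $i' \circ \rho_{f'}$ on ${Y'}^{+_{X'}}$. These agree on the intersection $Y'$ since $\rho_{f'}$ restricts to the identity there. Applying the uniform lifting structure afforded by $\rho_f$ (Proposition~\ref{prop:relative-plus-algebras-uniform}) to the lifting problem defined by $m$ against $f$ produces a new retraction $\bar{\rho}_f \colon Y^{+_X} \to Y$ over $X$ which still restricts to the identity on $Y$ (and is therefore a $\mathscr{TF}$-algebra structure on $f$) and which restricts along $j$ to $i' \circ \rho_{f'}$ (so that $i^* \bar{\rho}_f = \rho_{f'}$).

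The main obstacle I anticipate is the final bookkeeping step: checking that this pointwise agreement of retractions really translates to the commutation of sections demanded by Remark~\ref{rmk:F-morphism}, so that the original pullback square becomes a $\mathscr{TF}$-morphism from $(f', \rho_{f'})$ to $(f, \bar{\rho}_f)$. This should follow by combining Lemma~\ref{lem:locally-representable-stability}(\ref{itm:loc-rep-pb}) with the explicit description of $\psi_f$ from Example~\ref{ex:loc-rep-from-cartesian-factorization}, tracing retractions through their classifying elements in the relevant internal hom.
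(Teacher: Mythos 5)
Your proposal is correct and follows essentially the same route as the paper: local representability via Example~\ref{ex:loc-rep-from-cartesian-factorization} applied to the cartesian partial-map factorization of Remark~\ref{rmk:partial-map-stable-functorial-factorization}, and relative acyclicity by forming the union $Y \cup_{Y'} {Y'}^{+_{X'}} \rightarrowtail Y^{+_X}$ (the paper builds it as a pushout of monomorphisms) and solving the resulting lifting problem against $f$ to obtain the realigned retraction. Your anticipated final obstacle is not one: a $\mathscr{TF}$-morphism is, per Definition~\ref{defn:relative-+-algebra}, exactly the commutation of the square of retractions, which your lift satisfies by construction, so no passage through classifying sections or internal homs is needed.
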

\begin{proof}
Since, by Remark \ref{rmk:partial-map-stable-functorial-factorization}, the functorial factorization preserves pullbacks and our ambient category is locally cartesian closed, Example \ref{ex:loc-rep-from-cartesian-factorization} tells us that relative $+$-algebras define a locally representable notion of fibred structure.

The proof of relative acyclicity follows by an adaptation of Shulman's \cite[5.18]{Shulman:2019ai}. In this setting, relative acylicity asserts that for any solid-arrow pullback square whose horizontal maps are monomorphisms and vertical maps are relative $+$-algebras as below-left, the relative $+$-algebra structures encoded by the dashed maps below-right can be made to commute by changing the relative $+$-algebra structure for $f$:
\[ \begin{tikzcd} Y' \arrow[d, "f'"'] \arrow[r, tail, "i'"] \arrow[dr, phantom, "\lrcorner" very near start] & Y \arrow[d, "f"] \\ X' \arrow[r, "i"', tail] & X
\end{tikzcd}  \qquad \qquad \begin{tikzcd} Y' \arrow[d, tail, "\eta_{f'}"'] \arrow[r, tail, "i'"] \arrow[dr, phantom, "\lrcorner" very near start] & Y \arrow[d, tail, "\eta_f"] \\ Y'^{+_{X'}} \arrow[d, "f'^+"'] \arrow[r, tail] \arrow[dr, phantom, "\lrcorner" very near start] \arrow[u, dashed, bend right, "\rho_{f'}"' pos=.4] & \arrow[u, dashed, bend left, "\rho_f" pos=.4 ] Y^{+_X} \arrow[d, "f^+"] \\ X' \arrow[r, "i"', tail] & X \rlap{.}
\end{tikzcd}  \]
Since the functorial factorization of Remark \ref{rmk:partial-map-stable-functorial-factorization} is cartesian, the pushout below-left constructs the union of subobjects over $Y^{+_X}$ and thus defines a monomorphism:
\[
\begin{tikzcd}[sep=small] Y' \arrow[ddd, "\eta_{f'}"', tail] \arrow[ddrr, phantom, "\lrcorner" very near start, "\ulcorner" very near end] \arrow[rrr, tail, "i'"] & & & Y \arrow[ddd, tail, "\eta_f"]  \arrow[ddl, dotted] \\ \\ & & P \arrow[dr, dashed, tail, "j"'] \\ Y'^{+_{X'}} \arrow[urr, dotted] \arrow[rrr, tail] & & & Y^{+_X} \end{tikzcd} \qquad\qquad \begin{tikzcd} P \arrow[d, tail, "j"'] \arrow[r, "{(i'\rho_{f'}, \id_Y)}"] & Y \arrow[d, "f"] \\ Y^{+_X} \arrow[r, "f^+"'] \arrow[ur, dashed] & X \rlap{.} \end{tikzcd} \]
Since $f$ is a relative $+$-algebra, the resulting lifting problem admits a solution, defining a new relative $+$-algebra structure for $f$ that defines a $\mathscr{TF}$-morphism with the relative $+$-algebra structure for $f'$.
\end{proof}

When we forget structure and consider class of maps underlying $\mathscr{TF}$, we find another equivalent characterization.

\begin{prop}\label{prop:uniform-trivial-fibrations} The following are equivalent for a map $f \colon Y \to X$ in an elementary topos $\cE$:
  \begin{enumerate}
  \item \label{itm:tf-class-relative-+-algebra} $f$ is a relative $+$-algebra.
  \item \label{itm:tf-class-uniform-lift} $f$ lifts against all monomorphisms, uniformly in all pullback squares between monomorphisms.
  \item \label{itm:tf-class-lift} $f$ lifts against all monomorphisms.
  \end{enumerate}
\end{prop}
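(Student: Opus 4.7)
The plan is to prove the cycle (i) $\Rightarrow$ (ii) $\Rightarrow$ (iii) $\Rightarrow$ (i), exploiting the fact that $\eta_f \colon Y \rightarrowtail Y^{+_X}$ is itself a monomorphism which is in a suitable sense ``generic.''

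The implication (i) $\Rightarrow$ (ii) is exactly the content of Proposition \ref{prop:relative-plus-algebras-uniform}: a relative $+$-algebra structure on $f$ is precisely a uniform choice of lifts against all monomorphisms, so in particular it supplies (not necessarily uniform) lifts. Dropping uniformity, (ii) $\Rightarrow$ (iii) is immediate.

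The only nontrivial direction is (iii) $\Rightarrow$ (i). The point is that by Definition \ref{defn:relative-+-algebra}, a relative $+$-algebra structure on $f$ is nothing more than a single lift $\rho_f$ in the square
\[
\begin{tikzcd} Y \arrow[d, tail, "\eta_f"'] \arrow[r, equals] & Y \arrow[d, "f"] \\ Y^{+_X} \arrow[ur, dashed, "\rho_f"] \arrow[r, "f^+"'] & X \rlap{,}
\end{tikzcd}
\]
and the left-hand map $\eta_f$ is a monomorphism by Proposition \ref{prop:partial-map-classifier}. Thus the mere hypothesis that $f$ lifts against all monomorphisms suffices to produce the required retraction, yielding a relative $+$-algebra structure on $f$.

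No step should be a real obstacle here; the proof essentially consists in observing that the partial map classifier $\eta_f$ acts as a universal test monomorphism for $f$, so that existence of a single lift against it already supplies the structure demanded by (i). The content lies entirely in Proposition \ref{prop:partial-map-classifier} (to know $\eta_f$ is mono) and Proposition \ref{prop:relative-plus-algebras-uniform} (to conclude that a relative $+$-algebra automatically organises into uniform lifts).
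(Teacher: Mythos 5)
Your proposal is correct and is essentially the paper's own argument: (i) $\Leftrightarrow$ (ii) via Proposition \ref{prop:relative-plus-algebras-uniform}, (ii) $\Rightarrow$ (iii) trivially, and (iii) $\Rightarrow$ (i) by observing that the defining square \eqref{eq:relative-plus-algebra} of a relative $+$-algebra is itself a lifting problem against the monomorphism $\eta_f$.
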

\begin{proof}
  The equivalence between \eqref{itm:tf-class-relative-+-algebra} and \eqref{itm:tf-class-uniform-lift} follows from Proposition \ref{prop:relative-plus-algebras-uniform}. Clearly \eqref{itm:tf-class-uniform-lift} implies \eqref{itm:tf-class-lift}, and \eqref{itm:tf-class-lift} implies \eqref{itm:tf-class-relative-+-algebra} because the diagram \eqref{eq:relative-plus-algebra} is a lifting problem against a monomorphism.
\end{proof}

\begin{defn}\label{defn:trivial-fibrations}
We refer to maps $f$ satisfying the equivalent conditions of Proposition \ref{prop:uniform-trivial-fibrations} as \textbf{trivial fibrations}.
\end{defn}

Internally to $\cE$, the relative $+$-algebras can be seen as generated by right lifting against the family $\top \colon 1 \to \Omega$ indexed by the subobject classifier $\Omega$.
In the case where $\cE$ is a presheaf topos, this can be externalized as generation by a small \emph{category} of maps.
Both of these viewpoints are instances of the framework of Swan~\cite{swan18lp} of lifting in a Grothendieck fibration: the codomain fibration for the internal viewpoint and the category-indexed families fibration for the external viewpoint.

\begin{con}\label{con:generating-cof}
Let $\cE = \Set^{\cC^\op}$ be a presheaf topos.
In the slice category over $\Omega$, the morphism $\top \colon 1 \to \Omega$ may be regarded as a subterminal object, determining a family of maps internally indexed by the base object $\Omega$. This family can be externalized to determine a functor $I \colon \int\Omega \to \cE^\2$ on the category of elements of $\Omega$, defined by pulling back this internal family to the representables.

The cartesian functor $I$ thus lifts the Yoneda embedding $\yo$ from the discrete fibration associated to the category of elements of $\Omega$ to the codomain fibration of $\cE$:
 \begin{equation*}\label{eq:lift-elements-of-Omega}
  \begin{tikzcd}  \int\Omega \arrow[d, "\pi"'] \arrow[r, dashed, "I"] & \cE^\2 \arrow[d, "\cod"] \\ \cC \arrow[r, hook, "\yo"] & \cE \rlap{.}
  \end{tikzcd}
\end{equation*}
It sends an element $\chi_c : \yo a \to \Omega$ to the subobject $ C \rightarrowtail \yo a$ that it classifies, while morphisms in $\int\Omega$
\[ \begin{tikzcd} \yo b \arrow[rr, "\alpha"] \arrow[dr, "\chi_{d}"'] & & \yo a \arrow[dl, "\chi_c"] \\ & \Omega\end{tikzcd}
  \]
are carried to pullback squares between subobjects as below:
\[
\begin{tikzcd}
   D \arrow[d, tail, "d"'] \arrow[r, "\alpha"] \arrow[dr, phantom, "\lrcorner" very near start] & C \arrow[d, tail, "c"]\\
  \yo b \arrow[r, "\alpha"'] & \yo a \rlap{.}
\end{tikzcd}
\]
\end{con}

Recall that for any index category $\cI$ and functor $I \colon \cI \to \cE^\2$ into an arrow category, there is a corresponding category $\cI^\boxslash$ whose objects are arrows of $\cE$ equipped with chosen lifts against the images of the objects of $\cI$, in a way that is natural in the morphisms of $\cI$ \cite[15]{BourkeGarner:AWFSI}.

In particular, when $\cE = \Set^{\cC^\op}$ is a presheaf topos, an object of the category $(\int\Omega)^\boxslash$ is a morphism $f \colon Y \to X$ in $\cE$ equipped with chosen lifts against subobjects of representables that are uniform in pullback squares:
\[
  \begin{tikzcd}[sep=1.5cm]  D \arrow[d, tail, "d"'] \arrow[r, "{\alpha}"] \arrow[dr, phantom, "\lrcorner" very near start]&  C \arrow[d, "c" pos=.6, tail] \arrow[r] & Y \arrow[d, "f"] \\ \yo b \arrow[urr, dashed] \arrow[r, "{\alpha}"'] & \yo a \arrow[r] \arrow[ur, dashed] & X \rlap{.}
  \end{tikzcd}
\]

\begin{prop}
  \label{prop:relative-+-algebras-generation}
  For $\cE = \Set^{\cC^\op}$ a presheaf topos, the category of relative $+$-algebras is isomorphic over $\cE^\2$ to $(\int\Omega)^\boxslash$.
\end{prop}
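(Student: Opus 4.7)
The plan is to show that the underlying data and morphisms of relative $+$-algebras and of $(\int\Omega)^\boxslash$-algebras coincide over $\cE^\2$. By Proposition~\ref{prop:relative-plus-algebras-uniform}, a relative $+$-algebra structure on $f \colon Y \to X$ amounts to a choice of lift against every monomorphism, uniform under pullback squares. Restricting to those monomorphisms in the image of $I \colon \int\Omega \to \cE^\2$---the subobjects of representables---gives a $(\int\Omega)^\boxslash$-algebra structure on $f$ that is manifestly compatible with morphisms of algebras. This defines a functor from relative $+$-algebras to $(\int\Omega)^\boxslash$ over $\cE^\2$; the substantive task is to construct an inverse.

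For the inverse, I would extend a given $(\int\Omega)^\boxslash$-algebra structure on $f$ to uniform lifts against all monomorphisms. An arbitrary monomorphism $m \colon C \rightarrowtail Z$ is classified by a unique $\chi \colon Z \to \Omega$, so every element $z \colon \yo a \to Z$ pulls $m$ back to $I(\chi z) \colon C_z \rightarrowtail \yo a$. A lifting problem against $m$ pulls back along each such $z$ to a lifting problem against $I(\chi z)$, solved canonically by the algebra structure, and by the Yoneda lemma this selects an element of $Y$ lying over the image of $z$. For these elements to assemble into a morphism $Z \to Y$, one needs compatibility under restriction by morphisms $\alpha \colon b \to a$ of $\cC$; but $z \circ \yo\alpha$ has characteristic $\chi z \circ \yo\alpha$, so $\yo\alpha$ induces a pullback square $I(\chi z \circ \yo\alpha) \to I(\chi z)$ which is a morphism of $\int\Omega$, and uniformity of the $(\int\Omega)^\boxslash$-algebra in this square yields the required compatibility. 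An analogous argument, applied to pullback squares of arbitrary monomorphisms (which restrict elementwise to pullback squares in $\int\Omega$), shows the extended structure is itself uniform. Commutation of the top triangle of a lifting problem reduces to the observation that for any element $c \colon \yo a \to C$, the composite $m \circ c$ has classifying map constantly $\top$, so the corresponding subobject of $\yo a$ is the identity and the only available lift is the top map itself.

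The two constructions are mutually inverse by inspection, since both are uniquely determined by the lifts against subobjects of representables, and compatibility with morphisms of algebras restricts and extends in parallel. The main obstacle is the bookkeeping involved in the gluing argument, but there is no conceptual difficulty: in a presheaf topos every monomorphism is canonically built from subobjects of representables by pullback along its classifying map $\chi$, so a uniform lifting structure against this generating family determines, and is determined by, one against all monomorphisms.
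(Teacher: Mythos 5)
Your argument is correct and is exactly the standard reduction that the paper itself invokes: the paper's proof is a one-line remark that the lifting properties of Proposition \ref{prop:relative-plus-algebras-uniform} reduce to subobjects of representables, deferring the details to \cite[5.16]{GambinoSattler:2017fc}. Your elementwise gluing via classifying maps, with uniformity supplying naturality and the identity-subobject observation supplying the upper triangle, is precisely that cited argument spelled out.
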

\begin{proof}
  The statement asserts that in a presheaf topos, the lifting properties of Proposition \ref{prop:relative-plus-algebras-uniform} reduce to the case where we only ask for lifts against subobjects of representables.
  See \cite[5.16]{GambinoSattler:2017fc}.
\end{proof}

\begin{rmk}
  In summary, in the setting of a presheaf topos, we have multiple isomorphic characterizations of the category of relative $+$-algebras and the notion of fibred structure $\mathscr{TF}$.
  Note, however, that these perspectives suggest two non-isomorphic algebraic weak factorization systems providing a functorial factorization of a map into a monomorphism followed by a trivial fibration.

  On the one hand, the relative $+$-algebra factorization underlies an awfs as described in Remark \ref{rmk:partial-map-awfs}.
  On the other hand, Garner's algebraic small object argument applied to the generating category $I \colon \int\Omega \to \cE^\2$ yields an awfs whose category of monad algebras is isomorphic to $(\int \Omega)^\boxslash$ \cite[4.4]{Garner:USOA}.
  By Proposition \ref{prop:relative-+-algebras-generation}, the category of monad algebras for the second awfs is thus isomorphic to the category of pointed endofunctor algebras for the first, which is the category of relative $+$-algebras of Definition \ref{defn:relative-+-algebra}.
  In fact, the relative $+$-algebra factorization is the one-step factorization of the algebraic small object argument.
  See also the discussion in \cite[9.5]{GambinoSattler:2017fc}.
\end{rmk}

\subsection{Universes}\label{ssec:realignment}

\begin{defn}\label{defn:universeoffibrations}
Fix a notion of fibred structure $\sF$.
A \textbf{universe} for $\sF$ is an $\sF$-algebra $\pi \colon \dot{U} \to U$ such that $\pi \colon \cE(-,U) \to \sF$ is an acyclic fibration, meaning that we have bicategorical lifts against Yoneda embeddings of monomorphisms $i \colon A \rightarrowtail B$ as below:
\[
\begin{tikzcd}
  \cE(-,A)
  \arrow[d, "i"']
  \arrow[r, "h"]
&
  \cE(-,U)
  \arrow[d, "\pi"]
\\
  \cE(-,B)
  \arrow[r, "p"']
  \arrow[ur, dashed,"k"]
&
  \sF \rlap{.}
\end{tikzcd}
\]
\end{defn}

Unpacked, this requires that given any pair of $\sF$-algebras $p, q$ and $\sF$-morphisms as displayed by the solid-arrow squares below, with $i \colon A \rightarrowtail B$ a monomorphism,
\[
\begin{tikzcd}
  D \arrow[dd, two heads, "q"'] \arrow[rr] \arrow[drr, phantom, "\lrcorner" very near start] \arrow[dddr, phantom, "\lrcorner" very near start]\arrow[dr] & & \dot{U} \arrow[dd, two heads, "\pi"] \\ & E  \arrow[dr, phantom, "\lrcorner" very near start]\arrow[ur, dashed] & ~ \\ A \arrow[dr, tail, "i"'] \arrow[rr, "h" near end] & & U \\ & B \arrow[ur, dashed, "k"'] \arrow[from=uu, two heads, crossing over, "p"' near start]
\end{tikzcd}
\]
there exists an extension $k$ of $h$ along $i$ factoring the back pullback square as a composite of pullbacks and defining an $\sF$-morphism from $p$ to $\pi$.

\begin{prop}\label{prop:relatively-acyclic-implies-generic}
Assume that $\cE$ has initial objects which are preserved by pullback along arbitrary maps.
Given a relatively acyclic notion of fibred structure $\sF$ with universe $\pi \colon \dot{U} \to U$, each $\sF$-algebra is a pullback of $\pi$.
\end{prop}
\begin{proof}
Suppose $p \colon E \fto B$ is an $\sF$-algebra.
The back pullback square in the diagram below gives the identity on the initial object an $\sF$-algebra structure, and by relative acyclicity, the $\sF$-algebra $p$ can be given an $\sF$-algebra structure making the left-hand pullback into an $\sF$-morphism.
Because $\pi \colon \dot{U} \to U$ is a universe, $p$ is then a pullback of $\pi$:
\[
\begin{tikzcd}[baseline=(current bounding box.south)]
  \emptyset \arrow[dd, equals] \arrow[rr] \arrow[drr, phantom, "\lrcorner" very near start] \arrow[dddr, phantom, "\lrcorner" very near start]\arrow[dr] & & \dot{U} \arrow[dd, two heads, "\pi"] \\ & E  \arrow[dr, phantom, "\lrcorner" very near start]\arrow[ur, dashed] & ~ \\ \emptyset \arrow[dr, tail, "!"'] \arrow[rr, "!" near end] & & U \rlap{.} \\ & B \arrow[ur, dashed] \arrow[from=uu, two heads, crossing over, "p"' near start]
\end{tikzcd} \qedhere
\]
\end{proof}

We now specialize to the setting of a presheaf topos $\cE = \Set^{\cC^\op}$ for some small indexing category $\cC$ to give an example of a universe.
For any regular cardinal $\kappa$ for which $\cC$ is $\kappa$-small, the Hofmann--Streicher construction \cite{HofmannStreicher:1997lg,Awodey:2022hs} provides a classifier $\varpi \colon \dot{V}_\kappa \to V_\kappa$ for $\kappa$-small families, i.e., those maps whose components have $\kappa$-small fibres.
As noted in Example \ref{ex:small-fibred-structure}, for sufficiently large $\kappa$ this defines a locally representable and relatively acyclic full notion of fibred structure $\sE^\kappa$.
By \cite[3.9]{Cisinski:2014uu}, \cite[8.4]{OrtonPitts:2018af}, or \cite[6]{Awodey:2022hs}, the classifier $\varpi \colon \dot{V}_\kappa \to V_\kappa$ is a universe for $\sE^\kappa$.

Now consider a notion of fibred structure $\sF$ on the presheaf topos $\cE$.

\begin{con}\label{con:loc-rep-universe}
If $\sF$ is locally representable, then for sufficiently large $\kappa$ we may define a $\kappa$-small $\sF$-algebra classifier $\pi \colon \dot{U}_\kappa \to U_\kappa$ as follows. Firstly, we define a new notion of fibred structure $\sF^\kappa$ for which an $\sF^\kappa$-algebra is an $\sF$-algebra that is $\kappa$-small. If $\sF$ is locally representable or relatively acyclic, then for $\kappa$ sufficiently large so that Example \ref{ex:small-fibred-structure} holds, $\sF^\kappa$ inherits these properties \cite[3.3, 3.11, 4.18, 5.14]{Shulman:2019ai}.

Now set $U_\kappa \coloneq \sF^\kappa(\varpi)$ and form the pullback
  \[
    \begin{tikzcd}
        \dot{U}_\kappa \arrow[d, "\pi"'] \arrow[r] \arrow[dr, phantom, "\lrcorner" very near start] & \dot{V}_\kappa \arrow[d, "\varpi"] \\ U_\kappa \arrow[r, "\psi_\varpi"'] & V_\kappa \rlap{.}
    \end{tikzcd}
  \]
\end{con}

As a special case of Lemma \ref{lem:locally-representable-stability}\eqref{itm:loc-rep-alg}:

\begin{lem}\label{lem:loc-rep-universe-algebra} The map $\pi \colon \dot{U}_\kappa \to U_\kappa$ is canonically an $\sF^\kappa$-algebra. \qed
\end{lem}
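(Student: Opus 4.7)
The plan is to observe that this lemma is a direct specialization of Lemma \ref{lem:locally-representable-stability}(\ref{itm:loc-rep-alg}), as already flagged in the excerpt. Since $\sF^\kappa$ is locally representable (having inherited this property from $\sF$ by the cited results of Shulman, under the assumption that $\kappa$ is sufficiently large for Example \ref{ex:small-fibred-structure} to apply), the map $\varpi \colon \dot{V}_\kappa \to V_\kappa$ admits a classifier $\psi_\varpi \colon \sF^\kappa(\varpi) \to V_\kappa$ for $\sF^\kappa$-algebra structures on $\varpi$. By definition $U_\kappa \coloneq \sF^\kappa(\varpi)$, so the displayed pullback square defining $\pi$ is exactly $\pi = \psi_\varpi^* \varpi$.

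Now I would simply invoke Lemma \ref{lem:locally-representable-stability}(\ref{itm:loc-rep-alg}): the pullback of $\varpi$ along its own classifier $\psi_\varpi$ carries a canonical $\sF^\kappa$-algebra structure, namely the one corresponding under the representability bijection to the identity section of $\psi_\varpi$. This gives the required canonical $\sF^\kappa$-algebra structure on $\pi$.

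There is essentially no obstacle here; the content of the lemma is just the naming of an already-constructed structure, so the ``proof'' consists only of verifying that the hypotheses of Lemma \ref{lem:locally-representable-stability} are in force for $\sF^\kappa$ and pointing to the relevant clause. If one wishes to be more explicit, one can spell out that the tautological $\sF^\kappa$-algebra $\gamma_\varpi$ on $\psi_\varpi^* \varpi$ produced in the proof of Lemma \ref{lem:locally-representable-stability} is exactly the structure being claimed, and note that it is canonical in the strong sense that sections $s \colon X \to U_\kappa$ of $\psi_\varpi$ over a base $X$ biject with $\sF^\kappa$-algebra structures on $s^* \pi \cong (\psi_\varpi \circ s)^*\varpi$, with the identity section recovering $\pi$ itself equipped with the claimed structure.
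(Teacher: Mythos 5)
Your proposal is correct and is exactly the paper's argument: the lemma is stated there with an immediate \verb|\qed| precisely because it is the specialization of Lemma \ref{lem:locally-representable-stability}\eqref{itm:loc-rep-alg} to the classifier $\psi_\varpi \colon \sF^\kappa(\varpi) \to V_\kappa$, with $U_\kappa \coloneq \sF^\kappa(\varpi)$ and $\pi = \psi_\varpi^*\varpi$ carrying the tautological $\sF^\kappa$-algebra structure $\gamma_\varpi$. Your additional remarks on the representability bijection are consistent with the paper's Definition \ref{defn:locally-representable} and add nothing that needs checking.
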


\begin{prop}\label{prop:has-universes}
Let $\sF$ be a locally representable notion of fibred structure on a presheaf topos.
For sufficiently large regular cardinals $\kappa$, the $\sF^\kappa$-algebra $\pi \colon \dot{U}_\kappa \to U_\kappa$ is a universe for $\sF^\kappa$.
\end{prop}
\begin{proof}
Construction \ref{con:loc-rep-universe} defines the $\sF^\kappa$-algebra classifier as the pullback
\[ \begin{tikzcd} \cE(-,U_\kappa) \arrow[dr, phantom, "\lrcorner" very near start]\arrow[d, "\pi"'] \arrow[r, "\psi_\varpi"] & \cE(-,V_\kappa) \arrow[d, "\varpi"] \\ \sF^\kappa \arrow[r] & \sE^\kappa \rlap{.}
\end{tikzcd}
\]
Note that this strict pullback is also a bicategorical pullback, as $\sF^\kappa \to \sE^\kappa$ is a strict discrete fibration.
Since the Hofmann--Streicher classifier $\varpi \colon \dot{V}_\kappa \to V_\kappa$ is a universe, the right-hand vertical map is an acyclic fibration, whence its bicategorical pullback is as well.
\end{proof}

For size reasons, multiple universes will be required to classify all maps belonging to a given notion of fibred structure. So that the maps classified by a given universe are closed under various categorical operations, we now assume that the cardinals $\kappa$ are inaccessible so that the corresponding Hofmann--Streicher universes $\varpi \colon \dot{V}_\kappa \to V_\kappa$ can be thought of as internalized Grothendieck universes.

\begin{defn}
\label{defn:has-universes}
A pullback-stable class of maps $\mathcal{P}$ in a presheaf topos \textbf{has universes} if for any cardinal $\lambda$, there exists an inaccessible cardinal $\kappa \ge \lambda$ and a universe $\pi \colon \dot{U}_\kappa \to U_\kappa$ for a relatively acyclic notion of fibred structure whose underlying maps are the $\kappa$-small maps in $\mathcal{P}$.
\end{defn}

In particular, each $\kappa$-small map in $\mathcal{P}$ is a pullback of $\pi \colon \dot{U}_\kappa \to U_\kappa$, by Proposition \ref{prop:relatively-acyclic-implies-generic}.

We now make a standing assumption that there exist arbitrarily large inaccessible cardinals.
Proposition \ref{prop:has-universes} then provides universes for the class of maps underlying any locally representable and relatively acyclic notion of fibred structure on a presheaf topos.
See \cite{Shulman:2019ai} or \cite{GratzerSterlingShulman:2022} for a treatment of universe levels in more general categorical settings.

\begin{nt}\label{nt:has-universes}
  In the setting of Definition \ref{defn:has-universes}, it is often not necessary to disambiguate between the inaccessible cardinals indexing universe levels. Thus, we typically write $\pi \colon \dot{U} \to U$ for a generic member of the classifying family of universes, without explicitly designating the cardinal bound.
\end{nt}

\section{Cylindrical model structures}\label{sec:cylindrical}

In this section, we lay the theoretical groundwork for the construction of our two models of homotopy type theory, proving our results at a level of generality that ensures that they will apply to both cubical sets and cubical species while also enabling their use elsewhere. In \S\ref{ssec:premodel}, we introduce the notion of cylindrical premodel structure \cite{Sattler:2020cms}, also used in \cite{CavalloSattler:2022re}, which provides the familiar structures of abstract homotopy theory  in a setting where the weak equivalences are not yet known to satisfy the 2-of-3 property. In particular, these axioms provide fibred mapping path space factorizations that are stable under slicing, the basic properties of which we establish in \S\ref{ssec:brown}.

In \S\ref{ssec:EEP}, we state and prove the equivalence extension property in a locally cartesian closed cylindrical premodel category in which the cofibrations are the monomorphisms and these are stable under pushout products in all slices. In \S\ref{ssec:frobenius}, we introduce the Frobenius condition and mention a few consequences. In \S\ref{ssec:univalence}, we connect the equivalence extension property to the univalence axiom in the presence of the Frobenius condition on the cylindrical premodel structure. In \S\ref{ssec:Ufib}, we use this to establish the fibrancy of the universe, assuming that the fibrations are defined from the trivial fibrations via one of the standard constructions. In \S\ref{ssec:FEP}, we translate the fibrancy of the universe into the fibration extension property, which implies that the cylindrical premodel structure is in fact a model structure, retroactively justifying the title of this section as well as the nonstandard encodings of the weak equivalences and the univalence axioms we use along the way.

\subsection{Cylindrical premodel structures}\label{ssec:premodel}

Following Barton \cite{Barton:2019pm}, a \textbf{premodel structure} on a category $\cE$ is a pair of weak factorization systems, called the \textbf{(\textbf{trivial cofibration}, \textbf{fibration})} and (\textbf{cofibration}, \textbf{trivial fibration}) weak factorization systems, such that every trivial cofibration is a cofibration (equivalently, any trivial fibration is a fibration).
We also require finite limits and colimits (in practice, often only pullbacks along fibrations and pushouts along cofibrations are needed).
We denote trivial cofibrations with the arrow $\cwto$, fibrations with $\fto$, cofibrations with $\cto$, and trivial fibrations with $\fwto$.

In a premodel structure, define a map to be a \textbf{weak equivalence} $\wto$ if it factors as a composite of a trivial cofibration followed by a trivial fibration. In particular, the trivial cofibrations and trivial fibrations admit such factorizations, so both of these classes are included in the class of weak equivalences. Conversely, by a standard argument:

\begin{lem} Any cofibration and weak equivalence is a trivial cofibration, and any fibration and weak equivalence is a trivial fibration.
\end{lem}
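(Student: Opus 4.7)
The plan is to apply the classical retract argument, adapted to the premodel setting. For the first statement, suppose $f \colon A \to B$ is both a cofibration and a weak equivalence. By definition of weak equivalence, I can factor $f = p \circ i$ where $i \colon A \cwto Z$ is a trivial cofibration and $p \colon Z \fwto B$ is a trivial fibration. Since $f$ is a cofibration and $p$ is a trivial fibration, the lifting problem
\[
\begin{tikzcd}
A \ar[r, "i"] \ar[d, cto, "f"'] & Z \ar[d, fwto, "p"] \\
B \ar[ur, dashed, "r"] \ar[r, equals] & B
\end{tikzcd}
\]
has a solution $r$. The pair of squares
\[
\begin{tikzcd}
A \ar[r, equals] \ar[d, "f"'] & A \ar[r, equals] \ar[d, cwto, "i"] & A \ar[d, "f"] \\
B \ar[r, "r"'] & Z \ar[r, "p"'] & B
\end{tikzcd}
\]
exhibit $f$ as a retract of the trivial cofibration $i$. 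Since the trivial cofibrations are the left class of a weak factorization system, they are closed under retracts, so $f$ is itself a trivial cofibration.

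The second statement is entirely dual. If $f \colon A \fto B$ is a fibration and a weak equivalence, I again factor $f = p \circ i$ with $i$ a trivial cofibration and $p$ a trivial fibration. Now the lifting problem
\[
\begin{tikzcd}
A \ar[r, equals] \ar[d, cwto, "i"'] & A \ar[d, fto, "f"] \\
Z \ar[ur, dashed, "s"] \ar[r, "p"'] & B
\end{tikzcd}
\]
admits a solution $s$, and the diagram
\[
\begin{tikzcd}
A \ar[r, "i"] \ar[d, "f"'] & Z \ar[r, "s"] \ar[d, fwto, "p"] & A \ar[d, "f"] \\
B \ar[r, equals] & B \ar[r, equals] & B
\end{tikzcd}
\]
exhibits $f$ as a retract of the trivial fibration $p$. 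Closure of the right class of a weak factorization system under retracts then yields that $f$ is a trivial fibration.

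There is no real obstacle here: the argument uses only the wfs axioms on the two factorization systems comprising the premodel structure, together with the definition of weak equivalence as a (trivial cofibration, trivial fibration) factorization. No 2-of-3 property or functorial choice of factorization is needed. The only slightly delicate point is recognizing that the retract closure of the left and right classes of a wfs is automatic, so nothing beyond the premodel axioms is required.
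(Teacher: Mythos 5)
Your proof is correct and is essentially the paper's own argument: factor the map as a trivial cofibration followed by a trivial fibration, solve the resulting lifting problem, and read off a retract of the appropriate factor, then invoke retract-closure of the classes of a weak factorization system. The paper states only the cofibration case and declares the other dual, so your write-up is just a more explicit version of the same proof.
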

\begin{proof}
The proofs are dual, and standard. If a cofibration factors as a trivial cofibration followed by a trivial fibration, this presents a lifting problem
\[\begin{tikzcd} \bullet \arrow[d, tail] \arrow[r, tcofarrow] & \bullet \arrow[d, tfibarrow] \\ \bullet \arrow[ur, dashed] \arrow[r, equals] & \bullet\end{tikzcd}
\]
a solution to which presents the cofibration as a retract of the trivial cofibration.
\end{proof}

Thus, from the Joyal--Tierney characterization \cite[7.7--7.8]{JT:2007} of a (closed) Quillen model structure:

\begin{prop} A premodel structure defines a model structure if and only if the weak equivalences satisfy the 2-of-3 property. \qed
\end{prop}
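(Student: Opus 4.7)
The plan is to invoke the Joyal--Tierney characterization cited in the statement: a (closed) Quillen model structure on a finitely bicomplete category is equivalently given by two weak factorization systems $(\TCof, \Fib)$ and $(\Cof, \TFib)$ together with a class $\WE$ of weak equivalences, satisfying 2-of-3, such that $\TCof = \Cof \cap \WE$ and $\TFib = \Fib \cap \WE$. Since a premodel structure already provides the two weak factorization systems and a candidate class $\WE$ of weak equivalences (the maps factoring as a trivial cofibration followed by a trivial fibration), the proof reduces to matching the two characterizations.

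For the forward direction, assuming the premodel structure underlies a model structure, I would verify that the factorization-based notion of weak equivalence used in the paper coincides with the usual one. The class $\WE$ is contained in the model-structure weak equivalences because both $\TCof$ and $\TFib$ are. Conversely, given a model-structure weak equivalence $f$, the factorization axiom writes $f = p \circ i$ with $i \in \TCof$ and $p \in \Fib$; then 2-of-3 forces $p \in \WE$, and the identity $\TFib = \Fib \cap \WE$ in the model structure identifies $p$ with a trivial fibration. Thus $\WE$ is exactly the class of model-structure weak equivalences, which in particular satisfies 2-of-3.

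For the backward direction, assuming 2-of-3 for $\WE$, I would verify each Quillen axiom. Finite bicompleteness, the factorization axiom, and the lifting axioms are all built into the premodel data. 2-of-3 is the standing hypothesis. The only remaining content is the compatibility identities $\TCof = \Cof \cap \WE$ and $\TFib = \Fib \cap \WE$. The inclusions $\subseteq$ are immediate: trivial cofibrations are cofibrations by the premodel axiom and lie in $\WE$ by taking the identity as their trivial fibration factor. The reverse inclusions are exactly the content of the preceding lemma.

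I do not expect a significant obstacle; the proof is essentially an exercise in unwinding definitions, with all the real work already done in the preceding lemma (whose retract argument from a factorization is the only nontrivial step). The one place requiring some care is ensuring in the forward direction that the model-structure weak equivalences truly match the factorization-based class, but this is the standard 2-of-3 calculation sketched above.
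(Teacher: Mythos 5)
Your proof is correct and follows the same route as the paper: the paper derives the proposition directly from the Joyal--Tierney characterization together with the immediately preceding lemma (that a cofibration which is a weak equivalence is a trivial cofibration, and dually), which is exactly the decomposition you give. Your forward direction is slightly more careful than strictly needed -- since a model structure's weak equivalences are already determined by its two weak factorization systems as the composites of trivial cofibrations with trivial fibrations, the forward implication is immediate -- but the extra verification you sketch is valid.
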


\begin{rmk}\label{rmk:premodel-slicing} Premodel structures lift to slice and coslice categories, with all of the classes of maps created by the forgetful functor to the base category.
\end{rmk}

For a general premodel structure, the 2-of-3 property for the weak equivalences may be hard to prove (and is often false). A convenient technical device that can be used when present to analyze the weak equivalences in a premodel structure is an \emph{adjoint functorial cylinder}, introduced below, that satisfies three compatibility conditions making the premodel structure into a \emph{cylindrical premodel structure}.

\begin{defn}\label{defn:fun-htpy} A \textbf{functorial notion of homotopy} on a category $\cE$ is a reflexive binary relation on the hom-bifunctor in the category of profunctors from $\cE$ to $\cE$:
\[
    \begin{tikzcd} & \cE(-,-) \arrow[d, "{\epsilon}"] \arrow[dr, equals] \arrow[dl, equals] \\ \cE(-,-) & \cI(-,-) \arrow[r, "\partial_1"'] \arrow[l, "\partial_0"] & \cE(-,-) \rlap{.}
    \end{tikzcd}
\]
\end{defn}

For any pair of objects $A, B \in \cE$, we refer to elements of the set $\cI(A,B)$ as \textbf{homotopies} between maps from $A$ to $B$. More precisely, the fiber over a parallel pair of morphisms $f,g \colon A \rightrightarrows B$
\[ \begin{tikzcd} \cI(A,B)_{(f,g)} \arrow[d] \arrow[r] \arrow[dr, phantom, "\lrcorner" very near start] & \cI(A,B)\arrow[d, "{(\partial_0,\partial_1)}"] \\ \ast \arrow[r, "{(f,g)}"'] & \cE(A, B)\times \cE(A,B)
\end{tikzcd}
\]
defines the set of \textbf{homotopies} from $f$ to $g$. We write $\alpha \colon f \sim g$ to mean that $\alpha \in \cI(A,B)_{f,g}$. The map $\epsilon \colon \cE(A,B) \to \cI(A,B)$ sends each $f \colon A \to B$ to a \textbf{constant homotopy} $\epsilon_f \colon f \sim f$.

\begin{defn}\label{defn:adj-fun-cyl} A functorial notion of homotopy $\cI$ on $\cE$ is
    \begin{itemize}
        \item \textbf{representable} if the profunctor $\cI$ is covariantly represented by a functor $P \colon \cE \to \cE$, which then defines a \textbf{functorial cocylinder} $\cI(A,B) \cong \cE(A,PB)$, and
        \item \textbf{corepresentable} if the profunctor $\cI$ is contravariantly represented by a functor $C\colon \cE \to \cE$, which then defines a \textbf{functorial cylinder} $\cI(A,B) \cong \cE(CA,B)$.
    \end{itemize}
In the co/represented setting, by the profunctorial Yoneda lemma, the natural transformations $(\epsilon,\partial_0,\partial_1)$ determine natural transformations
\[ \begin{tikzcd} & \arrow[dr, equals] \arrow[dl, equals] \id & & & \arrow[dr, equals]\id \arrow[dl, equals] \arrow[d, Rightarrow, "\epsilon"] \\ \id \arrow[r, Rightarrow, "\partial_0"'] & C \arrow[u, Rightarrow, "\epsilon"] & \id \arrow[l, Rightarrow, "\partial_1"] & \id & P \arrow[l, Rightarrow, "\partial_0"] \arrow[r, Rightarrow, "\partial_1"']  & \id \rlap{.}
\end{tikzcd}
\]
When $\cI$ is \textbf{birepresentable}, that is both representable and corepresentable, these functors are adjoints $C \dashv P$ and the natural transformations are conjugates. As in Lemma \ref{lem:leibniz-application-transposition}, we use the same notation for a conjugate pair of transformations, e.g., $\epsilon \colon C \To \id$ and $\epsilon \colon \id \To P$. We follow \cite[3.9]{CavalloSattler:2022re} and refer to a birepresentable functorial notion of homotopy as an \textbf{adjoint functorial cylinder}.
\end{defn}

We now show that all of these notions are stable under slicing---that is, passage to $\cE_{/X}$---and coslicing---that is, passage to ${}^{X/}\cE$---over and under arbitrary objects $X \in \cE$. In fact it suffices to consider slice categories, since functorial notions of homotopy are self-dual.

\begin{lem}\label{lem:sliced-fun-htpy} If $\cE$ has a functorial notion of homotopy $\cI$ then for any $X \in \cE$ the slice category $\cE_{/X}$ has a functorial notion of homotopy $\cI_X$. Moreover:
    \begin{enumerate}
        \item if $\cI$ is corepresentable, then so is $\cI_X$, and
        \item if $\cI$ is representable and $\cE$ has pullbacks then so is $\cI_X$.
    \end{enumerate}
\end{lem}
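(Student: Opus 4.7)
The plan is to define $\cI_X$ on $\cE_{/X}$ pointwise by restricting $\cI$ to those homotopies that are \emph{constant on $X$}, namely, for objects $(A,a), (B,b) \in \cE_{/X}$, set
\[
  \cI_X((A,a),(B,b)) \;\coloneq\; \cI(A,B) \times_{\cI(A,X)} \{\epsilon(a)\},
\]
the pullback in $\Set$ of $\epsilon(a) \colon 1 \to \cI(A,X)$ along $b_* \colon \cI(A,B) \to \cI(A,X)$. Concretely, an element is a homotopy $h \in \cI(A,B)$ with $b_*h = \epsilon(a)$. The boundary and reflexivity maps of $\cI$ restrict to $\cI_X$: if $b_*h = \epsilon(a)$ then $b \circ \partial_i h = \partial_i b_* h = \partial_i \epsilon(a) = a$, so $\partial_i h$ is a morphism over $X$; and for $f \in \cE_{/X}((A,a),(B,b))$, $b_*\epsilon(f) = \epsilon(b f) = \epsilon(a)$. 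Profunctoriality in $\cE_{/X}$ is inherited from $\cI$: for $\alpha \colon (A',a') \to (A,a)$ and $\beta \colon (B,b) \to (B',b')$ and $h \in \cI_X((A,a),(B,b))$, naturality of $\epsilon$ yields $(b')_*(\beta_* \alpha^* h) = \alpha^* b_* h = \alpha^* \epsilon(a) = \epsilon(a\alpha) = \epsilon(a')$. This establishes that $\cI_X$ is a functorial notion of homotopy on $\cE_{/X}$.

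For corepresentability, suppose $\cI(A,B) \cong \cE(CA,B)$ naturally, with counit $\epsilon \colon C \To \id$. Under the isomorphism, $b_* h$ corresponds to $b \circ h$ and $\epsilon(a) \in \cI(A,X)$ corresponds to $a \circ \epsilon_A \colon CA \to X$. Hence
\[
  \cI_X((A,a),(B,b)) \;\cong\; \{ h \colon CA \to B \mid b \circ h = a \circ \epsilon_A \} \;=\; \cE_{/X}\bigl((CA,\, a \circ \epsilon_A),\, (B,b)\bigr),
\]
so defining $C_X(A,a) \coloneq (CA, a \circ \epsilon_A)$ gives a corepresentation. Functoriality of $C_X$ on a slice morphism $\alpha \colon (A',a') \to (A,a)$ comes from $C\alpha$ together with the identity $a \circ \epsilon_A \circ C\alpha = a \circ \alpha \circ \epsilon_{A'} = a' \circ \epsilon_{A'}$, using naturality of $\epsilon$.

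For representability, suppose $\cI(A,B) \cong \cE(A,PB)$ with unit $\epsilon \colon \id \To P$. Then $b_* h$ corresponds to $Pb \circ h$ and $\epsilon(a)$ corresponds to $\epsilon_X \circ a \colon A \to PX$. Using the pullback in $\cE$
\[
  \begin{tikzcd}
    P_X(B,b) \ar[r] \ar[d, "\pi_2"'] \ar[dr, phantom, "\lrcorner" very near start] & PB \ar[d, "Pb"] \\
    X \ar[r, "\epsilon_X"'] & PX
  \end{tikzcd}
\]
a map $(A,a) \to (P_X(B,b), \pi_2)$ in $\cE_{/X}$ is exactly a map $h \colon A \to PB$ with $Pb \circ h = \epsilon_X \circ a$, giving $\cI_X((A,a),(B,b)) \cong \cE_{/X}((A,a), P_X(B,b))$. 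Functoriality of $P_X$ on a slice morphism $\beta \colon (B,b) \to (B',b')$ is induced by $P\beta$ and the pullback universal property, using $Pb' \circ P\beta = P(b'\beta) = Pb$.

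There is no real obstacle here; the work is purely bookkeeping around naturality of $\epsilon$ and the two representation isomorphisms. The only mild subtlety is the representable case, where the pullback hypothesis on $\cE$ is precisely what is needed to construct the cocylinder $P_X$ in the slice.
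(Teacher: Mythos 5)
Your proof is correct and follows essentially the same route as the paper: the fibred cocylinder is the pullback of $Pb$ along the unit $\epsilon_X \colon X \to PX$, and the fibred cylinder is $CA$ equipped with the structure map $a \circ \epsilon_A = \epsilon_X \circ Ca$. You additionally spell out the general (non-representable) case via the ``fibrewise constant'' characterization of $\cI_X$, which the paper explicitly leaves to the reader, and your verifications of naturality are all sound.
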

\begin{proof} We leave the general case to the reader and construct the functorial cylinder and cocylinder in the birepresentable case.

  Given an object $g \colon Y \to X$ in the slice $\cE_{/X}$ its \textbf{fibred cylinder factorization} is created by the forgetful functor to $\cE$, with the projections to $X$ defined by composing in the diagram
  \[ \begin{tikzcd} Y + Y \arrow[r, "{(\partial_0, \partial_1)}"] \arrow[d, "f + f"'] & CY \arrow[r, "\epsilon"] \arrow[d, "Cf"] & Y \arrow[d, "f"] \\ X +X \arrow[r, "{(\partial_0, \partial_1)}"'] & CX \arrow[r, "\epsilon"'] & X \rlap{.} \end{tikzcd}\]

Meanwhile, the \textbf{fibred cocylinder factorization} is constructed as follows:
  \[
    \begin{tikzcd}[sep=tiny,baseline=(current bounding box.south)]
          Y \arrow[rrr] \arrow[dddd, "f"'] \arrow[dr, dashed] & & & PY \arrow[rrr] \arrow[dddd,  "Pf"] & & & Y \times Y  \arrow[dddd,  "f \times f"] \\ &  P_X Y \arrow[dddl,  dotted] \arrow[dddr, phantom, "\lrcorner" very near start]\arrow[urr, dotted] \arrow[dr, dashed ]
      \\ & & Y \times_X Y \arrow[ddll, dotted] \arrow[uurrrr, dotted]  \arrow[ddr, phantom, "\lrcorner" very near start]\\ ~
      \\ X \arrow[rrr] & & ~ & PX \arrow[rrr] & & & X \times X \rlap{.}
  \end{tikzcd} \qedhere
  \]
\end{proof}

\begin{rmk} Definitions \ref{defn:fun-htpy} and \ref{defn:adj-fun-cyl} are self-dual, so in particular the dual of Lemma \ref{lem:sliced-fun-htpy} applies to coslice categories ${}^{X/}\cE$.
\end{rmk}

Let $\cI$ be a birepresented notion of homotopy on a category $\cE$ with finite limits and colimits. Write $\partial \colon \id + \id \Rightarrow C$ and $\partial \colon P \Rightarrow \id\times\id$ for the conjugate pair of natural transformations with components defined by $\partial_0$ and $\partial_1$. The notion of a cylindrical premodel structure makes use of the Leibniz applications introduced in Definition \ref{defn:leibniz-application}.

\begin{defn}\label{defn:cylindrical}
A premodel structure on $\cE$ is \textbf{cylindrical} if $\cE$ admits an adjoint functorial cylinder so that:
\begin{enumerate}
    \item\label{itm:cylindrical-boundary} Leibniz pullback application of $\partial \colon P \Rightarrow \id \times \id$ preserves fibrations and trivial fibrations.
    \item\label{itm:cylindrical-endpoints} Leibniz pullback application of $\partial_0 \colon P \Rightarrow \id$ and $\partial_1 \colon P \Rightarrow \id$ sends fibrations to trivial fibrations.
\end{enumerate}
\end{defn}

By Lemma \ref{lem:leibniz-application-transposition} these conditions could be phrased dually in terms of Leibniz pushout application of the conjugate natural transformations. As observed in \cite[3.2, 3.11, 3.17]{CavalloSattler:2022re}:

\begin{lem}\label{lem:cylindrical-slicing} A cylindrical premodel structure on $\cE$ induces a cylindrical premodel structure on each of its coslice and slice categories.
\end{lem}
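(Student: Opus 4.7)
The plan is to leverage the construction of the fibred cocylinder from Lemma \ref{lem:sliced-fun-htpy} to reduce the slice case to the base case via pullback stability of (trivial) fibrations; the coslice case will then follow by the self-duality of the setup.

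By Remark \ref{rmk:premodel-slicing}, the premodel structure lifts to $\cE_{/X}$ with (trivial) (co)fibrations created by the forgetful functor, and by Lemma \ref{lem:sliced-fun-htpy} we obtain an adjoint functorial cylinder $(C_X, P_X)$ on $\cE_{/X}$. Unpacking the construction of that lemma, one obtains the descriptions
\[
P_X Y \cong PY \times_{X \times X} X, \qquad Y \times_X Y \cong (Y \times Y) \times_{X \times X} X,
\]
where $X \to X \times X$ is the diagonal and $PY \to X \times X$ factors via $PY \to Y \times Y$. The central observation, obtained by a routine commutation of iterated pullbacks, is that for any morphism $j \colon (B,h) \to (C,k)$ of $\cE_{/X}$, each Leibniz pullback application computed in $\cE_{/X}$ is the pullback along the diagonal $X \to X \times X$ of the corresponding Leibniz application computed in $\cE$. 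For instance, one checks directly that the codomain of the Leibniz application in the slice satisfies
\[
P_X C \times_{C \times_X C} (B \times_X B) \;\cong\; \bigl(PC \times_{C \times C} (B \times B)\bigr) \times_{X \times X} X,
\]
so the slice version of $\partial \hato j$ is the pullback of its base version; analogous identifications hold for $\partial_0 \hato j$ and $\partial_1 \hato j$, using that $B \times_C P_X C \cong (B \times_C PC) \times_{X \times X} X$.

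Given this, conditions (i) and (ii) of Definition \ref{defn:cylindrical} transfer immediately from $\cE$ to $\cE_{/X}$: if $j$ is a fibration of $\cE_{/X}$, then it is a fibration of $\cE$, so the base Leibniz applications $\partial \hato j$, $\partial_0 \hato j$, and $\partial_1 \hato j$ are, respectively, a fibration and two trivial fibrations in $\cE$ by the cylindrical structure on $\cE$; since (trivial) fibrations are stable under pullback, the same holds of the slice Leibniz applications, which by Remark \ref{rmk:premodel-slicing} are (trivial) fibrations in $\cE_{/X}$. The case where $j$ is a trivial fibration proceeds identically, using the first clause of (i). The coslice case is dual, since Definitions \ref{defn:fun-htpy} and \ref{defn:adj-fun-cyl} are self-dual and the dual of Lemma \ref{lem:sliced-fun-htpy} applies to ${}^{X/}\cE$.

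The principal subtlety lies in verifying the pullback identifications cleanly, which is mechanical given the construction of Lemma \ref{lem:sliced-fun-htpy}; no deeper obstacle is anticipated.
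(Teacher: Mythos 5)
Your overall strategy is sound and genuinely different from the paper's. The paper dualizes: by Lemma \ref{lem:leibniz-application-transposition} it suffices to check the conditions for Leibniz \emph{pushout} application of $\partial, \partial_0, \partial_1 \colon \id \To C$ on cofibrations and trivial cofibrations, and since the fibred \emph{cylinder} of Lemma \ref{lem:sliced-fun-htpy}, the (trivial) cofibrations, and hence the Leibniz pushout applications are all literally created by the forgetful functor $\cE_{/X} \to \cE$, the conditions are immediate from cylindricity of $\cE$ --- no pullback computation is needed. Your route stays on the cocylinder side and instead identifies the sliced Leibniz pullback applications as pullbacks of the base ones, then invokes pullback stability of the right classes; this works, but costs you the iterated-pullback bookkeeping that the paper's dualization avoids.

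There is, however, a concrete error in your identifications. The fibred cocylinder of Lemma \ref{lem:sliced-fun-htpy} is $P_X Y \cong PY \times_{PX} X$, the pullback of $Pf \colon PY \to PX$ along $\epsilon \colon X \to PX$ (paths in $Y$ lying over \emph{constant} paths in $X$), not $PY \times_{X\times X} X$ as you claim. The latter is the object of paths in $Y$ whose two endpoints merely lie in the same fibre; since $PY \times_{X\times X} X \cong PY \times_{PX}(PX \times_{X\times X} X)$ and $PX \times_{X\times X} X$ (free loops in $X$) is strictly larger than $\epsilon(X)$ in general, your formula is wrong whenever $X$ admits nonconstant loops. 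The correct statement is that each sliced Leibniz pullback application is the pullback of its base counterpart along $\epsilon \colon X \to PX$: for example
\[
P_X C \times_{C \times_X C} (B \times_X B) \;\cong\; \bigl(PC \times_{C\times C}(B\times B)\bigr)\times_{PX} X,
\qquad
P_X C \times_C B \;\cong\; (PC \times_C B)\times_{PX} X \rlap{,}
\]
after which your pullback-stability argument goes through unchanged. So the fix is local, but as written the central isomorphisms are false.
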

\begin{proof}
  We prove the case of slice categories, the coslices being dual. By Lemma \ref{lem:leibniz-application-transposition}, it suffices to show that Leibniz pushout application of $\partial \colon \id + \id \To C$ preserves cofibrations and trivial cofibrations and Leibniz pushout application of $\partial_0, \partial_1 \colon \id \To C$ send cofibrations to trivial cofibrations. But both these classes and these constructions are created by the forgetful functor to $\cE$ and $\cE$ is cylindrical, so this is immediate.
\end{proof}

The cylindrical premodel structure axioms allow us to deduce various ``2-of-3-like'' properties of ``acyclic'' morphisms without relying on the 2-of-3 property for the weak equivalences. Two such results are the following.

\begin{lem}[{\cite[3.19--20, 3.27]{CavalloSattler:2022re}}]\label{lem:fib-tfib-triangles} In a cylindrical premodel structure, in any diagram of the form below-left,  the fibration is a trivial fibration,
\[ \begin{tikzcd} &  A \arrow[dl, tcofarrow, "j"'] \arrow[dr, utcofarrow, "k"] & & &   A \arrow[dl, tfibarrow, "p"'] \arrow[dr, utfibarrow, "q"] \\ Y \arrow[rr, two heads, "f"'] & & X & Y \arrow[rr, two heads, "f"'] & & X \end{tikzcd}\]
and if the trivial fibrations are detected by lifting against cofibrations between cofibrant objects, the same is true in any diagram of the form above-right.
\end{lem}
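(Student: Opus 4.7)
The plan is to prove the first statement using the fibered path object construction in the slice $\cE_{/X}$, which inherits a cylindrical premodel structure by Lemmas \ref{lem:sliced-fun-htpy} and \ref{lem:cylindrical-slicing}. First, since $k$ is a trivial cofibration and $f$ is a fibration with $fj = k$, lifting $k$ against $f$ along $j$ yields a section $s \colon X \to Y$ of $f$ satisfying $sk = j$. Next, I construct a fibered homotopy $h \colon Y \to P_X Y$ from $\id_Y$ to $sf$ that extends the reflexivity on $A$: the square
\[ \begin{tikzcd} A \ar[d, tcofarrow, "j"'] \ar[r, "rj"] & P_X Y \ar[d, two heads, "{(\partial_0, \partial_1)}"] \\ Y \ar[r, "{(\id, sf)}"'] \ar[ur, dashed, "h"'] & Y \times_X Y \end{tikzcd} \]
commutes because $sfj = sk = j$, and $(\partial_0, \partial_1) \colon P_X Y \fto Y \times_X Y$ is a fibration in $\cE_{/X}$ by Definition \ref{defn:cylindrical}(\ref{itm:cylindrical-boundary}) applied there, so the lift $h$ exists against the trivial cofibration $j$.

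To show that $f$ is a trivial fibration, I will solve an arbitrary lifting problem $u \colon U \to Y$, $v \colon V \to X$ against a cofibration $i \colon U \cto V$ with $fu = vi$. The composite $hu \colon U \to P_X Y$ is then a path over $X$ from $u$ to $sfu = sv \circ i$. By Definition \ref{defn:cylindrical}(\ref{itm:cylindrical-endpoints}) applied in $\cE_{/X}$, the endpoint $\partial_1 \colon P_X Y \to Y$ is a trivial fibration in the slice, so the lifting problem
\[ \begin{tikzcd} U \ar[r, "hu"] \ar[d, cto, "i"'] & P_X Y \ar[d, tfibarrow, "\partial_1"] \\ V \ar[r, "sv"'] \ar[ur, dashed, "H"'] & Y \end{tikzcd} \]
in $\cE_{/X}$ admits a lift $H$, and then $\partial_0 H \colon V \to Y$ solves the original problem, since $f \partial_0 H = f \partial_1 H = fsv = v$ and $\partial_0 H \circ i = \partial_0 h u = u$.

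For the second claim, I must show $f$ has the right lifting property against any cofibration $i \colon U \cto V$ between cofibrant objects. Since $U$ is cofibrant, applying the detection hypothesis to the cofibration $\emptyset \cto U$ between cofibrants lifts $u$ through the trivial fibration $p$, yielding $\tilde u \colon U \to A$ with $p \tilde u = u$. Because $fp = q$, we have $q \tilde u = f u = v i$, so the square
\[ \begin{tikzcd} U \ar[r, "\tilde u"] \ar[d, cto, "i"'] & A \ar[d, tfibarrow, "q"] \\ V \ar[r, "v"'] \ar[ur, dashed, "\tilde v"'] & X \end{tikzcd} \]
commutes; a second application of the detection hypothesis, now to $i$ against $q$, produces a lift $\tilde v \colon V \to A$. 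The composite $p \tilde v \colon V \to Y$ then satisfies $fp \tilde v = q \tilde v = v$ and $p \tilde v \circ i = p \tilde u = u$, as required.

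The main obstacle is the construction in the first claim: one must correctly assemble the fibered path object in $\cE_{/X}$, verify the two cylindrical axioms transfer into the slice to yield the needed (trivial) fibrations, and carefully check the commutativity conditions for both lifting squares (in particular that $(\id, sf)$ factors through $Y \times_X Y$ and agrees with $(j,j)$ on $A$).
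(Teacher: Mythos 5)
Your proof is correct and takes essentially the same route as the paper's (sketched) argument: both construct the section $s$ and the fibred homotopy $h$ by lifting $k$ against $f$ and $j$ against $(\partial_0,\partial_1)\colon P_XY \to Y\times_X Y$, and both then appeal to axiom \ref{defn:cylindrical}(\ref{itm:cylindrical-endpoints}) in the slice; the paper merely packages the conclusion as ``$f$ is a retract of the trivial fibration $\partial_1\colon P_XY \to Y$,'' which you unwind into a direct solution of an arbitrary lifting problem. Your second argument is the same elementary lifting argument the paper alludes to (and, as you implicitly use, never needs $f$ to be a fibration); just note that the middle two lifts use only that $p$ and $q$ are trivial fibrations, while the detection hypothesis is needed only for the final conclusion that $f$ is one.
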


The first statement is proven by exhibiting $f$ as a retract of a trivial fibration constructed using axiom \ref{defn:cylindrical}\eqref{itm:cylindrical-endpoints} in a retract diagram whose data is defined by lifting. The second statement holds more generally even when $f$ is not known to be a fibration, by an elementary lifting argument.

\subsection{Brown factorizations}\label{ssec:brown}

The structure of a cylindrical premodel structure is designed to provide fibred mapping cylinder and mapping path space factorizations that are stable under coslicing and slicing, respectively.  In this section, we focus on the mapping path space construction, which we call the ``Brown factorization'' after \cite{Brown:1973ah},
which will be used in the next section to establish the equivalence extension property.
\begin{con}\label{con:path-space}
  Given a map $f \colon Z \to Y$ in a cylindrical premodel category, its \textbf{Brown factorization} $f = p_f \cdot s_f$  is constructed by factoring the graph of $f$ as follows:
  \[
  \begin{tikzcd}
    Z
    \ar[r, "f"]
    \ar[dd, bend right=45, "{(1,f)}"']
    \ar[d, dashed, "s_f"]
  &
    Y
    \ar[d, "\epsilon"]
  \\
    B f
    \ar[r]
    \ar[d, "{(q_f, p_f)}" pos=0.6]
    \ar[dr, phantom, "\lrcorner" very near start]
  &
    P Y
    \ar[d, "\partial"]
  \\
    Z \times Y
    \ar[r, "f \times Y"']
  &
    Y \times Y \rlap{.}
  \end{tikzcd}
  \]
  By construction $f = p_f \cdot s_f$ and $1 = q_f \cdot s_f$.
  \end{con}

  \begin{lem}\label{lem:mapping-path-space-fibrations} For the Brown factorization of a map  $f \colon Z \to Y$ in a cylindrical premodel category,
  \[
  \begin{tikzcd} & B f \arrow[dr, "p_f"] \arrow[dl, "q_f"', bend right] \\ Z \arrow[rr, "f"'] \arrow[ur, "s_f"'] & & Y\,,
  \end{tikzcd}
  \]
  \begin{enumerate}
    \item If $Y$ is fibrant, then $(q_f, p_f) \colon Bf \to Z \times Y$ is a fibration.
    \item If $Y$ is fibrant, then $q_f \colon B f \to Z$ is a trivial fibration.
    \item If $Y$ and $Z$ are both fibrant, then $p_f \colon B_f \to Y$ is a fibration.
  \end{enumerate}
  \end{lem}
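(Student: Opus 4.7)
The plan is to deduce all three statements directly from the cylindrical premodel axioms of Definition \ref{defn:cylindrical}, combined with the closure of fibrations and trivial fibrations under pullback and composition. The key preliminary observation is that $P$, being a right adjoint (to $C$), preserves the terminal object, so $P1 \cong 1$. Consequently, the Leibniz pullback applications of $\partial \colon P \To \id \times \id$, $\partial_0 \colon P \To \id$, and $\partial_1 \colon P \To \id$ at the terminal map $Y \to 1$ reduce to $\partial \colon PY \to Y \times Y$ and $\partial_0, \partial_1 \colon PY \to Y$ themselves. Thus, when $Y$ is fibrant, axiom (\ref{itm:cylindrical-boundary}) gives that $\partial$ is a fibration, and axiom (\ref{itm:cylindrical-endpoints}) gives that $\partial_0$ and $\partial_1$ are trivial fibrations.

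For (i), the defining square in Construction \ref{con:path-space} already exhibits $\langle q_f, p_f \rangle$ as the pullback of $\partial \colon PY \to Y \times Y$ along $f \times Y$, and hence as a fibration. For (ii), I would apply pullback pasting to that same defining square, cancelling the factor $Y \times Y \to Y$ on the second coordinate, to recognize $q_f$ as the pullback of $\partial_0 \colon PY \to Y$ along $f$:
\[
\begin{tikzcd}
  Bf \ar[r] \ar[d, "q_f"'] \ar[dr, phantom, "\lrcorner" very near start] & PY \ar[d, "\partial_0"] \\
  Z \ar[r, "f"'] & Y.
\end{tikzcd}
\]
Hence $q_f$ is a trivial fibration.

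For (iii), I would factor $p_f$ as the composite
\[
  Bf \xrightarrow{\langle q_f, p_f \rangle} Z \times Y \xrightarrow{\pi_Y} Y,
\]
where the first arrow is a fibration by (i). Since $Z$ is fibrant, the projection $\pi_Y$ is the pullback of $Z \to 1$ along $Y \to 1$, and is therefore a fibration. Closure of fibrations under composition completes the argument.

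No real obstacle is anticipated: the proof is a bookkeeping exercise in pullbacks and axiom application. The only subtlety is the identification of the Leibniz pullback applications at the terminal map with the bare components of the adjoint cylinder, which is the single place where fibrancy of $Y$ (and, for (iii), of $Z$) enters essentially.
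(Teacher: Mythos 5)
Your proof is correct and follows essentially the same route as the paper: $\langle q_f,p_f\rangle$ is a pullback of $\partial$, $q_f$ is a pullback of $\partial_0$ by pullback pasting, and $p_f$ is the composite of $\langle q_f,p_f\rangle$ with the projection $Z\times Y\to Y$. The only difference is that you spell out the reduction of the Leibniz pullback applications at $!_Y$ to the bare maps $\partial,\partial_0$ (via $P1\cong 1$), a step the paper leaves implicit.
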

  \begin{proof}
  These maps arise as
  \begin{align*}
  \begin{tikzcd}[ampersand replacement=\&]
    B f
    \ar[r]
    \ar[d, "q_f"']
    \ar[dr, phantom, "\lrcorner" very near start]
  \&
    P Y
    \ar[d, "\partial_0"]
  \\
    Z
    \ar[r, "f"']
  \&
    Y \rlap{,}
  \end{tikzcd}
  &&
  \begin{tikzcd}[ampersand replacement=\&]
    B f
    \ar[r]
    \ar[d, "{(q_f, p_f)}" pos=0.6]
    \ar[dd, bend right=50, "p_f"']
    \ar[dr, phantom, "\lrcorner" very near start]
  \&
    P Y
    \ar[d, "\partial"]
  \\
    Z \times Y
    \ar[r, "f \times Y"']
    \ar[d, "\pi"]
  \&
    Y \times Y
  \\
    Y \rlap{.}
  \end{tikzcd}
  \end{align*}
  If $Y$ is fibrant, then by Definition \ref{defn:cylindrical}, $\partial \colon PY \twoheadrightarrow Y \times Y$ is a fibration and $\partial_0 \colon PY \fwto Y$ is a trivial fibration, proving the first two statements.
  If $Z$ is fibrant, then the projection $\pi \colon Z \times Y \twoheadrightarrow Y$ is a fibration as well, proving the third statement.
  \end{proof}

  \begin{rmk}\label{rmk:fibredbrownfactor}
  By Lemma \ref{lem:cylindrical-slicing}, Construction \ref{con:path-space} can be implemented in slice categories.
  Given a map $f \colon Z \to Y$ lying over $X$ via $g \colon Y \to X$, the \textbf{fibred Brown factorization} is defined by implementing the Brown factorization construction in the slice over $X$. This factors the graph of $f$, regarded as a morphism with codomain $Z \times_X Y$, through a pullback of the fibred path object as displayed below-left:
\begin{equation}\label{eq:fibred-Brown}
    \begin{tikzcd}[sep=tiny]
    Z \arrow[dddd, "f"'] \arrow[rrrrrr, "1 \times f"] \arrow[dr, dashed,"s_f"'] &&& &&& Z \times Y \arrow[dddd, "f \times 1"] \\ & B_X f\arrow[dddd, dotted] \arrow[dr, dashed, "{(q_f, p_f)}" ] \arrow[urrrrr,dotted] \arrow[dddr, phantom, "\lrcorner" very near start] \\ & & Z\times_X Y \arrow[dddd, dotted] \arrow[uurrrr, dotted]\arrow[ddr, phantom, "\lrcorner" very near start] \\ \\
          Y \arrow[rrr] \arrow[dddd, "g"'] \arrow[dr, dashed] & &~ & PY \arrow[rrr] \arrow[dddd,  "Pg"] & & & Y \times Y \arrow[dddd,  "g \times g"] \\ &  P_XY  \arrow[dddl, dotted] \arrow[dddr, phantom, "\lrcorner" very near start]\arrow[urr, dotted] \arrow[dr, dashed ]
      \\ & & Y\times_XY \arrow[ddll, dotted] \arrow[uurrrr, dotted]  \arrow[ddr, phantom, "\lrcorner" very near start]\\ ~
      \\ X \arrow[rrr] & & ~ & PX \arrow[rrr] & & & X \times X
  \end{tikzcd}
\begin{tikzcd}[sep=tiny]
  Z \ar[ddddr, "g f"'] \ar[dr, dashed, "s_f" near end] \ar[rrr, "1"]& & & Z \ar[dr, "s_f"] \ar[ddd, "g f"] \\
  & B_X f \arrow[dddr, phantom, "\lrcorner" very near start] \ar[ddd, dotted] \ar[rrr, crossing over, dotted] \arrow[dr, dashed, "{(q_f, p_f)}" ]& & &  Bf\arrow[dr, "{(q_f, p_f)}" ] \ar[ddd] \\
  & & Z \times_X Y \ar[ddl, dotted] \ar[dr, phantom, "\lrcorner" very near start] \ar[rrr, dotted, crossing over]  &   & & Z \times Y \ar[ddd, "{gf \times g}"]  \\
   &~ &~ & X\ar[dr, "\epsilon"] \\
  & X \ar[urr, "1"] \ar[rrr, "\epsilon" description] \ar[rrrrd, "{(1,1)}"']&~ & ~& PX \ar[dr, "\partial"] \\
  & & & & & X \times X \rlap{.}
\end{tikzcd}
  \end{equation}
  By interchange of the pullback constructing the Brown factorization with pullback to the slice over $X$, the fibred Brown factorization is a pullback of the non-fibred Brown factorization as indicated in the right diagram above.
  Here the right-hand rectangle is formed by applying the non-fibred Brown factorization to the commutative square from $f$ to the identity on $X$.

  In this setting, Lemma \ref{lem:mapping-path-space-fibrations} specializes to tell us that
  \begin{enumerate}
  \item when $g$ is a fibration, $(q_f, p_f) \colon B_X f \to Z \times_X Y$ is a fibration,
  \item when $g$ is a fibration, $q_f \colon B_X f \to Z$ is a trivial fibration,
  \item when $g$ and $gf$ are both fibrations, $p_f \colon B_X f \to Y$ is a fibration.
  \end{enumerate}
\end{rmk}

\begin{lem}\label{lem:mapping-path-space-stability}
The fibred Brown factorization is stable under all pullbacks.
\end{lem}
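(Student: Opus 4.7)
The plan is to apply the pullback functor $h^* = -\times_X W$ associated to any morphism $h \colon W \to X$ to the entire diagram \eqref{eq:fibred-Brown} and verify, via pullback pasting, that the result is canonically isomorphic to the fibred Brown factorization of the pulled-back map $h^*f \colon h^*Z \to h^*Y$ over $W$. Two general facts make this feasible. First, in a locally cartesian closed category (or more generally whenever $h^*$ has a right adjoint), the pullback functor preserves all finite limits. Second, the functorial cocylinder $P$ is a right adjoint to $C$ in the adjoint functorial cylinder, and therefore preserves pullbacks; in particular it preserves the pullback $h^*Y = Y \times_X W$.

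Applying $h^*$ to the pullback squares of \eqref{eq:fibred-Brown} yields, by the pasting lemma, pullback squares in $\cE/W$ with the expected objects: the identifications $h^*(Y \times_X Y) \cong h^*Y \times_W h^*Y$ and $h^*(Z \times_X Y) \cong h^*Z \times_W h^*Y$ are immediate. The key identification is that of the fibred path object, $h^*(P_X Y) \cong P_W(h^*Y)$, which is obtained by unwinding the defining pullback of $P_X Y$ from Lemma \ref{lem:sliced-fun-htpy} and invoking the fact that $P$ preserves pullbacks. Combining these identifications then gives $h^*(B_X f) \cong B_W(h^*f)$, and the structural maps $s_{h^*f}$, $q_{h^*f}$, $p_{h^*f}$ are identified with the pullbacks of $s_f$, $q_f$, $p_f$ by the universal properties of the relevant limits.

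The main subtle point, and essentially the only non-formal step, is the compatibility of the fibred path object with base change, $h^*(P_X Y) \cong P_W(h^*Y)$; everything else reduces to the stability of pullbacks under further pullback. The remaining work is a bookkeeping exercise tracking the structural maps to $X$ (and their pullbacks to $W$) that determine how each object of \eqref{eq:fibred-Brown} is viewed as lying over the base.
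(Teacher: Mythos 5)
Your proposal is correct and is essentially an expanded version of the paper's proof, which simply asserts that the claim follows by inspection of the defining diagram \eqref{eq:fibred-Brown}: everything there is an iterated pullback, and the one non-formal identification, $h^*(P_XY)\cong P_W(h^*Y)$, holds because $P$ preserves pullbacks as a right adjoint of $C$ — exactly the point you isolate. (One small imprecision: the pullback functor $h^*$ preserves finite limits because it \emph{is} a right adjoint, namely to postcomposition with $h$; local cartesian closedness is not needed for this.)
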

\begin{proof}
This is the combination of the description of the fibred Brown factorization in the right-hand diagram of~\eqref{eq:fibred-Brown} with pullback pasting.
\end{proof}

\begin{defn}\label{defn:contractible-map} A map $f \colon Z \to Y$ between fibrant objects in a cylindrical premodel category is called \textbf{contractible} when the right factor $p_f \colon B_f \to Y$ in its Brown factorization is a trivial fibration:
  \[ \begin{tikzcd} & Bf \arrow[dr, utfibarrow, "p_f"] \\ Z \arrow[ur, "s_f"] \arrow[rr, "f"'] & & Y \rlap{.}
  \end{tikzcd}
  \]
\end{defn}

In the presence of the 2-of-3 property, the contractible maps agree with the weak equivalences between fibrant objects:

\begin{lem}\label{lem:contractible-vs-weak-equivalence} In a cylindrical model category, where the weak equivalences satisfy the 2-of-3 property, a map between fibrant objects is contractible if and only if it is a weak equivalence.
\end{lem}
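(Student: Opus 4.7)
The plan is to analyze the Brown factorization $f = p_f \cdot s_f$ and observe that $s_f$ is always a weak equivalence (under the fibrancy hypotheses), so that contractibility of $f$ becomes equivalent to $p_f$ being a weak equivalence, which in turn (since $p_f$ is always a fibration) is equivalent to $p_f$ being a trivial fibration by the standard premodel lemma already established.

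More concretely, first I would unpack what Lemma~\ref{lem:mapping-path-space-fibrations} gives us here. Because $Y$ and $Z$ are both fibrant, we know $p_f \colon Bf \to Y$ is a fibration, and $q_f \colon Bf \to Z$ is a trivial fibration with section $s_f$ (as $q_f \cdot s_f = \id_Z$). In particular, $q_f$ is a weak equivalence, and $\id_Z$ is a weak equivalence, so by 2-of-3 applied to $q_f \cdot s_f = \id_Z$ the map $s_f$ is a weak equivalence as well. This observation is the workhorse of the proof.

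For the forward direction, suppose $f$ is contractible, i.e.\ $p_f$ is a trivial fibration. Then $p_f$ is a weak equivalence, and $s_f$ is a weak equivalence by the above, so $f = p_f \cdot s_f$ is a weak equivalence. For the converse, suppose $f$ is a weak equivalence. Since $s_f$ is a weak equivalence, 2-of-3 applied to $f = p_f \cdot s_f$ implies $p_f$ is a weak equivalence. But $p_f$ is a fibration, so by the standard fact (already recorded in this section) that any fibration which is a weak equivalence is a trivial fibration, $p_f$ is a trivial fibration and $f$ is contractible.

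There is no real obstacle here; the only point requiring care is that the fibrancy hypotheses on both $Y$ and $Z$ are needed precisely to invoke all three clauses of Lemma~\ref{lem:mapping-path-space-fibrations} (in particular, to know $p_f$ is a fibration), and once those are in hand the argument is a two-line application of 2-of-3.
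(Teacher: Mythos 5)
Your proof is correct and follows essentially the same route as the paper: observe that $s_f$ is a weak equivalence as a section of the trivial fibration $q_f$ (by 2-of-3), then apply 2-of-3 to $f = p_f \cdot s_f$ and use the fact that a fibration which is a weak equivalence is a trivial fibration. The paper's proof is just a terser version of your argument.
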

\begin{proof}
If the weak equivalences satisfy the 2-of-3 property, then the section $s_f$ of the trivial fibration $q_f$ is also a weak equivalence. Thus, again by 2-of-3, $f$ is a weak equivalence if and only if the fibration $p_f$ is a trivial fibration.
\end{proof}

For emphasis, we shall refer to a contractible map in a slice $\cE_{/X}$ as a \textbf{contractible map over} $X$. Explicitly, a fibred map $f \colon Z\to Y$ over $X$ is contractible just when its domain $Z\to X$ and codomain $Y\to X$ are fibrations, and the fibration $p_f \colon B_X f \twoheadrightarrow Y$ of Remark \ref{rmk:fibredbrownfactor} is a trivial fibration.

\subsection{Equivalence extension property}\label{ssec:EEP}

In this section, we show that under suitable hypotheses, a cylindrical premodel category satisfies the following condition, the significance of which is explained in the sequel.
%Namely, we assume
%\begin{enumerate}
%  \item that $\cE$ is a locally cartesian closed cylindrical premodel category in which the cofibrations are the monomorphisms, and
%  \item that monomorphisms are stable under pushout-products in the slices.
%\end{enumerate} In particular, these conditions hold in any topos.

\begin{defn}\label{defn:EEP} A cylindrical premodel structure has the \textbf{equivalence extension property} when any contractible map $e$ over an object $A$ can be extended along any cofibration $i \colon A \cto B$ to a contractible map $f$ over $B$ with a specified codomain extending that of the original map:
  \begin{equation}\label{eq:EEP}
    \begin{tikzcd}
      X_0 \arrow[drrr,phantom,"\lrcorner" pos=.05] \arrow[ddr, two heads, "p_0"'] \arrow[rr, dotted] \arrow[dr,uwearrow, "e"] & & Y_0 \arrow[dr, uwedashed, "f"] \arrow[ddr, dotted, two heads]\\ &  X_1 \arrow[d, two heads, "p_1" near start] \arrow[rr, crossing over] \arrow[drr, phantom, "\lrcorner" very near start] & & Y_1 \arrow[d, two heads, "q_1" near start]\\  & A \arrow[rr, tail, "i"]  & & B \rlap{.}
    \end{tikzcd}
  \end{equation}
  In a setting such as a presheaf topos where we have universe levels, there is an additional requirement: for sufficiently large inaccessible cardinals $\kappa$, if $p_0$, $p_1$, and $q_1$ are $\kappa$-small, so is the extended fibration in \eqref{eq:EEP}.
\end{defn}

\begin{thm}\label{thm:cylindrical-EEP} Let $\cE$ be a locally cartesian closed category with a cylindrical premodel structure in which the cofibrations are the monomorphisms, and these are stable under pushout-products in all slices.  Then the equivalence extension property holds in $\cE$.
\end{thm}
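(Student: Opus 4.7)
The plan is to adapt the standard equivalence extension construction of Sattler \cite{Sattler:2017ee,Sattler:2018dc}, refined in \cite[{\S}4]{CavalloSattler:2022re}, by reducing the problem to a lifting problem against the cofibration $i$. First, by Lemma \ref{lem:cylindrical-slicing}, the cylindrical premodel structure lifts to $\cE_{/B}$; pulling $i$ back along $q_1 : Y_1 \fto B$ gives a cofibration $j : X_1 \cto Y_1$, since cofibrations are monomorphisms and monomorphisms are pullback stable. The problem thus becomes: in $\cE_{/Y_1}$, construct a contractible fibration $f : Y_0 \fto Y_1$ with $j^* f \cong e$, and show that the construction preserves $\kappa$-smallness.

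Second, I would construct an object $E \to Y_1$ in $\cE_{/Y_1}$ whose sections over a subobject $U \subseteq Y_1$ classify contractible fibrations $Z \to U$ equipped with an identification of $(j \cap U)^* Z$ with the corresponding restriction of $e$. Using the local cartesian closed structure and the fibred Brown factorization (Remark \ref{rmk:fibredbrownfactor}), $E$ can be built as a dependent sum encoding an object $Z \to Y_1$, a fibration structure on it, an identification $j^* Z \cong X_0$, and a trivial fibration structure on the right factor $p_Z$ of the fibred Brown factorization of $Z \to Y_1$. The given $e$, together with the trivial fibration $p_e$ from its Brown factorization, picks out a canonical section of $E|_{X_1} \to X_1$.

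The main step is then to show that $E \to Y_1$ is a trivial fibration; once established, the partial section over $X_1$ extends along $j$ by the defining lifting property of trivial fibrations against cofibrations, and unpacking this total section yields the desired $f$. The verification combines the closure properties of fibrations and trivial fibrations under the Leibniz pullback applications of $\partial,\partial_0,\partial_1$ (Definition \ref{defn:cylindrical}), the reflective embedding $j^* \dashv j_*$ along the mono $j$ (Lemma \ref{lem:mono-pushforward}), and, decisively, the hypothesis that monomorphisms are stable under pushout-products in all slices, which is what allows lifting data on joins of subobjects to be assembled into a lifting problem still against a cofibration. The size claim follows automatically, since the construction uses only finite limits, pushforwards along monomorphisms, and internal homs of small objects, all of which preserve $\kappa$-smallness for sufficiently large inaccessible $\kappa$. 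The hard part will be the trivial fibration verification: contractibility is additional structure beyond fibrancy, so $E$ must package it compatibly with the fibration structure on $Z \to Y_1$ in a way that makes partial lifting data against arbitrary monos always extendable, and this is precisely where the pushout-product hypothesis plays its essential role.
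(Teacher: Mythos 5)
There is a genuine gap, and it is the central construction. Your object $E \to Y_1$ is supposed to be ``a dependent sum encoding an object $Z \to Y_1$, a fibration structure on it, an identification $j^*Z \cong X_0$, and a trivial fibration structure on $p_Z$.'' In a general locally cartesian closed category with a cylindrical premodel structure --- the setting of the theorem --- there is no object whose sections classify \emph{objects} $Z \to U$: internal quantification over families requires a universe (or at least a locally representable notion of fibred structure), and neither is among the hypotheses. Indeed, the paper deliberately proves the equivalence extension property \emph{first}, without universes, and only later (Proposition \ref{prop:EEP-univalence}) relates it to the universe-level statement; your proposal would make the logic circular, since trivial fibrancy of a classifier of contractible fibrations is essentially the univalence statement itself. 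Relatedly, being a fibration is a lifting \emph{property} here, not structure, so ``encoding a fibration structure'' in a dependent sum is not meaningful at this level of generality. Finally, your proposal never actually produces the domain $Y_0$, which is the first and most important move: the paper defines $Y_0$ as the pullback of $i_*e \colon i_*X_0 \to i_*i^*Y_1$ along the unit $\eta_{Y_1} \colon Y_1 \to i_*i^*Y_1$, and Lemma \ref{lem:mono-pushforward} is what guarantees $i^*f \cong e$.

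Even setting aside the classifier issue, the ``main step'' you defer --- showing the relevant map is a trivial fibration --- is where all the content lives, and your sketch does not identify the actual mechanism. In the paper's proof one compares the fibred Brown factorization of $f$ with the pushforward of that of $e$ via two naturality cubes for $\eta \colon \id \Rightarrow i_*i^*$, and concludes that $p_f$ factors as a pullback of a comparison map $z$ followed by a pullback of $i_*p_e$. The map $i_*p_e$ is a trivial fibration because pushforward along a mono preserves trivial fibrations (cofibrations being pullback-stable), and $z$ is a trivial fibration because it is the Leibniz exponential, in the slice over $B$, of the cofibration $i$ with the trivial fibration $\partial_1 \colon P_BY_1 \fwto Y_1$ --- this is the precise point where the pushout-product hypothesis enters. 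One then checks $q_1f$ is a fibration by exhibiting it as a retract of $q_1p_f$ via $s_f$ and $q_f$. None of these steps appears in your outline, so the proof as proposed does not go through.
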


\begin{ex}
For instance, by Remark \ref{rmk:adhesive-pushout-products}, the hypotheses are satisfied in a cylindrical premodel structure on an elementary topos if the cofibrations are the monomorphisms. Moreover, in a presheaf topos, all of the constructions in the proof of Theorem \ref{thm:cylindrical-EEP} will respect universe levels.
\end{ex}

Our approach to the equivalence extension property phrased using contractible maps follows \cite{Sattler:2017ee}.
In a cylindrical model category, where the weak equivalences satisfy the 2-of-3 condition, this is equivalent by Lemma \ref{lem:contractible-vs-weak-equivalence} to the equivalence extension property phrased instead using weak equivalences as in \cite{KapulkinLumsdaine:2021sm,Shulman:2015ua}.

The proof of Theorem \ref{thm:cylindrical-EEP} occupies the remainder of this section.
To begin, in the diagram \eqref{eq:EEP}, we have $i^*Y_1 \cong X_1$ by hypothesis, and we define an object $Y_0$ with a map $f\colon Y_0 \to Y_1$ as a pullback of the pushforward along $i$ of the given fibred map $e\colon X_0\to X_1$\,:
\begin{equation}\label{eq:EEP-map} \begin{tikzcd}
  Y_0 \arrow[r, "\eta_{Y_0}"] \arrow[d, "f"'] \arrow[dr, phantom, "\lrcorner" very near start] & i_* X_0 \arrow[d, "i_*e"]  \\
   Y_1 \arrow[r, "\eta_{Y_1}"'] & i_* i^* Y_1 \rlap{.}  % = i_* X_1
   \end{tikzcd}
\end{equation}
By Lemma \ref{lem:mono-pushforward}, $i^*\eta_{Y_1}$ is invertible.
Considering the image of \eqref{eq:EEP-map} under the pullback-preserving functor $i^*$, we conclude that $i^*f$ is isomorphic to $i^*i_*e \cong e$.
In other words, $f \colon Y_0 \to Y_1$ pulls back along $i$ to the original map $e \colon X_0 \to X_1$, giving a diagram of the required form \eqref{eq:EEP}.

It remains to show that $q_1 f \colon Y_0 \to B$ is a fibration and that $f \colon Y_0 \to Y_1$ is a contractible map over $B$. We shall prove both in the slice over $B$.

For contractibility, consider the fibred Brown factorizations for both $e$ and $f$:
\[
  \begin{tikzcd}
     & B_A e \arrow[rr, dotted] & & B_B f \\ [-15pt]
    X_0 \arrow[ur, dashed] \arrow[ddr, two heads, "p_0"'] \arrow[rr, dotted] \arrow[dr, "e",uwearrow] & & Y_0 \arrow[ur, dashed] \arrow[dr, "f"] \arrow[ddr, "q_1f"' near end]\\ &  X_1 \arrow[from=uu, dashed, two heads, crossing over, "p_e" near end] \arrow[d, two heads, "p_1"' near start] \arrow[rr, crossing over] \arrow[drr, phantom, "\lrcorner" very near start] & & Y_1 \arrow[from=uu, dashed, crossing over, "p_f" near end]\arrow[d, two heads, "q_1" near start]\\  & A \arrow[rr, tail, "i"]  & & B \rlap{.}
  \end{tikzcd}
\]
By Lemma \ref{lem:mapping-path-space-stability}, the fibred Brown factorization for $f$ pulls back along $i$ to the factorization for $e$, and similarly the fibred path objects pullback $i^* P_BY_1 \cong P_AX_1$ (not shown in the diagram).  The relationship between the pushforward of the fibred Brown factorization for $e$ and that for $f$ is more complicated, however. To understand it, first consider the naturality cube resulting from the pullback square defining the map $(q_f, p_f)$ and the unit natural transformation $\eta \colon \id \Rightarrow i_*i^*$, which by Lemma \ref{lem:mapping-path-space-stability} determines the following commutative cube:
% comparing the fibred Brown factorization of $f$ with the pushforward of the fibred Brown factorization for $e$:
\[
  \begin{tikzcd}[sep=tiny]
    B_Bf \arrow[dr, phantom, "\lrcorner" very near start] \arrow[rrrr] \arrow[ddd, "{(q_f,p_f)}"'] \arrow[rrrd, "\eta"']& &  & & P_BY_1 \arrow[ddd,  "\partial" pos=.7] \arrow[rrrd, "\eta"]  \\ &~ & & i_*B_Ae \arrow[rrrr, crossing over] \arrow[dr, phantom, "\lrcorner" very near start]  & & & & i_*P_AX_1 \arrow[ddd, "{i_*\partial}"] \\  &  & ~ & & ~&~  \\ Y_0 \times_B Y_1 \arrow[rrrr] \arrow[rrrd, "\eta"']  & & & & Y_1\times_BY_1 \arrow[drrr, "\eta"] \\   & & & i_*(X_0 \times_A X_1) \arrow[from=uuu, crossing over, two heads, "{i_*(q_e,p_e)}"' pos=.3] \arrow[rrrr] & & & & i_*(X_1 \times_A X_1) \rlap{.}
    \end{tikzcd}
  \]
    The back face is the pullback in Construction \ref{con:path-space}, and the front face is its image under the right adjoint $i_*$, and is therefore also a pullback.  Since \eqref{eq:EEP-map} is a pullback, the bottom square is one as well. By pullback composition and cancelation, the top square is therefore also a pullback.

Now consider the naturality cube associated to the commutative square relating $p_f$ and~$\partial_1$:
    \[
    \begin{tikzcd}[sep=tiny]
B_Bf \arrow[rrrr] \arrow[ddd, "p_f"'] \arrow[ddr, dotted] \arrow[rrrd]& &  & & P_BY_1 \arrow[ddd, tfibarrow, "\partial_1"' pos=.6] \arrow[rrrd, "\eta"] \arrow[ddr,  "z" near start, dashed  ] \\ & & & i_*B_Ae \arrow[rrrr, crossing over]   & & & & i_*P_AX_1 \arrow[ddd, "i_*\partial_1", utfibarrow] \\  & \bullet \arrow[ddrr, phantom, "\lrcorner" pos=.05]  \arrow[urr, dotted] \arrow[dl, dotted] \arrow[rrrr, dotted] &  & & & \bullet  \arrow[urr, dotted] \arrow[dl, dotted] \arrow[ddrr, phantom, "\lrcorner" pos=.05] \\ Y_1 \arrow[rrrr, equals] \arrow[rrrd, "\eta"']  & & & & Y_1 \arrow[drrr, "\eta"]  \\   & & & i_*X_1 \arrow[from=uuu, crossing over, tfibarrow, "i_*p_e"' pos=.4] \arrow[rrrr, equals] & & & & i_*X_1 \rlap{.}
\end{tikzcd}
\]
The top square was just shown to be a pullback, and the bottom square is evidently one.  So when we form the pullbacks indicated in the left and right faces, we obtain a factorization of $p_f$ as a pullback of the map $i_*p_e$, after a pullback of the comparison map $z$ indicated as a dashed arrow in the right-hand face. This factorization will display $p_f$ as a trivial fibration, as we now argue.

First, since $e$ is a contractible map over $A$, its second Brown factor $p_e$ is a trivial fibration. Since the cofibrations are the monomorphisms, and therefore stable under pullback, the trivial fibrations are stable under pushforward, and so $i_*p_e$ is a  trivial fibration, as is any pullback of it.

Next, the map $z$ may be described as a Leibniz pullback application of the unit $\eta$ applied to the trivial fibration $\partial_1 \colon P_B Y_1 \fwto Y_1$.
But this is also a trivial fibration, as it is the Leibniz exponential, in the slice over $B$, of the cofibrant object $i \colon A \rightarrowtail B$ and the trivial fibration $\partial_1\colon P_BY_1 \fwto Y_1$, and monomorphisms are closed under pushout-products in slices.

Thus $p_f$ factors as a composite of pullbacks of trivial fibrations and so is itself a trivial fibration.  The map $f$ is therefore contractible over $B$, provided that its domain $q_1 f \colon Y_0 \to B$ is a fibration. But $q_1 f$ is a retract of $q_1p_f$:
\[ \begin{tikzcd} Y_0 \arrow[d, "f"'] \arrow[r, "s_f"] & B_B f \arrow[d, "p_f", utfibarrow] \arrow[r, "q_f"] & Y_0 \arrow[d, "f"] \\ Y_1 \arrow[d, "q_1"', two heads] \arrow[r, equals] & Y_1 \arrow[d, "q_1", two heads]  & Y_1 \arrow[d, "q_1", two heads] \\ B \arrow[r, equals] & B \arrow[r, equals] & B \rlap{.} \end{tikzcd} \]
Here the right rectangle commutes because in the fibred Brown factorization of $f \colon Y_0 \to Y_1$ over $B$, $p_f$ and $q_f$ live over $B$ and $p_f$ and $f$ both have target $q_1 \colon Y_1 \fto B$.
We have just shown that $p_f$ is a (trivial) fibration, while $q_1$ was assumed to be a fibration; thus the retract diagram implies $q_1 f \colon Y_0 \to B$ is also a fibration, as required. This completes the proof of Theorem \ref{thm:cylindrical-EEP}.

\subsection{The Frobenius condition}\label{ssec:frobenius}

In the setting of a locally cartesian closed category, it is natural to ask that a premodel structure satisfies the Frobenius condition.

\begin{defn}\label{defn:frobenius}
  A weak factorization system satisfies the \textbf{Frobenius condition} if the left maps are stable under pullback along the right maps. A premodel structure satisfies the \textbf{Frobenius condition} if this holds for both of its weak factorization systems.
\end{defn}

When the cofibrations are the monomorphisms, since these are stable under all pullbacks, the Frobenius condition only requires proof for the trivial cofibration--fibration weak factorization system. This condition has been studied in the homotopy type theory literature owing to the fact that, in a locally cartesian closed category, it is equivalent to the fibrations being closed under the pushforward operation, corresponding to type theory's $\Pi$-type construction. Various proofs of the Frobenius condition are given \cite{CCHM:2018ctt,GambinoSattler:2017fc,Awodey:2023,HazratpourRiehl:2022tc,Barton:2024}, depending on how exactly the fibrations are defined from the trivial fibrations. For the premodel structure introduced in \S\ref{sec:symmetric}, the result we will need is the following:

  \begin{prop}[{\cite[3.1.8]{ABCFHL},\cite[\S6]{Awodey:2023},\cite[\S 4]{HazratpourRiehl:2022tc}, \cite[8]{Barton:2024}}]\label{prop:generic-frobenius}
    Let $\cE$ be a locally cartesian closed category with a premodel structure in which the cofibrations are the monomorphisms.   Suppose there is an object $I$ such that a map is a fibration just when the Leibniz exponential of its pullback to the slice over $I$ by the diagonal $\delta \colon I \to I \times I$ is a trivial fibration in the slice premodel structure.
    Then the premodel structure satisfies the Frobenius condition.\qed
  \end{prop}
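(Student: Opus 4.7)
Since the cofibrations are assumed to be the monomorphisms, which are stable under arbitrary pullback, the Frobenius condition for the (cofibration, trivial fibration) weak factorization system is immediate. The content to verify is the Frobenius condition for the (trivial cofibration, fibration) system: that trivial cofibrations are stable under pullback along fibrations. In a locally cartesian closed category this is equivalent, via the adjunction $f^{*} \dashv f_{*}$, to the dual assertion that for any fibration $f \colon Y \to X$, the pushforward $f_{*} \colon \cE_{/Y} \to \cE_{/X}$ preserves fibrations.

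So fix a fibration $f \colon Y \to X$ and a fibration $g$ in $\cE_{/Y}$; the goal is to show that $f_{*} g$ is a fibration in $\cE_{/X}$. By the given characterization applied to $f_{*} g$, this reduces to showing that the Leibniz exponential of $\delta \colon I \to I \times I$ with the pullback of $f_{*} g$ to the slice over $I$ is a trivial fibration in the slice premodel structure. The key step is to push the Leibniz exponential and the pullback to the slice over $I$ past the pushforward $f_{*}$: since pushforward is an internal right adjoint in a locally cartesian closed category, Beck--Chevalley style arguments should identify $\delta \hato (f_{*} g)_{I}$ with the image under an appropriate (base-changed) pushforward of $\delta \hato g_{I}$, writing $(-)_{I}$ informally for the pullback to the relevant slice over $I$.

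Once this rearrangement is in hand, the argument closes quickly. By the hypothesis applied to the fibration $g$, the map $\delta \hato g_{I}$ is a trivial fibration. Moreover, trivial fibrations are stable under pushforward along any morphism: since cofibrations are monomorphisms and monomorphisms are stable under pullback, the pullback functor preserves cofibrations, so its right adjoint preserves the right-orthogonal class of trivial fibrations. Combining, $\delta \hato (f_{*} g)_{I}$ is a trivial fibration, and thus $f_{*} g$ is a fibration by the hypothesis. The main technical obstacle is the careful Beck--Chevalley bookkeeping that commutes the Leibniz exponential with $\delta$ past the pushforward $f_{*}$: this is the point where the cited proofs diverge in their details, and where one must keep track of the several base changes between slice categories involved.
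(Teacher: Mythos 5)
Your opening reductions are fine: Frobenius for the cofibration half is trivial since monomorphisms are pullback-stable, the (trivial cofibration, fibration) half is equivalent to $f_*$ preserving fibrations for every fibration $f$, and pushforward along \emph{any} map preserves trivial fibrations because its left adjoint preserves monomorphisms. The gap is the step you defer to ``Beck--Chevalley bookkeeping'': there is no identification of $\delta \hato (f_*g)_I$ with a pushforward of $\delta \hato g_I$, and this is not a bookkeeping issue but the place where the entire content of the proposition lives. The tell-tale symptom is that your argument never uses the hypothesis that $f$ is a fibration. If it were correct, it would show that $f_*$ preserves fibrations along an \emph{arbitrary} map $f$, equivalently that trivial cofibrations are stable under pullback along arbitrary maps, which fails in every intended instance. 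For example, in cartesian cubical sets with the unbiased fibrations, take $f = [\delta_0,\delta_1] \colon 1 + 1 \to I$ and $g$ the fibration $\mathrm{id} + {!} \colon 1 + \emptyset \to 1 + 1$; then $f_*g \cong \delta_0 \colon 1 \to I$, a trivial cofibration that is not a fibration (it fails to lift against the open box $\delta_1 \colon 1 \to I$ over the swap automorphism of $I$).

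Where the commutation actually breaks: working in $\cE_{/I}$ and writing $\II$ for the pulled-back interval with its generic point, the domain of $\delta \hato (f_*g)_I$ is the exponential $(f_*Z)^{\II}$ formed in $\cE_{/I}$ (cf.\ Definition \ref{defn:parametrized-path-space}), whose elements are $\II$-paths in $f_*Z$ lying over \emph{arbitrary} $\II$-paths in $X$. The exponential $[\II,-]$ does not commute with the dependent sum along the structure map of $X \times I$ over $I$, because a map $\II \times N \to X \times I$ need not factor through the projection to $N$; so nothing obtained by pushing forward $\delta \hato g_I$ (which only involves paths in $Z$ over paths in $Y$) can see the paths in $X$ that do not lift to $Y$. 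Supplying those lifts is exactly what the fibration (transport) structure on $f$ is for: one first lifts the given path in $X$ to a path in $Y$ through the relevant point, and only then applies the filling structure of $g$ along the lifted path. This two-step shape is common to all the cited proofs and is explicit in the paper's type-theoretic version, Proposition \ref{prop:internal-frobenius}, where the transport structure $t_A$ on the base family appears in the very definition of the filling operation on the $\Pi$-type. A corrected argument must incorporate this use of the filling structure of $f$; it cannot be routed purely through adjunctions and the pullback-stability of cofibrations.
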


  Now assume we are working with a premodel structure in which there is a locally representable and relatively acyclic notion of fibred structure $\mathscr{TF}$ such that the $\mathscr{TF}$-algebras are the trivial fibrations. By Lemma \ref{lem:loc-rep-triv-fib}, these hypotheses are satisfied by a premodel structure on an elementary topos whose cofibrations are the monomorphisms. If this premodel structure satisfies the Frobenius condition, then the trivial fibration structure classifier has an important property:

  \begin{lem}\label{lem:TFibpreservesfibrations} Consider a locally cartesian closed category with a cylindrical premodel structure satisfying the Frobenius condition in which the trivial fibrations are generated by right lifting against cofibrations between cofibrant objects. Suppose $\mathscr{TF}$ is a locally representable and relatively acyclic notion of fibred structure such that the $\mathscr{TF}$-algebras are the trivial fibrations. Then if $f \colon Y\fto X$ is a fibration, then so is $\phi_{f}\colon \mathscr{TF}(f) \fto X$.
    \end{lem}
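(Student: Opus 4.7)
The plan is to describe $\phi_f$ explicitly via Example \ref{ex:loc-rep-from-cartesian-factorization} and then recognise the resulting comparison map as a Leibniz internal hom to which the Frobenius condition applies.

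The notion of fibred structure $\mathscr{TF}$ arises from the partial map classifier factorization $f = f^+ \circ \eta_f$ of Remark \ref{rmk:partial-map-stable-functorial-factorization}, which is a pullback-preserving (cartesian) functorial factorization. Applying Example \ref{ex:loc-rep-from-cartesian-factorization}, this presents $\phi_f$ as the pullback
\[
\begin{tikzcd}
\mathscr{TF}(f) \arrow[d, "\phi_f"'] \arrow[dr, phantom, "\lrcorner" very near start] \arrow[r] & \Pi_{Y^{+_X}}(Y^{+_X} \times_X Y) \arrow[d, "-\circ \eta_f"] \\
X \arrow[r, "\id_Y"'] & \Pi_Y(Y \times_X Y)\rlap{.}
\end{tikzcd}
\]
Since fibrations are stable under pullback, it therefore suffices to show that the map $-\circ \eta_f$ on the right is a fibration.

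Next, I would pass to the slice $\cE_{/X}$, which by Lemma \ref{lem:cylindrical-slicing} inherits a cylindrical premodel structure, is itself locally cartesian closed, and retains the Frobenius condition. The generating hypothesis on trivial fibrations forces every object of $\cE$, and therefore of $\cE_{/X}$, to be cofibrant. In $\cE_{/X}$, the unit $\eta_f \colon (Y, f) \cto (Y^{+_X}, f^+)$ is then a cofibration between cofibrant objects, while the object $(Y, f)$ is fibrant over the terminal $(X, \id_X)$ since $f$ is a fibration in $\cE$. With these identifications, $-\circ \eta_f$ is precisely the Leibniz internal hom in $\cE_{/X}$ of the cofibration $\eta_f$ with the fibration $(Y, f) \to (X, \id_X)$.

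The main remaining obstacle, and the crux of the argument, is to invoke the Leibniz form of Frobenius: the Leibniz internal hom of a cofibration with a fibration is a fibration. Via the Leibniz adjunction of Lemma \ref{lem:leibniz-application-transposition}, this is equivalent to the statement that the pushout-product of a trivial cofibration with a cofibration is a trivial cofibration, which in our setting follows from the Frobenius condition combined with the closure of cofibrations under pushout-products (Remark \ref{rmk:adhesive-pushout-products} in the topos case where cofibrations are the monomorphisms). With this Leibniz form in hand, $-\circ \eta_f$ is a fibration, and so $\phi_f$ is a fibration by pullback.
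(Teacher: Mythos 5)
Your reduction of $\phi_f$ to the restriction map $-\circ\eta_f$ is fine as far as it goes, although note that it silently specializes the hypothesis: the lemma assumes only that $\mathscr{TF}$ is \emph{some} locally representable, relatively acyclic notion of fibred structure whose algebras are the trivial fibrations, whereas your explicit pullback description is specific to the partial-map-classifier instance of Example \ref{ex:loc-rep-from-cartesian-factorization} (Definition \ref{defn:uniform-tfib-structure}). The real problem, however, is the step you yourself flag as the crux. The ``Leibniz form of Frobenius'' you invoke --- that the Leibniz internal hom of a cofibration into a fibration is a fibration, equivalently that the pushout product of a cofibration with a trivial cofibration is a trivial cofibration --- is \emph{not} the Frobenius condition and does not follow from it. The Frobenius condition (Definition \ref{defn:frobenius}) concerns the pullback functor along a \emph{fibration}: trivial cofibrations are stable under $p^*$ for $p$ a fibration, equivalently fibrations are closed under pushforward $p_*$. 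The pushout-product statement concerns the product functor $-\times D$ and its right adjoint $(-)^D$, where $D\to 1$ need not be a fibration; and even when it is, Frobenius only gives that $j\times D=\pi_B^*j$ is a trivial cofibration, from which one cannot cancel to conclude that $j\hat{\times}i$ is one without 2-of-3, which is unavailable in a premodel structure. Adding that cofibrations are closed under pushout products (Remark \ref{rmk:adhesive-pushout-products}) says nothing about \emph{trivial} cofibrations. In this paper the pushout-product axiom is a genuinely separate fact, proved for the concrete premodel structures from the interval-based generation of the fibrations (Propositions \ref{prop:sm7} and \ref{prop:equivariant-sm7}), and it is deliberately \emph{not} among the hypotheses of Lemma \ref{lem:TFibpreservesfibrations}.

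For contrast: the paper's proof avoids any SM7-type input. It uses local representability to reduce to lifting $\phi_g$ against a trivial cofibration $t\colon A\cwto B$ with $g$ a fibration over $B$, i.e.\ to showing that a fibration whose pullback along $t$ is a trivial fibration is itself a trivial fibration (then realigns by relative acyclicity). That is done by pushing $t^*g$ forward along the monomorphism $t$, using Frobenius in its actual form (to see that the pullbacks of $t$ along the two fibrations are trivial cofibrations), and then applying the ``2-of-3-like'' Lemma \ref{lem:fib-tfib-triangles} for cylindrical premodel structures. If you want to keep your Leibniz-hom route, you must either add the pushout-product axiom (in slices) as a hypothesis --- which would make the lemma too weak for the paper's axiomatic use --- or replace the final step with an argument of the paper's kind.
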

    \begin{proof}
To solve a lifting problem of the form
\[
  \begin{tikzcd}
    A \arrow[d, tcofarrow, "t"'] \arrow[r] & \mathscr{TF}(g) \arrow[d, "\phi_{f}"] \\
    B \arrow[r] \arrow[ur,dashed] & X \rlap{,}
  \end{tikzcd}
\]
we can equivalently solve the induced lifting problem against the pullback of $\phi_f$ along $B \to X$.
By pullback stability of the fibrations and Lemma \ref{lem:locally-representable-stability}(ii), it thus suffices to solve lifting problems of the form
 \begin{equation}\label{eq:TF-fib-lifting} \begin{tikzcd} A \arrow[d, tcofarrow, "t"'] \arrow[r, "u"] & \mathscr{TF}(g) \arrow[d, "\phi_{g}"]\\
 B \arrow[r, equals] \arrow[ur,dashed] & B \end{tikzcd}\end{equation}
where $t \colon A\cwto B$ is a trivial cofibration and $g \colon D \fto B$ is a fibration. This amounts to showing that if the fibration $g$ becomes a $\mathscr{TF}$-algebra upon pulling back along $t$, then it has a $\mathscr{TF}$-algebra structure making the pullback square
\begin{equation}\label{eq:TF-fib-realignment} \begin{tikzcd} C \arrow[d,tfibarrow, "t^*g"'] \arrow[dr, phantom, "\lrcorner" very near start] \arrow[r, utcofarrow, "s"] &  D \arrow[d,two heads, "g"] \\
  A \arrow[r, tcofarrow, "t"'] & B \end{tikzcd}\end{equation}
into a $\mathscr{TF}$-morphism. Note that by the Frobenius condition, the map $s$ in this pullback square is also a trivial cofibration, as a pullback of the trivial cofibration $t$ along the fibration $g$.

  Since $t^*g$ is a trivial fibration by assumption, the pushforward $t_*t^*g \colon t_*C\to B$ is also a trivial fibration.
  Since $t$ is monic, Lemma \ref{lem:mono-pushforward} implies that $t_*t^*g$ pulls back along $t$ to $t^*g$:
  \[ \begin{tikzcd} C \arrow[d, equals] \arrow[dr, phantom, "\lrcorner" very near start] \arrow[rr, utcofarrow, "{s'} "] && t_*C \arrow[dd, utfibarrow, "{t_*t^*g}" ] \\
  C \arrow[d,tfibarrow, "t^*g"']  \arrow[dr, phantom, "\lrcorner" very near start] \arrow[r, utcofarrow, "s"] &  D \arrow[d,two heads, "g"] \\
 A \arrow[r, tcofarrow, "t"'] & B \arrow[r, equals] & B \rlap{.} \end{tikzcd}\]
Again since $t_*t^*g$ is a (trivial) fibration, the pullback $s'$ is also a trivial cofibration, by the Frobenius condition. We therefore have a (trivial cofibration, fibration) and a (trivial cofibration, trivial fibration) factorization of a common map $g \cdot s = t_*t^*g \cdot s'$.  In a cylindrical premodel structure, it follows that the fibration $g$ is a trivial fibration, by an argument we now reprise.

In the commutative square defined by the pair of factorizations, form the pullback $P$ and factor the gap map in the square as a trivial cofibration followed by a fibration:
\[
\begin{tikzcd}[sep=small] C \arrow[dr, tcofarrow] \arrow[dddr, tcofarrow, "s"'] \arrow[rrrd, utcofarrow, "s'"] \\ & E \arrow[rr, two heads, dashed] \arrow[dd, two heads, dashed ] \arrow[dr, two heads] & & t_*C \arrow[dd, utfibarrow, "t_*t^*g"]\\ & & P \arrow[ur, two heads] \arrow[dl, tfibarrow] \arrow[dr, phantom, "\lrcorner" very near start] &  \\ &   D \arrow[rr, two heads, "g"'] & & B \rlap{.}
 \end{tikzcd}
\]
By the first part of Lemma \ref{lem:fib-tfib-triangles}, the dashed composite fibrations are both trivial fibrations, and now the fibration $g$ is the base of a commutative triangle of trivial fibrations with summit $E$, so $g$ is a trivial fibration by the second part of that lemma.

This proves that $g$ admits some $\mathscr{TF}$-algebra structure.
By relative acyclicity, this structure may be aligned with that of $t^* g$ to make the square \eqref{eq:TF-fib-realignment} into a $\mathscr{TF}$-morphism.
This specification of a new $\mathscr{TF}$-algebra structure on $g$ finally solves the original lifting problem \eqref{eq:TF-fib-lifting}.
\end{proof}

In the setting of Lemma \ref{lem:TFibpreservesfibrations}, Voevodsky constructs an alternate contractible map classifier, which we briefly digress to describe.

\begin{dig}\label{dig:triv-fib-logical-equivalence}
  In a locally cartesian closed category with a cylindrical premodel structure satisfying the Frobenius condition, for any fibration $f \colon Y \fto X$, there is a fibration $\phi_f \colon \isContr_Xf \fto X$ defined by pushing forward and then summing over its fibred path space fibration:
  \[ \begin{tikzcd} P_X Y \arrow[d, two heads, "\partial"']& \Pi_{Y} P_X Y \arrow[d, two heads, "(\pi_2)_* \partial"] & \Sigma_{Y}\Pi_{Y} P_X Y \arrow[d, two heads, "f \cdot (\pi_2)_*\partial"] & \isContr_X(f) \arrow[l, phantom, "\eqcolon"] \arrow[d, two heads, "\phi_f"]\\ Y \times_X Y \arrow[r, "\pi_2"'] &  Y \arrow[r, "f"', two heads] & X & X \rlap{.} \arrow[l, phantom, "\eqcolon"] \end{tikzcd}\]
By construction, sections to $\phi_f \colon \isContr_X(f) \fto X$ correspond to sections $s \colon X \to Y$ to $f$ together with a fibred homotopy $s \cdot f \sim_X \id_Y$.

As our notation suggests, there is a close relationship between the map $\phi_f \colon \isContr_X(f) \to X$  and the map $\phi_f \colon \mathscr{TF}(f) \to X$ constructed in Lemma \ref{lem:loc-rep-triv-fib} in the setting of a premodel structure on an elementary topos in which the cofibrations are the monomorphisms. For a fibration $f \colon Y \fto X$, these define ``logically equivalent notions'' of fibred structure witnessing that $f$ is a trivial fibration.

Indeed, if $\phi_f \colon \mathscr{TF}(f) \to X$ has a section, then $f$ is a trivial fibration, so admits a section $s \colon X \to Y$, since all objects are cofibrant. This data defines a lifting problem
\[ \begin{tikzcd} \emptyset \arrow[d, tail] \arrow[r, tail] & P_X Y \arrow[d, utfibarrow, "\partial"] \\ Y \arrow[r, "{(sf, \id_Y)}"']\arrow[ur, dashed, "h"] & Y \times_X Y \rlap{,} \end{tikzcd}\]
which admits a solution by the axiom \ref{defn:cylindrical}\eqref{itm:cylindrical-boundary} in the setting of  Lemma \ref{lem:cylindrical-slicing}, constructing a section $(s,h)$ of $\phi_f \colon \isContr_X(f) \to X$.

Conversely, if $\phi_f \colon \isContr_X(f) \to X$ has a section, then this data defines a retract diagram
\[ \begin{tikzcd} Y \arrow[d, "f"'] \arrow[r, "h"] & P_X Y \arrow[d, utfibarrow, "\partial_0"] \arrow[r, "\partial_1", utfibarrow] & Y \arrow[d, "f"] \\ X \arrow[r, "s"']& Y \arrow[r, "f"'] & X \end{tikzcd}\]
exhibiting $f$ as a retract of $\partial_0$, which is a trivial fibration  in the setting of  Lemma \ref{lem:cylindrical-slicing} by the axiom \ref{defn:cylindrical}\eqref{itm:cylindrical-endpoints}. Thus, $\phi_f \colon \mathscr{TF}(f) \to X$ has a section.
\end{dig}

\subsection{Univalence}\label{ssec:univalence}

In a premodel structure that satisfies the Frobenius condition and for which the fibrations have universes in the sense of Definition \ref{defn:has-universes}, the equivalence extension property of Definition \ref{defn:EEP} is related to Voevodsky's univalence axiom. To state this, we require the following construction. Following Notation \ref{nt:has-universes}, we write $\pi \colon \dot{U} \to U$ for a generic classifying universe and refer to this as the ``universe of fibrations,'' without explicitly designating a cardinal bound.

  \begin{lem}\label{lem:universal-equivalence} In a locally cartesian closed category with a cylindrical premodel structure satisfying the Frobenius condition, any fibration $\pi \colon \dot{U} \fto U$ has a factorization
      \[ \begin{tikzcd} & U \arrow[d, "r"] \arrow[dr, equals] \arrow[dl, equals] \\ U & \Eq(\dot{U}) \arrow[l, "s"] \arrow[r, "t"'] & U \end{tikzcd}\] of the diagonal $U\to U\times U$ such that
    \begin{enumerate}
      \item $(s,t) \colon \Eq(\dot{U}) \fto U \times U$ is a fibration  and
      \item the pullback of $\Eq(\dot{U}) \fto U \times U$ along any $e \colon \Gamma \to U \times U$ classifies (structured) contractible maps over $\Gamma$ between pullbacks of $p \colon \dot{U} \fto U$.
    \end{enumerate}
  \end{lem}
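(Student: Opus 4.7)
The plan is to construct $\Eq(\dot{U})$ in three stages: first form the internal hom of two copies of $p$ over $U \times U$, then apply the fibred Brown factorization to the generic map it parametrizes, and finally push forward the $\mathscr{TF}$-classifier of the resulting Brown factor. Writing $q_i \coloneq \pi_i^* p \colon V_i \fto U \times U$ for $i = 1, 2$, form $H \coloneq (q_1)_* q_1^* V_2 \to U \times U$, the internal hom from $q_1$ to $q_2$ in the slice over $U \times U$. Pullback along the fibration $q_1$ preserves fibrations, while pushforward along $q_1$ preserves them by the Frobenius condition, so $H \to U \times U$ is a fibration. Over $H$, the generic map is an arrow $\tilde f \colon W_1 \to W_2$ between the two pullback fibrations $W_i \to H$ of $q_i$.

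Next, apply the fibred Brown factorization of Remark \ref{rmk:fibredbrownfactor} to $\tilde f$ in the slice over $H$, yielding $\tilde f = p_{\tilde f} \circ s_{\tilde f}$ with $p_{\tilde f} \colon B_H \tilde f \to W_2$ a fibration by part (iii) of that remark. By Lemma \ref{lem:TFibpreservesfibrations}, the classifier $\phi_{p_{\tilde f}} \colon \mathscr{TF}(p_{\tilde f}) \to W_2$ of trivial fibration structure on $p_{\tilde f}$ is then also a fibration. Setting $\Eq(\dot{U}) \coloneq w_* \mathscr{TF}(p_{\tilde f})$ for $w \colon W_2 \fto H$ gives a fibration over $H$ by the Frobenius condition once more, whence $(s,t) \colon \Eq(\dot{U}) \to H \to U \times U$ is a composite fibration, establishing the first claim.

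For the diagonal factorization, pulling $q_1, q_2$ back along $\delta \colon U \to U \times U$ yields two copies of $p$, and the identity $\id_{\dot{U}}$ between them corresponds under the universal property of $H$ to a lift $\tilde\delta \colon U \to H$ of $\delta$. A direct application of Construction \ref{con:path-space} to $\id_{\dot{U}}$ identifies its Brown factor $p_{\id_{\dot{U}}}$ with $\partial_1 \colon P_U \dot{U} \fwto \dot{U}$, which is a trivial fibration by axiom \ref{defn:cylindrical}\eqref{itm:cylindrical-endpoints}. Hence $\mathscr{TF}(p_{\id_{\dot{U}}}) \to \dot{U}$ admits a canonical section, which transposes via $w \dashv w_*$ to the desired section $r \colon U \to \Eq(\dot{U})$ over $\delta$. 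More generally, for $e = (e_1, e_2) \colon \Gamma \to U \times U$, an unwinding via the universal property of $H$, pullback stability of the Brown factorization (Lemma \ref{lem:mapping-path-space-stability}) and of $\mathscr{TF}$ (Lemma \ref{lem:locally-representable-stability}), and Beck--Chevalley for $w_*$ identifies sections of $e^* \Eq(\dot{U}) \to \Gamma$ with the data of a map $f \colon e_1^* \dot{U} \to e_2^* \dot{U}$ over $\Gamma$ together with a $\mathscr{TF}$-algebra structure on its Brown factor $p_f$, which by Proposition \ref{prop:uniform-trivial-fibrations} is a witness that $f$ is contractible over $\Gamma$, establishing the second claim.

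The main obstacle lies in bookkeeping: Lemma \ref{lem:TFibpreservesfibrations} and the Frobenius condition are invoked in the slice over $H$ rather than in $\cE$ itself, so we must lift the cylindrical premodel structure, the Frobenius condition, and the local representability and relative acyclicity of $\mathscr{TF}$ down to that slice. These transfers are supplied by Lemma \ref{lem:cylindrical-slicing} together with Example \ref{ex:local-rep-transfer}, after which stability of each ingredient of the construction under further pullback along $e$ is automatic.
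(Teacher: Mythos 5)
Your proposal is correct and follows essentially the same route as the paper: form the internal hom $H = \textup{Map}_{U\times U}(\pi_1^*\dot U,\pi_2^*\dot U)$ (a fibration by Frobenius), take the right Brown factor of the generic map it parametrizes, apply Lemma \ref{lem:TFibpreservesfibrations} to its $\mathscr{TF}$-classifier, and push forward along the codomain fibration before composing down to $U\times U$. The extra detail you supply---identifying the Brown factor of $\id_{\dot U}$ with $\partial_1$ to produce the section $r$, and the Beck--Chevalley/pullback-stability unwinding of the classifying property---is exactly what the paper leaves as ``easily seen,'' and your cautions about working in slices are harmless (the relevant lemmas already apply directly in $\cE$ since fibrations in slices are created by the forgetful functor).
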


Under the stated hypotheses, the construction is the one due to Voevodsky, described, for instance, in \cite[\S 4]{Shulman:2015ua} and involves his classifier for contractible maps. As discussed in Digression \ref{dig:triv-fib-logical-equivalence}, we can prove Lemma \ref{lem:universal-equivalence} using any locally representable and relatively acyclic notion of fibred structure for trivial fibrations.

 \begin{proof}[Proof of Lemma \ref{lem:universal-equivalence}]   We construct $\Eq(\dot{U}) \fto U \times U$ by first forming the pullbacks on the left below, and then the internal hom between them in the slice over $U \times U$, as shown on the right:
    \[ \begin{tikzcd} \dot{U} \times U \arrow[r] \arrow[dr, phantom, "\lrcorner" very near start] \arrow[d, "\pi_1^* \pi"'] & \dot{U} \arrow[d, two heads, "\pi"] & U \times \dot{U} \arrow[l] \arrow[dl, phantom, "\llcorner" very near start]  \arrow[d, two heads, "\pi_2^* \pi"] & & \textup{Map}_{U \times U}(\pi_1^*\dot{U},\pi_2^*\dot{U}) \arrow[d, two heads, "{[\pi_1^* \pi, \pi_2^* \pi]_{U \times U}}"] \\
    U \times U \arrow[r, "\pi_1"'] & U & U \times U \arrow[l, "\pi_2"]  & & U \times U \rlap{.} \end{tikzcd}\]
  By the Frobenius condition, this map is a fibration.
  The counit $\epsilon \colon \textup{Map}_{U \times U}(\pi_1^*\dot{U},\pi_2^*\dot{U}) \times_{U \times U} \pi_1^*\dot{U} \to \pi_2^*\dot{U}$ equivalently defines a map
  \[ \epsilon \colon \textup{Map}_{U \times U}(\pi_1^*\dot{U},\pi_2^*\dot{U}) \times_{U \times U} \dot{U} \times U \to\textup{Map}_{U \times U}(\pi_1^*\dot{U},\pi_2^*\dot{U}) \times_{U \times U} U \times \dot{U}\]
  over $\textup{Map}_{U \times U}(\pi_1^*\dot{U},\pi_2^*\dot{U})$, which is the universal map between two pullbacks of $\pi$, i.e.\ small fibrations.

 We define $\Eq(\dot{U})$ by equipping this $\epsilon$ with the data of a contractible map over $\textup{Map}_{U \times U}(\pi_1^*\dot{U},\pi_2^*\dot{U})$, by taking the classifier $\phi_{p_\epsilon}\colon \mathscr{TF}(p_\epsilon) \to \textup{Map}_{U \times U}(\pi_1^*\dot{U},\pi_2^*\dot{U}) \times_{U \times U} \pi_2^*\dot{U}$ for trivial fibration structures on the right Brown factor $p_\epsilon \colon B_{\textup{Map}_U(\pi_1^*\dot{U},\pi_2^*\dot{U})}\epsilon \fto \textup{Map}_{p_\epsilon}(\pi_1^*\dot{U},\pi_2^*\dot{U}) \times_{U \times U} \pi_2^*\dot{U}$,
 pushing it forward to obtain an object over $\textup{Map}_{U \times U}(\pi_1^*\dot{U},\pi_2^*\dot{U})$, and then summing to obtain one over $U \times U$.

The resulting map $\Eq(\dot{U}) \to U\times U$ would thus be written in type theory as:
\[
\Eq(\dot{U}) = \Sigma_{A,B:U}\Sigma_{f : A\to B}\Pi_{b:B} \mathscr{TF}(\textup{fib}_f(b))
\to U \times U \rlap{.}
\]
It is easily seen to have the stated classifying property (ii).
It is a fibration as required by (i) provided that the map $\phi_{p_\epsilon}\colon \mathscr{TF}(p_\epsilon) \to \textup{Map}_{U \times U}(\pi_1^*\dot{U},\pi_2^*\dot{U}) \times_{U \times U} \pi_2^*\dot{U}$ is one.  But this follows from Lemma \ref{lem:TFibpreservesfibrations}, since $\textup{fib}_f(b)$ is just the right Brown factor $p_\epsilon \colon B_{\textup{Map}_U(\pi_1^*\dot{U},\pi_2^*\dot{U})}\epsilon \fto \textup{Map}_{p_\epsilon}(\pi_1^*\dot{U},\pi_2^*\dot{U}) \times_{U \times U} \pi_2^*\dot{U}$, which is a fibration by Remark \ref{rmk:fibredbrownfactor}.
\end{proof}

By the construction just given, the fibration $(s,t) \colon \Eq(\dot{U}) \fto U\times U$ factors as follows:
\[ \begin{tikzcd}[sep=large] \Eq(\dot{U}) \arrow[r, two heads, "\upsilon"] \arrow[rr, bend right=10, "{(s,t)}"'] &  \textup{Map}_{U \times U}(\pi_1^*\dot{U},\pi_2^*\dot{U}) \arrow[r, two heads, "{[\pi_1^* \pi,\pi_2^* \pi]_{U \times U}}"] & U \times U \rlap{.} \end{tikzcd}\]
The contractible map classifier just constructed satisfies a relative version of the relative acyclicity property of the following form inherited from relative acyclicity for $\mathscr{TF}$.

\begin{lem}\label{lem:universal-equivalence-alignment}
In a locally cartesian closed category with a cylindrical premodel structure satisfying the Frobenius condition, contractible map structures defined using a locally representable and relatively acyclic notion of fibred structure $\mathscr{TF}$ for trivial fibrations can be aligned along monomorphisms, in the sense that the kernel pair projections lift against monomorphisms:
  \[ \begin{tikzcd} A \arrow[d, "i"', tail] \arrow[r] & \bullet \arrow[r, two heads] \arrow[d, two heads] \arrow[dr, phantom, "\lrcorner" very near start] &   \Eq(\dot{U}) \arrow[d, two heads, "\upsilon"] \\ B \arrow[r] \arrow[ur, dashed] & \Eq(\dot{U}) \arrow[r, two heads, "\upsilon"'] & \textup{Map}_{U \times U}(\pi_1^*\dot{U},\pi_2^*\dot{U}) \rlap{.} \end{tikzcd}\]
\end{lem}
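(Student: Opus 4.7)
The plan is to identify $\upsilon$ as a pushforward of $\phi_{p_\epsilon}$ and then transpose along the associated adjunction to apply Lemma \ref{lem:loc-rep-rel-acyclic-kernel-pair}.

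Abbreviate $M \coloneq \textup{Map}_{U \times U}(\pi_1^*\dot{U},\pi_2^*\dot{U})$ and $M_{p_\epsilon} \coloneq M \times_{U \times U} \pi_2^*\dot{U}$, with projection $\pi_M \colon M_{p_\epsilon} \fto M$ (a pullback of $p$, hence a fibration). Unwinding the construction in Lemma \ref{lem:universal-equivalence}, the map $\upsilon$ is exactly the pushforward $(\pi_M)_* \phi_{p_\epsilon}$ in $\cE_{/M}$ (the subsequent ``summing to obtain one over $U \times U$'' is merely composition with the fibration $M \fto U \times U$ and does not alter the fiber of $\upsilon$ over $M$). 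Since $(\pi_M)_*$ is a right adjoint to $\pi_M^*$, it preserves limits, so the kernel pair of $\upsilon$ in $\cE_{/M}$ is obtained by applying $(\pi_M)_*$ to the kernel pair of $\phi_{p_\epsilon}$ in $\cE_{/M_{p_\epsilon}}$, with projections $(\pi_M)_* k_1$ and $(\pi_M)_* k_2$ induced from the kernel pair projections $k_1, k_2 \colon \mathscr{TF}(p_\epsilon) \times_{M_{p_\epsilon}} \mathscr{TF}(p_\epsilon) \rightrightarrows \mathscr{TF}(p_\epsilon)$.

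Given a lifting problem in $\cE$ between a monomorphism $i \colon A \cto B$ and one of these projections $(\pi_M)_* k_j$, I would first observe that the data of the problem canonically factors through $M$: both the codomain $\Eq(\dot{U})$ and the domain of $(\pi_M)_* k_j$ carry structural maps to $M$. The problem therefore lives in $\cE_{/M}$, where $i$ remains a monomorphism (monomorphisms in a slice are created by the forgetful functor). Across the adjunction $\pi_M^* \dashv (\pi_M)_*$ it is equivalent to a lifting problem in $\cE_{/M_{p_\epsilon}}$ between $\pi_M^* i$ and $k_j$, and pullback preserves monomorphisms so $\pi_M^* i$ is a monomorphism. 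Lemma \ref{lem:loc-rep-rel-acyclic-kernel-pair} applied to the locally representable and relatively acyclic notion $\mathscr{TF}$ and the map $p_\epsilon$ (noting $\phi_{p_\epsilon} = \psi_{p_\epsilon}$) then supplies the required diagonal lift, which transposes back to a solution of the original problem.

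The only step requiring real care is the identification $\upsilon = (\pi_M)_* \phi_{p_\epsilon}$, which is immediate from the construction of $\Eq(\dot{U})$ in Lemma \ref{lem:universal-equivalence}; everything else is a formal manipulation of adjunctions together with the fact that right adjoints preserve limits and pullbacks preserve monomorphisms. I do not anticipate any substantive obstacle.
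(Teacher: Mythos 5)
Your proposal is correct and follows essentially the same route as the paper: identify $\upsilon$ as the pushforward of the classifier $\phi_{p_\epsilon}$, invoke Lemma \ref{lem:loc-rep-rel-acyclic-kernel-pair} for its kernel pair, and observe that the lifting property is stable under pushforward because pullback preserves monomorphisms. The paper compresses the final step into one sentence; you have simply made the adjoint transposition explicit.
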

\begin{proof}
By construction, the map $\upsilon$ is the pushforward of the classifier \[ \phi_{p_\epsilon}\colon \mathscr{TF}(p_\epsilon) \to \textup{Map}_{U \times U}(\pi_1^*\dot{U},\pi_2^*\dot{U}) \times_{U \times U} \pi_2^*\dot{U}\] for trivial fibration structures. Since the notion of fibred structure $\mathscr{TF}$ is locally representable and relatively acyclic, by Lemma \ref{lem:loc-rep-rel-acyclic-kernel-pair} the maps in the kernel pair of $\phi_{p_\epsilon}$ lift against monomorphisms. Since monomorphisms are stable under pullback, this condition is stable under pushforward.
\end{proof}

The construction of Lemma \ref{lem:universal-equivalence} allows us to codify univalence as follows.

\begin{defn}\label{defn:univalence} A fibration $\pi  \colon \dot{U} \fto U$ is \textbf{univalent} if the map $t \colon \Eq(\dot{U}) \fto U$ is a trivial fibration.
\end{defn}

\begin{rmk}
Definition \ref{defn:univalence} connects to the standard homotopy type theoretic encoding of the univalence axiom as follows. By Lemma \ref{lem:universal-equivalence}, the diagonal on $U$ lifts through a map $\id \colon U \to \Eq(\dot{U})$, classifying the identity map. This factorization of the diagonal can be related to the canonical one of the cocylinder by a map $u$, as indicated below:
 \[ \begin{tikzcd} U \arrow[d, "\epsilon"'] \arrow[r, "\id"] & \Eq(\dot{U}) \arrow[d, two heads, "{(s,t)}"] \\ PU \arrow[ur, dashed, "u" ]\arrow[r, "\partial"'] & U \times U \rlap{.}
\end{tikzcd}
\]
If the base of the universe is fibrant, as will be proven under mild hypotheses in \S\ref{ssec:Ufib} below, the map $\partial_1 \colon PU \fwto U$ will be a trivial fibration, so in the presence of the 2-of-3 axiom, $t$ is a trivial fibration if and only if $u$ is a weak equivalence.
\end{rmk}

\begin{prop}\label{prop:EEP-univalence} Consider a cylindrical premodel structure on a presheaf topos satisfying the Frobenius condition in which the cofibrations are the monomorphisms. If the premodel structure has universes in the sense of Definition \ref{defn:has-universes},
the equivalence extension property holds if and only if each universe $\pi \colon \dot{U} \fto U$ is univalent.
\end{prop}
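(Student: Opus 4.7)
The plan is to translate lifting problems against the codomain projection $t \colon \Eq(\dot{U}) \fto U$ into instances of the EEP and conversely, using the fact from Lemma \ref{lem:universal-equivalence} that maps $\Gamma \to \Eq(\dot{U})$ classify contractible maps over $\Gamma$ between pullbacks of $\pi$. Because the universes of Definition \ref{defn:has-universes} cover all sufficiently small fibrations, any instance of the EEP with $\kappa$-small data translates into a lifting problem against $t$ for a universe bounding $\kappa$, and conversely every such lifting problem is itself an EEP instance.

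For the direction ``univalence implies EEP'', suppose each universe is univalent and consider EEP data: a monomorphism $i \colon A \cto B$, a $\kappa$-small fibration $q_1 \colon Y_1 \fto B$ with pullback $p_1 \colon X_1 \fto A$, and a contractible map $e \colon X_0 \to X_1$ over $A$. Taking $\kappa$ large enough and letting $\pi \colon \dot{U} \fto U$ be the corresponding universe, $q_1$ is classified by some $b \colon B \to U$, while $e$ together with its chosen $\mathscr{TF}$-algebra structure on the right Brown factor $p_e$ is classified by $a \colon A \to \Eq(\dot{U})$ with $t \circ a = b \circ i$. Univalence provides a lift $\tilde{b} \colon B \to \Eq(\dot{U})$ with $\tilde{b} \circ i = a$ and $t \circ \tilde{b} = b$; pulling back the universal contractible map along $\tilde{b}$ yields $f \colon Y_0 \to Y_1$ over $B$ whose codomain is literally $q_1$ and whose restriction along $i$ is literally $e$, furnishing the required extension with $Y_0$ again $\kappa$-small as a pullback of $\pi$.

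Conversely, to derive univalence from EEP, fix a universe $\pi$ and consider any lifting problem $(a, b)$ against $t$ along a monomorphism $i \colon A \cto B$. The map $b$ classifies a fibration $q_1 \colon Y_1 \fto B$ and $a$ classifies a contractible map $e$ over $A$ with codomain $i^* Y_1$, so the square is an EEP instance whose solution is a contractible extension $f$ over $B$. The classifier of $f$ is a map $\tilde{b} \colon B \to \Eq(\dot{U})$ with $t \circ \tilde{b} = b$, but the principal obstacle is that $\tilde{b} \circ i$ need not equal $a$ on the nose: both project to the same point of $\textup{Map}_{U \times U}(\pi_1^* \dot{U}, \pi_2^* \dot{U})$ via $\upsilon$, since they classify the same underlying map $e$, yet their chosen $\mathscr{TF}$-algebra structures on $p_e$ may differ. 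This ambiguity is precisely what is measured by the kernel pair of $\upsilon$, and Lemma \ref{lem:universal-equivalence-alignment} supplies a lift of the kernel-pair projection along $i$ that replaces $\tilde{b}$ by a strictly realigned version agreeing with $a$ on $A$ while preserving its composite with $t$. This solves the lifting problem and establishes univalence.
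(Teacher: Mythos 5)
Your proof follows essentially the same route as the paper's: encode the EEP data as a lifting problem of the classifying maps against $t \colon \Eq(\dot{U}) \to U$, use that univalence makes $t$ a trivial fibration in one direction, and in the other direction solve the lifting problem up to the discrepancy measured by the kernel pair of $\upsilon$, which Lemma \ref{lem:universal-equivalence-alignment} eliminates. The one place you are too quick is with the \emph{other} realignment, the one for the universe $U$ itself rather than for the equivalence structure. In the forward direction, the strict equality $t \circ a = b \circ i$ is not free: the classifying map of $p_1 = i^*q_1$ depends on its chosen fibration structure, which need not be the restriction of the structure chosen on $q_1$; one must first realign the fibred structure on $q_1$ (using relative acyclicity and realignment for $\pi$) so that the pullback square becomes an $\sF$-morphism. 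Similarly, in the converse direction your claim that $\upsilon \tilde{b} i$ and $\upsilon a$ agree ``since they classify the same underlying map $e$'' presupposes that the classifying maps $B \to U \times U$ of the domain and codomain of the extended equivalence restrict along $i$ to those of $e$; the codomain is fine since it is literally $q_1$, but the domain fibration of the extension only \emph{pulls back to} the domain of $e$, so its code must again be realigned using realignment for $U$ before the universal property of $[\pi_1^*p,\pi_2^*p]_{U\times U}$ gives agreement at the $\textup{Map}$ level. With those two realignment steps inserted, your argument matches the paper's.
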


\begin{proof}
To prove the equivalence extension property assuming univalence, choose a univalent universe sufficiently large to classify the data in \eqref{eq:EEP} by means of a lifting problem
\[
\begin{tikzcd}
  A
  \arrow[d, tail, "i"']
  \arrow[r, "\overline{e}"]
&
  \Eq(\dot{U})
  \arrow[d, "t", utfibarrow]
\\
  B \arrow[r, "\overline{q}_1"']
  \arrow[ur, dashed, "\overline{f}"']
&
  U \rlap{.}
\end{tikzcd}
\]
For this, we first choose classifying maps $\overline{p}_0 \colon A \to U$ for $p_0$ and $\overline{q}_1 \colon B \to U$ and then use Lemma \ref{lem:universal-equivalence} to extend the map $(\overline{p}_0, \overline{q}_1 i) \colon A \to U \times U$ to a map $\overline{e} \colon A \to \Eq({\dot{U}})$ classifying the contractible map $e$.
By univalence, $t$ is a trivial fibration, so this lifting problem has a solution $\overline{f}$, which classifies a contractible map $f$ that pulls back along $i$ to $e$.

For the converse, consider a lifting problem as above, and suppose the equivalence extension property holds.
By Lemma \ref{lem:universal-equivalence}, the map $\overline{e}$ classifies a contractible map $e$ between fibrations into $A$ as in \eqref{eq:EEP}, while $\overline{q}_1$ classifies a fibration $q_1$ into $B$ that pulls back along $i$ to the codomain of $e$.
By the equivalence extension property, the equivalence extends to an equivalence $f$ over $B$ with codomain $q_1$ at the same universe level.
Using the given universe and relative acyclicity of its associated notion of fibred structure, we obtain a classifying map $\overline{q}_1$ for $q_1$ so that the exterior rectangle of classifying maps commutes:
\[
\begin{tikzcd}
  A
  \arrow[d, tail, "i"']
  \arrow[rrr, "\overline{e}"]
&&&
  \Eq(\dot{U}) \arrow[dl, two heads]
  \arrow[d, two heads, "{(s, t)}"]
\\
  B
  \arrow[r, "\overline{f}"]
  \arrow[rrr, bend right=15, "{(\overline{q}_0, \overline{q}_1)}"']
&
  \Eq(\dot{U})
  \arrow[r, two heads]
&
  \textup{Map}_{U \times U}(\pi_1^*\dot{U}, \pi_2^*\dot{U}) \arrow[r, two heads]
&
  U \times U \rlap{.}
\end{tikzcd}
\]
In fact, by the universal property of the fibration $[\pi_1^* \pi, \pi_2^* \pi]_{U \times U} \colon \textup{Map}_{U \times U}(\pi_1^* \dot{U}, \pi_2^* \dot{U}) \fto U \times U$ and commutativity the diagram \eqref{eq:EEP}, the interior of the diagram commutes as well.
Thus, our original lifting problem factors as displayed below:
\[
\begin{tikzcd}
  A
  \arrow[r]
  \arrow[d, tail, "i"']
  \arrow[rr, bend left=15, "\overline{e}"]
&
  \bullet
  \arrow[d, utfibarrow]
  \arrow[r, utfibarrow]
  \arrow[dr, phantom, "\lrcorner" very near start]
&
  \Eq(\dot{U})
  \arrow[d, two heads]
  \arrow[r, equals]
&
  \Eq(\dot{U})
  \arrow[d, two heads, "t"]
\\
  B
  \arrow[r, "\overline{f}"]
  \arrow[ur, dashed]
  \arrow[rrr, bend right=15, "\overline{q}_1"']
&
  \Eq(\dot{U})
  \arrow[r, two heads]
&
  \textup{Map}_{U \times U}(\pi_1^* \dot{U}, \pi_2^* \dot{U})
  \arrow[r, two heads]
&
  U \rlap{,}
\end{tikzcd}
\]
and can be solved by Lemma \ref{lem:universal-equivalence-alignment}, which aligns the equivalence structure on $f$ with that of $e$.
\end{proof}

\subsection{Fibrant universes}\label{ssec:Ufib}

We next introduce an axiomatic setup that allows us to use Proposition \ref{prop:EEP-univalence} to infer that the universes $\pi \colon \dot{U} \to U$ of fibrations have fibrant base objects $U$. Our argument follows that in \cite[2.12]{ABCFHL}.

Suppose that $\cE$ has a (cofibration, trivial fibration) weak factorization system in which every object is cofibrant, and let $P \colon \cE \to \cE$ be a finite-product preserving endofunctor equipped with a natural retraction, i.e.\ $\epsilon \colon \id \Rightarrow P$ and $\delta \colon P \Rightarrow \id$ such that $\delta \cdot \epsilon = \id$.  For instance, $P$ could be the cocylinder part of an adjoint functorial cylinder with $\delta$ taken to be either $\partial_0$ or $\partial_1$.  Alternately:

\begin{ex}\label{ex:generic-point} For any object $I$ in a cartesian closed category $\cE$, we have a diagram in the slice $\cE_{/I}$
  \[ \begin{tikzcd} I \arrow[dr, equals] \arrow[r, "\delta"] & I \times I \arrow[r, "\epsilon"] \arrow[d, "\epsilon"] & I \arrow[dl, equals] \\ & I \end{tikzcd}\]
  expressing the terminal object as a retract of $I$ pulled back to the slice. Here $\delta$ is the diagonal map and $\epsilon$ is the product projection obtained by pulling back $I \to 1$ to the slice. Exponentiating by these objects defines an endofunctor $P \colon \cE_{/I} \to \cE_{/I}$ together with natural transformations $\epsilon \colon \id \Rightarrow P$ and $\delta \colon P \Rightarrow \id$ such that $\delta\cdot\epsilon = \id$.
\end{ex}

By a \textbf{reflexive relation} $R\rightrightarrows X$ on an object $X$ we mean a factorization of the diagonal:
   \[
   \begin{tikzcd} & X \arrow[dr, equals] \arrow[dl, equals] \arrow[d, "r"] \\ X & R \arrow[l, "s"] \arrow[r, "t"'] & X \rlap{.} \end{tikzcd}
 \]
Note that we do not require the canonical pairing $(s,t) \colon R\to X\times X$ to be monic.

\begin{defn}\label{defn:delta-contractor} A \textbf{$\delta$-contractor} for a reflexive relation $R\rightrightarrows X$ is a map $c \colon PX \to PR$ making the following diagrams commute:
  \[ \begin{tikzcd} & PX \arrow[dl, equals] \arrow[r, "\delta_X"] \arrow[d, dashed, "c"] & X \arrow[d, "\epsilon_X"] & & PX \arrow[r, dashed, "c"] \arrow[d, "\delta_X"'] & PR \arrow[d, "\delta_R"] \\ PX & PR \arrow[l, "Ps"] \arrow[r, "Pt"'] & PX & & X \arrow[r, "r"'] & R \rlap{.}
  \end{tikzcd}
  \]
\end{defn}

\begin{rmk}
  To gain some intuition for this definition, suppose we are in a topological setting and $PX=X^I$ is the path
  space functor, $\epsilon$ the constant path operation, and $\delta$ evaluates a path at some fixed point $i\in I$.
  A $\delta$-contractor $c$ takes a path
  $p \colon x_0 \rightsquigarrow x_1$ in $X$ and produces a square as shown below, where the horizontal arrows
  are paths, the vertical arrows are witnesses to the relation $R$, and $x_i$ is the value of $p$ at $i$:
  \[
    \begin{tikzcd}
      x_0 \ar[<->]{d} \ar[rightsquigarrow]{rr}[xshift=-3ex]{p} &[1em] {\phantom{x}} \ar[<->]{d}{r(x_i)} & x_1 \ar[<->]{d} \\
      x_i \ar[rightsquigarrow]{rr}[below,xshift=-3ex]{\epsilon(x_i)} & {\phantom{x}} & x_i \\[-1.5em]
      0 & i & 1 \rlap{.}
    \end{tikzcd}
  \]

  The first diagram in Definition~\ref{defn:delta-contractor} determines the horizontal arrows: it asks that $c$ is
  a path of witnesses relating $p$ to the constant path $\epsilon(x_i)$. The second diagram asks that the
  value of $c$ at $i$, which relates $x_i$ to itself, is the reflexivity for $R$.
\end{rmk}

\begin{lem}\label{lem:delta-contractor} Let $R\rightrightarrows X$ be a reflexive relation. If the Leibniz pullback application of $\delta$ to $(s,t) \colon R \to X \times X$ is a trivial fibration, then $R$ has a $\delta$-contractor.
\end{lem}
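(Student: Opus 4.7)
The plan is to construct $c$ as the solution to a lifting problem against the trivial fibration $\delta \hato (s,t) \colon PR \to Q$, where I write $Q$ for the pullback $R \times_{X \times X} P(X \times X)$. Since we are in the setup of this subsection, every object is cofibrant, so any map out of $PX$ has a lift against $\delta \hato (s,t)$; the substance of the proof is choosing the map $PX \to Q$ correctly.

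First I would unpack the Leibniz pullback application. Because $P$ preserves finite products, $P(X \times X) \cong PX \times PX$, and under this identification the map $\delta_{X \times X}$ becomes $\delta_X \times \delta_X$, while $P(s,t) \colon PR \to P(X \times X)$ becomes $(Ps, Pt)$. Thus $Q \cong R \times_{X \times X} (PX \times PX)$ and $\delta \hato (s,t)$ is the canonical map pairing $\delta_R$ with $(Ps, Pt)$.

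Next I would define a map $\alpha \colon PX \to Q$ with $R$-component $r \cdot \delta_X$ and $(PX \times PX)$-component $\langle \id_{PX},\, \epsilon_X \cdot \delta_X \rangle$. Compatibility over $X \times X$ is the only nontrivial check: on the $R$-side we get $(s,t) \cdot r \cdot \delta_X = (\delta_X, \delta_X)$, using $sr = tr = \id_X$; on the other side we get $(\delta_X \times \delta_X) \cdot \langle \id_{PX}, \epsilon_X \delta_X \rangle = (\delta_X,\, \delta_X \epsilon_X \delta_X) = (\delta_X, \delta_X)$, where the last equality uses the retraction identity $\delta \cdot \epsilon = \id$. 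So $\alpha$ is well-defined.

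Since $\delta \hato (s,t)$ is a trivial fibration by hypothesis and $PX$ is cofibrant, the lifting problem with top map $\emptyset \to PR$, left map $\emptyset \cto PX$, and bottom map $\alpha$ admits a diagonal $c \colon PX \to PR$ satisfying $(\delta \hato (s,t)) \cdot c = \alpha$. Post-composing $c$ with the projections from $Q$ to $R$ and to the two copies of $PX$ immediately yields $\delta_R \cdot c = r \cdot \delta_X$, $Ps \cdot c = \id_{PX}$, and $Pt \cdot c = \epsilon_X \cdot \delta_X$, which are exactly the two commuting diagrams of Definition \ref{defn:delta-contractor}. The only real obstacle is the compatibility check in step two, and it reduces precisely to the assumption $\delta \cdot \epsilon = \id$ that makes the construction possible in the first place.
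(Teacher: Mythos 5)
Your proof is correct and is essentially the paper's own argument: the paper packages the three required equations into the single lifting problem of $PX$ against $\delta\hato(s,t) = ((Ps,Pt),\delta_R) \colon PR \to (PX\times PX)\times_{X\times X} R$ with bottom map $((\id,\epsilon_X\delta_X), r\delta_X)$, and solves it using cofibrancy of all objects. You simply spell out the identification of the Leibniz pullback application and the compatibility check that the paper leaves implicit.
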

\begin{proof} The required diagrams from \ref{defn:delta-contractor} can be repackaged into a single lifting problem as follows:
  \[ \begin{tikzcd}[column sep=7em] & PR \arrow[d, utfibarrow, "{((Ps,Pt),\delta_R)}"] \\
  PX \arrow[ur, dashed, "c"] \arrow[r, "{((\id, \epsilon_X\delta_X), r\delta_X)}"'] & (PX \times PX) \times_{X \times X} R \rlap{.}
  \end{tikzcd}\]
 But the vertical map is the said Leibniz pullback application $\delta\hato (s,t)$, which is assumed to be a trivial fibration, and so there is the indicated lift $c$, since all objects are cofibrant.
 \end{proof}

\begin{lem}\label{lem:delta-contractor-retract} Let $R\rightrightarrows X$ be a reflexive relation with a $\delta$-contractor. Consider the square
  \[
    \begin{tikzcd}
      R \arrow[r, "s"] \arrow[d, "t"'] & X \arrow[d, "!_X"] \\ X \arrow[r, "!_X"'] & 1
    \end{tikzcd}
  \]
  as a morphism $t \to {!_X}$ in $\cE^\2$.  The image of this morphism under the Leibniz pullback application functor $\delta\hato - \colon \cE^\2 \to \cE^\2$ is a split epimorphism.
\end{lem}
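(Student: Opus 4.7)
The approach is to unwind the definition of $\delta \hato -$ on the given square and then explicitly write down a section, using the $\delta$-contractor $c$ together with the reflexivity $r$ of the relation. Since $P$ preserves the terminal object, the arrow $\delta \hato {!_X}$ is (up to canonical isomorphism) just $\delta_X \colon PX \to X$, while $\delta \hato t$ is the gap map $PR \to PX \times_X R$ coming from the naturality square of $\delta$ at $t$. Applied to the morphism $(s, {!_X}) \colon t \to {!_X}$, the functor $\delta \hato -$ yields the square in $\cE^\2$ whose top arrow is $Ps \colon PR \to PX$ and whose bottom arrow is $s \cdot \pi_R \colon PX \times_X R \to X$.

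To split this morphism in $\cE^\2$ I would take the top section to be $c \colon PX \to PR$ itself, and the bottom section to be the map $(\epsilon_X, r) \colon X \to PX \times_X R$ induced by the pullback universal property from $\delta_X \cdot \epsilon_X = \id_X = t \cdot r$. The first diagram of Definition \ref{defn:delta-contractor} gives $Ps \cdot c = \id_{PX}$, so $c$ is a section of $Ps$; meanwhile $s \cdot r = \id_X$ from the reflexive relation shows that $(\epsilon_X, r)$ is a section of the bottom map.

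For these two maps to assemble into a morphism in $\cE^\2$ I need the coherence equation $(\delta \hato t) \cdot c = (\epsilon_X, r) \cdot \delta_X$, which I check componentwise into the pullback $PX \times_X R$. The projection to $PX$ gives $Pt \cdot c = \epsilon_X \cdot \delta_X$, which is the remaining clause of the first diagram in Definition \ref{defn:delta-contractor}; the projection to $R$ gives $\delta_R \cdot c = r \cdot \delta_X$, which is exactly the second diagram in that definition. Both equations hold by definition of a $\delta$-contractor, so the desired section exists. I do not foresee a substantive obstacle: the lemma is essentially a repackaging of the two coherence diagrams defining a $\delta$-contractor as the two component equations of a section in $\cE^\2$.
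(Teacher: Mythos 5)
Your proposal is correct and matches the paper's argument: the paper exhibits exactly the section $(c, (\epsilon_X,r))$ of the square with top arrow $Ps$ and bottom arrow $s\pi$, and leaves the verification implicit. You have simply spelled out the three identities ($Ps\cdot c=\id$, $s\pi\cdot(\epsilon_X,r)=\id$, and the compatibility square, checked componentwise via the two diagrams of Definition \ref{defn:delta-contractor} together with $\delta\cdot\epsilon=\id$ and $t\cdot r=\id$) that the paper asserts follow from the definition of a $\delta$-contractor.
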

\begin{proof}
  The claim is that the canonical square on the right below admits a section
  \[
    \begin{tikzcd}
      PX \arrow[d, "{\delta_X}"']  \arrow[r, dashed, "c"] & PR \arrow[d, "{(Pt, \delta_R)}"] \arrow[r, "Ps"] & PX \arrow[d, "{\delta_X}"] \\ X \arrow[r, dashed, "{(\epsilon_X,r)}"'] & PX \times_X R  \arrow[r, "{ s\pi}"'] & X \rlap{.}
    \end{tikzcd}
  \]
The notion of a $\delta$-contractor is such that the indicated maps constitute just such a section.
  \end{proof}

\begin{lem}\label{lem:FEP-from-EEP} Let $R\rightrightarrows X$ be a reflexive relation such that the Leibniz pullback applications of $\delta$ to $(s,t) \colon R \to X \times X$ and $t\colon R \to X$ are both trivial fibrations. Then $\delta_X \colon PX \to X$ is also a trivial fibration.
\end{lem}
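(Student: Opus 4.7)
The plan is to combine Lemmas \ref{lem:delta-contractor} and \ref{lem:delta-contractor-retract} to exhibit $\delta_X$ as a retract of the trivial fibration $\delta \hato t$. Since trivial fibrations are closed under retracts in the arrow category (being the right class of a weak factorization system), this immediately gives the conclusion.

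First, since $\delta \hato (s,t) \colon PR \to (PX \times PX) \times_{X\times X} R$ is a trivial fibration by hypothesis, Lemma \ref{lem:delta-contractor} directly produces a $\delta$-contractor $c \colon PX \to PR$ for the reflexive relation $R \rightrightarrows X$. With this $c$ in hand, Lemma \ref{lem:delta-contractor-retract} tells us that the image of the morphism $(t \to {!_X})$ of $\cE^\2$ under the Leibniz pullback application $\delta \hato - \colon \cE^\2 \to \cE^\2$ is a split epimorphism in $\cE^\2$.

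To interpret this, note that $\delta \hato {!_X}$ may be identified with $\delta_X$ itself, because $P$ preserves the terminal object (being finite-product preserving), so the putative pullback $P(1) \times_1 X$ collapses to $X$. Thus the split epimorphism in question is a commutative square from $\delta \hato t$ down to $\delta_X$ admitting a section, and reading off the section data directly produces a retract diagram in $\cE^\2$ exhibiting $\delta_X$ as a retract of $\delta \hato t$. Since $\delta \hato t \colon PR \to PX \times_X R$ is a trivial fibration by the second hypothesis, retract-closure of the right class yields that $\delta_X \colon PX \to X$ is a trivial fibration.

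I do not anticipate any genuine obstacle here: the substantive content has already been extracted into the two preceding lemmas, and what remains is bookkeeping — verifying that $\delta \hato {!_X} \cong \delta_X$ under $P$'s preservation of the terminal object, and invoking the standard retract-closure of the trivial fibration class.
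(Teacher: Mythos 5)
Your proof is correct and follows essentially the same route as the paper: identify $\delta_X$ with $\delta\hato{!_X}$, obtain the $\delta$-contractor from Lemma \ref{lem:delta-contractor} via the first hypothesis, use Lemma \ref{lem:delta-contractor-retract} to exhibit $\delta\hato{!_X}$ as a retract of $\delta\hato t$, and conclude by retract-closure of trivial fibrations. The paper's proof is just a terser version of the same argument.
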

\begin{proof} Note that $\delta_X \colon PX \to X$ is the Leibniz pullback application of $\delta$ to $!_X \colon X \to 1$.
By Lemmas \ref{lem:delta-contractor} and \ref{lem:delta-contractor-retract}, $\delta\hato !_X$ is a retract of $(Pt,\delta_R) = \delta\hato t$ and thus a trivial fibration.
\end{proof}

When the fibrations are created from the trivial fibrations in a particular way, Lemma \ref{lem:FEP-from-EEP} can be used to establish the fibrancy of an object $X$ admitting a suitable reflexive relation. For later use, we introduce the following general definitions.

\begin{defn}\label{defn:interval-fibrations} Let $\cE$ be a (locally) cartesian closed category with a class of trivial fibrations.
  \begin{enumerate}
    \item\label{itm:biased-fibrations} Relative to an interval object  $\delta_0,\delta_1 \colon 1 \to I$  in $\cE$, the \textbf{biased fibrations} are those maps whose Leibniz exponentials by $\delta_0$ and $\delta_1$ are trivial fibrations.
    \item\label{itm:unbiased-fibrations} Relative to an object $I \in \cE$, the \textbf{unbiased fibrations} are those maps for which the Leibniz exponential of their pullback to the slice over $I$ by the diagonal $\delta \colon I \to I \times I$ is a trivial fibration in the slice.
  \end{enumerate}
\end{defn}

\begin{prop}\label{prop:FEP-standard-endpoints}  Let $\cE$ be a cartesian closed category with a premodel structure in which its fibrations are the biased fibrations defined relative to an interval object.
Then  an object $X$ is fibrant if it has a reflexive relation $s,t\colon R\rightrightarrows X$ such that both $(s,t) \colon R \to X \times X$ and $t \colon R \to X$ are fibrations.
\end{prop}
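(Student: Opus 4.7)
The plan is to apply Lemma \ref{lem:FEP-from-EEP} twice, once for each endpoint of the interval, to deduce that both Leibniz exponentials of $!_X \colon X \to 1$ are trivial fibrations.

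First I set up the framework of Lemmas \ref{lem:delta-contractor}--\ref{lem:FEP-from-EEP} in the cartesian closed category $\cE$. Take $P \coloneq (-)^I \colon \cE \to \cE$, which preserves finite products as a right adjoint. For each $i \in \{0,1\}$, precomposition with the unique map $!_I \colon I \to 1$ yields a natural transformation $\epsilon \colon \id \Rightarrow P$ (the constant path), while precomposition with $\delta_i \colon 1 \to I$ yields $\delta_i \colon P \Rightarrow \id$ (evaluation at endpoint). Since $!_I \cdot \delta_i = \id_1$, we get $\delta_i \cdot \epsilon = \id$, so $(\epsilon, \delta_i)$ is a natural retraction. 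The Leibniz pullback application $\delta_i \hato f$ in this setup is precisely the Leibniz exponential of $f$ by $\delta_i \colon 1 \to I$ used in Definition \ref{defn:interval-fibrations}\eqref{itm:biased-fibrations}.

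Next I verify the hypotheses of Lemma \ref{lem:FEP-from-EEP} for the given reflexive relation $s, t \colon R \rightrightarrows X$, using the retraction $(\epsilon, \delta_i)$ for each $i \in \{0, 1\}$. Since $(s, t) \colon R \to X \times X$ and $t \colon R \to X$ are fibrations by assumption, the definition of biased fibration gives immediately that $\delta_i \hato (s, t)$ and $\delta_i \hato t$ are trivial fibrations. Lemma \ref{lem:FEP-from-EEP} then yields that $\delta_i \hato {!_X} \colon X^I \to X$ is a trivial fibration.

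Applying this argument for both $i = 0$ and $i = 1$, we conclude that $!_X \colon X \to 1$ has trivial fibration Leibniz exponentials by both endpoints, hence is a biased fibration, so $X$ is fibrant. There is no real obstacle here: the content lies entirely in recognizing that the two endpoints of the interval each provide an independent instance of the framework of Lemma \ref{lem:FEP-from-EEP}, with the hypotheses furnished directly by the definition of biased fibration.
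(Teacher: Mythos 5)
Your proof is correct and is essentially the paper's own argument: both apply Lemma \ref{lem:FEP-from-EEP} once for each endpoint $\delta_0,\delta_1$ of the interval, using the natural retraction on $(-)^I$ and the fact that the biased-fibration hypothesis on $(s,t)$ and $t$ directly supplies the trivial fibrations needed, to conclude that both Leibniz exponentials of $!_X$ are trivial fibrations. The only difference is that you spell out the verification of the hypotheses where the paper is terser.
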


\begin{proof}
  As in Example \ref{ex:generic-point}, exponentiation by the interval defines an endofunctor $(-)^I$ equipped with a natural retraction  $\epsilon \colon \id \Rightarrow (-)^I$ and $\delta_0,\delta_1 \colon (-)^I \To \id$. Applying Lemma \ref{lem:FEP-from-EEP} separately with $\delta_0$ and $\delta_1$, we see that both $(\delta_0)_X = \delta_0\hato !_X$ and $(\delta_1)_X = \delta_1\hato !_X$ are trivial fibrations, proving that $X$ is fibrant.
\end{proof}

\begin{prop}\label{prop:FEP-generic-point} Let $\cE$ be a cartesian closed category with a premodel structure in which the fibrations are the unbiased fibrations defined relative to an object $I$.  Then an object $X$ is fibrant if it has a reflexive relation $s,t\colon R\rightrightarrows X$ such that $(s,t) \colon R \to X \times X$ and $t \colon R \to X$ are both fibrations.
\end{prop}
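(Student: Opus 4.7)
The plan is to reproduce the structure of the proof of Proposition \ref{prop:FEP-standard-endpoints} internally to the slice $\cE_{/I}$, using the generic-point retraction of endofunctors from Example \ref{ex:generic-point} in place of the interval endpoint retractions. Recall that in $\cE_{/I}$ exponentiation by the retraction $I \xrightarrow{\delta} I \times I \xrightarrow{\epsilon} I$ gives an endofunctor $P \colon \cE_{/I} \to \cE_{/I}$ with natural transformations $\epsilon \colon \id \To P$ and $\delta \colon P \To \id$ satisfying $\delta \cdot \epsilon = \id$; by construction the associated Leibniz pullback application $\delta \hato (-)$ on maps of $\cE_{/I}$ coincides with Leibniz exponential by $\delta \colon I \to I \times I$ in the slice. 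Consequently, the condition that $f$ is an unbiased fibration in $\cE$ is precisely that $\delta \hato I^*f$ is a trivial fibration in $\cE_{/I}$.

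The first step is to pull back the reflexive relation $R \rightrightarrows X$ along $I \to 1$, producing a reflexive relation $I^*R \rightrightarrows I^*X$ in $\cE_{/I}$ whose canonical pairing and right leg are $I^*(s,t)$ and $I^*t$ respectively. The hypotheses that $(s,t)$ and $t$ are unbiased fibrations then translate directly into the statement that $\delta \hato I^*(s,t)$ and $\delta \hato I^*t$ are trivial fibrations in $\cE_{/I}$, which is exactly the input required to apply Lemma \ref{lem:FEP-from-EEP} in $\cE_{/I}$.

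That lemma then yields that $\delta_{I^*X} \colon P(I^*X) \to I^*X$ is a trivial fibration in $\cE_{/I}$. Since $P$ preserves the terminal object of $\cE_{/I}$ (exponentials into a terminal object are terminal), $\delta_{I^*X}$ coincides with the Leibniz pullback application $\delta \hato !_{I^*X}$ of $\delta$ to the terminal projection of $I^*X$ in $\cE_{/I}$; and this terminal projection is just $I^*(!_X)$. By the reformulation above, this exhibits $!_X \colon X \to 1$ as an unbiased fibration, so $X$ is fibrant.

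I expect no real obstacle: the argument is structurally identical to the biased case, the only bookkeeping being to relocate everything to $\cE_{/I}$ so that the generic point $\delta \colon I \to I \times I$ functions as an interval endpoint. The delicate point is merely the identification of Leibniz exponential by $\delta$ in the slice with Leibniz pullback application of the natural transformation $\delta \colon P \To \id$, together with the identification of $\delta_{I^*X}$ with $\delta \hato !_{I^*X}$; both are immediate from the construction of $P$ in Example \ref{ex:generic-point}.
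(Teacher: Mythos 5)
Your proposal is correct and follows essentially the same route as the paper: pull the reflexive relation back to the slice over $I$, where Example \ref{ex:generic-point} supplies the retraction $\epsilon, \delta$ making the standing hypotheses of the section hold, apply Lemma \ref{lem:FEP-from-EEP} there, and read off fibrancy of $X$ from the definition of unbiased fibration. The extra bookkeeping you flag (identifying Leibniz exponential by $\delta$ in the slice with Leibniz pullback application of $\delta \colon P \To \id$, and $\delta_{I^*X}$ with $\delta \hato !_{I^*X}$) is exactly what the paper leaves implicit, so your write-up is if anything slightly more careful.
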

\begin{proof}
  By Example \ref{ex:generic-point} the standing hypotheses of this section are satisfied in the slice over $I$. The fibrations $(s,t) \colon R \twoheadrightarrow X \times X$ and $t \colon R \twoheadrightarrow X$ pullback to fibrations in the sliced premodel structure over $I$. Lemma \ref{lem:FEP-from-EEP} applies, and $\delta\hato !_{X\times I}$ is therefore a trivial fibration, proving that $X$ is fibrant.
\end{proof}

We now combine these observations with the construction of the previous section to prove that the universe is a fibrant object under the combined hypotheses of these sections.

\begin{prop}\label{prop:fibrant-universe}
Suppose $\cE$ is a presheaf topos with a cylindrical premodel structure satisfying the Frobenius condition in which the cofibrations are the monomorphisms. If the fibrations are characterized as in Proposition \ref{prop:FEP-standard-endpoints} or \ref{prop:FEP-generic-point} and have universes, then the bases of the universal fibrations $\pi \colon \dot{U} \fto U$ are fibrant objects.
\end{prop}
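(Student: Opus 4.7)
The plan is to assemble the pieces already developed in this section. The key idea is to construct a reflexive relation on $U$ whose two projections are fibrations, and then invoke Proposition \ref{prop:FEP-standard-endpoints} or \ref{prop:FEP-generic-point}. The natural candidate for this reflexive relation is Voevodsky's universal equivalence $\Eq(\dot{U})$ produced in Lemma \ref{lem:universal-equivalence}.

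First I would establish the equivalence extension property. Since $\cE$ is a presheaf topos, it is in particular an elementary topos, so the cofibrations (being the monomorphisms) are stable under pushout-products in all slice categories by Remark \ref{rmk:adhesive-pushout-products}. The hypotheses of Theorem \ref{thm:cylindrical-EEP} are thus satisfied, so EEP holds. Combining EEP with the Frobenius condition and the existence of universes, Proposition \ref{prop:EEP-univalence} then yields univalence of each classifying fibration $\pi \colon \dot{U} \fto U$; that is, the right-leg projection $t \colon \Eq(\dot{U}) \fto U$ is a trivial fibration, hence in particular a fibration.

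Next I would invoke Lemma \ref{lem:universal-equivalence} to package the reflexive relation. It supplies a factorization
\[
\begin{tikzcd} & U \arrow[d, "r"] \arrow[dr, equals] \arrow[dl, equals] \\ U & \Eq(\dot{U}) \arrow[l, "s"] \arrow[r, "t"'] & U \end{tikzcd}
\]
of the diagonal on $U$, in which $(s,t) \colon \Eq(\dot{U}) \fto U \times U$ is a fibration by part (i) of that lemma. Together with the univalence obtained in the previous step, this exhibits $\Eq(\dot{U}) \rightrightarrows U$ as a reflexive relation whose pairing map $(s,t)$ and one projection $t$ are both fibrations.

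Finally, depending on which characterization the ambient premodel structure uses, I would apply either Proposition \ref{prop:FEP-standard-endpoints} (in the biased case, relative to an interval object $\delta_0, \delta_1 \colon 1 \to I$) or Proposition \ref{prop:FEP-generic-point} (in the unbiased case, relative to an object $I$) to the reflexive relation $\Eq(\dot{U}) \rightrightarrows U$ to conclude that $U$ is fibrant. There is essentially no obstacle here: the argument is purely a matter of combining the equivalence extension property, the univalence that Proposition \ref{prop:EEP-univalence} extracts from it, and the abstract fibrancy criterion furnished by the $\delta$-contractor lemmas. The main subtlety worth double-checking is merely that the hypotheses of Proposition \ref{prop:EEP-univalence} and those of the fibrancy criteria are all genuinely present under the combined assumptions of the statement, which they are.
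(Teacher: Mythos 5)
Your proposal is correct and follows the paper's own proof essentially verbatim: build the reflexive relation $\Eq(\dot{U}) \rightrightarrows U$ from Lemma \ref{lem:universal-equivalence}, get univalence (hence that $t$ is a trivial fibration) from Theorem \ref{thm:cylindrical-EEP} and Proposition \ref{prop:EEP-univalence}, and apply Proposition \ref{prop:FEP-standard-endpoints} or \ref{prop:FEP-generic-point}. No gaps.
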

\begin{proof}
By Lemma \ref{lem:universal-equivalence}, the fibration $\pi \colon \dot{U} \fto U$ gives rise to a reflexive relation $\Eq(\dot{U}) \rightrightarrows U$ for which the pairing $\Eq(\dot{U}) \fto U \times U$ is a fibration.
By Theorem \ref{thm:cylindrical-EEP}, the equivalence extension property holds, so by Proposition \ref{prop:EEP-univalence} the map $t \colon \Eq(\dot{U}) \fwto U$ is a trivial fibration, and in particular a fibration.
Now either Proposition \ref{prop:FEP-standard-endpoints} or \ref{prop:FEP-generic-point} applies to conclude that $U$ is fibrant.
\end{proof}

\subsection{Fibration extension property and 2-of-3}\label{ssec:FEP}

Recall Definition \ref{defn:has-universes}, which introduces what it means for a premodel structure on a presheaf topos to have universes. We say that a premodel structure \textbf{has fibrant universes} if in addition the base of each of these universes for each sufficiently large inaccessible cardinal is fibrant.

The aim of this section will be to connect the fibrancy of the universes to a useful property of the premodel structure.

\begin{defn}\label{defn:FEP}
A premodel structure on a presheaf topos satisfies the \textbf{fibration extension property} just when, for each sufficiently large inaccessible cardinal $\kappa$, any $\kappa$-small fibration $p \colon X \fto A$, and trivial cofibration $t \colon A \cwto B$, there exists a $\kappa$-small fibration over $B$ which pulls back to $p$ along $t$:
\[ \begin{tikzcd} X \arrow[d, two heads, "p"'] \arrow[r, dashed] \arrow[dr, phantom, "\lrcorner" very near start] & Y \arrow[d, dashed, two heads, "q"] \\ A \arrow[r, tcofarrow, "t"'] & B \rlap{.}
\end{tikzcd}
\]
\end{defn}

There is a well-known connection between the fibration extension property and fibrancy of the universe \cite{Shulman:2015ua} that we spell out carefully because we are working with a somewhat different axiomatization here.

\begin{lem}\label{lem:fibU_fibext}
Any premodel structure on a presheaf topos with fibrant universes has the fibration extension property.  Conversely, if a premodel structure with the fibration extension property has universes, then those universes have fibrant base objects.
\end{lem}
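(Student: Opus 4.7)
The plan for the first claim is to exploit the classifying property of the universe directly. Given a $\kappa$-small fibration $p \colon X \fto A$ and a trivial cofibration $t \colon A \cwto B$, I would use Proposition~\ref{prop:has-universes} to realize $p$ as the pullback of $\pi \colon \dot{U}_\kappa \fto U_\kappa$ along some classifying map $\chi \colon A \to U_\kappa$. Since $U_\kappa$ is fibrant, the lifting problem posed by $\chi$ against the trivial cofibration $t$ admits a solution $\bar\chi \colon B \to U_\kappa$, and the pullback of $\pi$ along $\bar\chi$ provides the desired $\kappa$-small extension $q \colon Y \fto B$ of $p$.

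For the converse, the plan is to convert a lifting problem against $U_\kappa \to 1$ into a fibration extension problem. Fix a trivial cofibration $t \colon A \cwto B$ and a map $\chi \colon A \to U_\kappa$; the goal is to extend $\chi$ along $t$. Pulling $\pi$ back along $\chi$ yields a $\kappa$-small fibration $p \colon X \fto A$ equipped with the induced $\sF^\kappa$-algebra structure $x$. Applying the fibration extension property produces a $\kappa$-small fibration $q \colon Y \fto B$ together with a pullback square exhibiting $p \cong t^\ast q$.

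The main obstacle is producing a classifying map $\bar\chi \colon B \to U_\kappa$ that extends $\chi$ on the nose. To overcome it, I would first invoke Proposition~\ref{prop:has-universes} to equip $q$ with some $\sF^\kappa$-algebra structure. Since the horizontal legs of the pullback square $p \cong t^\ast q$ are monomorphisms (being $t$ and its pullback, and cofibrations are monic in the settings of interest), relative acyclicity of $\sF^\kappa$ (Definition~\ref{defn:relatively-acyclic}) then supplies a replacement structure $\bar y$ on $q$ making the square an $\sF^\kappa$-morphism from $x$ to $\bar y$. The realignment property of the universe $\pi$ (Definition~\ref{defn:realignment}), applied to this $\sF^\kappa$-morphism together with the classifying map $\chi$, then produces the required extension $\bar\chi$. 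The key subtlety is this two-stage use of realignment: first at the level of the fibred structure to align the two sides of the pullback square, and then at the level of classifying maps into the universe.
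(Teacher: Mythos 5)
Your proposal is correct and follows essentially the same route as the paper: the forward direction solves the lifting problem for the classifying map against the fibrant base $U_\kappa$, and the converse converts a lifting problem into a fibration extension problem and then realigns. Your explicit two-stage argument in the converse—first using relative acyclicity to make the pullback square an $\sF^\kappa$-morphism, then invoking realignment of $\pi$—is a detail the paper compresses into the single phrase ``by realignment,'' but it is exactly the intended justification (and matches the pattern already used in the proof of Proposition~\ref{prop:has-universes}).
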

\begin{proof}
We first show that fibrant universes imply the fibration extension property.
For any fibration $p \colon X \fto A$, we have a classifying universe $\pi \colon \dot{U} \fto U$ with fibrant base $U$.
In particular, this choice defines a classifying map and thus a lifting problem
\[
\begin{tikzcd}
  A
  \arrow[r, "\overline{p}"]
  \arrow[d, "j"', tcofarrow]
&
  U \rlap{,}
\\
  B \arrow[ur, dashed, "\overline{q}"']
\end{tikzcd}
\]
which admits a solution since $U$ is fibrant.
The pullback of $\pi$ along this map, displayed below-right, defines a small fibration over $B$.
The pullback square for $p$ factors through the one for $q$ defining the desired extension square:
\[
\begin{tikzcd}
  X
  \arrow[rr, bend left]
  \arrow[d, two heads, "p"']
  \arrow[r, dashed]
  \arrow[dr, phantom, "\lrcorner" very near start]
&
  Y
  \arrow[d, dashed, two heads, "q"']
  \arrow[r]
  \arrow[dr, phantom, "\lrcorner" very near start]
&
  \tilde{U}
  \arrow[d, two heads, "\pi"]
\\
  A
  \arrow[r, utcofarrow, "j"]
  \arrow[rr, bend right, "\overline{p}"']
&
  B
  \arrow[r, "\overline{q}"]
&
  U \rlap{.}
\end{tikzcd}
\]
Conversely, suppose the fibration extension property holds and consider a lifting problem into the base of one of the universal fibrations:
\[
\begin{tikzcd}
  A
  \arrow[r, "\overline{p}"]
  \arrow[d, "j"', tcofarrow]
&
  U \rlap{.}
\\
  B
  \arrow[ur, dashed, "\overline{q}"']
\end{tikzcd}
\]
Define a fibration $p \colon X \fto A$ by pulling back $\pi$ along $\overline{p}$.
Then use the fibration extension property to extend this to a fibration $q \colon Y \fto B$ that pulls back along $j$ to $p$.
As required by Definition \ref{defn:FEP}, this extended fibration is classified by the same universe.
Using the given universe and relative acyclicity of its associated notion of fibred structure, the classifying map $\overline{p}$ extends along $j$ to a classifying map $\overline{q}$ for $q$ so that $\overline{q} \cdot j = \overline{p}$, solving the lifting problem.
This proves that the fibration extension property implies fibrancy of the universe.
\end{proof}

We can now make use of the following result from \cite{CavalloSattler:2022re}, the proof of which is entirely axiomatic.
\begin{prop}[{\cite[3.31]{CavalloSattler:2022re}}]\label{prop:cylindrical-model-category}  Let $\cE$ be a cylindrical premodel category in which all objects are cofibrant.  If the fibration extension property holds, then the weak equivalences satisfy the 2-of-3 condition.
\end{prop}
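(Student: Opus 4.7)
The plan is to leverage the fibration extension property to reduce 2-of-3 to the case of maps between fibrant objects, where it can be established by directly manipulating Brown factorizations. As a first step, I would show that between fibrant objects, a map is a weak equivalence if and only if it is contractible in the sense of Definition \ref{defn:contractible-map}. Given a weak equivalence factorization $X \cwto W \fwto Y$ between fibrant objects, the intermediate object $W$ is itself fibrant (as the domain of a trivial fibration to a fibrant object), and comparing its Brown factorization with that of the composite $f$ via functoriality of the cocylinder, together with Lemma \ref{lem:fib-tfib-triangles}, shows that the Brown factor $p_f$ of $f$ is a trivial fibration. The converse follows directly from the Brown factorization of any contractible map, whose section $s_f$ through the trivial fibration $q_f$ and right factor $p_f$ constitutes a weak equivalence factorization.

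Second, I would prove 2-of-3 for contractible maps between fibrant objects directly from the cylindrical structure. For composable $f \colon X \to Y$ and $g \colon Y \to Z$ between fibrant objects, the Brown factorizations $Bf$, $Bg$, $B(gf)$ come equipped with canonical comparison maps arising from functoriality of the cocylinder $P$. Given contractibility of any two of the three maps, one analyzes the resulting diagram of Brown factors: Lemma \ref{lem:fib-tfib-triangles} forces the remaining fibration to be a trivial fibration, using closure of trivial fibrations under pullback and composition and the fact that all objects are cofibrant.

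Third, I would use the FEP to transport the conclusion to the general setting via fibrant replacement. For any $f \colon X \to Y$, form fibrant replacements $X \cwto \hat X$ and $Y \cwto \hat Y$ and lift $f$ to a map $\hat f \colon \hat X \to \hat Y$ using fibrancy of $\hat Y$. The claim is that $f$ is a weak equivalence if and only if $\hat f$ is. Given a factorization $f = pt$ with $t$ a trivial cofibration and $p$ a trivial fibration, one uses FEP to extend the fibration $p$ along the trivial cofibration $Y \cwto \hat Y$, verifies that the extension remains a trivial fibration (for instance, by applying FEP to an appropriate classifier for trivial fibrations, or by appealing to Lemma \ref{lem:fib-tfib-triangles} in a suitable diagram), and lifts $t$ consistently with $X \cwto \hat X$ to produce a weak equivalence factorization of $\hat f$. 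The converse is easier: a factorization of $\hat f$ pulls back along $Y \cwto \hat Y$, after suitable composition, to one of $f$.

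The main obstacle will be the last step. The FEP cleanly extends fibrations across trivial cofibrations, but what is actually needed is the parallel extension of a full (trivial cofibration, trivial fibration) pair in a compatible manner. Ensuring the extended fibration is again \emph{trivial}, and that the trivial cofibration component of the factorization lifts consistently with this extension, requires delicate orchestration of the cylindrical axioms with FEP; this compatibility is the technical heart of the argument and is where the specific structure of a cylindrical premodel category with FEP is genuinely consumed.
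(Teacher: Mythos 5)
First, a point of reference: the paper gives no proof of this proposition — it is imported verbatim from Cavallo–Sattler \cite[3.31]{CavalloSattler:2022re} with the remark that the proof is "entirely axiomatic" — so your proposal must be measured against that external argument. Your overall architecture (reduce to fibrant objects, prove 2-of-3 there, transfer along fibrant replacement) has the right shape, and your first two steps amount to the standard development of the homotopy theory of fibrant objects. Even there, some care is needed: you cannot invoke Lemma \ref{lem:contractible-vs-weak-equivalence}, which presupposes 2-of-3, and the cleanest route through step 2 is to pass to genuine homotopy equivalences, which satisfy 2-of-6 because homotopy is a congruence on maps from cofibrant to fibrant objects; chasing Brown factors directly with Lemma \ref{lem:fib-tfib-triangles} does not obviously handle all three cases (in particular the case where $gf$ and $g$ are known and $f$ is sought).

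The genuine gap is step 3, which you have correctly located but not closed. The mechanism you propose — extend the trivial fibration $p$ in a factorization $f = pt$ along $Y \cwto \hat{Y}$ via the FEP and then "verify that the extension remains trivial" — fails as stated: the FEP produces a fibration extension, and there is no reason the extension is a trivial fibration. Deducing that would require either the 2-of-3 property being proved, or a fibrant classifier for trivial fibration structure (the $\isContr$ construction of Digression \ref{dig:triv-fib-logical-equivalence} needs the Frobenius condition, which is not a hypothesis here). Likewise "lifting $t$ consistently with $X \cwto \hat{X}$" is itself a 2-of-3-flavoured assertion. The actual argument uses the FEP in the opposite direction: one introduces an auxiliary class of maps defined by a homotopical condition after fibrant replacement (so 2-of-6 is automatic), shows it contains the trivial cofibrations and trivial fibrations, and then shows a cofibration in this class is a trivial cofibration by solving lifting problems against an arbitrary fibration $p \colon E \fto B$ — and it is here that the FEP is consumed, replacing $p$ by a fibration over a fibrant replacement of $B$ of which $p$ is a pullback, so that homotopy-lifting arguments between fibrant objects apply. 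As written, your proposal leaves unproved precisely the step where the fibration extension property does its work.
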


Thus, the constructions of a model of homotopy type theory and of a Quillen model structure from a cylindrical premodel category with all objects cofibrant are intertwined.
First one checks the equivalence extension property, which is the heart of the interpretation of univalence.
Then one proves the Frobenius condition, which provides the interpretation of $\Pi$-types and is connected to right properness of the model structure.
The equivalence extension property and Frobenius condition may then also play a role in the construction of fibrant universes.
Besides interpreting the universes of the type theory, the fibrant universes can be used to derive the fibration extension property, which then yields the model structure.
In the sequel, we see two versions of this story, both showing that a cylindrical premodel structure is a model structure, first in cubical species and then in cubical sets.

\section{The interval model structure on cubical species}\label{sec:symmetric}

On a presheaf topos with a suitable interval object there is a now well-known strategy for defining a model structure that models homotopy type theory. The cofibrations are the monomorphisms, making the trivial fibrations those of Definition \ref{defn:trivial-fibrations}. The fibrations are then defined from the trivial fibrations as either the \emph{biased} or \emph{unbiased} fibrations of Definition \ref{defn:interval-fibrations}.\footnote{As noted in \cite[4.22--23]{CavalloSattler:2022re} and Proposition \ref{prop:kan-is-equivariant}, sometimes these classes coincide.} The results in the previous section then apply to establish the equivalence extension property, the Frobenius condition, the fibration extension property, the univalence and fibrancy of the universes, and verify the 2-of-3 condition for the weak equivalences.

Here we apply this outline not in the category of cubical sets but in the category of \emph{cubical species} introduced in \S\ref{ssec:cubical-species}, which has a suitable ``symmetric'' interval object. The category of cubical species is a category of groupoid-indexed functors valued in cubical sets, so in \S\ref{ssec:groupoid-diagrams} we first discuss some general results about subobject classifiers, pushforwards, and tiny objects that apply in that general setting. In \S\ref{ssec:species-cylindrical}, we establish the cylindrical premodel structure on cubical species. Then in \S\ref{ssec:species-model}, we apply the results from \S\ref{sec:cylindrical} to prove that this premodel structure is a model structure modeling homotopy type theory.

\subsection{Groupoid-indexed diagram categories}\label{ssec:groupoid-diagrams}

We collect some statements about diagram categories indexed by a groupoid. In fact, the first few results apply more generally to category-indexed diagrams.

\begin{lem}\label{cartesian-nat-trans:pushforward}
In a diagram category $\cE^\cC$ whose base category $\cE$ has pullbacks, consider a cartesian natural transformation $f \colon Y \to X$.
The family of evaluation functors $c^* \colon \cE^\cC \to \cE$ at objects $c \colon 1 \to C$ creates pushforward along $f$.
\end{lem}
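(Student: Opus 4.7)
The plan is to construct $f_* g$ pointwise for each $g \colon Z \to Y$ in $(\cE^\cC)_{/Y}$, using that cartesianness of $f$ automatically validates the Beck--Chevalley condition along every morphism of $\cC$. Since $\cE$ has pullbacks, $f^* \colon (\cE^\cC)_{/X} \to (\cE^\cC)_{/Y}$ is computed pointwise; the task is to establish the analogue for the right adjoint, assuming each pointwise pushforward $(f_c)_*(g_c)$ exists in $\cE$.

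For each object $c$ of $\cC$, I would set $W_c \coloneq (f_c)_*(Z_c)$, an object over $X_c$. To assemble these into a functor $W \colon \cC \to \cE$, I would use the following observation: for any morphism $\alpha \colon c \to c'$ of $\cC$, the naturality square for $f$ at $\alpha$ is a pullback by cartesianness, so the Beck--Chevalley comparison $(f_c)_* Y_\alpha^* \to X_\alpha^* (f_{c'})_*$ is an isomorphism. The structure map $Z_\alpha \colon Z_c \to Z_{c'}$, viewed over $Y_c$ as a morphism $Z_c \to Y_\alpha^* Z_{c'}$, transports via $(f_c)_*$ followed by Beck--Chevalley to a morphism $W_c \to X_\alpha^* W_{c'}$ in $\cE_{/X_c}$, equivalently a map $W_c \to W_{c'}$ over $X_\alpha$. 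Coherence of Beck--Chevalley under composition then makes $W$ genuinely a functor and the pointwise projections $W_c \to X_c$ a natural transformation $W \to X$.

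For the universal property, a morphism $h \colon U \to W$ in $(\cE^\cC)_{/X}$ amounts to pointwise maps $h_c \colon U_c \to (f_c)_*(Z_c)$ over $X_c$, natural in $c$. Transposing each $h_c$ along $f_c^* \dashv (f_c)_*$ produces $\tilde{h}_c \colon Y_c \times_{X_c} U_c \to Z_c$ over $Y_c$; since $(f^* U)_c = Y_c \times_{X_c} U_c$ by pointwise computation of pullback, the family $(\tilde{h}_c)_c$ is precisely a morphism $f^* U \to g$ in $(\cE^\cC)_{/Y}$, with naturality on the two sides corresponding under the adjunction. This exhibits the bijection $\mathrm{Hom}_{/X}(U, W) \cong \mathrm{Hom}_{/Y}(f^* U, g)$, so $W \to X$ represents $f_* g$, and the evaluation functors jointly create it.

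The main obstacle is the bookkeeping around Beck--Chevalley coherence---verifying that the comparison isomorphisms for composable pairs $\alpha, \alpha'$ in $\cC$ fit together so that $W$ is a strict functor, not merely a pseudofunctor. In the 1-categorical setting this is automatic, as each Beck--Chevalley map is a canonical morphism between strict pullbacks determined by the universal property; the substantive input is precisely the cartesianness of $f$, which upgrades each of these canonical comparisons into an isomorphism.
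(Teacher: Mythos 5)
Your proposal is correct and is essentially the paper's argument unpacked by hand: both rest on the observation that cartesianness of $f$ makes the Beck--Chevalley mate relating $(f_c)_*$ and $(f_{c'})_*$ across a morphism $\alpha$ invertible, so that the pointwise pushforwards assemble into an object of $\cE^\cC$ over $X$ with the right universal property (the paper packages this as the levelwise adjunctions forming an indexed adjunction over the lax bilimit presentation of the slices, which is exactly your explicit construction of $W$ plus the transposition argument). The only cosmetic quibble is that the canonical mate runs $X_\alpha^*(f_{c'})_* \to (f_c)_* Y_\alpha^*$ rather than the direction you wrote, but since cartesianness makes it an isomorphism this is immaterial.
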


\begin{proof}
The slice of $\cE^\cC$ over $X$ is the lax bilimit of the categories $\cE_{/X(c)}$ indexed over $c \in \cC$, with functorial action given by pullback, and similarly for $Y$.
For each $u \colon c \to d$ in $\cC$, there are canonical isomorphisms $f_c^* X_u^* \cong Y_u^* f_d^*$ satisfying coherence under pasting. Thus, the pullback functor $f^*\colon \cE_{/X} \to \cE_{/Y}$ is given by functoriality of lax bilimits from pullback along the components of $f$.

Since the naturality square of $f$ at $u$ is a pullback, the mate $(Y_u)_! f_c^* \to f_d^* (X_u)_!$ is invertible.
By adjointness, so is the mate $X_u^* (f_d)_* \to (f_c)_* Y_u^*$, assuming we have pushforward along the components of $f$.
Therefore, the pullback-pushforward adjunctions at each level assemble into an indexed adjunction.
By bifunctoriality of lax bilimits, this gives a right adjoint to pullback along $f$.
\end{proof}

\begin{lem}\label{cartesian-nat-trans:tiny}
In category of diagrams $\cE^\cC$ whose base category $\cE$ has binary products, consider a diagam $A$ with invertible functorial actions.
The family of evaluation functors $c^* \colon \cE^\cC \to \cE$ at objects $c \colon 1 \to C$ creates exponential with $A$ and its right adjoint.
\end{lem}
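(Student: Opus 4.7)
The plan is to adapt the lax bilimit argument used in the proof of Lemma~\ref{cartesian-nat-trans:pushforward}. The category $\cE^\cC$ is the lax bilimit (over $\cC$) of the constant 2-functor $\cC \to \CAT$ with value $\cE$, where the action on morphisms of $\cC$ is the identity. Under this presentation, the product functor $-\times A \colon \cE^\cC \to \cE^\cC$ is assembled levelwise from the family of functors $(-\times A(c))_{c \in \cC}$, with the transition data at $u \colon c \to d$ supplied by the natural transformations $-\times A_u \colon -\times A(c) \Rightarrow -\times A(d)$.

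The key observation is that, by hypothesis, each $A_u$ is invertible, hence so is $-\times A_u$. Therefore, if $A(c)$ is exponentiable in $\cE$ for every $c$, then the mates of the transitions $-\times A_u$ under the levelwise adjunctions $-\times A(c) \dashv (-)^{A(c)}$ and $-\times A(d) \dashv (-)^{A(d)}$ are natural isomorphisms $(-)^{A(d)} \cong (-)^{A(c)}$, coherent with respect to composition in $\cC$. By bifunctoriality of lax bilimits, these indexed right adjoints assemble into a right adjoint $(-)^A \colon \cE^\cC \to \cE^\cC$ to $-\times A$, whose value at $c$ is $(-)^{A(c)}$. This shows that the evaluation functors $c^*$ jointly create exponential with $A$.

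For the right adjoint of $(-)^A$, one iterates the same manoeuvre one level higher. Assuming each $(-)^{A(c)}$ admits a right adjoint $T_{A(c)}$ in $\cE$, taking mates of the natural isomorphisms $(-)^{A(d)} \cong (-)^{A(c)}$ under the further adjunctions $(-)^{A(c)} \dashv T_{A(c)}$ yields natural isomorphisms $T_{A(c)} \cong T_{A(d)}$, still coherent in $\cC$. Bifunctoriality of lax bilimits then assembles them into a right adjoint $T_A$ to $(-)^A$ whose value at $c$ is $T_{A(c)}$, giving the second creation statement.

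The main obstacle, minor but requiring attention, is verifying that the coherence of the mate-transitions under composition in $\cC$ is precisely what bifunctoriality of lax bilimits demands, so that the levelwise candidates really do assemble into (adjoint) endofunctors of $\cE^\cC$. Once that is handled, the argument is entirely parallel to the indexed-adjunction step in Lemma~\ref{cartesian-nat-trans:pushforward}, the only new input being that invertibility of $A$'s action—rather than cartesianness of a natural transformation—is what forces the relevant transition natural transformations, and hence their mates, to be invertible.
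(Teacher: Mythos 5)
Your proposal is correct and follows essentially the same route as the paper: present $\cE^\cC$ as a lax bilimit, observe that invertibility of the actions $A_u$ makes the transition maps for $-\times A$ invertible, pass to mates twice to get coherent invertible transitions for $(-)^{A(c)}$ and then for its right adjoint, and invoke bifunctoriality of lax bilimits to assemble the levelwise adjoints. The paper's proof is exactly this argument, stated more tersely.
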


\begin{proof}
We argue similarly to the previous proof.
The product with $A$ is given bifunctorially from product with $A(c)$ at level $c \in \cC$ and invertibility of the map $(-) \times A(c) \to (-) \times A(d)$ for $u \colon c \to d$, using that $A_u$ is invertible.
Assuming levelwise exponentials, the induced map on right adjoints $(-)^{A(d)} \to (-)^{A(c)}$ is invertible.
Assuming further right adjoints $(-)^{A(c)} \dashv (-)_{A(c)}$ for $c \in \cC$, so is the induced map $(-)_{A(c)} \to (-)_{A(d)}$.
Bifunctoriality of lax bilimits gives the desired right adjoints $(-)^A$ and $(-)_A$.
\end{proof}

\begin{lem}\label{cartesian-nat-trans:subobject-classifier}
Consider a category $\cE$ with pullbacks and a subobject classifier $1 \to \Omega$, and the constant diagram functor $\Delta \colon \cE \to \cE^\cC$. Then $\Delta 1 \to \Delta\Omega$ classifies monomorphisms that define cartesian natural transformations in $\cE^\cC$.
\end{lem}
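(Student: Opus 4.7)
The plan is to unfold both sides of the desired bijection pointwise and use the universal property of $\Omega$ levelwise in $\cE$. A map $\chi \colon B \to \Delta\Omega$ in $\cE^\cC$ is, by definition of $\Delta$, a family of maps $\chi_c \colon B(c) \to \Omega$ in $\cE$ such that for every $u \colon c \to d$ in $\cC$ we have $\chi_d \circ B_u = \chi_c$ (using $(\Delta\Omega)_u = \id_\Omega$). On the other side, a cartesian monomorphism $m \colon A \to B$ in $\cE^\cC$ is a natural transformation whose components $m_c \colon A(c) \to B(c)$ are monomorphisms in $\cE$ and whose naturality squares at $u \colon c \to d$ are pullbacks.

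Given $\chi \colon B \to \Delta\Omega$, the plan is to define $A(c)$ as the subobject of $B(c)$ classified by $\chi_c$, giving a monomorphism $m_c \colon A(c) \rightarrowtail B(c)$. For $u \colon c \to d$ in $\cC$, the equation $\chi_c = \chi_d \circ B_u$ combined with pullback pasting applied to the classifying squares for $m_c$ and $m_d$ shows that $B_u \colon B(c) \to B(d)$ restricts uniquely to a map $A_u \colon A(c) \to A(d)$ whose naturality square is a pullback. Functoriality of $A$ and naturality of $m$ follow from uniqueness of such restrictions together with functoriality of $B$. This assembles into a cartesian monomorphism $m \colon A \to B$ in $\cE^\cC$, which moreover is precisely the pullback of $\Delta 1 \to \Delta\Omega$ along $\chi$ since pullbacks in $\cE^\cC$ are computed pointwise and $\Delta$ preserves limits.

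Conversely, given a cartesian monomorphism $m \colon A \to B$ in $\cE^\cC$, I would classify each component $m_c$ by a map $\chi_c \colon B(c) \to \Omega$. The hypothesis that each naturality square at $u \colon c \to d$ is a pullback means that $m_c$ is the pullback of $m_d$ along $B_u$; by uniqueness of classifying maps this forces $\chi_d \circ B_u = \chi_c$, i.e., the family $(\chi_c)_{c \in \cC}$ is natural and defines $\chi \colon B \to \Delta\Omega$. Finally, the two constructions are mutually inverse by uniqueness of subobject classification in $\cE$, giving the required bijection between maps $B \to \Delta\Omega$ and cartesian monomorphisms into $B$.

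The argument is essentially bookkeeping: there is no real obstacle, only the need to check that levelwise classification by $\Omega$ is compatible with naturality precisely under the cartesian/pullback condition. The one small point worth emphasizing is that a monomorphism in $\cE^\cC$ whose naturality squares are pullbacks is exactly a levelwise mono plus the pullback condition, so that levelwise subobject classification transports cleanly to classification by $\Delta\Omega$.
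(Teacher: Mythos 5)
Your proposal is correct and follows essentially the same route as the paper: classify levelwise, then use pullback pasting and uniqueness of classifying maps to show that the cartesian (pullback naturality square) condition is exactly what makes the levelwise classifiers assemble into a natural map to $\Delta\Omega$, noting that monomorphisms in $\cE^\cC$ are componentwise monomorphisms since $\cE$ has pullbacks. The paper's proof is just a compressed version of your two-directional argument.
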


\begin{proof}
Note that cartesian natural transformations are closed under pullback and that the claimed classifier is one.
Given a cartesian natural transformation that is a componentwise monomorphism, its levelwise classifying squares assemble into a (unique) classifying square by pullback pasting and uniqueness of classification. Since $\cE$ has pullbacks, monomorphisms in $\cE^\cC$ are componentwise monomorphisms.
\end{proof}

For a groupoid $\cG$, every functor from $\cG$ to $\cE$ has invertible functorial action and every natural transformation between such functors is cartesian.
Therefore:

\begin{cor}\label{groupoid-functors:pushforward}
Consider a locally cartesian closed category $\cE$.
For each groupoid $\cG$, the functor category $\cE^\cG$ is locally cartesian closed.
For each functor $F \colon \cG \to \cH$ between groupoids, restriction $F^* \colon \cE^\cH \to \cE^\cG$ preserves pushforward.
\qed
\end{cor}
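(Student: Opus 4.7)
The plan is to reduce both claims to the two lemmas just established about cartesian natural transformations. The key observation is that when $\cG$ is a groupoid, every functor $X : \cG \to \cE$ sends morphisms of $\cG$ to isomorphisms in $\cE$, so the two edges of any naturality square coming from the functorial action are isomorphisms; a commutative square with parallel isomorphisms is automatically a pullback. Consequently every natural transformation in $\cE^\cG$ is cartesian, and every object of $\cE^\cG$ has invertible functorial action (the hypothesis of Lemma \ref{cartesian-nat-trans:tiny}).

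For local cartesian closure, I would apply Lemma \ref{cartesian-nat-trans:pushforward} to each morphism $f : Y \to X$ in $\cE^\cG$. Since $f$ is cartesian and $\cE$ is locally cartesian closed, the evaluation functors at objects $c \in \cG$ create a pushforward $f_* \colon \cE^\cG_{/Y} \to \cE^\cG_{/X}$, and finite limits in $\cE^\cG$ are computed pointwise, so $\cE^\cG$ inherits local cartesian closure from $\cE$.

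For preservation of pushforward by $F^* \colon \cE^\cH \to \cE^\cG$, fix a morphism $f : Y \to X$ in $\cE^\cH$ and an object $Z \to Y$ of the slice $\cE^\cH_{/Y}$. By the previous step, $(f_* Z)_h \cong (f_h)_* Z_h$ for each $h \in \cH$. Since restriction along $F$ is computed pointwise, $(F^* f_* Z)_g \cong (f_{F(g)})_* Z_{F(g)}$ for each $g \in \cG$. On the other hand, $F^* f$ has component $f_{F(g)}$ at $g$, so the same lemma gives $((F^* f)_* F^* Z)_g \cong (f_{F(g)})_* Z_{F(g)}$. The Beck--Chevalley comparison $(F^*f)_* F^* \To F^* f_*$ is componentwise the canonical isomorphism arising from these identifications, hence invertible.

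The main point to watch is that the identifications in Lemma \ref{cartesian-nat-trans:pushforward} are furnished by the lax bilimit description of the slices and so a priori carry coherence data; but since restriction along $F$ acts strictly by reindexing along the functor $F$, and pushforwards on both sides are constructed out of the same componentwise pushforwards in $\cE$, the comparison is strictly the identity on each component, so no nontrivial obstacle arises.
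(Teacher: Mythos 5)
Your proposal is correct and follows essentially the same route as the paper: the paper derives this corollary directly from the observation that over a groupoid every functor has invertible action and every natural transformation is cartesian, so Lemmas \ref{cartesian-nat-trans:pushforward} and its companions apply verbatim. Your additional check that the Beck--Chevalley comparison for $F^*$ is componentwise the identity is exactly the content implicit in the paper's statement that the evaluation functors \emph{create} the pushforwards.
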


\begin{cor}\label{groupoid-functors:tiny}
Consider a cartesian closed category $\cE$.
For each groupoid $\cG$, an object $A \in \cE^\cC$ is tiny if it is componentwise tiny.
For each functor $F \colon \cG \to \cH$ between groupoids, restriction $F^* \colon \cE^\cH \to \cE^\cG$ preserves exponentiation with componentwise tiny objects.
\qed
\end{cor}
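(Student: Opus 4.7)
The plan is to derive the corollary directly from Lemma \ref{cartesian-nat-trans:tiny} by observing that the hypothesis on invertible functorial actions is automatic when the base category is a groupoid, exactly as in the derivation of Corollary \ref{groupoid-functors:pushforward} from Lemma \ref{cartesian-nat-trans:pushforward}.

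First I would recall that for any groupoid $\cG$ and any diagram $A \in \cE^\cG$, the functorial action $A_u$ is invertible for every $u$ in $\cG$, since $u$ is itself invertible and $A$ preserves this. Therefore the hypothesis of Lemma \ref{cartesian-nat-trans:tiny} is satisfied by every object of $\cE^\cG$. Now suppose $A$ is componentwise tiny, i.e.\ each $A(c) \in \cE$ has an exponential functor with a right adjoint $(-)_{A(c)}$. By Lemma \ref{cartesian-nat-trans:tiny} applied to $\cC = \cG$, the evaluation functors $c^* \colon \cE^\cG \to \cE$ at objects $c$ create both the exponential with $A$ and its right adjoint, so $(-)^A \colon \cE^\cG \to \cE^\cG$ has a right adjoint, witnessing that $A$ is tiny in $\cE^\cG$.

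For the second claim, consider $F \colon \cG \to \cH$ between groupoids and a componentwise tiny $A \in \cE^\cH$. Then $F^*A \in \cE^\cG$ is also componentwise tiny, since $(F^*A)(c) = A(Fc)$ and each $A(Fc)$ is tiny. By the first part, both $(-)^A$ on $\cE^\cH$ and $(-)^{F^*A}$ on $\cE^\cG$ exist and are computed levelwise by Lemma \ref{cartesian-nat-trans:tiny}. Since restriction $F^*$ is itself the levelwise operation $B \mapsto B \circ F$, the canonical comparison map $F^*(B^A) \to (F^*B)^{F^*A}$ is componentwise the identity on $B(Fc)^{A(Fc)}$, hence invertible.

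The verification is essentially bookkeeping, so I do not anticipate a genuine obstacle; the only subtle point is ensuring that the creation statement in Lemma \ref{cartesian-nat-trans:tiny} indeed delivers the right adjoint in $\cE^\cG$ as a functor and not merely pointwise, but this is exactly what ``creates'' is defined to mean there, and bifunctoriality of the lax bilimit presentation used in the proof of the lemma yields the required coherence automatically.
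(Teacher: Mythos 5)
Your proposal is correct and matches the paper's intended argument exactly: the corollary is stated with a bare \qed, immediately after the observation that every groupoid-indexed diagram has invertible functorial actions (and every natural transformation between such diagrams is cartesian), so the result is precisely an application of Lemma \ref{cartesian-nat-trans:tiny} as you describe. Your additional remarks on the comparison map for $F^*$ and on what ``creates'' delivers are the right bookkeeping and introduce no new ideas beyond the paper's route.
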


\begin{cor}\label{groupoid-functor:subobject-classifier}
Consider a finitely complete category $\cE$ with a subobject classifier.
For each groupoid $\cG$, the functor category $\cE^\cG$ has a subobject classifier.
For each functor $F \colon \cG \to \cH$ between groupoids, restriction $F^* \colon \cE^\cH \to \cE^\cG$ preserves subobject classifiers.
\qed
\end{cor}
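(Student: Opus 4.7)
The plan is to deduce the corollary directly from Lemma \ref{cartesian-nat-trans:subobject-classifier}, using the special feature of groupoid-indexed diagrams already exploited above---namely, that every natural transformation between functors out of a groupoid is cartesian. Accordingly, the first step is to apply the lemma with $\cC \coloneq \cG$: it produces a classifier $\Delta 1 \to \Delta\Omega$ in $\cE^\cG$ for those monomorphisms which are cartesian natural transformations.

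Next, I would observe that in $\cE^\cG$, \emph{every} monomorphism is cartesian and thus classified. Since $\cE$ has pullbacks, monomorphisms in $\cE^\cG$ are exactly the componentwise monomorphisms (as noted in the proof of Lemma \ref{cartesian-nat-trans:subobject-classifier}). For any natural transformation $f \colon Y \to X$ between functors $\cG \to \cE$ with $\cG$ a groupoid, every morphism $u \colon c \to d$ in $\cG$ is invertible, which forces the naturality square at $u$ to be a pullback (this is the same observation used immediately before Corollary \ref{groupoid-functors:pushforward} and Corollary \ref{groupoid-functors:tiny}). Thus monomorphisms in $\cE^\cG$ coincide with cartesian monomorphisms, and $\Delta 1 \to \Delta\Omega$ is the subobject classifier for $\cE^\cG$.

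For the preservation statement, I would use the evident fact that restriction $F^* \colon \cE^\cH \to \cE^\cG$ strictly preserves constant diagrams: $F^* \Delta_\cH X \cong \Delta_\cG X$ naturally in $X \in \cE$, since $(\Delta_\cH X) \circ F = \Delta_\cG X$. Applying this to the map $1 \to \Omega$ in $\cE$, we obtain $F^*(\Delta_\cH 1 \to \Delta_\cH \Omega) \cong (\Delta_\cG 1 \to \Delta_\cG \Omega)$, which by the first part is the subobject classifier of $\cE^\cG$.

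The argument is essentially bookkeeping once Lemma \ref{cartesian-nat-trans:subobject-classifier} is in hand; the only potential subtlety is remembering to invoke invertibility of morphisms in $\cG$ to upgrade ``cartesian monomorphism'' to ``monomorphism,'' which is the content of the groupoid hypothesis.
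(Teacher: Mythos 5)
Your proof is correct and matches the paper's (implicit) argument exactly: the corollary is stated with no proof precisely because it follows from Lemma \ref{cartesian-nat-trans:subobject-classifier} together with the observation, made just before the three corollaries, that every natural transformation between functors out of a groupoid is cartesian, plus the fact that $F^*$ preserves constant diagrams. Nothing to add.
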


\subsection{Cubical species and the symmetric interval}\label{ssec:cubical-species}

The ``cubical'' in the phrase \emph{cubical species} refers to the cartesian cube category, defined below.
In Buchholtz and Morehouse's taxonomy of cube categories \cite{BuchholtzMorehouse:2017vo}, this is $\mathbb{C}_{(\mathrm{wec},\cdot)}$.

\begin{defn}\label{defn:cartesian-cubes}
    The \textbf{cartesian cube category} $\CCube \coloneq \Fin_{\bot\neq\top}^\op$ is the opposite of the category of finite strictly bipointed sets and bipointed maps. Its objects are bipointed sets of the form $\{\bot,1,\ldots, n, \top\}$ for $n \geq 0$. We write $\cSet\coloneq\widehat{\CCube}$ for the topos of presheaves and call its objects \textbf{(cartesian) cubical sets}.  Under the Yoneda embedding $\yo \colon \CCube \to \cSet$, the object $\{\bot,1,\ldots, n, \top\}$ is identified with the $n$-cube $I^n$. By the Yoneda lemma, morphisms $\alpha \colon I^m \to I^n$ correspond to functions $\alpha\colon \{\bot,1,\ldots,n,\top\} \to \{\bot,1,\ldots,m,\top\}$ preserving the basepoints $\bot$ and $\top$.
\end{defn}

Let $\SSigma \cong \coprod_{k \geq 1} \SSigma_{k}$ be the maximal subgroupoid of the cube category $\CCube$ excluding, for reasons explained in Remark \ref{rmk:no-degree-zero}, the identity automorphism of the 0-cube. Here $\SSigma_k$ is the one-object groupoid associated to the symmetric group $\Sigma_k$, which acts on $\{\bot,1,\ldots,k,\top\}$ by permuting the indices and thus acts on the representable cubical set $I^k$ by permuting the dimensions.

\begin{defn}
  A \textbf{cubical species} is a set-valued functor on $\CCube^{\op} \times \SSigma$.
\end{defn}

It is convenient to represent a cubical species as a symmetric sequence of cubical sets, i.e., as a family $\mathbb{X} = (X^{k})_{k \geq 1}$ of cubical sets, in which each $X^{k}$ has a specified $\Sigma_{k}$-action. Indeed, as a category we have
\[
\Set^{\CCube^{\op} \times \SSigma}\ \cong\  \cSet^\SSigma\
\cong\ \prod_{k \geq 1} \cSet^{\SSigma_{k}}.
\]
A cubical species that is non-empty  in only a single factor $\cSet^{\SSigma_k}$ is said to be \textbf{concentrated in degree} $k$.

Write $\FF_{k} \colon \cSet \to \cSet^{\SSigma}$ for left Kan extension along $*_{k} \colon \1 \to \SSigma$, the left adjoint to the functor $U_k \colon \cSet^{\SSigma} \to \cSet$ which projects to the $k$th component of the cubical species and forgets the action:
\[
  \begin{tikzcd}
    \cSet \arrow[rr, phantom, "\bot"] \arrow[rr, bend left, "\FF_k"] && \cSet^{\SSigma} \arrow[ll, bend left, "U_k"] \rlap{.}
  \end{tikzcd}\]

\begin{defn} For $k \geq 1$, a \textbf{$k$-free} cubical species is a cubical species of the form $\FF_kX$ for $X \in \cSet$. Explicitly, the $k$-free cubical species $\FF_kX$ is concentrated in degree $k$ with free $\Sigma_{k}$-action on the cubical set $X \times \Sigma_k$.
\end{defn}

We highlight two particularly important examples of cubical species.

\begin{ex}\label{ex:representables}
  The representable cubical species
  \[
  \hom_{\CCube \times \SSigma^{\op}}\big(-, ([n], *_{k})\big),
  \]
   represented by the pair of objects $[n] = \{\bot, 1,\ldots, n ,\top\} \in \CCube$ and $*_{k} \in \SSigma$, is the free cubical species $\FF_kI^n$ concentrated in degree $k$ and given there by the cubical set $I^{n} \times \Sigma_{k}$ with the free $\Sigma_{k}$-action.
  \end{ex}

\begin{ex}\label{ex:interval}
  The restriction of the hom bifunctor $\hom \in \Set^{\CCube^{\op} \times \CCube}$ along the inclusion $\SSigma\hookrightarrow\CCube$ in the codomain variable defines a cubical species $\II$ whose $k$th component is the geometric $k$-cube $I^{k}$ with its \textbf{regular action}, permuting the $k$ dimensions.
\end{ex}

\begin{rmk}\label{rmk:interval-endpoints} The symmetric interval $\II$ has $2^{\omega}$ points $\1 \to \II$: for any countable sequence $\vec{v}$ of 0s and 1s there is a corresponding point $\vec{v} \colon \1 \to \II$ that chooses either the initial or final vertex in each component. Since the terminal cubical species $\1$ has a trivial action in each component, all points of the interval are fixed points for the coordinatewise actions of the symmetric groups.
\end{rmk}

\begin{lem}\label{lem:symmetric-interval-tiny} The cubical species $\II$ is tiny.
\end{lem}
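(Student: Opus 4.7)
The plan is to reduce tininess of $\II \in \cSet^{\SSigma}$ to tininess of its components in $\cSet$, using Corollary \ref{groupoid-functors:tiny}, and then to recognize those components as representables on a category with finite products.

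First, since $\SSigma$ is a groupoid, Corollary \ref{groupoid-functors:tiny} applies with $\cG = \SSigma$ and $\cE = \cSet$ and reduces tininess of $\II$ to componentwise tininess: it suffices to show that each $\II(*_k) \in \cSet$ is tiny. By the definition of $\II$ in Example \ref{ex:interval}, this evaluation is just the representable cubical set $I^k$, so the problem becomes showing that every representable $I^n$ is tiny in $\cSet$.

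For that, I would use the finite products in $\CCube$: since $\Fin_{\bot \neq \top}$ has coproducts given by disjoint union of underlying sets modulo identification of corresponding basepoints, $\CCube$ has binary products with $I^m \times I^n \cong I^{m+n}$, and the Yoneda embedding preserves them. Consequently the exponential $(-)^{I^n} \colon \cSet \to \cSet$ is pointwise $Y \mapsto \bigl([k] \mapsto Y([k+n])\bigr)$, that is, it coincides with restriction along the functor $(-) \times I^n \colon \CCube \to \CCube$. Restriction along any functor between small categories is itself a left adjoint (with right adjoint given by right Kan extension), so $(-)^{I^n}$ has a right adjoint and $I^n$ is tiny.

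I do not anticipate any substantial obstacle here: the argument essentially requires only identifying the correct reduction via Corollary \ref{groupoid-functors:tiny} together with the standard fact that representables in a presheaf topos on a category with finite products are tiny. The one point worth spelling out in the write-up is the identification of the componentwise value $\II(*_k)$ as a plain representable, which follows directly from the description of $\II$ as the restriction of the hom bifunctor on $\CCube$.
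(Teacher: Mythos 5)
Your proof is correct and follows the same route as the paper: reduce to componentwise tininess via Corollary \ref{groupoid-functors:tiny}, identify each component $\II(*_k)$ as the representable $I^k$, and invoke tininess of representables in presheaves over a category with binary products. The only difference is that you spell out why representables are tiny (exponentiation as restriction along $(-)\times I^n$), which the paper cites as a standard fact.
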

\begin{proof}
Recall that $\II(c) = \CCube(-, c) \in \cSet$ is representable.
Since $\CCube$ has binary products, representables in $\cSet$ are tiny.
Now $\II$ is tiny by Corollary \ref{groupoid-functors:tiny}.
\end{proof}

\subsection{The cylindrical premodel structure on cubical species}\label{ssec:species-cylindrical}

We determine a pair of (algebraic) weak factorization systems that constitute a premodel structure on the cubical species and prove that it is cylindrical, with adjoint functorial cylinder represented by the interval object
\[ \begin{tikzcd} \1 \arrow[r, shift left, "\delta_0"] \arrow[r, shift right, "\delta_1"'] & \II \arrow[r, "!"] & \1 \end{tikzcd}
\]
where the points $\delta_0 , \delta_1$ correspond to the constant sequences $\vec{0},\vec{1}$ of Remark \ref{rmk:interval-endpoints}.

As a presheaf topos, the category $\cSet^{\SSigma}$ has a subobject classifier $ \top \colon \1 \rightarrowtail \OOmega$, which we can describe explicitly as follows.

\begin{lem}\label{lem:symmetric-subobjects} For $n,k \in \NN$, $k \geq 1$, elements $\chi_c \colon \FF_kI^n \to \OOmega$ of the subobject classifier correspond bijectively to subobjects $c \colon C \rightarrowtail I^n$ of the $n$-cube.
\end{lem}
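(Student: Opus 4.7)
The plan is to combine the adjunction $\FF_k \dashv U_k$ with an identification of the subobject classifier $\OOmega$ in cubical species that follows from the results of \S\ref{ssec:groupoid-diagrams}.

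The key preliminary observation is that, since $\SSigma$ is a groupoid, every natural transformation in $\cSet^{\SSigma}$ is cartesian: in any naturality square, the horizontal maps (the functorial actions of morphisms in $\SSigma$) are isomorphisms, so the square is automatically a pullback. Hence Corollary \ref{groupoid-functor:subobject-classifier} applies (or Lemma \ref{cartesian-nat-trans:subobject-classifier} directly), identifying the subobject classifier of $\cSet^{\SSigma}$ as the constant cubical species $\OOmega \cong \Delta\Omega$, where $\Omega$ is the subobject classifier of $\cSet$. In particular, the projection $U_k \colon \cSet^{\SSigma} \to \cSet$ preserves the subobject classifier, so $U_k \OOmega \cong \Omega$.

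From here, the lemma follows from formal adjunction arguments. By the adjunction $\FF_k \dashv U_k$ and the universal property of $\Omega$, one obtains the chain of natural bijections
\[
  \hom_{\cSet^{\SSigma}}(\FF_k I^n, \OOmega) \;\cong\; \hom_{\cSet}(I^n, U_k \OOmega) \;\cong\; \hom_{\cSet}(I^n, \Omega) \;\cong\; \mathrm{Sub}_{\cSet}(I^n).
\]
Unwinding, the bijection sends a subobject $c \colon C \rightarrowtail I^n$ of the $n$-cube to its classifying map $\chi_c$ in the sense of the statement, obtained by transposing the characteristic map $I^n \to \Omega$ of $c$ across the adjunction.

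No step poses a serious obstacle. Should one wish to bypass the machinery of \S\ref{ssec:groupoid-diagrams}, the identification $\OOmega \cong \Delta\Omega$ is also easily verified directly: a sieve on $([n], *_k)$ in the indexing category $\CCube \times \SSigma^{\op}$ consists of a set of pairs $(\alpha, \sigma)$ with $\alpha \colon [m] \to [n]$ in $\CCube$ and $\sigma \in \Sigma_k$, and closure under the free right $\Sigma_k$-action on the second coordinate forces it to have the form $S \times \Sigma_k$ for a unique sieve $S$ on $[n]$ in $\CCube$. Either way, the content of the lemma is that subobjects of the $k$-free representable are controlled entirely by the cubical set in degree $k$, with the free $\Sigma_k$-action adding no further structure.
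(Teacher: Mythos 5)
Your proof is correct, but it takes a different route from the paper's. The paper argues directly at the level of subobjects: an element $\chi_c \colon \FF_k I^n \to \OOmega$ classifies a subobject of $\FF_k I^n$, and since $\FF_k I^n$ is concentrated in degree $k$ with a free $\Sigma_k$-action, any subobject inherits both properties and is therefore of the form $\FF_k c \colon \FF_k C \rightarrowtail \FF_k I^n$ for a unique $c \colon C \rightarrowtail I^n$. You instead identify the classifier itself, $\OOmega \cong \Delta\Omega$ (a fact the paper only records later, as Lemma \ref{lem:constant-subobjects}, via Corollary \ref{groupoid-functor:subobject-classifier}), and then transpose across $\FF_k \dashv U_k$. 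Your version is the more formal of the two and neatly packages the small implicit step in the paper's proof (that an equivariant subobject of a freely-acted object is itself free on a subobject of the quotient); the paper's version has the advantage of directly exhibiting the classified subobject as $\FF_k C \rightarrowtail \FF_k I^n$, which is the form in which the lemma is used downstream (e.g.\ in Construction \ref{con:generating-tcof}). Your concluding remark that you do recover the explicit description by unwinding the adjunction closes that gap, and your direct sieve computation is also correct. Both arguments are sound.
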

\begin{proof}
By definition, an element $\chi_c \colon \FF_k I^n \to  \OOmega$ corresponds to a subobject of the representable cubical species $\FF_kI^n$. Since $\FF_kI^n$ is concentrated in degree $k$ and has a free $\Sigma_k$-action, its subobject must have these properties as well. Thus, we see that the subobject has the form $\FF_k c \colon \FF_kC \rightarrowtail \FF_kI^n$ for a necessarily unique subobject $c \colon C \rightarrowtail I^n$ of the $n$-cube.
\end{proof}

\begin{defn}\label{def:coftrivfib} As the \textbf{cofibrations} we take the monomorphisms, which are classified (up to equivalence) by the subobject classifier $ \top \colon \1 \rightarrowtail \OOmega$. The \textbf{trivial fibrations} are then the maps with the right lifting property against all monomorphisms.
\end{defn}

As we saw in \S\ref{ssec:trivial-fibrations}, the cofibrations and trivial fibrations form a weak factorization system.
By Lemma \ref{lem:loc-rep-triv-fib}, we can recognize the trivial fibrations as the class underlying a locally representable and relatively acyclic notion of fibred structure $\TF$.

We now turn to the (trivial cofibration, fibration) weak factorization system. The fibrations will be the unbiased fibrations of Definition \ref{defn:interval-fibrations}\eqref{itm:unbiased-fibrations}---see Theorem \ref{thm:uniform-fibrations}---which we now describe explicitly. The fibrations will be determined by the trivial fibrations, by Leibniz pullback application of the evaluation natural transformation $\ev \colon (-)^\II \times \II \To (-)$ involving the interval $\II$.
Equivalently, we may describe them as given by right lifting against a category of generating trivial cofibrations constructed using the universal subobject $\top \colon \1 \to \OOmega$ and the ``generic point'' $\delta \colon \II \to \II \times \II$---see Definition \ref{defn:species-trivial-cofibration-fibration}.
With the latter description, we can obtain a functorial factorization (indeed, an awfs) constructively using Garner's algebraic small object argument.

\begin{defn} As a map in the slice category $\cSet^{\SSigma}_{/\II}$, the diagonal $\delta \colon \II \to \II \times \II$ defines an additional point of $\II$, called the \textbf{generic point}.
\end{defn}

The morphisms $\top \colon \1 \to \OOmega$ in $\cSet^{\SSigma}_{/\OOmega}$ and $\delta \colon \II \to \II \times \II$ in $\cSet^{\SSigma}_{/\II}$ can be reindexed to lie in the common slice $\cSet^{\SSigma}_{/\OOmega \times \II}$. Their pushout product there defines a family of maps $\top\hat\times_{\OOmega\times\II}\delta$ internally indexed by the object $\OOmega \times \II$:
\[
  \begin{tikzcd}[column sep=small] & \II \arrow[dd, phantom, "\rotatebox{135}{$\lrcorner$}" very near end]\arrow[dl, "\top \times \II"'] \arrow[dr, "\delta"] \\
   \OOmega \times\II \arrow[dr, dotted] \arrow[ddr, "\OOmega \times \delta"'] & &  \II \times \II  \arrow[dl, dotted] \arrow[ddl, "\top \times \II \times \II"]\\
    [-12pt] &  \OOmega \times \II \cup_\II \II \times \II \arrow[d, dashed, "\top \hat{\times}_{\OOmega \times \II}\delta" description] & \\
     [+24pt] & \OOmega \times \II \times \II \arrow[d, "\pi"] &\\
      & \OOmega \times \II \rlap{.} &
        \end{tikzcd}
\]
Our category of generating trivial cofibrations will be given by externalizing the family $\top\hat\times_{\OOmega\times\II}\delta$ and will therefore be indexed by the category of elements of $\OOmega\times\II$.

\begin{rmk}\label{rmk:product-cat-of-elements} Since in general $\int_{\cX} 1 \cong \cX$, and the category of elements functor $\int$ preserves pullbacks, the category of elements of a product is the pullback of the categories of elements:
  \[
    \begin{tikzcd}
      \int\OOmega \times \II \arrow[r] \arrow[d] \arrow[dr, phantom, "\lrcorner" very near start] & \int \OOmega \arrow[d] \\ \int \II \arrow[r] & \CCube \times \SSigma^\op \rlap{.}
    \end{tikzcd}
   \]
   Now $\II$ is a restriction of the hom bifunctor, so its category of elements is a restriction of the twisted arrow category. Thus, the objects of $\int\OOmega \times \II$ are pairs $(c,\zeta)$ as displayed vertically below while $(\alpha,\sigma) \colon (d,\xi) \to (c,\zeta)$ defines a morphism just when the displayed diagram of cubical sets commutes, and the top square is a pullback:
   \begin{equation}\label{eq:generating-tcof-indexing-map}
     \begin{tikzcd} D \arrow[r, "\alpha"] \arrow[d, "d"', tail] \arrow[dr, phantom, "\lrcorner" very near start] & C \arrow[d, "c", tail] \\ I^m \arrow[r, "\alpha"] \arrow[d, "\xi"'] & I^n \arrow[d, "\zeta"] \\ I^k & I^k \arrow[l, "\sigma"] \rlap{.}
     \end{tikzcd}
    \end{equation}
\end{rmk}

As observed in Remark \ref{rmk:product-cat-of-elements}, the elements of $\OOmega \times \II$ stand in bijection with maps $(\chi_c,\zeta) \colon \FF_kI^n \to \OOmega \times \II$ where $\chi_c \colon \FF_kI^n \to \OOmega$ classifies a subobject $c \colon C \rightarrowtail I^n$ of the cubical set $I^n$ and $\zeta \colon \FF_k I^n \to \II$, by adjunction, corresponds to a map $\zeta \colon I^n \to U_k\II \cong I^k$ in $\CCube$. Thus, we regard the objects in $\int\OOmega \times \II$ as composable pairs of cubical set morphisms
\[
\begin{tikzcd} C \arrow[rr, "c", tail] & & I^n \arrow[dl, "\zeta"]\\  & I^k \rlap{,}
\end{tikzcd}
\]
which we call \textbf{triangles}.

\begin{con}\label{con:generating-tcof}
  The family of maps $\top\hat\times_{\OOmega \times \II}\delta$ internally indexed by the object $\OOmega \times \II$ can be externalized to define a functor $J \colon \int\OOmega\times\II \to (\cSet^{\SSigma})^\2$ externally indexed by the category of elements of $\OOmega \times \II$ and defined by pulling back the given internal family of maps to representables.
The cartesian functor $J$ lifts the Yoneda embedding $\yo$ from the discrete fibration associated to the category of elements of the functor $\OOmega \times \II$ to the codomain fibration:
  \[
    \begin{tikzcd}  \int\OOmega\times \II  \arrow[d, "\pi"'] \arrow[r, dashed, "J"] & (\cSet^{\SSigma})^\2 \arrow[d, "\cod"] \\  \CCube \times \SSigma^\op \arrow[r, hook, "\yo"] & \cSet^{\SSigma} \rlap{.}
    \end{tikzcd}
  \]
  Explicitly, the functor $J$ sends an element $(c,\zeta)$ to the pullback along it of the universal element $\top \hat\times\delta$, as indicated below:
    \[
    \begin{tikzcd}[column sep=small]  & \FF_k C \arrow[dl, "\FF_kc"'] \arrow[dr, "{(\FF_kC,\zeta\cdot\FF_kc)}"]  \arrow[dd, phantom, "\rotatebox{135}{$\lrcorner$}" very near end]&& & \II \arrow[dd, phantom, "\rotatebox{135}{$\lrcorner$}" very near end]\arrow[dl, "\top \times \II"'] \arrow[dr, "\delta"] \\ \FF_k I^n \arrow[dr, dotted] \arrow[ddr, "{(\FF_kI^n,\zeta)}"'] && \FF_k C \times \II \arrow[dl, dotted] \arrow[ddl, "\FF_kc \times \II"] &\OOmega \times\II \arrow[dr, dotted] \arrow[ddr, "\OOmega \times \delta"'] & &  \II \times \II  \arrow[dl, dotted] \arrow[ddl, "\top \times \II \times \II"]\\ [-12pt] & \FF_kI^n \cup_{\FF_kC} \FF_kC \times \II \arrow[d, dashed]  \arrow[rrr, dotted] \arrow[drrr, phantom, "\lrcorner\qquad\qquad" pos=.0001] && &  \OOmega \times \II \cup_\II \II \times \II \arrow[d, dashed, "\top \hat\times\delta" description] & \\ [+2pt] & \FF_k I^n \times \II \arrow[rrr, dotted] \arrow[drrr, phantom, "\lrcorner\qquad\qquad" very near start] \arrow[d, "\pi"] && & \OOmega \times \II \times \II \arrow[d, "\pi"] \\ &\FF_k I^n \arrow[rrr, "{(\chi_c,\zeta)}"'] && & \OOmega \times \II \rlap{.}
          \end{tikzcd}
  \]
The resulting map $J(c,\zeta) = (\chi_c,\zeta)^*(\top \hat\times\delta)$ can also be computed as the pushout product of the subobject $\FF_kc \colon \FF_kC \rightarrowtail \FF_kI^n$ and the generic point $\delta \colon \II \to \II \times \II$ regarded as maps in the slice over $\II$ via $\zeta \colon \FF_kI^n \to \II$ and $\pi \colon \II \times \II \to \II$.

  Note the map $\delta$ pulls back along $(\chi_c,\zeta)$ to define the \textbf{graph} $(\FF_kC,\zeta \cdot \FF_kc) \colon \FF_kC \to \FF_kC \times \II$ of $\zeta \cdot \FF_kc \colon \FF_kC \to \II$ and similarly $\OOmega\times\delta$ pulls back to define the graph of $\zeta \colon \FF_kI^n \to \II$.  Henceforth, for any map $\gamma \colon \AA \to \BB$, we shall write $[\gamma] \colon \AA \to \AA\times \BB$ for its graph $(\AA, \gamma)$.

  Morphisms in $\int\OOmega\times\II$
  \[
    \begin{tikzcd}
      \FF_kI^m \arrow[rr, "\alpha \times \sigma"] \arrow[dr, "{(\chi_d,\xi)}"'] & & \FF_k I^n \arrow[dl, "{(\chi_c,\zeta)}"] \\ & \OOmega \times \II
   \end{tikzcd}
  \]
  correspond to pairs $\alpha \colon I^m \to I^n$ and $\sigma \in \Sigma_k$ as in \eqref{eq:generating-tcof-indexing-map}. The functor $J$ carries such a morphism to the following pullback square of cubical species:
  \begin{equation}\label{eq:generating-tcof-morphism}
    \begin{tikzcd}
      \FF_k I^m \cup_{\FF_kD} \FF_kD \times \II \arrow[d, tail, "{\langle [\xi], \FF_kd \times 1\rangle}"'] \arrow[r, "\alpha \times \sigma \times 1"] \arrow[dr, phantom, "\lrcorner" very near start] & \FF_k I^n \cup_{\FF_kC} \FF_kC \times \II \arrow[d, tail, "{\langle [\zeta], \FF_kc \times 1\rangle}"] \\ \FF_k I^m \times \II \arrow[r, "\alpha \times \sigma \times 1"'] & \FF_k I^n \times \II \rlap{.}
    \end{tikzcd}
  \end{equation}
\end{con}

We refer to the subobjects in the image of the functor $J$ as \textbf{open boxes}, though the nature of the gluing of the ``lid'' $\FF_k I^n$ onto the ``box'' $\FF_k C \times \II$ is somewhat subtle because it involves the map $\zeta \colon \FF_kI^n \to \II$. The open boxes are themselves pushout products on account of the following general lemma.

\begin{lem} If $i$ is a morphism in the slice over $\XX$ and $j$ is a morphism in the slice over $\YY$ and $(x,y) \colon \ZZ \to \XX \times \YY$, then the pushout product of $i$ and $j$ in the slice over $\XX \times \YY$ pulls back along $(x,y)$ to the map over $\ZZ$ obtained as the pushout product over $\ZZ$ of the evident pullbacks of $i$ and $j$.
\end{lem}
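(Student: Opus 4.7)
The plan is to exploit the fact that pullback functors between slice categories preserve both the limits and colimits involved in forming pushout products. In the ambient locally cartesian closed category (here $\cSet^{\SSigma}$), the functor $(x,y)^* \colon \cE_{/\XX \times \YY} \to \cE_{/\ZZ}$ admits both a left adjoint (postcomposition with $(x,y)$) and a right adjoint (pushforward), so it preserves all finite limits and colimits in the relevant slices.

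First I will rewrite the data in uniform terms in the slice over $\XX \times \YY$: a map $i$ over $\XX$ lifts to $\pi_\XX^* i$ over $\XX \times \YY$ via pullback along the projection $\pi_\XX \colon \XX \times \YY \to \XX$, and symmetrically $j$ over $\YY$ lifts to $\pi_\YY^* j$. The pushout product in question is then built from the binary product in $\cE_{/\XX \times \YY}$ (a pullback over $\XX \times \YY$) of these two lifts, together with the standard pushout along the legs of this product.

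Next I will apply $(x,y)^*$. Because pullback preserves pullbacks and pushouts, it carries the pushout product of $\pi_\XX^* i$ and $\pi_\YY^* j$ in $\cE_{/\XX \times \YY}$ to the pushout product of $(x,y)^*\pi_\XX^* i$ and $(x,y)^*\pi_\YY^* j$ in $\cE_{/\ZZ}$. Pullback pasting then identifies $(x,y)^* \pi_\XX^* i$ with the pullback of $i$ along $\pi_\XX \cdot (x,y) = x$, namely $x^* i$, and similarly $(x,y)^* \pi_\YY^* j = y^* j$. Combining these identifications yields the stated equality of maps over $\ZZ$.

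There is no real obstacle: preservation of pushouts by pullback is automatic in any locally cartesian closed category, so the proof reduces to a bookkeeping argument about the naturality of the pushout product construction under change of base along $(x,y)$. The only point requiring some care is keeping track of which slice a given object lives in, since the pullback functors are precisely the bridges between these slices.
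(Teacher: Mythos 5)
Your proof is correct and is essentially the paper's argument, which is stated in one line as ``Pushout products in slices are stable under pullback''; you have simply unpacked why this holds (the pullback functor $(x,y)^*$ preserves the relevant limits and colimits since it has adjoints on both sides) and done the bookkeeping identifying $(x,y)^*\pi_\XX^* i$ with $x^*i$ and $(x,y)^*\pi_\YY^* j$ with $y^*j$ via pullback pasting.
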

\begin{proof}
Pushout products in slices are stable under pullback.
\end{proof}

\begin{cor} The open box
  \[\begin{tikzcd}[column sep=large]
    \FF_kI^n \cup_{\FF_kC} \FF_kC \times \II \arrow[r, "{\langle [\zeta], \FF_kc \times 1\rangle}"] & \FF_kI^n \times \II
  \end{tikzcd}
  \] is the pushout product over $\FF_kI^n$ of the maps obtained by pullback
  \[
  \begin{tikzcd}[column sep=small,baseline=(current bounding box.south)]
    \FF_kC \arrow[dr, dashed, "\FF_kc"'] \arrow[dd, "\FF_kc"', dotted] \arrow[rr, dotted] & & 1 \arrow[dr, "\top"] \arrow[dd, "\top"'] & & \FF_kI^n \arrow[dr, dashed, "{[\zeta]}"'] \arrow[dd, dotted, equals] \arrow[rr, dotted] & [-10pt] ~ &  \II \arrow[dr, "\delta"] \arrow[dd, equals] \\ & \FF_kI^n \arrow[dl, dotted, equals] \arrow[rr, dotted] & & \OOmega \arrow[dl, equals] & & \FF_kI^n \times \II \arrow[rr, dotted] \arrow[dl, "\pi", dotted] & & \II \times \II \arrow[dl, "\pi"] \\ \FF_kI^n \arrow[rr, "\chi_c"'] & & \OOmega & & \FF_kI^n \arrow[rr, "\zeta"'] & & \II \rlap{.}
  \end{tikzcd} \qed
  \]
\end{cor}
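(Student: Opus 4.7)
The plan is to apply the preceding lemma directly, after identifying the two pullback maps produced there with $\FF_kc$ and $[\zeta]$ respectively. Recall that in Construction \ref{con:generating-tcof} the open box $\langle [\zeta], \FF_kc \times 1\rangle$ is defined as the pullback along $(\chi_c,\zeta)\colon \FF_kI^n \to \OOmega \times \II$ of the map $\top \hat{\times} \delta$, which is itself the pushout product, in the slice over $\OOmega\times\II$, of $\top \colon 1 \to \OOmega$ (a map in the slice over $\OOmega$) and $\delta \colon \II \to \II\times\II$ (a map in the slice over $\II$). So the hypotheses of the lemma apply with $\XX=\OOmega$, $\YY=\II$, $\ZZ=\FF_kI^n$, $(x,y)=(\chi_c,\zeta)$, $i=\top$, $j=\delta$.

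The lemma then identifies the open box with the pushout product over $\FF_kI^n$ of the pullbacks of $i$ and $j$ along the respective components of $(x,y)$. I would check that these pullbacks are as claimed. On the $\OOmega$ side, the pullback of $\top \colon 1\rightarrowtail\OOmega$ along $\chi_c\colon \FF_kI^n \to \OOmega$ is $\FF_kc\colon \FF_kC\rightarrowtail\FF_kI^n$, by definition of $\chi_c$ as the classifying map for the subobject $\FF_kc$ (which is how $(c,\zeta)\in\int\OOmega\times\II$ was encoded in the first place, using Lemma \ref{lem:symmetric-subobjects}). On the $\II$ side, the pullback of $\delta\colon \II\to\II\times\II$ along $\zeta\colon \FF_kI^n\to\II$, taken in the slice over $\II$ via the projection $\pi\colon\II\times\II\to\II$, is the graph $[\zeta]\colon \FF_kI^n\to\FF_kI^n\times\II$: indeed $\delta$ is the graph of $\id_\II$, and graphs pull back to graphs under the projection convention used in Construction \ref{con:generating-tcof}.

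There is essentially no obstacle here beyond bookkeeping; the only care needed is to make sure the ``slice over $\II$'' structure on $\II \to \II \times \II$ used by the lemma agrees with the one used in Construction \ref{con:generating-tcof} (namely via the second projection $\pi \colon \II\times\II \to \II$), so that the pullback of $\delta$ really is the graph $[\zeta]$ rather than the diagonal of $\FF_kI^n$. With those identifications in place, the statement of the corollary is an instance of the statement of the lemma.
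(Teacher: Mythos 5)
Your proposal is correct and matches the paper's own reasoning: the corollary is stated as an immediate instance of the preceding lemma, with the two displayed cubes recording exactly the identifications you verify (the pullback of $\top$ along $\chi_c$ is $\FF_kc$, and the pullback of $\delta$ along $\zeta$ in the slice via $\pi$ is the graph $[\zeta]$). Your attention to the slice structure on $\delta$ via the second projection is the right point of care, and nothing further is needed.
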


\begin{rmk}
Since the representables are concentrated in a single degree, each open box is as well. The ``triangle'' of cubical sets as below-left---where the first map is a morphism and the second map is between representables---gives rise to the ``open-box'' of cubical species as below-center, concentrated in degree $k$:
\[
\begin{tikzcd} C \arrow[rr, tail, "c"]  & & I^n \arrow[dl, "\zeta"] & \arrow[d, phantom, "\rightsquigarrow"] & \FF_k I^n \cup_{\FF_kC} \FF_kC \times \II \arrow[d, tail, "{\langle [\zeta], \FF_kc \times 1\rangle}"] & ~ & \Sigma_k \times I^n  \cup_{\Sigma_k \times C} \Sigma_k \times C \times  I^k \arrow[d, tail, "{\langle [\zeta^{\Sigma_k}],  1 \times c \times 1\rangle}"]  \\ & I^k & & ~ &  \FF_k I^n \times \II & \arrow[u, phantom, "\leftrightsquigarrow"]  &  \Sigma_k \times I^n \times I^k \rlap{.}
\end{tikzcd}
\]
The non-empty component of this map is the map of $\Sigma_k$-cubical sets above-right, defined by the pushout below:
  \[
    \begin{tikzcd}[column sep=small]  &  C \times \Sigma_k  \arrow[dl,  tail, "c \times 1"'] \arrow[dr, "{[(\zeta \cdot c)^{\Sigma_k}]}"]  \arrow[dd, phantom, "\rotatebox{135}{$\lrcorner$}" very near end] \\ I^n \times \Sigma_k \arrow[dr, dotted] \arrow[ddr, "{[\zeta^{\Sigma_k}]}"'] &&   C\times \Sigma_k \times I^k \arrow[dl, dotted, tail] \arrow[ddl, tail, "c \times 1"] \\ [-12pt] & \bullet \arrow[d, dashed]   \\ [+2pt] &  I^n \times \Sigma_k \times I^k \rlap{.}
          \end{tikzcd}
  \]
  Here the action of $\Sigma_k$ is trivial on $C$ and $I^n$; by left multiplication on $\Sigma_k$; and by permuting the dimensions on $I^k$---the ``regular'' action.  The map $[\zeta^{\Sigma_k}] \colon I^n \times \Sigma_k  \to I^n \times\Sigma_k \times  I^k$ is the graph of a twisted version of $\zeta$: the map $\zeta^{\Sigma_k} \colon I^n \times \Sigma_k\to I^k$ acts on the component of the domain coproduct indexed by $\sigma \in \Sigma_k$ by $\sigma \cdot \zeta \colon I^n \to I^k$. The top-right map is defined similarly. Note the maps in the pushout diagram are all $\Sigma_k$-equivariant, as required.

Similarly, the pullback square \eqref{eq:generating-tcof-morphism} is concentrated in degree $k$ and has the form
  \[
    \begin{tikzcd}
      I^m \times \Sigma_k \cup_{D \times \Sigma_k} D \times \Sigma_k \times I^k
     \arrow[d, tail, "{\langle [\xi^{\Sigma_k}], d \times 1\rangle}"'] \arrow[r, "\alpha \times \sigma \times 1"] \arrow[dr, phantom, "\lrcorner" very near start] & I^n \times \Sigma_k \cup_{C \times \Sigma_k} C \times \Sigma_k \times I^k \arrow[d, tail, "{\langle [\zeta^{\Sigma_k}], c \times 1\rangle}"] \\ I^m \times \Sigma_k \times I^k \arrow[r, "\alpha \times \sigma \times 1"'] &  I^n \times \Sigma_k \times I^k
    \end{tikzcd}
  \]
where $\sigma \colon \Sigma_k \to \Sigma_k$ is defined by right multiplication.  Note these definitions make the map $\alpha \times \sigma \times 1 : I^m \times \Sigma_k \times I^k \to I^n \times \Sigma_k \times I^k$ into a $\Sigma_k$-equivariant map.
\end{rmk}

\begin{defn}
  \label{defn:species-trivial-cofibration-fibration}
  Garner's algebraic small object argument \cite{Garner:USOA} yields an algebraic weak factorization system on $\cSet^\SSigma$ which is algebraically free on $J \colon \int\OOmega\times\II \to (\cSet^\SSigma)^\2$, i.e., whose category of monad algebras is given by $(\int\OOmega \times \II)^\boxslash$.
  In particular, a right map is a morphism $f \colon \YY \to \XX$ of cubical species equipped with chosen lifts against open boxes that are uniform in pullback squares:
  \[
    \begin{tikzcd}[column sep=large, row sep=1.5cm]
      \FF_k I^m \cup_{\FF_kD} \FF_kD \times \II \arrow[d, tail, "{\langle [\sigma\zeta\alpha], \FF_kd \times 1\rangle}"'] \arrow[r, "\alpha \times \sigma \times 1"] \arrow[dr, phantom, "\lrcorner" very near start] & \FF_k I^n \cup_{\FF_kC} \FF_kC \times \II \arrow[d, tail, "{\langle [\zeta], \FF_kc \times 1\rangle}" description, pos=.6] \arrow[r] & \YY \arrow[d, "f"] \\ \FF_k I^m \times \II \arrow[urr, dashed] \arrow[r, "\alpha \times \sigma \times 1"'] & \FF_k I^n \times \II \arrow[r] \arrow[ur, dashed] & \XX \rlap{.}
    \end{tikzcd}
  \]

  We call the left and right classes of the underlying weak factorization system the \textbf{trivial cofibrations} and \textbf{fibrations} respectively.
\end{defn}

We now show that these fibrations are the unbiased fibrations.

\begin{defn}\label{defn:parametrized-path-space}
  Given a map $f \colon \YY \to \XX$ define the \textbf{parametrized path space} by forming the Leibniz exponential of $f$ with $\delta$ in the slice over $\II$, as displayed below-left:
  \begin{equation}\label{eq:parametrized-path-space}
    \begin{tikzcd}
      & \YY^\II \times \II \arrow[ddl, "f^\II \times \II"'] \arrow[ddr, "{(\ev,\pi)}"] \arrow[d, dashed, "\ev\hato  f" description]& & &       & \YY^\II \times \II \arrow[ddl, "f^\II \times \II"'] \arrow[ddr, "{\ev}"] \arrow[d, dashed, "\ev\hato  f" description]\\ [+2pt] & \PP^\II \YY \arrow[dr, dotted] \arrow[dl, dotted] \arrow[dd, phantom, "\rotatebox{135}{$\ulcorner$}" very near start] & & &  & \PP^\II \YY \arrow[dr, dotted] \arrow[dl, dotted] \arrow[dd, phantom, "\rotatebox{135}{$\ulcorner$}" very near start] \\[-12pt] \XX^\II \times \II \arrow[dr, "{(\ev,\pi)}"'] & & \YY \times \II \arrow[dl, "f \times \II"] & & [-12pt] \XX^\II \times \II \arrow[dr, "{\ev}"'] & &  [+5pt] \YY \arrow[dl, "f"] \\ & \XX \times \II & & &   & \XX
    \end{tikzcd}
  \end{equation}
  where $\ev \colon \YY^\II \times \II \to \YY$ is evaluation. Equivalently, the map $\ev\hato  f$ may be defined by the pullback above-right, which is not formed in the slice over $\II$.
\end{defn}

From the second of these characterizations, $\ev\hato f$ is the Leibniz pullback application of the evaluation natural transformation to the map $f$, explaining our notation.
This functor is not right adjoint, failing to preserve the terminal object.
However, from the decomposition
  \[
    \begin{tikzcd}
     \arrow[rrr, bend right=15, "f \mapsto \ev\hato  f"'] (\cSet^{\SSigma})^\2 \arrow[r, "-\times \II"] & (\cSet^{\SSigma}_{/\II})^\2 \arrow[r, "\widehat{\{\delta,-\}}_\II"] & (\cSet^{\SSigma}_{/\II})^\2 \arrow[r, "\Sigma"] & (\cSet^{\SSigma})^\2 \rlap{,}
    \end{tikzcd}
    \]
  it is the composition of a right adjoint with the forgetful functor $\Sigma$. In particular, it preserves pullbacks.

\begin{thm}\label{thm:uniform-fibrations}
The category of uniform fibrations $(\int\OOmega\times \II)^\boxslash$ is the pullback of the category of uniform trivial fibrations $(\int\OOmega)^\boxslash$ along the parametrized path space functor:
\[ \begin{tikzcd} (\int\OOmega\times \II)^\boxslash \arrow[d] \arrow[r] \arrow[dr, phantom, "\lrcorner" very near start] & (\int\OOmega)^\boxslash \arrow[d] \\  (\cSet^{\SSigma})^\2 \arrow[r, "{\ev\hato  -}"'] & (\cSet^{\SSigma})^\2 \rlap{.} \end{tikzcd}\]
In particular, a map $f \colon \YY \to \XX$ of cubical species is a fibration if and only if it is an unbiased fibration, i.e., the parametrized path space map
  \[
    \begin{tikzcd}
      \YY^\II \times \II \arrow[r, "\ev\hato  f"] & \PP^\II \YY
    \end{tikzcd}
      \]
is a trivial fibration.
\end{thm}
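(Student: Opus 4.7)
The plan is to build an isomorphism over $(\cSet^{\SSigma})^\2$ between the category $(\int\OOmega\times\II)^\boxslash$ of structured fibrations and the pullback of the trivial fibration structures along the parametrized path space functor. The key mechanism is the two-variable Leibniz adjunction: for cofibrations $i$, $j$ and a map $f$ in a suitable closed setting, there is a natural bijection between lifting problems of the pushout-product $i\leib{\times} j$ against $f$ and lifting problems of $i$ against the Leibniz exponential $\leib{\{j,f\}}$ (and symmetrically for $j$).

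First, I would observe that by Construction~\ref{con:generating-tcof}, every open box $J(c,\zeta)$ is a pullback of the pushout-product $\top\leib\times_{\OOmega\times\II}\delta$ along the classifying map $(\chi_c,\zeta)\colon\FF_kI^n\to\OOmega\times\II$. Working in the slice over $\II$ via $\zeta$ and the projection $\pi$, this is equivalently the pushout-product of $\FF_kc$ (pulled into $\cSet^{\SSigma}_{/\II}$ via $\zeta\circ\FF_kc$) with the generic point $\delta$. A lifting problem
\[
\begin{tikzcd}
\FF_kI^n\cup_{\FF_kC}\FF_kC\times\II \ar[d,tail,"\langle[\zeta],\FF_kc\times 1\rangle"'] \ar[r] & \YY \ar[d,"f"]\\
\FF_kI^n\times\II \ar[r] \ar[ur,dashed] & \XX
\end{tikzcd}
\]
is therefore, by Leibniz adjunction in $\cSet^{\SSigma}_{/\II}$ (using that $\II$ is tiny by Lemma~\ref{lem:symmetric-interval-tiny}), bijectively correspondent to a lifting problem of $\FF_kc\colon\FF_kC\to\FF_kI^n$ against the map $\ev\leib\hato f$ of Definition~\ref{defn:parametrized-path-space}, that is, an element of a lifting problem for $\ev\leib\hato f$ indexed by the element $c\colon C\rightarrowtail I^n$ of $\OOmega$.

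Next, I would check that this bijection is natural with respect to morphisms in the indexing category in a way that realizes the claimed pullback. A generating morphism in $\int\OOmega\times\II$ consists of a pair $(\alpha,\sigma)$ of a cube map $\alpha\colon I^m\to I^n$ and a permutation $\sigma\in\Sigma_k$ fitting into the commutative diagram~\eqref{eq:generating-tcof-indexing-map}. Under the transposition, the permutation $\sigma$ and the graphs $[\zeta],[\xi]$ encoding the $\II$-component of the element get absorbed into the parametrized path space functor $\ev\leib\hato(-)$: the uniformity condition on the original lift with respect to $\alpha\times\sigma$ corresponds exactly to uniformity of the transposed lift with respect to $\alpha$ (viewed as a morphism in $\int\OOmega$). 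This yields a functor $(\int\OOmega\times\II)^\boxslash\to(\int\OOmega)^\boxslash$ over $(\cSet^{\SSigma})^\2$, sending $(f,\varphi)$ to $(\ev\leib\hato f,\tilde\varphi)$, and the pullback square in the statement follows by inspecting what data is required on each side.

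The corollary concerning unbiased fibrations then drops out by taking the underlying map: $f$ carries the structure of a uniform fibration iff $\ev\leib\hato f$ carries the structure of a uniform trivial fibration iff $\ev\leib\hato f$ is a trivial fibration (by the fact, from \S\ref{ssec:trivial-fibrations}, that uniform trivial fibration structures exist precisely on trivial fibrations, and one descends the structure via Proposition~\ref{prop:relative-+-algebras-generation}).

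The main obstacle I foresee is the bookkeeping for the naturality of the Leibniz transposition against the two distinct kinds of morphisms in $\int\OOmega\times\II$: those coming from cube maps $\alpha$ (which must correspond to morphisms in $\int\OOmega$ on the transposed side) and those coming from permutations $\sigma$ (which must correspond to the identity on the transposed side, absorbed into the action of $\Sigma_k$ on $\ev\leib\hato f$). Getting this calculation cleanly packaged ultimately relies on the observation that $\ev\leib\hato(-)$ preserves pullbacks (noted after Definition~\ref{defn:parametrized-path-space}), so that the Leibniz adjunction applied fiberwise over $\OOmega\times\II$ and pulled back along $(\chi_c,\zeta)$ commutes with reindexing.
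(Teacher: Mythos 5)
Your proposal is correct and follows essentially the same route as the paper: both arguments factor the generating category $J$ as the reindexing of $I$ to the slice over $\II$ followed by pushout product with $\delta$, then transpose lifting problems across the Leibniz adjunction $-\hat\times\delta \dashv \widehat{\{\delta,-\}}_\II$ and use the fact that lifts in slice categories are created by the forgetful functor. One small correction: this adjunction needs only cartesian closedness of the slice $\cSet^\SSigma_{/\II}$, not tininess of $\II$ (Lemma \ref{lem:symmetric-interval-tiny} is used later for local representability, not here).
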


\begin{proof}
The category of uniform fibrations is defined by right lifting against the category of arrows $J \colon \int\OOmega\times\II \to (\cSet^{\SSigma})^\2$ defined in Construction \ref{con:generating-tcof}. In terms of the functor $I \colon \int\OOmega  \to (\cSet^{\SSigma})^\2$ of Construction \ref{con:generating-cof}, the functor $J$ is the top horizontal composite:
\[ \begin{tikzcd} \int\OOmega\times\II \arrow[r, "\Sigma^*I"] \arrow[d] \arrow[dr, phantom, "\lrcorner" very near start] & (\cSet^\SSigma_{/\II})^\2 \arrow[r, "-\hat\times\delta"] \arrow[d, "\Sigma"] & (\cSet^\SSigma_{/\II})^\2 \arrow[r, "\Sigma"] \arrow[l, bend left, dashed, "\widehat{\{\delta,-\}}_\II", "\bot"'] & (\cSet^{\SSigma})^\2  \arrow[l, bend left, dashed, "-\times \II", "\bot"']\\
  \int\OOmega \arrow[r, "I"'] & (\cSet^{\SSigma})^\2
\end{tikzcd}\]
Thus, by adjunction, $f \in (\cSet^{\SSigma})^\2$ is a uniform fibration if and only if $\widehat{\{\delta,f\times \II\}}_\II \in  (\cSet^\SSigma_{/\II})^\2$ lifts on the right against the category $\Sigma^*I \colon \int\OOmega \times \II \to  (\cSet^\SSigma_{/\II})^\2$. As solutions to lifting problems in slice categories are created by the forgetful functor, this is the case if and only if $\ev\hato f \cong \Sigma \widehat{\{\delta,f\times \II\}}_\II \in  (\cSet^{\SSigma})^\2$ is a uniform trivial fibration as claimed.
\end{proof}

The left maps of an algebraic weak factorization system satisfy additional closure properties, arising from the fact that comonadic functors create colimits \cite{BourkeGarner:AWFSI}. In particular, colimits in the arrow category, of diagrams that factor through the generating category, are trivial cofibrations. The following lemma provides an example of this paradigm.

\begin{lem}\label{lem:interval-endpoints} For any of the $2^\omega$ points $\vec{\epsilon}$ of $\II$, the map $\vec{\epsilon} \colon \1 \to \II$, is a trivial cofibration.
\end{lem}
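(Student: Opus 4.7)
The plan is to exhibit $\vec{\epsilon} \colon \1 \to \II$ as a colimit of generating trivial cofibrations. As noted in the remarks preceding the lemma, the forgetful functor from the category of trivial cofibration coalgebras to $(\cSet^\SSigma)^\2$ creates colimits, and $J \colon \int\OOmega \times \II \to (\cSet^\SSigma)^\2$ factors through the embedding of $\int\OOmega \times \II$ into this coalgebra category; hence any colimit in $(\cSet^\SSigma)^\2$ of a diagram that lifts through $J$ is automatically a trivial cofibration.

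The construction exploits the $\Sigma_k$-fixity of each diagonal vertex $\vec{\epsilon}_k \in I^k$ recalled in Remark \ref{rmk:interval-endpoints}. For each $k \geq 1$, the triangle $(\emptyset \rightarrowtail I^0, \vec{\epsilon}_k \colon I^0 \to I^k)$ is an object of $\int\OOmega \times \II$, and the commutation condition \eqref{eq:generating-tcof-indexing-map} for a prospective endomorphism $(\id_{I^0}, \sigma)$ reduces to $\sigma \cdot \vec{\epsilon}_k = \vec{\epsilon}_k$. Since $\vec{\epsilon}_k$ is diagonal, this holds for every $\sigma \in \Sigma_k$, yielding a functor $D_k \colon \SSigma_k \to \int\OOmega \times \II$ sending the unique object to this triangle and each $\sigma$ to the endomorphism $(\id, \sigma)$. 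I would then assemble $D \coloneq \coprod_{k \geq 1} D_k \colon \SSigma \to \int\OOmega \times \II$ and claim that the colimit of $J \circ D$ in $(\cSet^\SSigma)^\2$ is $\vec{\epsilon}$.

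The heart of the argument is the colimit computation. For each $k$, the value of $J D_k$ is the open box $\FF_k I^0 \to \FF_k I^0 \times \II$, concentrated in degree $k$ where it becomes the graph $\Sigma_k \to \Sigma_k \times I^k$, $\sigma \mapsto (\sigma, \sigma \cdot \vec{\epsilon}_k) = (\sigma, \vec{\epsilon}_k)$. By the description \eqref{eq:generating-tcof-morphism}, each $\sigma \in \Sigma_k$ acts on this square by right multiplication on the $\Sigma_k$-factor from $\FF_k$ and trivially on $I^k$. The hard part will be carefully disentangling this free right-multiplication action from the intrinsic $\Sigma_k$-action---left multiplication on the $\Sigma_k$-factor combined with the regular action on $I^k$ in the codomain---that presents the domain and codomain as $\Sigma_k$-cubical sets, and checking that quotienting out the former leaves, in degree $k$, the object $\1$ (trivial action) mapping to $I^k$ (regular action) by the vertex $\vec{\epsilon}_k$. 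Since coproducts in $\cSet^\SSigma$ are computed degreewise and each $JD_k$ is concentrated in a distinct degree, summing over $k$ then produces $\vec{\epsilon} \colon \1 \to \II$ on the nose.
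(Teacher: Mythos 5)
Your proposal is correct and follows essentially the same route as the paper: exhibit each component $\vec{\epsilon}_k \colon 1 \to I^k$ as the colimit of the $\Sigma_k$-diagram of open boxes on the triangle $(\emptyset \rightarrowtail 1,\ \vec{\epsilon}_k)$, using that $\vec{\epsilon}_k \in \{\vec{0},\vec{1}\}$ is fixed by the permutation action so that every $(\id,\sigma)$ is a morphism in $\int\OOmega\times\II$, and then take the coproduct over $k$. The orbit computation you flag as the remaining step is exactly the content of Lemma \ref{lem:orbits-calculation} (quotienting the free right-multiplication action on the $\Sigma_k$-factor leaves $1 \to I^k$ with the regular action on the codomain), so the argument goes through as you describe.
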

\begin{proof}
  For any vertex $\vec{v} \in I^k$ we have a triangle
  \begin{equation}\label{eq:point-generating-tcofs}\begin{tikzcd} \emptyset \arrow[rr, tail, "!"] & & 1 \arrow[dl, "\vec{v}"] & \arrow[d, phantom, "\rightsquigarrow"] & \FF_k 1 \arrow[d, tail, "{[\vec{v}]}"] & ~ & \Sigma_k  \arrow[d, tail, "{\vec{v}^{\Sigma_k}}"]  \\ & I^k & & ~ &   \FF_k 1 \times \II & \arrow[u, phantom, "\leftrightsquigarrow"]  &  \Sigma_k \times I^k
  \end{tikzcd}\end{equation}
  The map of $\Sigma_k$-cubical sets on the right sends $\sigma \in \Sigma_k$ to the pair $(\sigma,\sigma \cdot{\vec{v}})$. However, recall from Remark \ref{rmk:interval-endpoints} that a point of $\II$ is specified by choosing either point $\vec{0} \colon 1 \to I^k$ or $\vec{1} \colon 1 \to I^k$ for each component. Note these are the only two points in the $\Sigma_k$-cubical set $I^k$, since the other points in the underlying cubical set are permuted by the regular action. By contrast, since these points are fixed we have automorphisms
   \[
     \begin{tikzcd} \emptyset \arrow[r, equals] \arrow[d, "!"', tail] \arrow[dr, phantom, "\lrcorner" very near start] & \emptyset \arrow[d, "!", tail]  & & \emptyset \arrow[r, equals] \arrow[d, "!"', tail] \arrow[dr, phantom, "\lrcorner" very near start] & \emptyset \arrow[d, "!", tail] \\ 1 \arrow[r, equals] \arrow[d, "\vec{0}"'] & 1 \arrow[d, "\vec{0}"] & &  1 \arrow[r, equals] \arrow[d, "\vec{1}"'] & 1 \arrow[d, "\vec{1}"]\\ I^k & I^k \arrow[l, "\sigma"] & & I^k & I^k \arrow[l, "\sigma"]
     \end{tikzcd}
  \]
  for each $\sigma \in \Sigma_k$. Thus $\Sigma_k^\op$ acts on the open boxes $[\vec{0}] \colon \FF_k 1\rightarrowtail \FF_k 1 \times \II$ and $[\vec{1}] \colon \FF_k 1\rightarrowtail \FF_k 1 \times \II$ and these automorphisms lie in the generating category. The colimits yield the maps $\vec{0} \colon 1 \to I^k$ and $\vec{1} \colon 1 \to I^k$ in $\Sigma_k$-cubical sets, where the codomains have the regular action. Thus, these maps are trivial cofibrations. Picking the appropriate trivial cofibration in each component and forming their coproduct in cubical species yields the point inclusion $\vec{v} \colon 1 \to \II$ in $\cSet^{\SSigma}$.
\end{proof}

We have defined (cofibration, trivial fibration) and (trivial cofibration, fibration) algebraic weak factorization systems, each with an explicit category of generators. The trivial fibrations lift naturally against the generating category for the (trivial cofibration, fibration) awfs by Proposition \ref{prop:uniform-trivial-fibrations}, so trivial fibrations are fibrations and trivial cofibrations are cofibrations. The underlying weak factorization systems thus equip the category of cubical species with a premodel structure to be called the \textbf{interval premodel structure}. As in \S\ref{ssec:premodel}, we define the \textbf{weak equivalences} of cubical species to be those maps that factor as trivial cofibrations followed by trivial fibrations.

\begin{rmk}\label{rmk:no-degree-zero}
We would have a similar result if we had included the identity automorphism of the 0-cube in our definition of $\SSigma$, adding a $k=0$ component to our cubical species. Had we done so, then note that  in the $k=0$ component, all maps would be fibrations, since the components of the exterior squares of \eqref{eq:parametrized-path-space} are both pullbacks. Consequently, in the $k=0$ component, the only trivial cofibrations would be the isomorphisms, which means that the class of weak equivalences would coincide with the class of trivial fibrations, defined as in the other components to be those maps that lift against monomorphisms. But this class evidently fails to satisfy the 2-of-3 property, failing to be closed under left cancelation, so had we included a $k=0$ component our premodel structure would have no chance of defining a model structure.
However, the premodel structure would still suffice to define the model structure on equivariant cubical sets in Section \ref{ssec:species-to-sets}.
\end{rmk}

We next verify that the interval premodel structure is cartesian monoidal. We expect that this property can be made structural: that the cartesian closed structure on the category of cubical species defines two variable adjunctions of algebraic weak factorization systems \cite{Riehl:2013ma}, but as we have no application for that result, we decline to pursue it here.

\begin{prop}\label{prop:sm7}
  Pushout products of cofibrations are cofibrations, while the pushout product of a cofibration and a trivial cofibration is a trivial cofibration.
\end{prop}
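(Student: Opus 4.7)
The plan is to reduce both claims to the closure of monomorphisms under pushout products, leveraging the two-variable adjunction between pushout product and Leibniz exponential.

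Part (1) is immediate. Since $\cSet^{\SSigma}$ is a presheaf topos and therefore adhesive, the monomorphisms---and hence the cofibrations---are closed under pushout products by Remark \ref{rmk:adhesive-pushout-products}.

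For Part (2), let $i \colon A \cto B$ be a cofibration and $j$ a trivial cofibration. By the two-variable adjunction between $\htimes$ and the Leibniz exponential $\hat{[-,-]}$ on arrow categories, the pushout product $i \htimes j$ lifts against a map $f$ if and only if $j$ lifts against $\hat{[i,f]}$. Hence $i \htimes j$ is a trivial cofibration---i.e.\ lifts against all fibrations---if and only if $\hat{[i,f]}$ is a fibration for every fibration $f$. The task thus reduces to showing that the Leibniz exponential $\hat{[i,f]}$ of a cofibration with a fibration is again a fibration.

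By the unbiased characterization of Theorem \ref{thm:uniform-fibrations}, $\hat{[i,f]}$ is a fibration if and only if $\ev \hato \hat{[i,f]}$ is a trivial fibration. The main technical step is to establish a natural isomorphism
\[
  \ev \hato \hat{[i,f]} \;\cong\; \hat{[i,\, \ev \hato f]} \rlap{,}
\]
which instantiates a general commutation fact: Leibniz pullback applications of natural transformations between pullback-preserving endofunctors commute up to isomorphism. The endofunctors at play here---$(-)^\II \times \II$ for $\ev$, and $(-)^B$, $(-)^A$ for the restriction along $i$---are all pullback-preserving, being either right adjoints or products with a fixed object (in fact $\II$ is even tiny, by Lemma \ref{lem:symmetric-interval-tiny}, so $(-)^\II$ is itself a left adjoint as well).

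Granted this commutation, the conclusion is quick. Since $f$ is a fibration, $\ev \hato f$ is a trivial fibration. Next, for any monomorphism $i$, the functor $\hat{[i,-]}$ preserves trivial fibrations: by adjunction, a monomorphism $m$ lifts against $\hat{[i,h]}$ iff $m \htimes i$ lifts against $h$, and by Part (1) the pushout product $m \htimes i$ is again a monomorphism, so the lift exists whenever $h$ is a trivial fibration. Applying this observation with $h = \ev \hato f$ shows that $\hat{[i, \ev \hato f]}$ is a trivial fibration; hence so is $\ev \hato \hat{[i,f]}$, and therefore $\hat{[i,f]}$ is a fibration. The main obstacle is the commutation identity, which although a formal consequence of the coherence of iterated Leibniz constructions on pullback-preserving adjunctions, requires some careful bookkeeping to verify directly.
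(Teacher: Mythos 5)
Your overall strategy coincides with the paper's: both arguments dispose of the first claim by adhesivity (Remark \ref{rmk:adhesive-pushout-products}), reduce the second by adjunction to showing that the Leibniz exponential $\widehat{\{i,f\}}$ of a cofibration with a fibration is a fibration, invoke the unbiased characterization of Theorem \ref{thm:uniform-fibrations}, commute the two Leibniz constructions, and finish with closure of monomorphisms under pushout products a second time.

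The one step that does not survive scrutiny as written is the commutation isomorphism $\ev\hato\widehat{\{i,f\}}\cong\widehat{\{i,\ev\hato f\}}$. The ``general commutation fact'' you appeal to requires the two endofunctors to commute with each other, and here they do not: applying $(-)^\II\times\II$ after $(-)^B$ yields $(Y^B)^\II\times\II$, whereas applying $(-)^B$ after $(-)^\II\times\II$ yields $(Y^\II\times\II)^B\cong(Y^B)^\II\times\II^B$, and $\II^B\not\cong\II$ unless $B$ is terminal. (Relatedly, $\ev\hato(-)$ is not itself a Leibniz pullback application of a transformation between right adjoints on $\cSet^{\SSigma}$; as noted after Definition \ref{defn:parametrized-path-space}, it fails to preserve the terminal object.) The repair is exactly what the paper's proof does: pass to the slice over $\II$, where $\ev\hato f$ is $\Sigma_\II$ applied to the genuine Leibniz exponential $\widehat{\{\delta,f\times\II\}}_\II$ against the generic point, and where the pullback functor $-\times\II$ carries $\widehat{\{i,f\}}$ to $\widehat{\{i\times\II,f\times\II\}}_\II$. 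Associativity of Leibniz exponentials in the slice then gives the corrected commutation, with $i$ replaced by its pullback $i\times\II$; after that, your closing observation---that Leibniz exponentiation by a monomorphism preserves trivial fibrations, by adjunction plus closure of monomorphisms under pushout products, now applied in the slice over $\II$---finishes the proof exactly as in the paper.
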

\begin{proof}
  As the cofibrations are the monomorphisms in a presheaf category, the first property holds by  Remark \ref{rmk:adhesive-pushout-products}.

  The remaining statement is equivalent to the assertion that the Leibniz exponential $\widehat{\{c,f\}}$ of a fibration $f \colon \YY \to \XX$ and a monomorphism $c \colon \CC \rightarrowtail \ZZ$ is a fibration. By Theorem \ref{thm:uniform-fibrations}, this is equivalent to the assertion that the Leibniz exponential in the slice over $\II$ of $\delta \colon \II \to \II \times \II$ and $\widehat{\{c,f\}} \times \II$ is a trivial fibration, lifting against all monomorphisms $u \colon \JJ \to \KK$ in the slice over $\II$. Since the pullback of $\widehat{\{c,f\}}$ to the slice over $\II$ is isomorphic to the Leibniz exponential in the slice over $\II$ of the pullbacks $c\times \II$ and $f\times \II$, we are equivalently looking to solve lifting problems in the slice over $\II$ between the Leibniz product of $c \times \II$ and $u$ in the slice over $\II$ and the Leibniz exponential
  \[\ev\hato  f \coloneq \widehat{\{\delta, f \times \II\}}_\II\,.\]
  As we are working under the hypothesis that $f$ is a fibration, $\ev\hato  f$ is a trivial fibration so it suffices to verify that the pushout product of the monomorphisms $c \times \II$ and $u$ over $\II$ is a monomorphism. This again holds by Remark \ref{rmk:adhesive-pushout-products}.
\end{proof}

Finally, we observe that the interval premodel structure is cylindrical, satisfying the axioms of Definition \ref{defn:cylindrical}, using the adjunction $(-)\times \II \dashv (-)^\II$ to define an adjoint functorial cylinder.

\begin{lem}\label{lem:symmetric-cylindrical}
The interval premodel structure on cubical species is cylindrical.
\end{lem}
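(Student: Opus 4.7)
The plan is to apply Lemma \ref{lem:leibniz-application-transposition} to convert the axioms of Definition \ref{defn:cylindrical} into statements about pushout products, which can then be verified directly using the results we have already established, namely Proposition \ref{prop:sm7} (the pushout product axiom for the two weak factorization systems) and Lemma \ref{lem:interval-endpoints} (the endpoints of $\II$ are trivial cofibrations).

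The adjoint functorial cylinder is given by $(-) \times \II \dashv (-)^\II$, with $\partial_0, \partial_1 \colon (-)^\II \To \id$ defined by evaluation at $\delta_0, \delta_1 \colon 1 \to \II$ and constant homotopy $\epsilon \colon \id \To (-)^\II$ defined by the degeneracy $! \colon \II \to 1$. The conjugate transformations on the cylinder side are $\delta_0, \delta_1 \colon \id \To (-) \times \II$ and the projection $(-) \times \II \To \id$, making $\II$ into a representing object for a functorial notion of homotopy.

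For axiom \ref{defn:cylindrical}(\ref{itm:cylindrical-boundary}), by Lemma \ref{lem:leibniz-application-transposition} it is equivalent to show that the Leibniz pushout application of the conjugate $(\delta_0, \delta_1) \colon \id + \id \To (-) \times \II$ preserves cofibrations and trivial cofibrations. Unpacking, this Leibniz pushout application is precisely the pushout product with the monomorphism $(\delta_0, \delta_1) \colon 1 + 1 \to \II$. The claim then follows from Proposition \ref{prop:sm7}. For axiom \ref{defn:cylindrical}(\ref{itm:cylindrical-endpoints}), by the same lemma it suffices to show that the Leibniz pushout application of each $\delta_\epsilon \colon \id \To (-) \times \II$ sends cofibrations to trivial cofibrations. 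This is the pushout product with $\delta_\epsilon \colon 1 \to \II$, which is a trivial cofibration by Lemma \ref{lem:interval-endpoints}, so again Proposition \ref{prop:sm7} applies.

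There is no real obstacle here: the structural content was already packaged into Proposition \ref{prop:sm7} and Lemma \ref{lem:interval-endpoints}, with Lemma \ref{lem:leibniz-application-transposition} providing the formal bridge between the cylinder and cocylinder formulations. The only thing to be careful about is keeping the adjoint pairs and their conjugate transformations straight when invoking Lemma \ref{lem:leibniz-application-transposition}.
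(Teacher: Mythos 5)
Your proof is correct and follows essentially the same route as the paper's: dualize the cylindrical axioms via Lemma \ref{lem:leibniz-application-transposition} (as the paper notes can be done right after Definition \ref{defn:cylindrical}), observe that $[\delta_0,\delta_1]\colon \1+\1\to\II$ is a cofibration while $\delta_0,\delta_1\colon\1\to\II$ are trivial cofibrations by Lemma \ref{lem:interval-endpoints}, and conclude from Proposition \ref{prop:sm7}. The only detail the paper states that you elide is the justification that $[\delta_0,\delta_1]$ is a monomorphism, namely that the two endpoints of $\II$ are disjoint.
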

\begin{proof}
Since the endpoints $\vec{0}$ and $\vec{1}$ of the interval $\II$ are disjoint, the copairing $[\delta_0, \delta_1] \colon \1 + \1 \cto \II$ is a monomorphism and thus a cofibration. By Lemma \ref{lem:interval-endpoints}, the single endpoint inclusions $\delta_0, \delta_1 \colon \1 \cwto \II$ are trivial cofibrations. Now the result follows from Proposition \ref{prop:sm7}.
\end{proof}

\subsection{The cubical species model of homotopy type theory}\label{ssec:species-model}

In this section, we apply the results of \S\ref{sec:cylindrical} to verify the type-theoretic properties of the interval premodel structure on cubical species that allow us to show it is a Quillen model structure with the extra features required of a model of homotopy type theory.

The cofibrations in the interval premodel structure are exactly the monomorphisms, which are closed under pushout products in all slices by Remark \ref{rmk:adhesive-pushout-products}. Together with Lemma \ref{lem:symmetric-cylindrical}, this verifies the hypotheses of Theorem \ref{thm:cylindrical-EEP}, and therefore:

\begin{prop}\label{prop:symmetric-EEP}
  The interval premodel structure on cubical species satisfies the equivalence extension property.\qed
\end{prop}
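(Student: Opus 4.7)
The plan is to invoke Theorem \ref{thm:cylindrical-EEP} and check its hypotheses for the interval premodel structure on $\cSet^{\SSigma}$. Since everything required has already been established in the preceding subsections, this reduces to a brief verification.

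First, I would observe that $\cSet^{\SSigma}$, being a presheaf topos, is in particular an elementary topos, and so is locally cartesian closed. Second, by Definition \ref{def:coftrivfib} the cofibrations of the interval premodel structure are exactly the monomorphisms. Third, by Lemma \ref{lem:symmetric-cylindrical} the interval premodel structure is cylindrical, using the adjoint functorial cylinder $(-)\times\II \dashv (-)^\II$. Fourth, by Remark \ref{rmk:adhesive-pushout-products} the monomorphisms in any topos are closed under pushout-products, and the same applies in each slice (which is again a topos). With all four hypotheses of Theorem \ref{thm:cylindrical-EEP} in hand, the equivalence extension property follows immediately.

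The only subtle point is the universe-level clause in Definition \ref{defn:EEP}: for sufficiently large inaccessible $\kappa$, if the input fibrations $p_0, p_1, q_1$ are $\kappa$-small, the extended fibration in \eqref{eq:EEP} must also be $\kappa$-small. As the example immediately after Theorem \ref{thm:cylindrical-EEP} already remarks, in a presheaf topos the constructions appearing in the proof (pullbacks, pushforward along a monomorphism $i$, and the Leibniz pullback application of $\eta$ applied to $\partial_1$) all preserve $\kappa$-smallness for $\kappa$ large enough, so this bookkeeping requires no additional argument beyond noting where it applies.

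The main obstacle was really front-loaded into the axiomatic setup: establishing that the generating cofibrations and trivial cofibrations of Definition \ref{defn:species-trivial-cofibration-fibration} do in fact yield a cylindrical premodel structure (Lemma \ref{lem:symmetric-cylindrical}) in which the cofibrations are precisely the monomorphisms. Once those properties are in place, the equivalence extension property is a formal consequence of Theorem \ref{thm:cylindrical-EEP}.
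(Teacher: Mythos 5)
Your proposal is correct and matches the paper's argument exactly: the paper derives this proposition by verifying the hypotheses of Theorem \ref{thm:cylindrical-EEP} via Definition \ref{def:coftrivfib}, Remark \ref{rmk:adhesive-pushout-products}, and Lemma \ref{lem:symmetric-cylindrical}, with the universe-level bookkeeping handled by the example following that theorem. Nothing further is needed.
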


Similarly, the definition of the fibrations is of the form considered by Proposition \ref{prop:generic-frobenius}, and therefore:

\begin{prop}\label{prop:symmetric-frobenius}
  The interval premodel structure on cubical species has the Frobenius property.\qed
\end{prop}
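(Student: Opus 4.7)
The plan is to deduce Proposition \ref{prop:symmetric-frobenius} as a direct application of Proposition \ref{prop:generic-frobenius}, which provides a generic criterion for the Frobenius condition to hold in a locally cartesian closed premodel category whose cofibrations are the monomorphisms and whose fibrations are defined as the unbiased fibrations with respect to some object $I$. Our task is simply to verify that the interval premodel structure on $\cSet^{\SSigma}$ satisfies these hypotheses, with the interval $\II$ playing the role of $I$.

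First I would observe that the ambient category $\cSet^{\SSigma} = \Set^{\CCube^{\op} \times \SSigma}$ is a presheaf topos, hence an elementary topos, and in particular locally cartesian closed. By Definition \ref{def:coftrivfib}, the cofibrations in the interval premodel structure are precisely the monomorphisms. Moreover, since the monomorphisms are stable under arbitrary pullback, the Frobenius condition for the (cofibration, trivial fibration) weak factorization system is automatic; so the content of the proposition reduces to the (trivial cofibration, fibration) weak factorization system.

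Next I would invoke Theorem \ref{thm:uniform-fibrations}, which characterizes a map $f \colon \YY \to \XX$ of cubical species as a fibration if and only if the parametrized path space map $\ev \hato f \colon \YY^{\II} \times \II \to \PP^{\II}\YY$ is a trivial fibration. Unwinding Definition \ref{defn:parametrized-path-space}, the map $\ev\hato f$ is precisely the Leibniz exponential, formed in the slice over $\II$, of the diagonal $\delta \colon \II \to \II \times \II$ with the pullback $f \times \II$ of $f$ to $\cSet^{\SSigma}_{/\II}$. Since trivial fibrations in the slice premodel structure (in the sense of Remark \ref{rmk:premodel-slicing}) are created by the forgetful functor to $\cSet^{\SSigma}$, this gives exactly the characterization of fibrations required by the hypothesis of Proposition \ref{prop:generic-frobenius}, with $I = \II$.

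With all hypotheses verified, Proposition \ref{prop:generic-frobenius} applies and yields the Frobenius property for the interval premodel structure. There is no genuine obstacle here: all the real work has already been done, either in setting up the general criterion in Proposition \ref{prop:generic-frobenius} or in establishing the unbiased characterization of fibrations in Theorem \ref{thm:uniform-fibrations}. The only thing to double-check is the bookkeeping identifying $\ev\hato f$ with the Leibniz exponential of $\delta$ and $f \times \II$ in the slice over $\II$, which is immediate from the two equivalent pullback descriptions given in diagram \eqref{eq:parametrized-path-space}.
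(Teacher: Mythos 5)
Your proposal is correct and matches the paper's argument exactly: the paper also deduces this proposition by observing that the fibrations of the interval premodel structure are the unbiased fibrations relative to $\II$ (Theorem \ref{thm:uniform-fibrations}), so that Proposition \ref{prop:generic-frobenius} applies directly. The additional bookkeeping you supply identifying $\ev\hato f$ with the Leibniz exponential of $\delta$ and $f\times\II$ in the slice over $\II$ is exactly what the paper leaves implicit.
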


The remaining properties require universes, which we now construct. By Theorem \ref{thm:uniform-fibrations}, the uniform fibrations are determined as a certain pullback of the trivial fibrations. We use this result to define a notion of fibred structure $\FF$ that is locally representable and relatively acyclic and classifies the uniform fibrations.

\begin{lem}\label{lem:loc-rep-fib} There is a locally representable and relatively acyclic notion of fibred structure $\FF$, the notion of \textbf{uniform fibration structure}, whose underlying class of maps is the class of fibrations.
\end{lem}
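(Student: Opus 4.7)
The plan is to apply Lemma \ref{lem:leibniz-pullback-application-loc-rep-fibred-structure} to transfer the locally representable and relatively acyclic notion of fibred structure $\TF$ along the evaluation natural transformation. Specifically, take $\cE' = \cE = \cSet^{\SSigma}$, $L = (-)^\II \times \II$, $K = \id$, and $\alpha = \ev \colon L \To K$. With these choices, the Leibniz pullback application $\ev \hato f$ coincides with the parametrized path space map of Definition \ref{defn:parametrized-path-space}, so that the notion of fibred structure produced by the lemma has as algebras exactly the maps $f$ equipped with a $\TF$-algebra structure on $\ev \hato f$.

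To invoke the lemma, I must verify its three hypotheses. The functor $K = \id$ preserves pullbacks trivially, while $L$ is the composition of $(-)^\II$, which preserves all limits as it is right adjoint to $-\times \II$, with $-\times \II$, which preserves pullbacks; hence $L$ preserves pullbacks. The hypothesis requiring more thought is the existence of an indexed right adjoint to $L$. Since $\II$ is tiny by Lemma \ref{lem:symmetric-interval-tiny}, the functor $(-)^\II$ admits a right adjoint $(-)_\II$, exhibiting $L$ as a composition of left adjoints and hence itself a left adjoint with global right adjoint $R \coloneq ((-)_\II)^\II$. In the locally cartesian closed setting of $\cSet^{\SSigma}$, the slicewise right adjoint may be constructed explicitly as
\[
R_X(Z \to LX) \coloneq X \times_{RLX} RZ,
\]
where $X \to RLX$ is the unit of $L \dashv R$; compatibility with pullback follows because $R$ preserves pullbacks.

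With these hypotheses in hand, Lemma \ref{lem:leibniz-pullback-application-loc-rep-fibred-structure} transfers the notion of uniform trivial fibration structure $\TF$ on $\cSet^{\SSigma}$---which is locally representable and relatively acyclic by Lemma \ref{lem:loc-rep-triv-fib}---to a locally representable and relatively acyclic notion of fibred structure $\FF$ on $\cSet^{\SSigma}$, defining uniform fibration structure.

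It remains to identify the underlying class of maps of $\FF$ with the class of fibrations. By construction, $f$ admits an $\FF$-algebra structure precisely when $\ev \hato f$ admits a $\TF$-algebra structure, which by Proposition \ref{prop:uniform-trivial-fibrations} is equivalent to $\ev \hato f$ being a trivial fibration. By Theorem \ref{thm:uniform-fibrations}, this is in turn equivalent to $f$ being a fibration. I expect the principal technical point to be the verification of the indexed right adjoint; the remaining steps are essentially a combinatorial bookkeeping of the pieces assembled in the previous subsections.
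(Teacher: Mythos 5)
Your proposal is correct and follows essentially the same route as the paper: apply Lemma \ref{lem:leibniz-pullback-application-loc-rep-fibred-structure} with $L = (-)^\II \times \II$, $K = \id$, and $\alpha = \ev$, using tininess of $\II$ (Lemma \ref{lem:symmetric-interval-tiny}) to supply the right adjoint, Lemma \ref{lem:loc-rep-triv-fib} for the input structure $\TF$, and Theorem \ref{thm:uniform-fibrations} to identify the underlying maps with the fibrations. The only nit is the order of composition in your global right adjoint, which should be $\YY \mapsto (\YY^\II)_{\II}$ (right adjoints compose in the reverse order), though this does not affect the argument.
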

\begin{proof}
  We apply Lemma \ref{lem:leibniz-pullback-application-loc-rep-fibred-structure}.
  That is, we define a uniform fibration structure on $f \colon \YY \to \XX$ to be a uniform trivial fibration structure on $\ev\hato  f$, the Leibniz pullback application of the evaluation natural transformation
  \[ \begin{tikzcd} \cSet^{\SSigma} \arrow[r, bend left, "{(-)^\II \times \II}"] \arrow[r, phantom, "\Downarrow\ev"]\arrow[r, bend right, equals] & \cSet^{\SSigma} \rlap{.} \end{tikzcd}
  \] Since the interval $\II$ is tiny, the functor $\XX \mapsto \XX^\II \times \II$ has a right adjoint:
  \[
    \begin{tikzcd} \cSet^{\SSigma} \arrow[r, bend left, "{(-)^\II}"] \arrow[r, phantom, "\bot"] & \cSet^{\SSigma} \arrow[l, bend left, "{(-)_{\II}}"] \arrow[r, bend left, "{-\times \II}"] \arrow[r, phantom, "\bot"] & \cSet^{\SSigma} \arrow[l, bend left, "{(-)^\II}"] \rlap{.}
  \end{tikzcd}
  \]
Since Lemma \ref{lem:loc-rep-triv-fib} tells us that the notion of fibred structure $\TF$ is locally representable and relatively acyclic, Lemma \ref{lem:leibniz-pullback-application-loc-rep-fibred-structure} tells  us that the same is true for the uniform fibrations.
\end{proof}

Instantiating Construction \ref{con:loc-rep-universe}:

\begin{con}\label{con:symmetric-universe}
  For sufficiently large $\kappa$, we define a $\kappa$-small fibration classifier $\pi \colon \dot{\UU}_\kappa \to \UU_\kappa$  by defining $\UU_\kappa \coloneq \FF^\kappa(\varpi)$ and forming the pullback
  \[
    \begin{tikzcd}
        \dot{\UU}_\kappa \arrow[d, "\pi"'] \arrow[r] \arrow[dr, phantom, "\lrcorner" very near start] & \dot{\VV}_\kappa \arrow[d, "\varpi"] \\ \UU_\kappa \arrow[r, "\psi_\varpi"'] & \VV_\kappa
    \end{tikzcd}
  \]
  where $\varpi \colon \dot{\VV}_\kappa \to \VV_\kappa$ is the Hofmann--Streicher universe classifying $\kappa$-small families in the presheaf topos $\cSet^\SSigma$.
\end{con}

By Proposition \ref{prop:has-universes}:

\begin{prop}\label{prop:symmetric-has-universes}
The interval premodel structure on cubical species has universes in the sense of Definition \ref{defn:has-universes} for the fibrations given by the classifiers $\pi \colon \dot{\UU}_\kappa \to {\UU}_\kappa$ for sufficiently large inaccessible cardinals $\kappa$.
\qed
\end{prop}

With Propositions \ref{prop:symmetric-has-universes} and \ref{prop:symmetric-frobenius}, we have satisfied the hypotheses of Proposition \ref{prop:EEP-univalence}, so from Proposition \ref{prop:symmetric-EEP} we may conclude:

\begin{prop}\label{prop:symmetric-univalent} The universes in the interval premodel structure on cubical species are univalent. \qed
\end{prop}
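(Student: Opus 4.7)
The statement is an essentially immediate consequence of Proposition \ref{prop:EEP-univalence}, so my plan is to verify that each of its hypotheses has already been established for the interval premodel structure on cubical species, and then invoke it. The main obstacle has already been dispatched upstream (the equivalence extension property), so what remains is a bookkeeping argument.

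First I would recall the hypotheses of Proposition \ref{prop:EEP-univalence}: we need a cylindrical premodel structure on a presheaf topos in which the cofibrations are the monomorphisms, which satisfies the Frobenius condition, and which has universes in the sense of Definition \ref{defn:has-universes}. Cubical species $\cSet^{\SSigma}$ is a presheaf topos, and the cofibrations were defined to be the monomorphisms in Definition \ref{def:coftrivfib}. The cylindrical premodel structure is provided by Lemma \ref{lem:symmetric-cylindrical}, the Frobenius condition by Proposition \ref{prop:symmetric-frobenius}, and the existence of universes for the fibrations by Proposition \ref{prop:symmetric-has-universes}.

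Under these hypotheses, Proposition \ref{prop:EEP-univalence} asserts that the universes $\pi \colon \dot{\UU}_\kappa \fto \UU_\kappa$ are univalent if and only if the equivalence extension property holds. The latter was established as Proposition \ref{prop:symmetric-EEP}, so the desired univalence follows at once. Thus the entire proof amounts to citing Propositions \ref{prop:symmetric-EEP}, \ref{prop:symmetric-frobenius}, and \ref{prop:symmetric-has-universes} as inputs to Proposition \ref{prop:EEP-univalence}.

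Since this is purely an aggregation of previously proven facts, there is no meaningful calculation to perform and no subtle obstacle to overcome at this stage. If I wanted to emphasize the flow of the argument for the reader, I could note explicitly that the Frobenius condition was the nontrivial type-theoretic input enabling the universal equivalence $(s,t) \colon \Eq(\dot{\UU}_\kappa) \fto \UU_\kappa \times \UU_\kappa$ to be constructed as a fibration (Lemma \ref{lem:universal-equivalence}), and that realignment for the generic equivalence (Lemma \ref{lem:universal-equivalence-alignment}) together with realignment for the Hofmann--Streicher universe underlying $\pi$ is what converts the equivalence extension property into the statement that the codomain projection $t$ is a trivial fibration, which is the content of univalence as formulated in Definition \ref{defn:univalence}.
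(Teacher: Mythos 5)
Your proposal is correct and matches the paper's argument exactly: the paper derives this proposition by observing that Propositions \ref{prop:symmetric-has-universes} and \ref{prop:symmetric-frobenius} (together with the cylindrical premodel structure on the presheaf topos of cubical species with monomorphisms as cofibrations) verify the hypotheses of Proposition \ref{prop:EEP-univalence}, and then concludes from the equivalence extension property of Proposition \ref{prop:symmetric-EEP}. Your additional remarks about where the Frobenius condition and realignment enter are accurate elaborations of the same chain of reasoning.
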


By Definition \ref{defn:parametrized-path-space} and Theorem \ref{thm:uniform-fibrations}, our fibrations are characterized in the way demanded by Proposition \ref{prop:FEP-generic-point}. Thus Proposition \ref{prop:fibrant-universe} applies and we may conclude:

\begin{prop}\label{prop:symmetric-fibrant-universe} The bases of the universal fibrations for the interval premodel structure on cubical species are fibrant objects. \qed
\end{prop}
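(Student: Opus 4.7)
The plan is to invoke Proposition \ref{prop:fibrant-universe} directly. All of its hypotheses have been individually verified over the course of this section, so the work amounts to assembling them and selecting the correct one of its two alternative clauses about how fibrations are characterized.

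First I would observe that $\cSet^{\SSigma}$ is a presheaf topos (on $\CCube^\op \times \SSigma$), and that by Definition \ref{def:coftrivfib} its cofibrations are the monomorphisms. The cylindrical structure on the interval premodel category is given by Lemma \ref{lem:symmetric-cylindrical}, and Proposition \ref{prop:symmetric-frobenius} supplies the Frobenius condition. The existence of universes in the sense of Definition \ref{defn:has-universes} is provided by Proposition \ref{prop:symmetric-has-universes}, via the concrete classifiers $\pi \colon \dot{\UU}_\kappa \to \UU_\kappa$ of Construction \ref{con:symmetric-universe}.

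The one choice to make is which of Proposition \ref{prop:FEP-standard-endpoints} or Proposition \ref{prop:FEP-generic-point} to invoke for the characterization of fibrations. Since our fibrations are defined not via the biased endpoint inclusions $\delta_0, \delta_1 \colon \1 \to \II$ but via the Leibniz pullback application $\ev \hato f$ of the evaluation natural transformation (equivalently, via the generic point $\delta \colon \II \to \II \times \II$ in the slice over $\II$, cf.\ Definition \ref{defn:parametrized-path-space}), Theorem \ref{thm:uniform-fibrations} tells us that our fibrations are precisely the unbiased fibrations relative to $\II$ in the sense of Definition \ref{defn:interval-fibrations}(\ref{itm:unbiased-fibrations}). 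That is the hypothesis of Proposition \ref{prop:FEP-generic-point}, not of Proposition \ref{prop:FEP-standard-endpoints}.

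With all hypotheses of Proposition \ref{prop:fibrant-universe} in hand, we conclude that for each sufficiently large inaccessible $\kappa$ the base $\UU_\kappa$ of the universal fibration is fibrant. There is no real obstacle to overcome here; all of the homotopical content has been absorbed into the general machinery of \S\ref{sec:cylindrical} (in particular the chain Theorem \ref{thm:cylindrical-EEP} $\Rightarrow$ Proposition \ref{prop:EEP-univalence} $\Rightarrow$ Proposition \ref{prop:fibrant-universe} running through univalence of the universal equivalence fibration $\Eq(\dot{\UU}) \fto \UU$) and the specifically cubical-species content has been established in the immediately preceding propositions. If anything, the step warranting care is matching the tiny-ness of $\II$ (Lemma \ref{lem:symmetric-interval-tiny}), which ensured the existence of the right adjoint $(-)_{\II}$ used in Lemma \ref{lem:loc-rep-fib} to make $\FF$ locally representable, so that the universes from Proposition \ref{prop:symmetric-has-universes} exist and thus feed into the application of Proposition \ref{prop:fibrant-universe}.
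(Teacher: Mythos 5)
Your proof is correct and matches the paper's own argument, which likewise cites Theorem \ref{thm:uniform-fibrations} to identify the fibrations as the unbiased fibrations of Definition \ref{defn:interval-fibrations}(\ref{itm:unbiased-fibrations}) and then applies Proposition \ref{prop:fibrant-universe} via Proposition \ref{prop:FEP-generic-point}. You simply spell out the verification of the remaining hypotheses (presheaf topos, cofibrations are monomorphisms, cylindricality, Frobenius, universes) more explicitly than the paper, which leaves them implicit from the preceding results of the section.
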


By applying Lemma \ref{lem:fibU_fibext}, we see that:

\begin{prop}\label{prop:symmetric-FEP}
   The interval premodel structure satisfies the fibration extension property. \qed
\end{prop}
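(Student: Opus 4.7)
The plan is to reduce the statement to a direct application of Lemma \ref{lem:fibU_fibext}, which says that any premodel structure on a presheaf topos with fibrant universes enjoys the fibration extension property. The two ingredients required by that lemma have already been assembled: Proposition \ref{prop:symmetric-has-universes} provides, for each sufficiently large inaccessible $\kappa$, a universe $\pi \colon \dot{\UU}_\kappa \to \UU_\kappa$ classifying the $\kappa$-small fibrations, and Proposition \ref{prop:symmetric-fibrant-universe} provides the fibrancy of the base $\UU_\kappa$. So the work has all been done; only the invocation remains.

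Unpacked, the argument runs as follows. Fix a sufficiently large inaccessible cardinal $\kappa$, a $\kappa$-small fibration $p \colon \XX \fto \AA$, and a trivial cofibration $t \colon \AA \cwto \BB$. By Proposition \ref{prop:symmetric-has-universes}, the fibration $p$ arises as a pullback of $\pi \colon \dot{\UU}_\kappa \fto \UU_\kappa$ along some classifying map $h \colon \AA \to \UU_\kappa$. The lifting problem
\[
  \begin{tikzcd}
    \AA \arrow[r, "h"] \arrow[d, tcofarrow, "t"'] & \UU_\kappa \\
    \BB \arrow[ur, dashed, "k"']
  \end{tikzcd}
\]
admits a solution $k \colon \BB \to \UU_\kappa$ with $k \circ t = h$, since $\UU_\kappa$ is fibrant by Proposition \ref{prop:symmetric-fibrant-universe}. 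The pullback of $\pi$ along $k$ then defines a $\kappa$-small fibration $q \colon \YY \fto \BB$, and by pullback pasting the pullback of $q$ along $t$ is the pullback of $\pi$ along $k \circ t = h$, which is $p$. This exhibits $q$ as the desired $\kappa$-small fibration extending $p$ along $t$.

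There is no real obstacle here; the core content of Definition \ref{defn:FEP} is the respect for universe levels, and this is built into the definition of having universes together with the fibrancy of their bases. The genuinely hard steps—verifying the equivalence extension property (Proposition \ref{prop:symmetric-EEP}), the Frobenius condition (Proposition \ref{prop:symmetric-frobenius}), and univalence (Proposition \ref{prop:symmetric-univalent}), which together feed into Proposition \ref{prop:symmetric-fibrant-universe} via Proposition \ref{prop:fibrant-universe}—have already been carried out in the preceding subsection.
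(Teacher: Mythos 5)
Your proof is correct and is exactly the paper's argument: Proposition \ref{prop:symmetric-FEP} is stated as an immediate consequence of Lemma \ref{lem:fibU_fibext} applied to the universes of Proposition \ref{prop:symmetric-has-universes}, whose bases are fibrant by Proposition \ref{prop:symmetric-fibrant-universe}. The unpacked lifting argument you give is precisely the first half of the paper's proof of Lemma \ref{lem:fibU_fibext}.
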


These results assemble into the main theorem of this section.

\begin{thm}\label{thm:species-interval-model-structure} The category of cubical species admits a Quillen model structure in which the cofibrations are the monomorphisms and the fibrations are the unbiased fibrations of \ref{defn:interval-fibrations}\eqref{itm:unbiased-fibrations}. This model is cylindrical and cartesian closed and satisfies the Frobenius condition, equivalence extension property, and fibration extension property. Moreover, it has univalent universes whose bases are fibrant objects.
\end{thm}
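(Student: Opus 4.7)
The plan is to assemble the ingredients developed throughout Section \ref{sec:symmetric}, treating this theorem as a summary statement. All the hard work has already been done in the preceding propositions; what remains is to invoke the abstract machinery of Section \ref{sec:cylindrical} to promote the premodel structure to a model structure and collect the additional properties.

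First I would establish the premodel structure itself: the (cofibration, trivial fibration) weak factorization system is given by Definition \ref{def:coftrivfib} and Lemma \ref{lem:loc-rep-triv-fib}, while the (trivial cofibration, fibration) weak factorization system arises from Garner's algebraic small object argument applied to the category $J \colon \int\OOmega\times\II \to (\cSet^{\SSigma})^\2$ of Construction \ref{con:generating-tcof}. That every trivial fibration is a fibration follows since trivial fibrations lift against the generating open boxes (all of which are monomorphisms) by Proposition \ref{prop:uniform-trivial-fibrations}. Theorem \ref{thm:uniform-fibrations} identifies the resulting fibrations with the unbiased fibrations. The premodel structure is cylindrical by Lemma \ref{lem:symmetric-cylindrical} and is cartesian closed by Proposition \ref{prop:sm7}.

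The central step is upgrading the premodel structure to a Quillen model structure, i.e., verifying the 2-of-3 property for the weak equivalences. Here I would invoke Proposition \ref{prop:cylindrical-model-category}, whose hypotheses are: (a) a cylindrical premodel structure, (b) all objects cofibrant, and (c) the fibration extension property. Condition (a) is Lemma \ref{lem:symmetric-cylindrical}; condition (b) holds because $\cSet^{\SSigma}$ is a presheaf topos, in which the initial object embeds into every object by a monomorphism and hence a cofibration; and condition (c) is Proposition \ref{prop:symmetric-FEP}. The latter rests on fibrancy of the universes (Proposition \ref{prop:symmetric-fibrant-universe}) via Lemma \ref{lem:fibU_fibext}, and fibrancy of the universes in turn comes from the chain Proposition \ref{prop:symmetric-EEP} $\Rightarrow$ Proposition \ref{prop:symmetric-univalent} $\Rightarrow$ Proposition \ref{prop:fibrant-universe}, using the Frobenius condition (Proposition \ref{prop:symmetric-frobenius}) and the fact (Theorem \ref{thm:uniform-fibrations}) that our fibrations are of the form required by Proposition \ref{prop:FEP-generic-point}.

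The remaining assertions are immediate: the Frobenius condition is Proposition \ref{prop:symmetric-frobenius}, the equivalence extension property is Proposition \ref{prop:symmetric-EEP}, the fibration extension property is Proposition \ref{prop:symmetric-FEP}, the existence of universes is Proposition \ref{prop:symmetric-has-universes}, univalence of those universes is Proposition \ref{prop:symmetric-univalent}, and fibrancy of their bases is Proposition \ref{prop:symmetric-fibrant-universe}. No step is really an obstacle here since the theorem is a packaging result; the only subtlety is making sure the logical dependencies among propositions \ref{prop:symmetric-EEP}--\ref{prop:symmetric-FEP} are cited in the correct order, so that no circularity is introduced when invoking Proposition \ref{prop:cylindrical-model-category} to obtain 2-of-3.
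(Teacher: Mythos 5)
Your proposal is correct and takes essentially the same route as the paper: the theorem is a packaging result, and the paper's own proof simply notes that the only unproven claim is the 2-of-3 property, which follows from Proposition \ref{prop:cylindrical-model-category} together with Proposition \ref{prop:symmetric-FEP} and the cofibrancy of all objects. Your more detailed tracing of the dependency chain (EEP $\Rightarrow$ univalence $\Rightarrow$ fibrant universes $\Rightarrow$ FEP) accurately reflects how the preceding propositions are organized.
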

\begin{proof}
  The only result of the statement that we have not yet proven is the fact that the interval premodel structure is in fact a model structure, but this follows formally from Proposition \ref{prop:cylindrical-model-category}, by Proposition \ref{prop:symmetric-FEP} and the fact that all objects are cofibrant.
\end{proof}

Thus, the interval model structure on the topos of cubical species is a model of homotopy type theory.

\section{The equivariant model structure on cubical sets}\label{sec:equivariant}

Having established a model structure on the category of cubical species, we now transfer it to a model structure, and a model of homotopy type theory, on the category $\cSet$ of cartesian cubical sets. The results of \S\ref{sec:symmetric} both provide conceptual justification for the constructions in this section and also simplify many of the proofs.

In \S\ref{ssec:species-to-sets}, we introduce an adjoint triple of functors between cubical sets and cubical species and establish the basic properties of these functors. In \S\ref{ssec:cylindrical-premodel-equivariant}, we lift the cylindrical premodel structure from cubical species to cubical sets by using the constant diagram functor $\Delta \colon \cSet \to \cSet^{\SSigma}$ to create the fibrations and trivial fibrations. We give explicit characterizations of these classes that reveal that the trivial fibrations are again the trivial fibrations of \S\ref{ssec:trivial-fibrations}, while the fibrations are novel, defining a class of maps we call \emph{equivariant fibrations}.

As the cofibrations in the resulting premodel structure on cubical sets are again the monomorphisms, these are created by the functor $\Delta$ as well, but the trivial cofibrations and weak equivalences are not, so in particular it will again take work to prove that the right-lifted premodel structure in fact defines a Quillen model structure. This is achieved in \S\ref{ssec:equivariant-model}, which proves the analogue of Theorem \ref{thm:species-interval-model-structure} for cubical sets. For some of the constituent results, the proofs are formal, specializing the results of \S\ref{sec:cylindrical}; for other statements, the results of that section do not apply and we leverage the results of \S\ref{sec:symmetric} instead.

\subsection{From cubical species to equivariant cubical sets}\label{ssec:species-to-sets}

The category of cubical sets embeds faithfully into the category of cubical species via the constant diagram functor \[\Delta \colon \cSet \to \cSet^{\SSigma}\cong\prod_{k \geq 1}\cSet^{\SSigma_k},\] which is fully faithful on each factor $\cSet^{\SSigma_k}$ though only faithful on the whole. Since the groupoid $\SSigma$ is small and $\cSet$ is bicomplete, this functor admits left and right adjoints:
\[
  \begin{tikzcd}
 \cSet^\SSigma   \arrow[dd, bend right=40, "{\Leftadj}"', "\dashv"]  \arrow[dd, bend left=40, "\Gamma", "\dashv"'] \\
 \\
  \cSet \arrow[uu, "\Delta" description] \rlap{.}
  \end{tikzcd}
\]

The left adjoint $\Leftadj$ takes the colimit over the groupoid $\SSigma$, and the right adjoint $\Gamma$ takes the limit.
Explicitly, for a cubical species $\XX = (X^k)_{k \geq 1}$, we have
\begin{align*}
\Leftadj(\XX) &\coloneq \coprod_{k \geq 1}X^k_{/\Sigma_k} \\ \Gamma(\XX) &\coloneq \prod_{k \geq 1} (X^k)^{\Sigma_k}
\end{align*}
where $X^k_{/\Sigma_k}$ is the cubical set of \textbf{orbits}, the quotient of the $\Sigma_k$-cubical set $X^k$ by its action, and $(X^k)^{\Sigma_k}$ is the cubical set of $\Sigma_k$-\textbf{fixed points}.

As a category of actions by a groupoid, the topos $\cSet^{\SSigma}$ is well-known to be atomic over $\cSet$, and $\Delta \colon \cSet \to \cSet^{\SSigma}$ to be a logical functor, preserving (co)limits, the subobject classifier and the locally cartesian closed structure.  We  provide some explicit calculations of these.

\begin{ex}\label{ex:free-colimits} For $n,k \in \NN$ and $k \geq 1$, we have $\Leftadj(\FF_kI^n) \cong I^n$, reflecting the fact that left Kan extensions preserve representables. More generally, for any cubical set $X$, we have $\Leftadj(\FF_k X) \cong X$, as $\Leftadj \cdot \FF_k$ is left adjoint to the identity functor.
\end{ex}

\begin{ex} We calculate
  \[ \Leftadj(\II) \cong \coprod_{k \geq 1} I^k_{/\Sigma_k} \qquad \text{and} \qquad \Gamma(\II) \cong \prod_{k \geq 1} I \cong I^{\omega}
  \] using the fact that $(I^k)^{\Sigma_k} \cong I$ for all $k > 0$.
\end{ex}

The left adjoint $\Leftadj$ is far from being left exact, failing to preserve pullbacks (since 1-categorical quotients by a group action do not commute with pullbacks) and even finite products (since coproducts do not commute with finite products); in particular, $\Leftadj(\1)\cong \NN$.  It does, however, interact well with certain finite limits involving constant cubical species.

\begin{cor}\label{cor:constant-exponentials} The constant diagram functor $\Delta \colon \cSet \to \cSet^{\SSigma}$ preserves pushforwards and exponentials.
\end{cor}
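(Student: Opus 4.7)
The plan is to identify $\Delta \colon \cSet \to \cSet^{\SSigma}$ with the restriction functor $F^*$ along the unique functor $F \colon \SSigma \to \1$, where $\1$ denotes the terminal category regarded as a trivial groupoid. Since both source and target are groupoids, the general results of Section \ref{ssec:groupoid-diagrams} apply, and both claims of the corollary will follow from them.

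For preservation of pushforwards, I would cite Corollary \ref{groupoid-functors:pushforward} directly: restriction along any functor between groupoids preserves pushforwards, and so $\Delta = F^*$ does too.

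For preservation of exponentials, I would observe that any constant diagram $\Delta X \in \cSet^{\SSigma}$ has the identity as its functorial action, which is in particular invertible. Lemma \ref{cartesian-nat-trans:tiny} then tells us that the exponential $(-)^{\Delta X}$ in $\cSet^{\SSigma}$ is created by the evaluation functors at each level $k \geq 1$. Evaluating at a second constant diagram $\Delta Y$, one obtains at each level the cubical set $Y^X$, whose induced $\Sigma_k$-action (the conjugation action) is trivial because both input actions are trivial. The exponential $(\Delta Y)^{\Delta X}$ is therefore itself a constant diagram with value $Y^X$, i.e., it is $\Delta(Y^X)$.

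The argument presents no real obstacle: the substantive work has already been carried out in establishing the underlying results about groupoid-indexed diagram categories, and the only content specific to this corollary is the observation that constant diagrams satisfy the hypotheses of those results.
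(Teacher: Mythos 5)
Your argument is correct and matches the paper's: the proof given there consists entirely of the observation that the statement is an instance of Corollary \ref{groupoid-functors:pushforward}, which is precisely your identification of $\Delta$ with restriction along the functor $\SSigma \to \1$. The only immaterial difference is in the exponential clause, where you invoke Lemma \ref{cartesian-nat-trans:tiny} and compute levelwise that the conjugation action on $Y^X$ is trivial, whereas the paper leaves this implicit (exponentials being built from pullbacks and pushforwards, both preserved by $\Delta$).
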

\begin{proof}
This is an instance of Corollary \ref{groupoid-functors:pushforward}.
\end{proof}

\begin{lem}\label{lem:constant-subobjects} The constant diagram functor $\Delta \colon \cSet \to \cSet^{\SSigma}$ preserves the subobject classifier and creates monomorphisms.
\end{lem}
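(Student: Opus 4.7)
The plan is to recognize this lemma as an immediate application of the general results on groupoid-indexed diagram categories established in \S\ref{ssec:groupoid-diagrams}. Specifically, $\cSet^{\SSigma}$ may be presented as the functor category $(\cSet)^{\SSigma}$ valued in cubical sets on the groupoid $\SSigma$, and under this identification $\Delta \colon \cSet \to \cSet^{\SSigma}$ is precisely restriction along the unique functor $\SSigma \to \1$. Once this perspective is in hand, the two claims fall out of results already proven.

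For preservation of the subobject classifier, I would appeal directly to Corollary \ref{groupoid-functor:subobject-classifier}, which asserts that restriction along any functor between groupoids preserves subobject classifiers. The underlying input is Lemma \ref{cartesian-nat-trans:subobject-classifier}: in any diagram category indexed by a small category, the constant diagram on $\Omega$ classifies \emph{cartesian} monomorphisms, but when the indexing category is a groupoid, every naturality square has invertible verticals and is therefore automatically a pullback, so ``cartesian mono'' coincides with ``mono.'' Thus $\Delta \Omega$ really is the full subobject classifier of $\cSet^{\SSigma}$.

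For creation of monomorphisms, I would use that the family of evaluation functors $U_k \colon \cSet^{\SSigma} \to \cSet$, $k \geq 1$, jointly reflects and creates limits and is jointly conservative, so a morphism in $\cSet^{\SSigma}$ is a monomorphism precisely when each component $U_k$ of it is a monomorphism in $\cSet$. Since $U_k \Delta X = X$ with trivial $\Sigma_k$-action and $U_k \Delta f = f$ for every $f \colon X \to Y$ in $\cSet$, this componentwise criterion reduces immediately to: $\Delta f$ is mono in $\cSet^{\SSigma}$ iff $f$ is mono in $\cSet$.

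There is no substantive obstacle: the only care needed is in making the identification $\cSet^{\SSigma} \cong (\cSet)^{\SSigma}$ explicit so that the groupoid-diagram results of \S\ref{ssec:groupoid-diagrams} apply with $\cE = \cSet$ and $\cG = \SSigma$. After that, both halves of the lemma are one-line consequences.
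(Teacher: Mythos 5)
Your proposal is correct and matches the paper's proof: preservation of the subobject classifier is cited as an instance of Corollary \ref{groupoid-functor:subobject-classifier} (viewing $\Delta$ as restriction along $\SSigma \to \1$), and creation of monomorphisms follows because monomorphisms in $\cSet^{\SSigma}$ are pointwise and $\SSigma$ is inhabited, which is exactly your componentwise argument via the evaluation functors $U_k$.
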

\begin{proof}
Preservation of the subobject classifier is an instance of Corollary \ref{groupoid-functor:subobject-classifier}.
For creation of monomorphisms, recall that monomorphisms in $\cSet^{\SSigma}$ are defined pointwise and that $\Sigma$ is inhabited.
\end{proof}

\begin{cor}\label{cor:Deltapreservesandreflects}
The constant diagram functor $\Delta \colon \cSet \to \cSet^{\SSigma}$ preserves the (relative) partial map classifiers $\eta_X \colon X\to X^+$ of Section \ref{ssec:trivial-fibrations}, and therefore also the (relative) $+$-algebras.  Since it is faithful, $\Delta$ also reflects the latter.
\qed
\end{cor}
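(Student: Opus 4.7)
The plan is to show that $\Delta$ preserves each ingredient used to construct the (relative) partial map classifier, from which the remaining claims follow. Recall from Proposition \ref{prop:partial-map-classifier} and Remark \ref{rmk:partial-map-stable-functorial-factorization} that $\eta_f \colon Y \rightarrowtail Y^{+_X}$ is obtained by pushing $f$ forward along the universal monomorphism $\top \colon 1 \rightarrowtail \Omega$ and then fitting it into the universal pullback square of Proposition \ref{prop:partial-map-classifier}, with the relative version carried out in the slice over $X$.

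I will invoke Lemma \ref{lem:constant-subobjects} (preservation of the subobject classifier), Corollary \ref{cor:constant-exponentials} (preservation of pushforwards), and the fact that $\Delta$, being a right adjoint to $\Leftadj$, preserves pullbacks. All three facts slice, since $\Delta$ restricts to a constant-diagram functor $\cSet_{/X} \to \cSet^\SSigma_{/\Delta X}$ whose adjoints are again given slicewise. Chasing the construction then produces canonical isomorphisms $\Delta(Y^{+_X}) \cong (\Delta Y)^{+_{\Delta X}}$ that identify $\Delta(\eta_f)$ with $\eta_{\Delta f}$, establishing the first claim. Preservation of the $+$-algebras follows immediately: a retraction $\rho_f$ of $\eta_f$ over $X$ (Definition \ref{defn:relative-+-algebra}) is carried by $\Delta$ to a retraction of $\eta_{\Delta f}$ over $\Delta X$, and compatibility with morphisms of $+$-algebras is preserved by functoriality.

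For the reflection statement, the efficient route is via Proposition \ref{prop:uniform-trivial-fibrations}, which characterizes $+$-algebras as trivial fibrations, i.e., maps with the right lifting property against all monomorphisms. Given a lifting problem for $f$ against a monomorphism $m \colon A \rightarrowtail B$ in $\cSet$, apply $\Delta$; by Lemma \ref{lem:constant-subobjects}, $\Delta m$ is again a monomorphism, so the transported problem admits a solution in $\cSet^{\SSigma}$. Projecting this solution at any level $k \geq 1$ yields a $\cSet$-morphism $B \to Y$, which by faithfulness of $\Delta$ is identified with its constant image and solves the original problem. Hence $f$ is a trivial fibration, therefore a $+$-algebra. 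No step poses a real obstacle; the only care needed is correct slicewise instantiation in the relative setting, which is uniformly handled by the fact that the adjoint triple $\Leftadj \dashv \Delta \dashv \Gamma$ slices.
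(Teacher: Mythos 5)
Your proposal is correct. The preservation half is exactly the paper's (implicit) argument: the relative partial map classifier $\eta_f \colon Y \rightarrowtail Y^{+_X}$ is built from the subobject classifier, pushforward along $\top$, and pullbacks, all of which $\Delta$ preserves by Lemma \ref{lem:constant-subobjects}, Corollary \ref{cor:constant-exponentials} (an instance of Corollary \ref{groupoid-functors:pushforward}), and right-adjointness; preservation of $+$-algebras then follows by functoriality.

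For reflection you take a genuinely different, though equally valid, route. The paper's ``since it is faithful'' points at a direct transport of structure: a retraction of $\eta_{\Delta f} \cong \Delta(\eta_f)$ over $\Delta X$ is, at each level $k$, a retraction of $\eta_f$ over $X$ in $\cSet$ (a morphism $\Delta A \to \Delta B$ of cubical species is just a family of cubical-set morphisms $A \to B$, equivariance being vacuous for trivial actions), so $f$ inherits a $+$-algebra structure. You instead pass through Proposition \ref{prop:uniform-trivial-fibrations} and reflect the \emph{lifting property} against monomorphisms, projecting a solution at some level $k$. Both arguments are one line; yours has the mild advantage of only needing to produce \emph{some} lift rather than a structured retraction, while the paper's keeps everything at the level of algebra structures. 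One small imprecision: in your projection step the operative fact is not faithfulness of $\Delta$ but that $U_k \Delta = \id$ and that $U_k$ applied to the two commuting triangles of the lift yields commuting triangles in $\cSet$ (equivalently, that $\Delta$ is full onto each component $\cSet^{\SSigma_k}$); faithfulness alone would not let you extract a $\cSet$-morphism from the lift. This does not affect the validity of the argument.
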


\subsection{The cylindrical premodel structure on cubical sets}
\label{ssec:cylindrical-premodel-equivariant}

By a well-known transfer procedure, we may obtain a premodel structure on cubical sets from the premodel structure on cubical species by pulling back the right classes of the weak factorization systems along the right adjoint $\Delta \colon \cSet \to \cSet^{\SSigma}$: we say $f$ is a (trivial) fibration in $\cSet$ if $\Delta f$ is a (trivial) fibration in $\cSet^{\SSigma}$.
The transfer procedure gives us the left and right classes as well as generating categories, namely the images of the generating categories of the original weak factorization systems under the left adjoint $\Leftadj \colon \cSet^{\SSigma} \to \cSet$.
Note, however, that we do not mechanically obtain the \emph{factorizations} in $\cSet$ from those in $\cSet^{\SSigma}$; we must construct these ``by hand'', and we want in particular to do so constructively.

\begin{con}\label{con:equivariant-generating-cof}
  The trivial fibrations in $\cSet$ are generated by the category $\int\OOmega$ of Construction \ref{con:generating-cof} and the top composite functor
\begin{equation}\label{eq:initial-transfered-cofibrations}
  \begin{tikzcd}  \int\OOmega \arrow[d, "\pi"'] \arrow[r, dashed, "I"] & (\cSet^{\SSigma})^\2 \arrow[d, "\cod"] \arrow[r, "\Leftadj"] & \cSet^\2 \arrow[d, "\cod"] \\ \CCube \times \SSigma^\op \arrow[r, hook, "\yo"] & \cSet^{\SSigma} \arrow[r, "\Leftadj"] & \cSet \rlap{.}
  \end{tikzcd}
\end{equation}

Explicitly, by Example \ref{ex:free-colimits}, the composite functor $\Leftadj I \colon \int\OOmega \to \cSet^\2$  sends an element $\chi_c : \FF_kI^n \to \OOmega$ to the corresponding subobject $c \colon C \rightarrowtail I^n$ under Lemma \ref{lem:symmetric-subobjects}, while morphisms in $\int\OOmega$ as below-left are carried to pullback squares between subobjects as below-right:
\[ \begin{tikzcd} \FF_kI^m \arrow[rr, "\alpha \times \sigma"] \arrow[dr, "\chi_{d}"'] & & \FF_kI^n \arrow[dl, "\chi_c"] & \arrow[d, phantom, "\rightsquigarrow"] &  D \arrow[d, tail, "d"'] \arrow[r, "\alpha"] \arrow[dr, phantom, "\lrcorner" very near start] & C \arrow[d, tail, "c"] \\ & \OOmega & & ~ & I^m \arrow[r, "\alpha"'] & I^n \rlap{.} \end{tikzcd} \]

Note the image of the functor $\Leftadj I \colon \int\OOmega \to \cSet^\2$ on both objects and morphisms is independent of the parameter $k \in \SSigma$. The isomorphism of Lemma \ref{lem:constant-subobjects} induces an isomorphism of categories $\int\OOmega \cong (\int\Omega) \times \SSigma^\op$ and, by the observations just made,  the functor $\Leftadj I$ factors through the projection $\pi \colon \int\OOmega \to \int\Omega$. Thus, the composite rectangle of \eqref{eq:initial-transfered-cofibrations} also factors as follows:
\[
  \begin{tikzcd} \int\OOmega \arrow[d, "\pi"'] \arrow[r, "\pi"] & \int\Omega \arrow[d, "\pi"'] \arrow[r, dashed, "I"] &  \cSet^\2 \arrow[d, "\cod"] \\ \CCube \times \SSigma^\op \arrow[r, "\pi"'] & \CCube \arrow[r, hook, "\yo"'] &  \cSet \rlap{.}
  \end{tikzcd}
\]
Since the projection $\pi \colon \int\OOmega \to \int\Omega$ is an epimorphism, a generating category for the trivial fibrations on $\cSet$ can be given more simply as the category $I \colon \int\Omega \to \cSet^\2$ internally indexed by the subobject classifier $\top \colon 1 \rightarrowtail \Omega$ in $\cSet$.
\end{con}

It now follows from Remark \ref{rmk:partial-map-stable-functorial-factorization} and Proposition \ref{prop:relative-+-algebras-generation} that the cofibrations are precisely the monomorphisms and the trivial fibrations are the relative $+$-algebras, i.e.\ the algebras for the pointed polynomial endofunctors $+_X \colon \cSet_{/X}\to\cSet_{/X}$, as can also be seen from Corollary \ref{cor:Deltapreservesandreflects}.
In particular, we have a (\textbf{cofibration}, \textbf{trivial fibration}) weak factorization system with functorial factorization given by the partial map factorization of Remark \ref{rmk:partial-map-stable-functorial-factorization}.

We next transfer the (trivial cofibration, fibration) weak factorization system. This case is more delicate, however, because the left class of trivial cofibrations is not simply reflected by the constant diagram functor $\Delta \colon \cSet \to \cSet^{\SSigma}$. In order to characterize the maps in the image of the generating category, we pause to observe a result that will help us calculate orbits.

\begin{lem}\label{lem:orbits-calculation}
  Let $G$ be a group and let $S$ be a $G$-set. Consider the $G$-set $G \times S$ where $G$ acts freely on $G$ and via its specified action on $S$. Then the map
  \[
    \begin{tikzcd}[row sep=tiny]
      G \times S \arrow[r, "\tau"] & S \\ (g,s) \arrow[r, maps to] & g^{-1}\cdot s
    \end{tikzcd}
  \]
exhibits the set $S$ as the set of $G$-orbits in $G \times S$.
\end{lem}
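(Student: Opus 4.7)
The plan is to verify directly that $\tau \colon G \times S \to S$ coequalises the action, and then check the universal property, working entirely in the category of sets (which suffices since the orbit quotient is computed setwise).

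First I would check that $\tau$ is $G$-invariant. For $h \in G$ and $(g,s) \in G \times S$, the diagonal action gives $h \cdot (g,s) = (hg, h \cdot s)$, and
\[
\tau(h \cdot (g,s)) = (hg)^{-1} \cdot (h \cdot s) = g^{-1} h^{-1} h \cdot s = g^{-1} \cdot s = \tau(g,s).
\]
So $\tau$ factors through the quotient map $G \times S \twoheadrightarrow (G \times S)_{/G}$.

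Next I would show the induced map $(G \times S)_{/G} \to S$ is a bijection. Surjectivity is immediate: $\tau(e,s) = s$ for any $s \in S$. For injectivity, suppose $\tau(g,s) = \tau(g',s')$, so $g^{-1} \cdot s = (g')^{-1} \cdot s'$; setting $h \coloneq g' g^{-1}$ yields $h \cdot (g,s) = (hg, h \cdot s) = (g', g' g^{-1} \cdot s) = (g', s')$, showing $(g,s)$ and $(g',s')$ lie in the same $G$-orbit.

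There is no real obstacle here; the lemma is a direct unpacking of definitions. The only subtlety worth noting is the choice of sign $g^{-1}$ in the formula for $\tau$, which is precisely what makes $G$-invariance work with the left diagonal action. Equivalently, one may observe that $\tau$ is the composite of the $G$-equivariant isomorphism $G \times S \cong G \times S$, $(g,s) \mapsto (g, g^{-1} \cdot s)$ (where the codomain carries the action on the first factor alone), with the second projection $G \times S \to S$; the quotient of the free $G$-set $G \times S$ by the first-factor action is then manifestly $S$.
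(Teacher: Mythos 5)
Your proof is correct and takes essentially the same route as the paper: the $G$-invariance computation is identical, and where the paper verifies the universal property of the orbit set directly (factoring an arbitrary invariant map through $\tau$ via $\psi(s)=\phi(e,s)$), you instead show the induced map on the quotient is a bijection — an interchangeable formulation for a set-level quotient. Your closing observation that $\tau$ factors through the equivariant isomorphism $(g,s)\mapsto(g,g^{-1}\cdot s)$ is a clean conceptual gloss but not a different argument.
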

\begin{proof}
First observe that the map in the statement defines a cone under the $G$-indexed diagram of sets defined by the $G$-set $G \times S$. For any $g,h \in G$ and $s \in S$, the action of $h$ sends the pair $(g,s)$ to $(h \cdot g, h \cdot s)$, and $\tau(h \cdot g, h \cdot s) = (h \cdot g)^{-1} \cdot (h \cdot s) = g^{-1} \cdot s = \tau (g,s)$. Given any other map $\phi \colon G \times S \to X$ to a set $X$ that is constant on $G$-orbits in the domain, we define a factorization through $\tau$ by:
  \[
    \begin{tikzcd}[row sep=tiny]
      G \times S \arrow[r, "\tau"] & S \arrow[r, "\psi"] & X\\ (g,s) \arrow[r, maps to] & g^{-1}\cdot s \\ & s \arrow[r, maps to] & \phi(e,s) \rlap{.}
    \end{tikzcd}
  \]
 Since $(g,s)$ and $(e, g^{-1}\cdot s)$ are in the same orbit, $\phi(g,s) = \phi(e, g^{-1} \cdot s) = \psi \cdot \tau(g,s)$. Uniqueness of this factorization is immediate since $\tau$ is an epimorphism.
\end{proof}

\begin{con}\label{con:equivariant-generating-tcof}
  The fibrations in $\cSet$, which we call \textbf{equivariant fibrations}, are generated by the image of the category $\int\OOmega\times \II$ under the composition:
\[
    \begin{tikzcd}  \int\OOmega\times \II  \arrow[d, "\pi"'] \arrow[r, dashed, "J"] & (\cSet^{\SSigma})^\2 \arrow[d, "\cod"] \arrow[r, "\Leftadj"] & \cSet^\2 \arrow[d, "\cod"]  \\  \CCube \times \SSigma^\op \arrow[r, hook, "\yo"'] & \cSet^{\SSigma}\arrow[r, "\Leftadj"'] & \cSet \rlap{.}
    \end{tikzcd}
  \]
  Recall, from Remark \ref{rmk:product-cat-of-elements}, that objects of $\int\OOmega \times \II$ are pairs $(c,\zeta)$ as displayed vertically below while $(\alpha,\sigma) \colon (d,\xi) \to (c,\zeta)$ defines a morphism just when the diagrams of cubical sets commute and the top square is a pullback:
   \[
     \begin{tikzcd} D \arrow[r, "\alpha"] \arrow[d, "d"', tail] \arrow[dr, phantom, "\lrcorner" very near start] & C \arrow[d, "c", tail] \\ I^m \arrow[r, "\alpha"] \arrow[d, "\xi"'] & I^n \arrow[d, "\zeta"] \\ I^k & I^k \arrow[l, "\sigma"] \rlap{.}
     \end{tikzcd}
  \]
The functor $J$ sends an element $(c,\zeta)$ to the morphism of cubical species defined by pushout of cubical species below-left, which corresponds to the pushout of $\Sigma_k$-cubical sets below-right:
  \[
    \begin{tikzcd}[column sep=small]  & \FF_k C \arrow[dl, "\FF_kc"'] \arrow[dr, "{[\zeta\cdot\FF_kc]}"]  \arrow[dd, phantom, "\rotatebox{135}{$\lrcorner$}" very near end]&& & C \times \Sigma_k  \arrow[dl,  tail, "c \times 1"'] \arrow[dr, "{[(\zeta \cdot c)^{\Sigma_k}]}"]  \arrow[dd, phantom, "\rotatebox{135}{$\lrcorner$}" very near end]\\ \FF_k I^n \arrow[dr, dotted] \arrow[ddr, "{[\zeta]}"'] && \FF_k C \times \II \arrow[dl, dotted] \arrow[ddl, "\FF_kc \times \II"] &I^n \times \Sigma_k \arrow[dr, dotted] \arrow[ddr, "{[\zeta^{\Sigma_k}]}"'] &&   C\times \Sigma_k \times I^k \arrow[dl, dotted, tail] \arrow[ddl, tail, "c \times 1"] \\ [-12pt] & \bullet \arrow[d, dashed]  && &  \bullet \arrow[d, dashed] & \\ [+2pt] & \FF_k I^n \times \II  && &I^n \times\Sigma_k \times  I^k \rlap{.}
    \end{tikzcd}
  \]
The image of the left-hand diagram under $\Leftadj$ is given by passing to orbits in the diagram of $\Sigma_k$-cubical sets above-right, and this can be calculated using Lemma \ref{lem:orbits-calculation}. This results in the pushout diagram of cubical sets
  \[
    \begin{tikzcd}[column sep=small]  &  C  \arrow[dl,  tail, "c"'] \arrow[dr, "{[\zeta \cdot c]}"]  \arrow[dd, phantom, "\rotatebox{135}{$\lrcorner$}" very near end] \\I^n \arrow[dr, dotted] \arrow[ddr, "{[\zeta]}"'] &&   C\times I^k \arrow[dl, dotted, tail] \arrow[ddl, tail, "c \times 1"] \\ [-12pt] &  I^n \cup_{C} C \times I^k \arrow[d, dashed]   \\ [+2pt] &  I^n \times I^k \rlap{.}
    \end{tikzcd}
  \]
We again refer to the subobjects in the image of the functor $J$ as \textbf{open boxes} though the nature of the gluing of the ``lid'' $I^n$ onto the ``box'' $C \times I^k$ is somewhat subtle because it involves the map $\zeta \colon I^n \to I^k$.

The functor $J$ sends morphisms \eqref{eq:generating-tcof-indexing-map} to the pullback square of cubical species below-left, which corresponds to the pullback square of $\Sigma_k$-cubical sets below-right:
\[
    \begin{tikzcd}
      \FF_k I^m \underset{\FF_kD}{\cup} \FF_kD \times \II \arrow[d, tail, "{\langle [\xi], \FF_kd \times 1\rangle}"'] \arrow[r, "\alpha \times \sigma \times 1"] \arrow[dr, phantom, "\lrcorner" very near start] & \FF_k I^n \underset{\FF_kC}{\cup} \FF_kC \times \II \arrow[d, tail, "{\langle [\zeta], \FF_kc \times 1\rangle}"] & [-2em]    I^m \times \Sigma_k \underset{D \times \Sigma_k}{\cup} D \times \Sigma_k \times I^k
     \arrow[d, tail, "{\langle [\xi^{\Sigma_k}], d \times 1\rangle}"'] \arrow[r, "\alpha \times \sigma \times 1"] \arrow[dr, phantom, "\lrcorner" very near start] & I^n \times \Sigma_k \underset{C \times \Sigma_k}{\cup} C \times \Sigma_k \times I^k \arrow[d, tail, "{\langle [\zeta^{\Sigma_k}], c \times 1\rangle}"]  \\ \FF_k I^m \times \II \arrow[r, "\alpha \times \sigma \times 1"'] & \FF_k I^n \times \II
  & I^m \times \Sigma_k \times I^k \arrow[r, "\alpha \times \sigma \times 1"'] &  I^n \times \Sigma_k \times I^k \rlap{.}
    \end{tikzcd}
  \]
  Passing to orbits using Lemma \ref{lem:orbits-calculation} this becomes
  \[
      \begin{tikzcd}
    I^m  \cup_{D } D \times I^k
     \arrow[d, tail, "{\langle [\xi], d \times 1\rangle}"'] \arrow[r, "\alpha \times \sigma^{-1}"] \arrow[dr, phantom, "\lrcorner" very near start] & I^n  \cup_{C } C\times I^k \arrow[d, tail, "{\langle [\zeta], c \times 1\rangle}"]  \\ I^m \times  I^k \arrow[r, "\alpha \times \sigma^{-1}"'] &  I^n  \times I^k \rlap{.}
    \end{tikzcd}
  \]
  Thus, an equivariant fibration is a morphism $f \colon Y \to X$ of cubical sets equipped with chosen lifts against open boxes that are uniform in pullback squares:
  \[
    \begin{tikzcd}[column sep=large, row sep=1.2cm]
    I^m  \cup_{D } D \times I^k
     \arrow[d, tail, "{\langle [\xi], d \times 1\rangle}"'] \arrow[r, "\alpha \times \sigma^{-1}"] \arrow[dr, phantom, "\lrcorner" very near start] & I^n  \cup_{C } C\times I^k \arrow[d, tail, "{\langle [\zeta], c \times 1\rangle}"] \arrow[r] & Y \arrow[d, "f"] \\ I^m \times I^k \arrow[urr, dashed] \arrow[r, "\alpha \times \sigma^{-1}"'] &  I^n \times I^k \arrow[r] \arrow[ur, dashed] & X \rlap{.}
    \end{tikzcd}
  \]
\end{con}

By Garner's algebraic small object argument, the functor $\Leftadj J \colon \int\OOmega\times \II \to \cSet^\2$ generates a (\textbf{trivial cofibration}, \textbf{equivariant fibration}) algebraic weak factorization system.
Thus we have a functorial factorization for both weakly orthogonal classes, completing the definition of a premodel structure which we call the \textbf{equivariant premodel structure}.
By construction:

\begin{lem} The adjunction
\[ \begin{tikzcd} \cSet \arrow[r, bend right, "\Delta"'] \arrow[r, phantom, "\bot"] & \cSet^{\SSigma} \arrow[l, bend right, "\Leftadj"' pos=.45]
\end{tikzcd}
\]
defines a Quillen adjunction of premodel structures between the equivariant premodel structure on $\cSet$ and the interval model structure on $\cSet^{\SSigma}$. \qed
\end{lem}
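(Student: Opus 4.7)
The proof is essentially tautological and needs only to unpack the definitions. A Quillen adjunction requires either that the left adjoint preserve cofibrations and trivial cofibrations, or equivalently that the right adjoint preserve fibrations and trivial fibrations. Both formulations hold here by construction of the equivariant premodel structure.

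The plan is to observe that the fibrations and trivial fibrations in the equivariant premodel structure on $\cSet$ were \emph{defined} (in Constructions \ref{con:equivariant-generating-cof} and \ref{con:equivariant-generating-tcof} together with the transfer procedure described at the start of \S\ref{ssec:cylindrical-premodel-equivariant}) to be precisely the maps $f \colon Y \to X$ in $\cSet$ such that $\Delta f$ is a (trivial) fibration in $\cSet^{\SSigma}$. Thus $\Delta \colon \cSet \to \cSet^{\SSigma}$ preserves and reflects (trivial) fibrations by definition, which gives the Quillen adjunction immediately.

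Alternatively, and symmetrically, one can verify the left-adjoint condition: by Constructions \ref{con:equivariant-generating-cof} and \ref{con:equivariant-generating-tcof}, the generating categories $\Leftadj I \colon \int\Omega \to \cSet^\2$ and $\Leftadj J \colon \int\OOmega \times \II \to \cSet^\2$ of cofibrations and trivial cofibrations for the equivariant premodel structure are obtained as the images under $\Leftadj$ of the generating categories $I$ and $J$ for the interval premodel structure on $\cSet^{\SSigma}$. Since $\Leftadj$ is a left adjoint, it preserves the pushouts and transfinite composites used by Garner's algebraic small object argument to build the (trivial) cofibrations as retracts of cell complexes on the generating categories, so $\Leftadj$ preserves cofibrations and trivial cofibrations.

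There is no real obstacle: the lemma is a bookkeeping statement recording that the transfer procedure used to define the equivariant premodel structure automatically produces a Quillen adjunction. The substantive content lies not here but in the verification (carried out in \S\ref{ssec:equivariant-model}) that the transferred premodel structure on $\cSet$ is in fact a model structure.
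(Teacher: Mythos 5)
Your proof is correct and matches the paper, which simply records this lemma with the remark ``By construction'': since the equivariant (trivial) fibrations are defined as exactly those maps sent to (trivial) fibrations by the right adjoint $\Delta$, the Quillen adjunction condition is immediate. Your alternative verification via $\Leftadj$ and the generating categories is a fine (if redundant) cross-check.
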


An argument similar to the proof of Lemma \ref{lem:interval-endpoints} can be used to identify explicit trivial cofibrations.

\begin{lem}\label{lem:weakly-contractible-cube-quotients} For any $k \geq 1$ and subgroup $G \subset \Sigma_k$ the inclusions $\vec{0},\vec{1} \colon 1 \to I^k_{/G}$ of the initial or final vertices into the quotient cubical set define trivial cofibrations.
\end{lem}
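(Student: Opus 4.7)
The plan is to adapt the argument of Lemma \ref{lem:interval-endpoints} to the cubical set setting, using that trivial cofibration coalgebras are closed under colimits to recognize $\vec{0} \colon 1 \to I^k_{/G}$ as a colimit of generating trivial cofibrations over the classifying groupoid $\GG$ of $G$. We treat $\vec{0}$ explicitly; the case of $\vec{1}$ is identical.

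First, I would observe that $\vec{0} \colon 1 \to I^k$ is itself a generating trivial cofibration in $\cSet$. Indeed, consider the object of $\int\OOmega \times \II$ given by the triangle
\[
\begin{tikzcd}
\emptyset \arrow[rr, tail] & & 1 \arrow[dl, "\vec{0}"] \\
& I^k
\end{tikzcd}
\]
(with $C = \emptyset$, $I^n = 1$, $\zeta = \vec{0}$). By Construction \ref{con:equivariant-generating-tcof}, the functor $\Leftadj J$ carries this triangle to the open box inclusion
\[1 \cup_\emptyset \emptyset \times I^k \cong 1 \;\xrightarrow{[\vec{0}]}\; 1 \times I^k \cong I^k\rlap{,}\]
which is precisely $\vec{0} \colon 1 \to I^k$.

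Next, for each $\sigma \in G \subseteq \Sigma_k$, the square $(\id_\emptyset, \id_1, \sigma)$ assembles into a morphism in $\int\OOmega \times \II$ from this triangle to itself, precisely because the vertex $\vec{0}$ is a fixed point of the regular $\Sigma_k$-action on $I^k$, so that the required commutation $\sigma \cdot \vec{0} = \vec{0}$ holds. Applying $\Leftadj J$ yields a morphism of trivial cofibration coalgebras whose underlying square is
\[
\begin{tikzcd}
1 \arrow[d, "\vec{0}"'] \arrow[r, equals] & 1 \arrow[d, "\vec{0}"] \\
I^k \arrow[r, "\sigma^{-1}"'] & I^k\rlap{.}
\end{tikzcd}
\]
These assemble into a functor $\GG \to \cSet^\2$ lifting along the forgetful functor from the category of trivial cofibration coalgebras for the $(\text{trivial cofibration}, \text{equivariant fibration})$ awfs. (Replacing the indexing $\sigma \mapsto \sigma^{-1}$ is an automorphism of $\GG$, so this does define a functor on $\GG$.)

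Finally, since $\GG$-colimits in the category of trivial cofibration coalgebras are created by the forgetful functor to $\cSet^\2$, the colimit of this diagram in $\cSet^\2$ automatically carries a trivial cofibration coalgebra structure. On the domain $1$ the action is trivial, so the domain of the colimit is $1$; on the codomain the colimit is by definition the orbit cubical set $I^k_{/G}$, yielding the map $\vec{0} \colon 1 \to I^k_{/G}$. This exhibits $\vec{0} \colon 1 \to I^k_{/G}$ as a trivial cofibration, as desired. The main point, really, is simply that in the equivariant setup the vertex $\vec{0}$ is invariant and the $G$-action passes through the generating category; there is no serious obstacle once the diagram is correctly formulated.
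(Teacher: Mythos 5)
Your proof is correct and follows essentially the same route as the paper's: identify $\vec{0} \colon 1 \to I^k$ as the open box associated to the triangle $(\emptyset \rightarrowtail 1, \vec{0})$, use that $\vec{0}$ is fixed by the $\Sigma_k$-action to get a $G$-indexed diagram in the generating category, and conclude via creation of colimits by the forgetful functor from trivial cofibration coalgebras. Your explicit remarks about the $\sigma \mapsto \sigma^{-1}$ contravariance and the triviality of the action on the domain are fine points the paper leaves implicit, but the argument is the same.
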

\begin{proof}
By Construction \ref{con:equivariant-generating-tcof}, the triangle below-left gives rise to the generating trivial cofibration below-right:
\[
  \begin{tikzcd} \emptyset  \arrow[rr, tail, "!"] & & 1 \arrow[dl, "\vec{v}"] \arrow[dr, phantom, "\rightsquigarrow"] & 1 \arrow[d, "\vec{v}"] \\ & I^k & & I^k \rlap{.}
  \end{tikzcd}
\]
When $\vec{v}$ is the point $\vec{0}$ or $\vec{1}$, then any $\sigma \in \Sigma_k$ defines a morphism of triangles, as below-left, giving rise to the morphism in the generating category of trivial cofibrations displayed below-right:
\[
  \begin{tikzcd} \emptyset \arrow[r, equals] \arrow[d, "!"', tail] \arrow[dr, phantom, "\lrcorner" very near start] & \emptyset \arrow[d, "!", tail]   \\ 1 \arrow[r, equals] \arrow[d, "\vec{v}"'] & 1 \arrow[d, "\vec{v}"] \\ I^k & I^k \arrow[l, "\sigma"]
  \end{tikzcd} \qquad\rightsquigarrow\qquad
  \begin{tikzcd} 1 \arrow[d, tail, "\vec{v}"'] \arrow[r, "\sigma^{-1}"] & 1 \arrow[d, tail, "\vec{v}"] \\ I^k \arrow[r, "\sigma^{-1}"'] & I^k \rlap{.}
  \end{tikzcd}
\]
Thus, the maps $\vec{0}, \vec{1} \colon 1 \to I^k_{/G}$ arise as colimits of diagrams valued in the subcategory of generating trivial cofibrations. Since the equivariant fibrations lift uniformly against the generating category, they lift against colimits of diagrams valued in there, proving that  the inclusions $\vec{0}, \vec{1} \colon 1 \to I^k_{/G}$ are trivial cofibrations.
\end{proof}

In particular:

\begin{cor}\label{cor:equivariant-interval-endpoints} For any $k \ge 1$, the inclusions $\vec{0},\vec{1} \colon 1 \to I^k$ of the initial or final vertices into the $k$-cube each define trivial cofibrations. \qed
\end{cor}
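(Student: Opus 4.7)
The plan is to apply Lemma~\ref{lem:weakly-contractible-cube-quotients} with $k=1$. In that case the symmetric group $\Sigma_1$ is trivial, its only subgroup is the trivial group $G$, and the quotient $I^1_{/G}$ coincides with $I^1$ itself. Under this identification, the distinguished vertices $\vec{0}, \vec{1} \colon 1 \to I^1_{/G}$ of the lemma become precisely the endpoint inclusions $0, 1 \colon 1 \to I^1$. Since Lemma~\ref{lem:weakly-contractible-cube-quotients} asserts that these are trivial cofibrations, the corollary follows immediately.

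There is essentially no obstacle, and no new computation is required: this is a direct specialization of the previous lemma to the simplest non-trivial case. One might alternatively give an independent proof by noting that the triangle
\[
  \begin{tikzcd} \emptyset \arrow[rr, tail] & & 1 \arrow[dl, "v"] \\ & I^1 & \end{tikzcd}
\]
for $v \in \{0,1\}$ is already an object of $\int\OOmega \times \II$ at level $k=1$, so that Construction~\ref{con:equivariant-generating-tcof} produces the map $v \colon 1 \to I^1$ itself as a generating trivial cofibration (with no orbit quotient needed since $\Sigma_1$ is trivial). Either route gives the result in a line.
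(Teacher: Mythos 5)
Your proof is correct and matches the paper exactly: the corollary is stated as an immediate specialization of Lemma~\ref{lem:weakly-contractible-cube-quotients} to $k=1$ with the trivial subgroup, so that $I^1_{/G} = I^1$. The alternative direct argument via the generating category of Construction~\ref{con:equivariant-generating-tcof} is also valid but unnecessary.
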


We now verify that the equivariant premodel structure is cartesian monoidal.

\begin{prop}\label{prop:equivariant-sm7}
  Pushout products of cofibrations are cofibrations, while the pushout product of a cofibration and a trivial cofibration is a trivial cofibration.
\end{prop}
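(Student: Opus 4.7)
The first statement is immediate from Remark \ref{rmk:adhesive-pushout-products}: cofibrations in $\cSet$ are the monomorphisms, and in any (elementary) topos monomorphisms are closed under pushout products.

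For the second statement, my plan is to pass through the Leibniz adjunction and then transfer the result from cubical species. Let $c \colon C \cto D$ be a cofibration in $\cSet$. The endofunctor $c \mathbin{\hat\times} - \colon \cSet^\2 \to \cSet^\2$ is left adjoint to the Leibniz exponential $\widehat{\{c,-\}} \colon \cSet^\2 \to \cSet^\2$. It therefore suffices to show that for every equivariant fibration $f$, the Leibniz exponential $\widehat{\{c,f\}}$ is again an equivariant fibration, since then, for any trivial cofibration $t$, the Leibniz adjunction produces a lift for every lifting problem of $c \mathbin{\hat\times} t$ against any equivariant fibration $f$ from a lift of $t$ against $\widehat{\{c,f\}}$.

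Next I would transfer the statement across the adjunction $\Leftadj \dashv \Delta$. By definition, $f$ is an equivariant fibration iff $\Delta f$ is a fibration in $\cSet^{\SSigma}$, and by Lemma \ref{lem:constant-subobjects} $\Delta c$ is a monomorphism, hence a cofibration in $\cSet^{\SSigma}$. By Corollary \ref{cor:constant-exponentials}, $\Delta$ preserves exponentials, and as a right adjoint it preserves pullbacks, so it preserves the Leibniz exponential construction; thus $\Delta \widehat{\{c,f\}} \cong \widehat{\{\Delta c, \Delta f\}}$. By Proposition \ref{prop:sm7} applied in the Leibniz-exponential formulation (equivalent via the same two-variable adjunction argument used above), $\widehat{\{\Delta c, \Delta f\}}$ is a fibration in $\cSet^{\SSigma}$. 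Therefore $\Delta \widehat{\{c,f\}}$ is a fibration in $\cSet^{\SSigma}$, which by definition means $\widehat{\{c,f\}}$ is an equivariant fibration.

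I don't anticipate a serious obstacle: the argument is purely formal, assembling the Leibniz two-variable adjunction together with the preservation properties of $\Delta$ already recorded in \S\ref{ssec:species-to-sets}. The only point requiring a small sanity check is that Proposition \ref{prop:sm7} really does give the Leibniz-exponential reformulation in $\cSet^{\SSigma}$, but that is automatic from the same two-variable adjunction applied in cubical species.
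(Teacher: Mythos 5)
Your proposal is correct and follows essentially the same route as the paper: reduce the second statement via the Leibniz two-variable adjunction to showing that $\widehat{\{c,f\}}$ is an equivariant fibration, then use that $\Delta$ creates monomorphisms and equivariant fibrations and preserves Leibniz exponentials (Lemma \ref{lem:constant-subobjects}, Corollary \ref{cor:constant-exponentials}) to transfer the claim to Proposition \ref{prop:sm7} in cubical species. Your final sanity check is also fine, since the paper's proof of Proposition \ref{prop:sm7} itself proceeds through exactly that Leibniz-exponential reformulation.
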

\begin{proof}
  As the cofibrations are the monomorphisms in a topos, the first property is again immediate.
  The second statement is equivalent to the assertion that the Leibniz exponential $\widehat{\{c,f\}}$ of a uniform fibration $f \colon Y \to X$ and a monomorphism $c \colon C \rightarrowtail Z$ is a uniform fibration. But uniform fibrations and monomorphisms are created by the functor $\Delta \colon \cSet \to \cSet^{\SSigma}$ from the corresponding classes of cubical species, by definition and Lemma \ref{lem:constant-subobjects}, respectively, and in virtue of Corollary \ref{cor:constant-exponentials} the functor $\Delta$ also preserves Leibniz exponentials. So the result follows from Proposition \ref{prop:sm7}.
\end{proof}

We now observe that our premodel structure is cylindrical. Although the equivariant fibrations are not defined using a particular interval object, we will show that the naive interval object
\[ \begin{tikzcd} 1 \arrow[r, shift left, "0"] \arrow[r, shift right, "1"'] & I \arrow[r, "!"] & 1\end{tikzcd}
\]
satisfies the axioms of Definition \ref{defn:cylindrical}, using the adjunction $(-)\times I \dashv (-)^{I}$ to define our adjoint functorial cylinder.

\begin{lem}\label{lem:equivariant-cylindrical}
The equivariant premodel structure on cubical sets is cylindrical.
\end{lem}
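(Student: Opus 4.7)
The plan is to mirror the proof of Lemma \ref{lem:symmetric-cylindrical}, reducing the two axioms of Definition \ref{defn:cylindrical} to properties of pushout products with the endpoint inclusions of the interval $I$, which have already been established.

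By Lemma \ref{lem:leibniz-application-transposition}, I can dualize: it suffices to verify that the Leibniz pushout application of $[\partial_0,\partial_1] \colon \id + \id \To C$ preserves cofibrations and trivial cofibrations, and that Leibniz pushout application of $\partial_0 , \partial_1 \colon \id \To C$ each sends cofibrations to trivial cofibrations. Here $C = (-) \times I$, and these Leibniz pushout applications are just pushout products of the argument cofibration with the maps $[0,1] \colon 1+1 \to I$ and $0, 1 \colon 1 \to I$, respectively.

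Next I would observe that these three interval maps lie in the appropriate classes. The copairing $[0,1] \colon 1+1 \to I$ is a monomorphism since the two endpoints $0$ and $1$ of $I$ are disjoint, so it is a cofibration. By Corollary \ref{cor:equivariant-interval-endpoints}, each of the individual endpoint inclusions $0, 1 \colon 1 \to I$ is a trivial cofibration.

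The conclusion is then immediate from Proposition \ref{prop:equivariant-sm7}: pushout products of cofibrations are cofibrations, and the pushout product of a cofibration with a trivial cofibration is a trivial cofibration. There is no real obstacle here—all the work has been done in establishing Corollary \ref{cor:equivariant-interval-endpoints} and Proposition \ref{prop:equivariant-sm7}; the present lemma only packages those results to fit Definition \ref{defn:cylindrical}.
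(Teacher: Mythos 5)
Your proof is correct and is essentially identical to the paper's: both cite the disjointness of the endpoints to see that $[0,1]\colon 1+1\to I$ is a cofibration, invoke Corollary \ref{cor:equivariant-interval-endpoints} for the endpoint inclusions being trivial cofibrations, and conclude via Proposition \ref{prop:equivariant-sm7}. The explicit dualization step via Lemma \ref{lem:leibniz-application-transposition} is a welcome bit of extra care that the paper leaves implicit.
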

\begin{proof}
Since the endpoints $0$ and $1$ of our interval $I$ are disjoint, the map $\partial \colon 1 + 1 \cto I$ is a monomorphism and thus a cofibration. By Corollary \ref{cor:equivariant-interval-endpoints}, the single endpoint inclusions $\partial_0, \partial_1 \colon 1 \cwto I$ are trivial cofibrations. Now the result follows from Proposition \ref{prop:equivariant-sm7}.
\end{proof}

\subsection{The equivariant cubical sets model of homotopy type theory}\label{ssec:equivariant-model}

In this section, we establish the type-theoretic properties of the cylindrical premodel structure on cubical sets needed to infer that it defines a Quillen model structure with the extra features required of a model of homotopy type theory.

The cofibrations in the equivariant premodel structure are exactly the monomorphisms, which are closed under pushout products in all slices by Remark \ref{rmk:adhesive-pushout-products}. Together with Lemma \ref{lem:equivariant-cylindrical}, this verifies the hypotheses of Theorem \ref{thm:cylindrical-EEP}, and therefore:

\begin{prop}\label{prop:equivariant-EEP}
  The equivariant premodel structure on cubical sets satisfies the equivalence extension property.\qed
\end{prop}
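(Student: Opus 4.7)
The plan is to apply Theorem \ref{thm:cylindrical-EEP} directly. That theorem gives the equivalence extension property for any locally cartesian closed category equipped with a cylindrical premodel structure whose cofibrations are precisely the monomorphisms and are closed under pushout--products in all slices, so the whole task reduces to verifying these four hypotheses for the equivariant premodel structure on $\cSet$.

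Three of the hypotheses are essentially immediate: $\cSet$ is a presheaf topos and hence locally cartesian closed; the cofibrations in the equivariant premodel structure are by construction (see Construction \ref{con:equivariant-generating-cof}) exactly the monomorphisms; and since elementary toposes are adhesive, Remark \ref{rmk:adhesive-pushout-products} tells us that monomorphisms are stable under pushout--products both in $\cSet$ and in each of its slices. The fourth hypothesis, that the premodel structure is cylindrical with respect to some adjoint functorial cylinder, is furnished by Lemma \ref{lem:equivariant-cylindrical}, where the cylinder is given by $(-) \times I \dashv (-)^I$ using the naive interval $I$.

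Having verified all four hypotheses, a direct appeal to Theorem \ref{thm:cylindrical-EEP} yields the equivalence extension property. I do not expect any obstacle here: the genuinely substantive work was done in Lemma \ref{lem:equivariant-cylindrical}, where the fact that the single-endpoint inclusions $0,1 \colon 1 \to I$ are trivial cofibrations (Corollary \ref{cor:equivariant-interval-endpoints}) was the key input, and this in turn hinged on the definition of equivariant fibrations via the generating category $\int\OOmega \times \II$. Once cylindricity with the naive interval is available, the equivalence extension property falls out of the abstract machinery of \S\ref{sec:cylindrical} exactly as it does for cubical species in Proposition \ref{prop:symmetric-EEP}.
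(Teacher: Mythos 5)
Your proof is correct and is exactly the argument the paper gives: the proposition is deduced by checking the hypotheses of Theorem \ref{thm:cylindrical-EEP} — local cartesian closure of the presheaf topos, cofibrations being the monomorphisms, their stability under pushout-products in slices via Remark \ref{rmk:adhesive-pushout-products}, and cylindricity from Lemma \ref{lem:equivariant-cylindrical}. Nothing further is needed.
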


Unlike in the case of the interval premodel structure on cubical species, we cannot use the results of \S\ref{ssec:frobenius} to establish the Frobenius condition, as the equivariant fibrations are not the naive unbiased fibrations.
Instead, it follows for the equivariant premodel structure on cubical sets by comparison with cubical species.

\begin{prop}\label{prop:equivariant-frobenius} The equivariant fibrations satisfy the Frobenius condition.
\end{prop}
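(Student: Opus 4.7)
The plan is to reduce the Frobenius condition to closure under pushforward, and then to transfer this property from cubical species via the constant diagram functor. As noted in the discussion following Definition \ref{defn:frobenius}, for a locally cartesian closed category whose cofibrations are the monomorphisms, the Frobenius condition for the (trivial cofibration, fibration) weak factorization system is equivalent to the assertion that the pushforward of a fibration along a fibration is again a fibration. This standard equivalence uses the adjunction $f^* \dashv f_*$ together with pullback-stability of monomorphisms (so that closure of fibrations under pushforward along a fibration $f$ translates, after base change and applying $f^* \dashv f_*$, to closure of trivial cofibrations under pullback along $f$). It therefore suffices to show that equivariant fibrations are closed under pushforward along equivariant fibrations.

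So let $f \colon X \fto B$ and $g \colon Y \fto X$ be equivariant fibrations in $\cSet$. By definition of the equivariant fibrations, $\Delta f$ and $\Delta g$ are fibrations in $\cSet^{\SSigma}$. The interval premodel structure on cubical species satisfies Frobenius by Proposition \ref{prop:symmetric-frobenius}, so by the same equivalence the pushforward $(\Delta f)_*(\Delta g)$ is a fibration in $\cSet^{\SSigma}$. Since $\Delta$ preserves pushforwards by Corollary \ref{cor:constant-exponentials}, we have $\Delta(f_* g) \cong (\Delta f)_*(\Delta g)$, and hence $\Delta(f_* g)$ is a fibration in cubical species. By the very definition of equivariant fibration, this says that $f_* g$ is an equivariant fibration, as required.

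There is no real obstacle in this argument: the essential technical work has been done in establishing Proposition \ref{prop:symmetric-frobenius} (which in turn rests on Proposition \ref{prop:generic-frobenius} applied to the symmetric interval $\II$) and in verifying that the constant diagram functor preserves pushforwards. The conceptual point I want to emphasize is that although the equivariant fibrations do not fit the unbiased-fibration format of Proposition \ref{prop:generic-frobenius} with respect to the naive interval in $\cSet$---so that a direct proof would be substantially harder---the premodel structure on cubical species was precisely engineered so that Frobenius transfers cleanly through $\Delta$, following the heuristic outlined in \S\ref{ssec:cubical-model-structures} that properties relying on the generation of fibrations by box filling are proven first in $\cSet^{\SSigma}$ and then transferred to $\cSet$.
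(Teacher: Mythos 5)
Your proof is correct and follows exactly the same route as the paper's: reduce the Frobenius condition to closure of fibrations under pushforward along fibrations, note that $\Delta$ preserves pushforwards (Corollary \ref{cor:constant-exponentials}), and invoke Frobenius for the interval premodel structure on cubical species (Proposition \ref{prop:symmetric-frobenius}) together with the fact that equivariant fibrations are created by $\Delta$. The only difference is that you spell out the standard equivalence between Frobenius and pushforward-closure in slightly more detail than the paper does.
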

\begin{proof}
We must show that the pushforward of an equivariant fibration $g$ along an equivariant fibration $f$ defines an equivariant fibration, which is the case just when its image under the constant diagram functor is a fibration of cubical species. But since Corollary \ref{cor:constant-exponentials} tells us that this functor preserves pushforwards, this map is the pushforward of $\Delta g$ along $\Delta f$. Since the equivariant fibrations are pulled back along $\Delta$ from the fibrations, the result follows from Frobenius for the latter, Proposition \ref{prop:symmetric-frobenius}.
\end{proof}

The remaining properties require universes, which we now construct. Since the equivariant fibrations are created from the fibrations in $\cSet^{\SSigma}$ via the functor $\Delta \colon \cSet \to \cSet^{\SSigma}$, and since $\Delta$ preserves pullbacks and has a right adjoint, Example \ref{ex:local-rep-transfer} applies to tell us that that the equivariant fibrations underlie a locally representable and relatively acyclic notion of fibred structure.

\begin{lem}\label{lem:equiv-loc-rep-fib}
  There is a locally representable and relatively acyclic notion of fibred structure $\mathscr{F}$ on cubical sets whose underlying class of maps is the class of equivariant fibrations.
\end{lem}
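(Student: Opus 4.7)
The plan is to invoke the transfer machinery developed in the previous sections, essentially as indicated in the paragraph immediately preceding the lemma. Specifically, I would apply Example \ref{ex:local-rep-transfer}, which is the specialization of Lemma \ref{lem:leibniz-pullback-application-loc-rep-fibred-structure} in which $K$ is taken to be the terminal functor, so that the Leibniz pullback application reduces to ordinary application of the left adjoint.

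Concretely, the starting input is Lemma \ref{lem:loc-rep-fib}, which supplies the notion of uniform fibration structure $\mathscr{F}$ on $\cSet^{\SSigma}$, known to be locally representable and relatively acyclic, whose underlying class of maps is the fibrations of cubical species. The transfer is along the constant diagram functor $\Delta \colon \cSet \to \cSet^{\SSigma}$. To apply Example \ref{ex:local-rep-transfer}, I need $\Delta$ to preserve pullbacks and to admit a right adjoint. Both are immediate: as observed in \S\ref{ssec:species-to-sets}, the constant diagram functor on a groupoid-indexed diagram category preserves all limits and colimits, and in particular has the limit-over-$\SSigma$ functor $\Gamma$ as its right adjoint. (Note Example \ref{ex:local-rep-transfer} does not require local cartesian closure of the target, so no further hypothesis on $\cSet^{\SSigma}$ is needed beyond the existence of pullbacks.)

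The resulting notion of fibred structure $\mathscr{F}'$ on $\cSet$ is, by construction, the one whose $\mathscr{F}'$-algebra structures on $f \colon Y \to X$ are the $\mathscr{F}$-algebra structures on $\Delta f$, with functorial action determined by applying $\Delta$ to pullback squares. Example \ref{ex:local-rep-transfer} guarantees that $\mathscr{F}'$ inherits both local representability and relative acyclicity from $\mathscr{F}$.

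It remains to identify the underlying class of maps. This is essentially the definition of the equivariant premodel structure from \S\ref{ssec:cylindrical-premodel-equivariant}: a map $f$ of cubical sets is an equivariant fibration precisely when $\Delta f$ is a fibration of cubical species, i.e., precisely when $\Delta f$ admits an $\mathscr{F}$-algebra structure, i.e., precisely when $f$ admits an $\mathscr{F}'$-algebra structure. So setting $\mathscr{F} \coloneq \mathscr{F}'$ completes the proof. There is no real obstacle here; the work is all packaged into the earlier results, and this lemma is essentially a bookkeeping consequence of the design of the transfer construction in Lemma \ref{lem:leibniz-pullback-application-loc-rep-fibred-structure}.
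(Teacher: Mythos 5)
Your proposal is correct and matches the paper's proof: both invoke Example \ref{ex:local-rep-transfer} (the $K$-terminal specialization of Lemma \ref{lem:leibniz-pullback-application-loc-rep-fibred-structure}) applied to the pullback-preserving left adjoint $\Delta \colon \cSet \to \cSet^{\SSigma}$ with right adjoint $\Gamma$, taking as input the notion of uniform fibration structure from Lemma \ref{lem:loc-rep-fib}, and then identify the underlying maps as the equivariant fibrations directly from their definition. The paper additionally displays the resulting classifier $\psi_f \colon \mathscr{F}(f) \to X$ as an explicit pullback of $\Gamma\FF(\Delta f)$ along the unit, but this is just unwinding the same construction.
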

\begin{proof}
  By Example \ref{ex:local-rep-transfer} and Lemma \ref{lem:loc-rep-fib}, there is a locally representable and relatively acyclic notion of fibred structure $\mathscr{F}$  where an $\mathscr{F}$-algebra structure on a map $f \colon Y \to X$ of cubical sets is defined to be an $\FF$-algebra structure on the map $\Delta f \colon \Delta Y \to \Delta X$ of cubical species. Then, by the proof of Lemma \ref{lem:leibniz-pullback-application-loc-rep-fibred-structure}, the map  $\psi_f \colon \mathscr{F}(f) \to X$ defined by the pullback
  \[  \begin{tikzcd} \mathscr{F}(f) \arrow[d, "\psi_f"] \arrow[r] \arrow[dr, phantom, "\lrcorner" very near start] & \Gamma \FF(\Delta f) \arrow[d, "\Gamma \psi_{\Delta f}"] \\ X \arrow[r, "\eta"'] \arrow[u, bend left, dotted] \arrow[ur, dashed] & \Gamma \Delta X
\end{tikzcd}
\] has the property that for any $g \colon Z \to X$, there is a natural bijection between equivariant fibration structures on $g^*f$ and lifts of $g$ across $\psi_f$.
\end{proof}

The same line of reasoning tells us how to construct the universal equivariant fibration. By \cite[8]{Awodey:2022hs}, the Hofmann--Streicher universe $\varpi \colon \dot{V}_\kappa \to V_\kappa$ for $\cSet$ and the Hofmann--Streicher universe $\varpi \colon \dot{\VV}_\kappa \to \VV_\kappa$ for $\cSet^\SSigma$, defined with respect to the same regular cardinal $\kappa$, are related by a canonical pullback:
\begin{equation}\label{eq:Hofmann-Streicher-comparison}
\begin{tikzcd} \Delta \dot{V}_\kappa \arrow[d, "\Delta\varpi"'] \arrow[r] \arrow[dr, phantom, "\lrcorner" very near start] & \dot{\VV}_\kappa \arrow[d, "\varpi"] \\ \Delta V_\kappa \arrow[r] & \VV_\kappa \rlap{.}
\end{tikzcd}
\end{equation}

\begin{con}\label{con:equivariant-universe}
Define $\pi \colon \dot{U}_\kappa \to U_\kappa$ to be the map of cubical sets defined by the pullbacks in the top and bottom faces of the cube, whose back face is the transpose of \eqref{eq:Hofmann-Streicher-comparison} and whose right face is the image of the pullback square of Construction \ref{con:symmetric-universe} under $\Gamma \colon \cSet^{\SSigma} \to \cSet$:
\[
\begin{tikzcd}[sep=small] & \dot{V}_\kappa \arrow[rr] \arrow[dd, "\varpi" near start]& & \Gamma \dot{\VV}_\kappa \arrow[dd, "\Gamma\varpi"] \\ \dot{U}_\kappa \arrow[dd, dashed, "\pi"']  \arrow[urrr, phantom, "\urcorner" very near start] \arrow[ur, dotted] \arrow[rr, dotted] \arrow[dr, phantom, "\lrcorner" very near start]& & \Gamma \dot{\UU}_\kappa \arrow[ur]  \arrow[dr, phantom, "\lrcorner" very near start]\\ & V_\kappa \arrow[rr] & & \Gamma \VV_\kappa \\ U_\kappa \arrow[ur, dotted] \arrow[rr, dotted, crossing over] \arrow[urrr, phantom, "\urcorner" very near start] & & \Gamma \UU_\kappa \arrow[ur] \arrow[from=uu, "\Gamma\pi"' near start, crossing over] \rlap{.}
\end{tikzcd}
\]
By pullback composition and cancelation, this makes the left face a pullback.
\end{con}

\begin{rmk}\label{rmk:equivariant-universe}
By Construction \ref{con:loc-rep-universe}, we might have instead defined $U_\kappa \to V_\kappa$  to be the map $\mathscr{F}^\kappa(\varpi) \to V_\kappa$ classifying equivariant fibration structures associated to the Hofmann--Streicher universe $\varpi \colon \dot{V}_\kappa \to V_\kappa$. However, on account of the pullback square \eqref{eq:Hofmann-Streicher-comparison} we have a pullback
\[
\begin{tikzcd} \FF^\kappa(\Delta \varpi) \arrow[d, "{\psi_{\Delta\varpi}}"'] \arrow[r] \arrow[dr, phantom, "\lrcorner" very near start] & \FF^\kappa(\varpi) \eqcolon \UU_\kappa \arrow[d, "{\psi_\varpi}"] \\ \Delta V_\kappa \arrow[r] & \VV_\kappa
\end{tikzcd}
\]
of cubical species and thus a composable pair of pullbacks of cubical sets
\[
\begin{tikzcd} U_\kappa \cong \mathscr{F}^\kappa(\varpi) \arrow[d, "\psi_\varpi"'] \arrow[r] \arrow[dr, phantom, "\lrcorner" very near start] & \Gamma \FF^\kappa(\Delta \varpi) \arrow[d, "\Gamma\psi_{\Delta\varpi}"'] \arrow[dr, phantom, "\lrcorner" very near start] \arrow[r]& \Gamma \FF^\kappa(\varpi) \eqcolon \Gamma \UU_\kappa \arrow[d, "\Gamma\psi_\varpi"] \\ V_\kappa \arrow[r, "\eta"'] & \Gamma \Delta V_\kappa \arrow[r] & \Gamma\VV_\kappa
\end{tikzcd}
\]
showing that both definitions agree (cf.\ \cite[12]{Awodey:2022hs}).
\end{rmk}

By Remark \ref{rmk:equivariant-universe} and Proposition \ref{prop:has-universes}:

\begin{prop}\label{prop:equivariant-has-universes}
The equivariant premodel structure on cubical sets has universes in the sense of Definition \ref{defn:has-universes} for the equivariant fibrations given by the classifiers $\pi \colon \dot{U}_\kappa \to {U}_\kappa$ for sufficiently large inaccessible cardinals $\kappa$.
\qed
\end{prop}

With Propositions \ref{prop:equivariant-has-universes} and \ref{prop:equivariant-frobenius}, we have satisfied the hypotheses of Proposition \ref{prop:EEP-univalence}, so from Proposition \ref{prop:equivariant-EEP} we may conclude:

\begin{prop}\label{prop:equivariant-univalent} The universes in the equivariant premodel structure on cubical sets are univalent. \qed
\end{prop}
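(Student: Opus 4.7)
The plan is essentially to invoke Proposition \ref{prop:EEP-univalence} as a black box, since all of its hypotheses have been verified in the immediately preceding results. Specifically, I would first check that we are in the setting of that proposition: $\cSet$ is a presheaf topos $\widehat{\CCube}$; by Construction \ref{con:equivariant-generating-cof} together with Proposition \ref{prop:relative-+-algebras-generation} and Remark \ref{rmk:partial-map-stable-functorial-factorization}, the cofibrations in the equivariant premodel structure are precisely the monomorphisms; Lemma \ref{lem:equivariant-cylindrical} ensures the premodel structure is cylindrical; Proposition \ref{prop:equivariant-frobenius} supplies the Frobenius condition; and Proposition \ref{prop:equivariant-has-universes} shows that equivariant fibrations have universes in the sense of Definition \ref{defn:has-universes}.

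With these hypotheses in place, Proposition \ref{prop:EEP-univalence} asserts an equivalence between the equivalence extension property and univalence of each universe $\pi \colon \dot{U}_\kappa \fto U_\kappa$. Since Proposition \ref{prop:equivariant-EEP} establishes the equivalence extension property in $\cSet$, the forward direction of Proposition \ref{prop:EEP-univalence} immediately yields that each of the universes $\pi \colon \dot{U}_\kappa \fto U_\kappa$ of Construction \ref{con:equivariant-universe} is univalent in the sense of Definition \ref{defn:univalence}; that is, the map $t \colon \Eq(\dot{U}_\kappa) \to U_\kappa$ constructed in Lemma \ref{lem:universal-equivalence} is a trivial fibration.

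There is no substantive obstacle in this proof: the genuine work has already been done in proving Proposition \ref{prop:equivariant-EEP} (which ultimately relied on the cylindrical premodel machinery of \S\ref{ssec:EEP} via Theorem \ref{thm:cylindrical-EEP}), Proposition \ref{prop:equivariant-frobenius} (which was transferred from the cubical species setting via the fact that $\Delta$ preserves pushforwards), and Proposition \ref{prop:equivariant-has-universes} (which combined Construction \ref{con:equivariant-universe} with the abstract Proposition \ref{prop:has-universes}). The only thing to double-check is that the construction of $\Eq(\dot{U}_\kappa)$ in Lemma \ref{lem:universal-equivalence} applies in our setting, which requires a locally representable and relatively acyclic notion of fibred structure $\mathscr{TF}$ for trivial fibrations; this is supplied by Lemma \ref{lem:loc-rep-triv-fib}, which applies since $\cSet$ is an elementary topos and our cofibrations are the monomorphisms.
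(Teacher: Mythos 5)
Your proposal is correct and matches the paper's own argument exactly: the paper derives this proposition by noting that Propositions \ref{prop:equivariant-has-universes} and \ref{prop:equivariant-frobenius} verify the hypotheses of Proposition \ref{prop:EEP-univalence}, whose forward direction applied to Proposition \ref{prop:equivariant-EEP} gives univalence. Your extra check that Lemma \ref{lem:universal-equivalence} is available via Lemma \ref{lem:loc-rep-triv-fib} is a sensible (if implicit in the paper) precaution, not a divergence.
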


We now leverage the results of \S\ref{ssec:Ufib} to prove that the bases of these universe are equivariantly fibrant objects. Note, however, that in contrast to the analogous result for cubical species, this is not a direct consequence of Proposition \ref{prop:fibrant-universe}.

\begin{prop}\label{prop:equivariant-fibrant-universe}
The bases of the universal fibrations for the equivariant premodel structure on cubical sets are fibrant objects.
\end{prop}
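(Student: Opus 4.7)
The obstacle is that, unlike the interval premodel structure on cubical species, the equivariant fibrations on $\cSet$ are \emph{not} characterized as the biased or unbiased fibrations with respect to any interval object in $\cSet$; thus Proposition \ref{prop:fibrant-universe} does not apply directly. They are, however, created by $\Delta \colon \cSet \to \cSet^{\SSigma}$ from fibrations in cubical species, which by Theorem \ref{thm:uniform-fibrations} \emph{are} the unbiased fibrations with respect to the symmetric interval $\II$. The plan is therefore to prove that $\Delta U_\kappa$ is fibrant in the interval premodel structure on $\cSet^{\SSigma}$; the equivariant fibrancy of $U_\kappa$ then follows by definition of equivariant fibration.

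First, using Propositions \ref{prop:equivariant-frobenius} and \ref{prop:equivariant-has-universes}, we apply Lemma \ref{lem:universal-equivalence} to the equivariant universal fibration $\pi \colon \dot{U}_\kappa \fto U_\kappa$ to obtain a factorization
\[ \begin{tikzcd} & U_\kappa \arrow[d, "r"] \arrow[dr, equals] \arrow[dl, equals] \\ U_\kappa & \Eq(\dot{U}_\kappa) \arrow[l, "s"] \arrow[r, "t"'] & U_\kappa \end{tikzcd}\]
of the diagonal in which $(s,t) \colon \Eq(\dot{U}_\kappa) \fto U_\kappa \times U_\kappa$ is an equivariant fibration. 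By univalence (Proposition \ref{prop:equivariant-univalent}), $t$ is moreover a trivial equivariant fibration, and hence in particular an equivariant fibration.

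Next, apply the right adjoint functor $\Delta$, which preserves all finite limits and thus the structure of the reflexive relation. By the very definition of equivariant fibration, $\Delta$ sends equivariant fibrations to fibrations in $\cSet^{\SSigma}$. We therefore obtain a reflexive relation
\[ \begin{tikzcd} & \Delta U_\kappa \arrow[d, "\Delta r"] \arrow[dr, equals] \arrow[dl, equals] \\ \Delta U_\kappa & \Delta\Eq(\dot{U}_\kappa) \arrow[l, "\Delta s"] \arrow[r, "\Delta t"'] & \Delta U_\kappa \end{tikzcd}\]
on $\Delta U_\kappa$ in which $(\Delta s, \Delta t)$ and $\Delta t$ are both fibrations of cubical species. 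Since the fibrations in $\cSet^{\SSigma}$ are the unbiased fibrations relative to $\II$ (Theorem \ref{thm:uniform-fibrations}), Proposition \ref{prop:FEP-generic-point} applies and yields that $\Delta U_\kappa$ is fibrant. Equivalently, $U_\kappa$ is equivariantly fibrant, as required.

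The proof is essentially a matter of transfer; the only real content is recognizing that Lemma \ref{lem:universal-equivalence} supplies a reflexive relation on $U_\kappa$ whose components become fibrations in $\cSet^{\SSigma}$ after applying $\Delta$, so that the generic-point version of the fibrancy argument (Proposition \ref{prop:FEP-generic-point}) can be carried out upstairs in cubical species rather than directly in $\cSet$.
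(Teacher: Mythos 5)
Your proof is correct and follows exactly the paper's own argument: construct the reflexive relation $\Eq(\dot{U}_\kappa)\rightrightarrows U_\kappa$ via Lemma \ref{lem:universal-equivalence}, use univalence to see $t$ is a (trivial) fibration, and then apply $\Delta$ to transfer to cubical species, where Proposition \ref{prop:FEP-generic-point} applies with respect to the symmetric interval $\II$. No discrepancies to report.
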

\begin{proof}
    As in the proof of Proposition \ref{prop:symmetric-fibrant-universe}, we can use Proposition \ref{prop:FEP-generic-point} to show that $U$ is fibrant, though in a more subtle way. First, we again equip $U$ with the reflexive relation defined by the object of equivalences constructed by Lemma \ref{lem:universal-equivalence}:
    \[
    \begin{tikzcd} & U \arrow[d] \arrow[dl, equals] \arrow[dr, equals] \\ U & \Eq(\dot{U}) \arrow[l, "s"] \arrow[r,"t"'] & U \rlap{.}
    \end{tikzcd}
    \]
    The map $(s,t) \colon \Eq(\dot{U}) \to U \times U$ is again a fibration by its construction. By univalence, Proposition \ref{prop:equivariant-univalent}, the map $t \colon \Eq(\dot{U}) \to U$ is a trivial fibration and in particular a fibration.

    Now the equivariant premodel structure lacks an interval $I$ as required by Proposition \ref{prop:FEP-generic-point}, but by the definition of the equivariant fibrations, the images of the maps $(s,t) \colon \Eq(\dot{U}) \to U \times U$ and $t \colon \Eq(\dot{U}) \to U$ under $\Delta$ are uniform fibrations in $\cSet^\SSigma$, and we are trying to show that $\Delta U$ is uniformly fibrant.  Since the interval (pre)model structure on cubical species does have such an interval, and the remaining hypotheses of Proposition \ref{prop:FEP-generic-point} are also satisfied for the reflexive relation $\Delta \Eq(\dot{U}) \rightrightarrows \Delta U$, we can conclude that $\Delta U$ is indeed uniformly fibrant. Thus, $U$ is equivariantly fibrant.
    \end{proof}

By applying Lemma \ref{lem:fibU_fibext}, we see that:

\begin{prop}\label{prop:equivariant-FEP}
   The equivariant premodel structure satisfies the fibration extension property. \qed
\end{prop}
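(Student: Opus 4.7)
The plan is to apply Lemma \ref{lem:fibU_fibext}, which asserts that any premodel structure on a presheaf topos with fibrant universes has the fibration extension property. Both ingredients needed to invoke that lemma have already been established in this section, so the argument is essentially an assembly of prior results.

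First, Proposition \ref{prop:equivariant-has-universes} supplies, for each sufficiently large inaccessible cardinal $\kappa$, a universe $\pi \colon \dot{U}_\kappa \to U_\kappa$ classifying the $\kappa$-small equivariant fibrations, satisfying the realignment property. Second, Proposition \ref{prop:equivariant-fibrant-universe} asserts that the base $U_\kappa$ of each such universe is equivariantly fibrant. Together these say precisely that the equivariant premodel structure has fibrant universes in the sense of Definition \ref{defn:has-universes} (together with the standing assumption on arbitrarily large inaccessibles made immediately after it), which is the hypothesis of Lemma \ref{lem:fibU_fibext}.

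Unwinding what that lemma produces in this setting: given a $\kappa$-small equivariant fibration $p \colon X \fto A$ and a trivial cofibration $t \colon A \cwto B$, Proposition \ref{prop:equivariant-has-universes} yields a classifying map $q \colon A \to U_\kappa$ exhibiting $p$ as the pullback of $\pi$ along $q$; fibrancy of $U_\kappa$ then provides a lift $k \colon B \to U_\kappa$ with $k \cdot t = q$; and pulling $\pi$ back along $k$ produces the required $\kappa$-small equivariant fibration over $B$ whose restriction along $t$ recovers $p$. No additional obstacle arises at this stage, since the conceptual work---the equivalence extension property (Proposition \ref{prop:equivariant-EEP}), the Frobenius condition (Proposition \ref{prop:equivariant-frobenius}), univalence (Proposition \ref{prop:equivariant-univalent}), and fibrancy of the universe (Proposition \ref{prop:equivariant-fibrant-universe})---has already been carried out.
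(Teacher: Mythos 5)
Your proposal is correct and matches the paper's argument exactly: the paper derives this proposition by applying Lemma \ref{lem:fibU_fibext} to the universes of Proposition \ref{prop:equivariant-has-universes}, whose bases are fibrant by Proposition \ref{prop:equivariant-fibrant-universe}. Your unwinding of the lemma in this setting is also faithful to its proof.
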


These results assemble into the main theorem of this section.

\begin{thm}\label{thm:equivariant-model-structure} The category of cubical sets admits a Quillen model structure in which the cofibrations are the monomorphisms and the fibrations are the equivariant fibrations. This model is cylindrical and cartesian closed and satisfies the Frobenius condition, equivalence extension property, and fibration extension property. Moreover, it has univalent universes whose bases are fibrant objects.
\end{thm}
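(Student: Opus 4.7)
The proof will be a direct analogue of the proof of Theorem \ref{thm:species-interval-model-structure}: nearly every clause of the statement has already been established as a separate proposition during \S\ref{ssec:equivariant-model}, so the remaining task is to verify that the equivariant premodel structure satisfies the 2-of-3 property for weak equivalences and thus lifts to a Quillen model structure.

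The plan is to invoke Proposition \ref{prop:cylindrical-model-category} once the three hypotheses of that result are in hand. First, I would cite Lemma \ref{lem:equivariant-cylindrical} to recall that the equivariant premodel structure is cylindrical, with adjoint functorial cylinder given by $(-) \times I \dashv (-)^I$. Second, every object of $\cSet$ is cofibrant, because the unique map from the initial object is a monomorphism, and the cofibrations are precisely the monomorphisms by Construction \ref{con:equivariant-generating-cof}. Third, the fibration extension property was established as Proposition \ref{prop:equivariant-FEP}. Given these three inputs, Proposition \ref{prop:cylindrical-model-category} yields that the weak equivalences of the equivariant premodel structure satisfy 2-of-3, upgrading it to a Quillen model structure.

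The remaining clauses of the theorem are then immediate from earlier results: the cofibrations are the monomorphisms by construction; the fibrations are the equivariant fibrations by Construction \ref{con:equivariant-generating-tcof} and Definition \ref{defn:species-trivial-cofibration-fibration}; cylindricity is Lemma \ref{lem:equivariant-cylindrical}; the cartesian closed property (stability of (trivial) cofibrations under pushout product) is Proposition \ref{prop:equivariant-sm7}; the Frobenius condition is Proposition \ref{prop:equivariant-frobenius}; the equivalence extension property is Proposition \ref{prop:equivariant-EEP}; the fibration extension property is Proposition \ref{prop:equivariant-FEP}; the existence of universes classifying the $\kappa$-small equivariant fibrations is Proposition \ref{prop:equivariant-has-universes}; their univalence is Proposition \ref{prop:equivariant-univalent}; and the fibrancy of their bases is Proposition \ref{prop:equivariant-fibrant-universe}.

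No step here poses a real obstacle at this level of abstraction: all the substantive technical work was concentrated in the preceding propositions (especially the transfer of Frobenius from cubical species via $\Delta$, and the deduction of fibrancy of $U$ through the symmetric interval model structure in Proposition \ref{prop:equivariant-fibrant-universe}). The only thing to be careful about is the logical order of the last few results: FEP depends on fibrancy of the universe (Lemma \ref{lem:fibU_fibext}), which in turn depends on univalence (via Proposition \ref{prop:EEP-univalence} combined with EEP), so one must check that no result cited in the assembly of Theorem \ref{thm:equivariant-model-structure} secretly depended on 2-of-3 for weak equivalences. Inspecting the proofs, each cited proposition is derived either directly from the axiomatic framework of \S\ref{sec:cylindrical} or by transfer along $\Delta$ from the corresponding statement in $\cSet^{\SSigma}$, so there is no circularity and the proof is complete.
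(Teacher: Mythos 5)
Your proposal is correct and matches the paper's own proof: the paper likewise observes that every clause except the 2-of-3 property has already been established in the preceding propositions, and then deduces 2-of-3 from Proposition \ref{prop:cylindrical-model-category} using Proposition \ref{prop:equivariant-FEP} and the cofibrancy of all objects. Your additional check that no cited result secretly presupposes 2-of-3 is a sensible precaution but not part of the paper's argument.
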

\begin{proof}
 Once more, the only result of the statement that we have not yet proven is the fact that the interval premodel structure is in fact a model structure, but this follows formally from Proposition \ref{prop:cylindrical-model-category}, by Proposition \ref{prop:equivariant-FEP} and the fact that all objects are cofibrant.
\end{proof}

Thus, the equivariant model structure on the topos of cubical sets is a model of homotopy type theory. In contrast to the model of Theorem \ref{thm:species-interval-model-structure}, the equivariant model structure presents classical homotopy theory, as we demonstrate in the next section.

\section{The equivalence with classical homotopy theory}\label{sec:classical}

In this section we prove our final main result, that the equivariant cubical model category of Theorem \ref{thm:equivariant-model-structure} is equivalent to classical homotopy theory. More specifically, we demonstrate that the triangulation functor $T \colon \cSet \to \sSet$ defines a left Quillen equivalence from the equivariant model structure, whose fibrations are the equivariant fibrations, to Quillen's model structure on simplicial sets \cite{Quillen:1967ha}, whose fibrations are the Kan fibrations. This argument makes use of classical reasoning; see \S\ref{ssec:constructivity} above.

Our proof makes central use of the fact that the indexing categories  $\CCube$ and $\DDelta$ are \emph{Eilenberg--Zilber categories}, a special class of (generalized) Reedy categories introduced by Berger and Moerdijk \cite{BergerMoerdijk:2008rc}. We develop some general theory of Eilenberg--Zilber categories in \S\ref{ssec:EZ} for that purpose. In particular, we prove in Corollary \ref{cor:EZ-magic} that to check that a natural transformation between left Quillen functors with either $\cSet$ or $\sSet$ as domain is a natural weak equivalence, it will suffice to check this on those components indexed by quotients of representables by subgroups of their automorphism groups. And in fact, by the two-of-three property, this will follow automatically for terminal object preserving functors, provided these objects are weakly contractible---as is the case in both the  equivariant model structure on $\cSet$ and the classical model structure on $\sSet$.

These results make it easy to prove that an opposing pair of left Quillen functors between $\sSet$ and $\cSet$ define a derived equivalence, and thus we seek a left Quillen functor from $\sSet$ to $\cSet$ to define a candidate inverse to triangulation. Our original proof proceeded along the following lines. In \cite{Sattler:2019ic}, Sattler observes that the idempotent completion of the category of \emph{Dedekind} cubes---the full subcategory of $\Cat$ on the posets $\{0 < 1\}^n$ for $n \geq 0$, which adds connections to the cartesian cubes $\CCube$---is the category $\LLat$ whose objects are the finite bounded lattices and whose morphisms are the monotone maps between them.  Thus the category $\lSet$ of presheaves on $\LLat$ is equivalent to the category of presheaves on the Dedekind cubes, which we can equip with the model structure defined in \cite{Sattler:2017ee}, following \cite{CCHM:2018ctt}. The utility of this result is that the finite ordinals $[n] = \{ 0 < 1 < \cdots < n\}$ are finite complete lattices; indeed, we have a fully faithful embedding $j \colon \DDelta \hookrightarrow \LLat$, in addition to the evident (non-full) inclusion $k \colon \CCube \to \LLat$ of the cartesian cube category. These functors induce adjoint triples of functors
\[
\begin{tikzcd}
\CCube \arrow[r, "k"] & \LLat & \DDelta \arrow[l, hook', "j"'] & & \cSet \arrow[r, bend left=45, "k_!", "\bot"'] \arrow[r, bend right=45, "k_*"', "\bot"] & \lSet \arrow[r, "j^*" description] \arrow[l, "k^*" description] & \sSet \arrow[l, bend left=45, "j_*", "\bot"'] \arrow[l, bend right=45, "j_!"', "\bot"] \rlap{.}
\end{tikzcd}
\]
with the left and right adjoints defined by left and right Kan extension. The composite $j^*k_! \colon \cSet \to \sSet$ is the triangulation functor and one can verify that $k^*j_! \colon \sSet \to \cSet$ is a left Quillen homotopy inverse.

While this article was in preparation, Reid Barton observed that the triangulation functor in fact arises by restriction along a single functor $i \colon \DDelta \to \CCube$, and in particular has a left adjoint, which is also left Quillen \cite{Barton:2024t}. These results are verified in \S\ref{ssec:triangulation}. In \S\ref{ssec:EZ}, we then apply the theory of Eilenberg--Zilber categories sketched above to conclude that all three functors in the adjoint triple
\[
\begin{tikzcd}
\cSet  \arrow[r, "i^*" description] & \sSet \arrow[l, bend left=45, "i_*", "\bot"'] \arrow[l, bend right=45, "i_!"', "\bot"]
\end{tikzcd}
\]
are Quillen equivalences.  Finally, in \S\ref{ssec:test}, we compare the equivariant model structure on cubical sets to the test model structure of Cisinski after Grothendieck and prove that they coincide.

\subsection{Triangulation}\label{ssec:triangulation}

As Barton observed, implicit in Joyal's proof that $\sSet$ is the classifying topos for a strict interval is the definition of a faithful dimension-preserving functor \[ \begin{tikzcd}[row sep=tiny] \DDelta \arrow[r, "i"] & \CCube \\ \{0 < \cdots < n \} \arrow[r, maps to] & \{\bot,1, \ldots, n, \top\} \end{tikzcd}\] from the simplex category to the cartesian cube category. This functor may be defined using Joyal's ``interval representation'' \cite{Joyal:1997dd}, a contravariant isomorphism between $\DDelta$ and the opposite of the category of \emph{strict intervals}, linearly ordered sets $\{ \top > 1 >\cdots > n > \top \}$ for $n \geq 0$ with $\bot \neq \top$, and endpoint-preserving ordered maps.\footnote{Our atypical choice of ordering on the interval coordinates is chosen to match the conventions used in \cite{RiehlShulman17}, which uses the functor $i \colon \DDelta \to \CCube$ to give a syntactic encoding of the simplices as ``shapes'' embedded in cubes.} The category of strict intervals is evidently a subcategory of finite bipointed sets $\Fin_{\bot\neq\top} \cong \CCube^\op$, thus defining $i \colon \DDelta \to \CCube$.

The functor $i$ sends sends outer face maps $\delta^0,\delta^n \colon [n-1] \to [n]$ to the face maps $I^{n-1} \to I^n$ that respectively fix the first cube coordinate to be $\top$ and the last cube coordinate to be $\bot$. The inner face maps $\delta^i \colon [n-1] \to [n]$ are sent to the diagonal maps $I^{n-1} \to I^n$ that identify the $i$th and $(i+1)$th coordinates. The degeneracy maps $\sigma^i \colon [n+1] \to [n]$ are sent to the projections $I^{n+1}\to I^n$ away from the $(i+1)$th coordinate.

Barton then observed:

\begin{lem}[Barton]\label{lem:restriction-is-triangulation} Restriction along $i$ defines the triangulation functor $i^* \colon \cSet \to \sSet$.
\end{lem}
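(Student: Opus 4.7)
The approach is to combine cocontinuity with Yoneda density. The restriction $i^* \colon \cSet \to \sSet$ is cocontinuous since colimits in presheaf categories are computed pointwise, and the triangulation $T$ is cocontinuous as a left adjoint. Each is therefore determined up to natural isomorphism by its restriction along the Yoneda embedding, so it suffices to produce a natural isomorphism $T(I^n) \cong i^*(I^n)$ of simplicial sets, naturally in $I^n \in \CCube$.

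Evaluating both sides at $[k] \in \DDelta$, we have $T(I^n)_k = ((\Delta^1)^n)_k = \DDelta([k],[1])^n$, and
\[
  i^*(I^n)_k \;=\; I^n(i[k]) \;=\; \CCube(i[k], I^n) \;=\; \Fin_{\bot \neq \top}(\{\bot,1,\ldots,n,\top\},\, i[k]),
\]
the last equality by the definition $\CCube = \Fin_{\bot \neq \top}^\op$. A bipointed map out of the $(n+2)$-element bipointed set $\{\bot,1,\ldots,n,\top\}$ is uniquely determined by the images of its $n$ non-basepoint elements, so this morphism set is in natural bijection with the $n$-fold cartesian product of the underlying bipointed set of $i[k]$.

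It remains to identify $i[k]$, as a bipointed set naturally in $[k]$, with $\DDelta([k],[1])$ equipped with its two constant maps as basepoints. This is precisely the content of Joyal's interval representation used to define $i$ in the first place: the contravariant equivalence between $\DDelta$ and finite strict intervals sends $[k]$ to the strict interval $\DDelta([k],[1])$ (pointwise ordered, with the constant maps $0$ and $1$ as basepoints), and composing with the forgetful embedding of strict intervals into $\Fin_{\bot \neq \top} = \CCube^\op$ recovers $i \colon \DDelta \to \CCube$. Combining with the bijection of the previous paragraph yields natural isomorphisms $i^*(I^n)_k \cong (i[k])^n \cong \DDelta([k],[1])^n = T(I^n)_k$.

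The main care lies in verifying naturality in $I^n \in \CCube$. A morphism $\alpha \colon I^m \to I^n$ in $\CCube$ corresponds to a bipointed map $\{\bot,1,\ldots,n,\top\} \to \{\bot,1,\ldots,m,\top\}$; the induced action on $n$-tuples of elements of $i[k]$ is by substitution $(g_1,\ldots,g_m) \mapsto (g_{\alpha(1)},\ldots,g_{\alpha(n)})$, interpreting $g_\bot$ and $g_\top$ as the constant maps $0$ and $1$ in $\DDelta([k],[1])$. This must be matched with the action of $T(\alpha) \colon (\Delta^1)^m \to (\Delta^1)^n$, which is a direct calculation. The only obstacle is keeping track of the directions of morphisms through the two opposites involved—that of $\CCube$ over $\Fin_{\bot \neq \top}$ and of $\DDelta$ over strict intervals—which is routine bookkeeping once the interval representation is spelled out explicitly.
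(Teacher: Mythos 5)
Your proof is correct, and it shares the paper's starting point (both $T$ and $i^*$ are cocontinuous, so it suffices to compare them on representables), but the reduction you perform afterwards is different. The paper exploits the fact that $i^*$, being a right adjoint, also preserves finite products, and that $T$ is characterized as the unique cocontinuous \emph{product-preserving} functor sending the interval of $\CCube$ to $\Delta^1$; this reduces everything to checking $i^*(I^1) \cong \Delta^1$, $i^*(I^0)\cong\Delta^0$, and the three structure maps, where the only content is that $i$ is fully faithful on morphisms with codomain $[1]$ (every bipointed map into $\{\bot,1,\top\}$ is automatically order-preserving). You instead compute $i^*(I^n)_k = \Fin_{\bot\neq\top}(\{\bot,1,\ldots,n,\top\}, i[k]) \cong (i[k])^n \cong \DDelta([k],[1])^n$ directly for every $n$, using Joyal's interval representation to identify $i[k]$ with $\DDelta([k],[1])$, and then must verify naturality against \emph{all} morphisms of $\CCube$ (diagonals, permutations, constants) by explicit substitution. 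Your route is more computational and self-contained; the paper's buys a much smaller verification burden at the cost of invoking the product-preservation characterization of $T$. One small point of care in your argument that you correctly flag but do not fully discharge: naturality is needed both in $[k]\in\DDelta$ (so that the bijections assemble into isomorphisms of simplicial sets) and in $I^n\in\CCube$; both reduce to tracking the two op's, and both do check out, so the gap is only one of written detail, not of substance.
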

\begin{proof}
    The triangulation functor is the unique cocontinuous functor extending the product-preserving functor $\CCube \to \sSet$ that carries the interval in $\CCube$ to the interval in $\sSet$:
    \[ \begin{tikzcd}[row sep=tiny] \CCube \arrow[r, "\yo", hook] & \cSet \arrow[r, "T"] & \sSet  \\  {\{\bot,\top\}} \arrow[dd, shift left] \arrow[dd, shift right] \arrow[r, maps to] & I^0 \arrow[dd, shift left] \arrow[dd, shift right] \arrow[r, mapsto] &  \Delta^0 \arrow[dd, shift left] \arrow[dd, shift right] \\ ~\arrow[r, phantom, "\mapsto"] & ~\arrow[r, phantom, "\mapsto"] & ~ \\{\{\bot,1, \top\}} \arrow[uu] \arrow[r, maps to] & I^1 \arrow[uu] \arrow[r, maps to] & \Delta^1 \arrow[uu] \rlap{.} \end{tikzcd} \]
     The restriction functor $i^* \colon \cSet \to \sSet$ is cocontinuous and product-preserving, as is the Yoneda embedding $\yo \colon \CCube \hookrightarrow\cSet$, so it suffices to show that $i^*(I^1) = \Delta^1$ and similarly for the interval maps. Since $i[1] \coloneq \{\bot,1,\top\}$,  $i^*(I^1)$ is the functor $\CCube(i[-],i[1]) \colon \DDelta^\op \to \Set$. Now the claim follows because the inclusion $i$ is fully faithful on maps with codomain $[1]$, as in $\Fin_{\bot\neq\top} \cong \CCube^\op$ any map of bipointed sets with domain $\{\bot,1,\top\}$ is order-preserving.

    As a right adjoint, $i^*(I^0) = \Delta^0$ and by inspection, $i^*$ carries the maps $0,1 \colon I^0 \to I^1$ and $! \colon I^1 \to I^0$ in $\CCube$ to the corresponding maps involving $\Delta^1$. Thus $i^*$ coincides with the triangulation functor, as claimed.
\end{proof}

We now verify that both left adjoints in the adjoint triple
\[
\begin{tikzcd}
\cSet  \arrow[r, "i^*" description] & \sSet \arrow[l, bend left=45, "i_*", "\bot"'] \arrow[l, bend right=45, "i_!"', "\bot"]
\end{tikzcd}
\]
are left Quillen. To analyze the left Kan extension $i_!$, it will be useful to establish the relationship between $i$ and its augmented analogue. Let $\DDelta_+$ and $\CCube_+$ denote the augmented simplex and augmented cube categories, obtained by freely adjoining initial objects, and write $i_+ \colon \DDelta_+ \to \CCube_+$ for the functor induced by $i$ that preserves them. Write $\sSet_+ \coloneqq \Set^{\DDelta_+^\op}$ and $\cSet \coloneqq \Set^{\CCube_+^\op}$.

\begin{lem}\label{lem:exact-interval-reps} The commutative square below-left is exact, defining a canonical natural isomorphism in the square of functors below-right:
  \[ \begin{tikzcd} \DDelta \arrow[r, "i"] \arrow[d, hook, "\iota"'] \arrow[dr, phantom, "\Rightarrow" rotate=-135] & \CCube \arrow[d, hook, "\iota"] & \sSet \arrow[r, "i_!"] \arrow[dr, phantom, "{\rotatebox[origin=c]{-35}{$\Rightarrow$}\cong}"] & \cSet \\
    \DDelta_+ \arrow[r, "i_+"'] & \CCube_+ & \sSet_+ \arrow[u, "\iota^*"] \arrow[r, "(i_+)_!"'] & \cSet_+ \arrow[u, "\iota^*"'] \rlap{.} \end{tikzcd}\]
\end{lem}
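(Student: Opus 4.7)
The plan is to verify exactness by showing that the canonical mate $\alpha \colon i_! \iota^* \Rightarrow \iota^* (i_+)_!$, induced by the commuting square of restriction functors $\iota^* (i_+)^* = i^* \iota^*$, is a natural isomorphism. The key observation is that both functors $\sSet_+ \to \cSet$ in this transformation are cocontinuous: $i_!$ and $(i_+)_!$ preserve colimits as left adjoints, while $\iota^*$ is both a left and a right adjoint (sitting in $\iota_! \dashv \iota^* \dashv \iota_*$) and hence preserves all colimits as well. By the universal property of $\sSet_+$ as the free cocompletion of $\DDelta_+$, it therefore suffices to verify that $\alpha$ is an isomorphism on the representables $\yo[n]$ for each $[n] \in \DDelta_+$.

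First, for $[n] = \iota[n']$ in the image of $\iota \colon \DDelta \hookrightarrow \DDelta_+$, I would use that $\iota$ is fully faithful to obtain the unit iso $\iota^* \iota_! \cong \id$, and exploit the commuting square of right adjoints $\iota^* (i_+)^* = i^* \iota^*$ to get by mateship a canonical isomorphism $(i_+)_! \iota_! \cong \iota_! i_!$ between left adjoints. Both sides of $\alpha$ then reduce canonically to $\yo_{\CCube}(i[n'])$:
\[ i_! \iota^* (\iota_! \yo_{\DDelta}[n']) \cong i_! \yo_{\DDelta}[n'] \cong \yo_{\CCube}(i[n']), \qquad \iota^* (i_+)_! (\iota_! \yo_{\DDelta}[n']) \cong \iota^* \iota_! \yo_{\CCube}(i[n']) \cong \yo_{\CCube}(i[n']). \]
Second, for $[n] = \bot$ the freely adjoined initial object of $\DDelta_+$, the presheaf $\iota^* \yo_{\DDelta_+}\bot$ on $\DDelta$ is empty, since there are no morphisms $\iota[m] \to \bot$ in $\DDelta_+$; hence $i_! \iota^* \yo_{\DDelta_+}\bot \cong \emptyset$. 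Since $i_+$ preserves the initial object, $(i_+)_! \yo_{\DDelta_+}\bot \cong \yo_{\CCube_+}\bot$, whose restriction along $\iota$ is likewise empty by the same reasoning.

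The main obstacle, though a routine one, will be to confirm that the canonical identifications on representables in each case genuinely coincide with the specific mate $\alpha$, rather than being only abstractly isomorphic---a standard triangle-identity manipulation using the units and counits of $\iota_! \dashv \iota^*$ and $(i_+)_! \dashv (i_+)^*$. Once verified, $\alpha$ is an isomorphism on all representables, and by the cocontinuity observation, a natural isomorphism on all of $\sSet_+$, establishing the exactness of the square.
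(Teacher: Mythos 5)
Your argument is correct, but it takes a genuinely different route from the paper. You work entirely at the level of presheaf categories: you observe that both composites $i_!\iota^*$ and $\iota^*(i_+)_!$ are cocontinuous (the left Kan extensions as left adjoints, and $\iota^*$ because restriction has adjoints on both sides), reduce the invertibility of the Beck--Chevalley transformation to the representables of $\DDelta_+$, and then compute both sides explicitly in the two cases $\iota[n']$ and the freely adjoined initial object, using full faithfulness of $\iota$ and the fact that $i_+$ preserves the adjoined initial object. The paper instead argues one level down, with the square of small categories: it invokes the calculus of exact squares (Guitart), showing that \emph{any} square of the form $\cC \to \cD$, $\1\ast\cC \to \1\ast\cD$ is exact by pasting with comma squares over the jointly surjective pair of inclusions $\1 \hookrightarrow \1\ast\cC$ and $\cC \hookrightarrow \1\ast\cC$ and using that comma squares and squares built from fully faithful functors are exact. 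Your approach is more computational and self-contained; the paper's is more formal and yields the stronger, reusable statement about arbitrary $k \colon \cC \to \cD$. The one point you rightly flag but defer --- that the objectwise identifications on representables really are the components of the mate $\alpha$, not merely abstract isomorphisms --- is the only place where care is needed; it is a standard unit/counit triangle-identity check (the Beck--Chevalley map at $\yo a$ is, under $F_!\yo_a \cong \yo_{Fa}$, the canonical comparison), so the gap is cosmetic rather than substantive.
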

\begin{proof}
  Here the isomorphism in the square above-right is the Beck--Chevalley transformation associated to the identity natural transformation in the square above-left, and thus is invertible when the square is exact \cite{Guitart:1980ce}. Exactness of this square follows from the general observation that for any functor $k \colon \cC \to \cD$, any commutative square of the form below is exact:
  \[ \begin{tikzcd} \cC \arrow[r, "k"] \arrow[d, hook, "\iota"'] \arrow[dr, phantom, "\Rightarrow" rotate=-135] & \cD \arrow[d, hook, "\iota"] \\ \1 \ast \cC \arrow[r, "\1 \ast k"'] & \1 \ast \cD \rlap{.} \end{tikzcd} \]
This in turn can be detected by pasting with exact squares into $\iota\colon \cC \hookrightarrow \1 \ast \cC$ over any family of jointly surjective functors into $\1\ast\cC$ \cite[2.8 with $\mathcal{W} = \mathcal{W}_0$]{Maltsiniotis:2012}, such as the pair formed by the left and right inclusions $\iota \colon \1 \hookrightarrow\1\ast \cC$ and $\iota \colon \cC \hookrightarrow\1\ast\cC$. To that end we observe that
\[ \begin{tikzcd} \emptyset \arrow[r] \arrow[d] \arrow[dr, phantom, "\Rightarrow" rotate=-135] & \cC \arrow[r, "k"] \arrow[d, hook, "\iota"'] \arrow[dr, phantom, "\Rightarrow" rotate=-135] & \cD \arrow[d, hook, "\iota"] \arrow[dr, phantom, "="] & \emptyset  \arrow[r] \arrow[d] \arrow[dr, phantom, "\Rightarrow" rotate=-135] & \cD \arrow[d, hook, "\iota"] \\ \1 \arrow[r, hook, "\iota"'] & \1 \ast \cC \arrow[r, "\1 \ast k"'] & \1 \ast \cD & \1 \arrow[r, hook, "\iota"'] & \1 \ast \cD \end{tikzcd} \]
where both the left-hand square and the composite rectangle are comma squares, and thus exact.
Similarly, the left-hand and right-hand squares in the pasting equation below are exact since the functors $\iota$ are fully-faithful,
\[ \begin{tikzcd} \cC \arrow[r, equals] \arrow[d, equals] \arrow[dr, phantom, "\Rightarrow" rotate=-135] & \cC \arrow[r, "k"] \arrow[d, hook, "\iota"'] \arrow[dr, phantom, "\Rightarrow" rotate=-135] & \cD \arrow[d, hook, "\iota"] \arrow[dr, phantom, "="] & \cC  \arrow[r, "k"] \arrow[d, equals] \arrow[dr, phantom, "\Rightarrow" rotate=-135] & \cD \arrow[r, equals] \arrow[dr, phantom, "\Rightarrow" rotate=-135] \arrow[d, equals] & \cD \arrow[d, hook, "\iota"] \\ \cC \arrow[r, hook, "\iota"'] & \1 \ast \cC \arrow[r, "\1 \ast k"'] & \1 \ast \cD & \cC \arrow[r, "k"'] & \cD \arrow[r, hook, "\iota"'] & \1 \ast \cD \rlap{,} \end{tikzcd} \] while the trivial square is trivially exact.
\end{proof}

Using this, we now demonstrate:

\begin{lem} The functors
  \[ \begin{tikzcd}
  \cSet  \arrow[r, "i^*"', bend right] \arrow[r, phantom, "\bot"] & \sSet  \arrow[l, bend right, "i_!"']
  \end{tikzcd} \] preserve monomorphisms.
\end{lem}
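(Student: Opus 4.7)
The case of $i^*$ is immediate: monomorphisms in presheaf toposes are detected componentwise, and $i^*$ acts by precomposition with $i$, so the component $(i^*m)_{[n]} = m_{i[n]}$ is injective whenever $m \colon X \rightarrowtail Y$ is a monomorphism in $\cSet$.

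For $i_!$, I would reduce to the boundary inclusions via the standard cell decomposition of simplicial monomorphisms: every monomorphism in $\sSet$ is a transfinite composite of pushouts of the boundary inclusions $\{\partial\Delta^n \hookrightarrow \Delta^n\}_{n \geq 0}$. Since $i_!$ is cocontinuous (as a left adjoint) and monomorphisms in the topos $\cSet$ are closed under pushouts along arbitrary maps and under transfinite composites by Remark \ref{rmk:adhesive-pushout-products}, it suffices to verify that $i_!(\partial\Delta^n \hookrightarrow \Delta^n)$ is a monomorphism for each $n$.

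For this, note that $i_!\Delta^n = I^n$ since $i_!$ extends $i$ along the Yoneda embedding, and $\partial\Delta^n$ is the union in $\sSet$ of the images of the face maps $d_j \colon \Delta^{n-1} \hookrightarrow \Delta^n$. The key observation is that $i \colon \DDelta \to \CCube$ respects the Eilenberg--Zilber factorization: passing to opposites, $i$ is the inclusion of strict intervals into bipointed finite sets, which sends surjections to surjections and injections to injections, so in particular $i$ sends face maps of $\DDelta$ to face maps of $\CCube$. Hence each image $i(d_j) \colon I^{n-1} \hookrightarrow I^n$ is a face inclusion and a monomorphism. The main obstacle remaining is to show that the canonical comparison map $i_!\partial\Delta^n \to I^n$ is injective---equivalently, that $i$ faithfully transports the poset of proper faces of $[n]$ in $\DDelta$ to the analogous subposet of faces of $I^n$ in $\CCube$. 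This reduces to a direct combinatorial check on bipointed finite sets: whenever a map $g \colon c \to I^n$ in $\CCube$ factors through the $i$-images of two distinct face inclusions $\phi_1, \phi_2$, both factorizations can be identified in the colimit $i_!\partial\Delta^n$ via the $i$-image of their common refinement through the intersection of $\phi_1$ and $\phi_2$ in $\DDelta$, the existence of such a common refinement being guaranteed by the Eilenberg--Zilber structure on $\DDelta$.
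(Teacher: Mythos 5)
Your treatment of $i^*$ and your reduction of $i_!$ to the boundary inclusions match the paper's strategy, and you have correctly isolated the real content of the lemma: injectivity of the comparison map $i_!\partial\Delta^n \to I^n$, which comes down to showing that $i$ carries intersections of faces of $[n]$ to intersections of the corresponding faces of $I^n$. But at exactly this point your argument has a genuine gap. The existence of a ``common refinement through the intersection of $\phi_1$ and $\phi_2$ in $\DDelta$'' is not guaranteed by the Eilenberg--Zilber structure on $\DDelta$: that structure provides absolute pushouts of split epimorphisms and says nothing about pullbacks of monomorphisms, nor about whether the \emph{functor} $i$ preserves them. Two separate things must be established. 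First, the intersection of two faces need not exist in $\DDelta$ at all (e.g.\ the two vertices of $\Delta^1$, or any pair of disjoint faces); the paper handles this by passing to the augmented categories and factoring $i_! \cong \iota^*(i_+)_!\iota_*$ via an exact square, so that all pairwise intersections exist in $\DDelta_+$. Second, and more seriously, the preservation of these pullbacks by $i_+$ is a nontrivial property of the functor that must actually be proved: the paper's remark following the lemma exhibits a pullback in $\DDelta$ (of a mono against a split epi) that $i$ does \emph{not} preserve, so ``a direct combinatorial check'' cannot be waved at --- it is the whole proof.

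The paper's argument for this step dualizes to the categories of finite intervals and finite bipointed sets, where pullbacks of face maps become pushouts of epimorphisms, and identifies the comma categories of epimorphisms under a fixed object with posets of equivalence relations (all equivalence relations on the bipointed-set side, those whose classes are subintervals on the interval side). The functor induced by $i_+$ is then a coreflective embedding of posets, hence creates the binary coproducts that compute the relevant pushouts. Something of this nature --- an actual verification that the $\CCube$-intersection of $i(\phi_1)$ and $i(\phi_2)$ is the $i$-image of the $\DDelta_+$-intersection --- is what your proof is missing.
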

\begin{proof}
This is immediate for the right adjoint $i^*$. For the left adjoint $i_!$, we observe
\[ i_! \cong i_! \iota^* \iota_* \cong \iota^* (i_+)_! \iota_*,\] by Lemma \ref{lem:exact-interval-reps} and fully faithfulness of $\iota \colon \DDelta \hookrightarrow \DDelta_+$. Thus, to prove that $i_!$ preserves monomorphisms it suffices to prove that $(i_+)_!$ does.

Monomorphisms in $\sSet_+$ decompose canonically as sequential colimits of pushouts of coproducts of maps of the form $\partial\Delta^n \hookrightarrow \Delta^n$. As a left adjoint, $(i_+)_!$ preserves cell complexes, so it suffices to show that this functor carries these generating maps to monomorphisms. Each boundary inclusion is the joint image of the family of monomorphisms $\delta \colon \Delta^m \rightarrowtail \Delta^n$ indexed by monomorphisms $\delta \colon [m] \rightarrowtail [n]$ in $\DDelta_+$ with codomain $[n]$. Thus, it suffices to prove that $(i_+)_!$ preserves joint images of monomorphisms between representables. In a Grothendieck topos, the joint image of monomorphisms $(m_i \colon A_i \rightarrowtail B)_{i \in I}$ is given by the coequalizer of the following parallel pair of maps in the slice over $B$
\[ \begin{tikzcd} \coprod_{i,j \in I} A_i \times_B A_j \arrow[r, shift right] \arrow[r, shift left] & \coprod_{k \in I}A_k \end{tikzcd} \] and thus a cocontinuous functor between Grothendieck toposes will preserve the joint image of a family of monomorphisms provided it preserves the pullbacks of cospans in the family. In the case of the functor $(i_+)_!$ and the family of monomorphisms $(\delta_i \colon \Delta^{m_i} \rightarrowtail \Delta^n)_i$, we'll demonstrate this by showing that $\DDelta_+$ has pullbacks of face maps and $i_+ \colon \DDelta_+ \to \CCube_+$ preserves them.\footnote{This is the advantage of working with $i_+$ rather than $i$; $\DDelta$ does not have pullbacks of all face maps.}

The functor $i_+ \colon \DDelta_+ \to \CCube_+$ is the opposite of the functor $i_+ \colon \cat{FinInt} \to \cat{Fin}_{\bot,\top}$ from the category of finite intervals $\{ \bot > 1 > \cdots > n > \top\}$, now possibly with $\bot=\top$, to the category of finite bipointed sets, now dropping the requirement that the basepoints are distinct. We must show that $\cat{FinInt}$ has and $i_+ \colon \cat{FinInt} \to \cat{Fin}_{\bot,\top}$ preserves pushouts of epimorphisms, or equivalently for any finite interval $A$ that the comma category ${A}\downarrow\cat{FinInt}$ has and the forgetful functor $i_+ \colon {A}\downarrow\cat{FinInt} \to {i_+A}\downarrow\cat{Fin}_{\bot,\top}$ preserves binary coproducts of epimorphisms. On account of the epimorphism--monomorphism orthogonal factorization systems, it suffices to restrict to the subcategories of epimorphisms $\cat{FinInt}^{\text{epi}}$ and $\cat{Fin}_{\bot,\top}^{\text{epi}}$ and show that binary coproducts exist in ${A}\downarrow\cat{FinInt}^{\text{epi}}$ and are preserved by the forgetful functor between comma categories $i_+ \colon {A}\downarrow\cat{FinInt}^{\text{epi}} \to {i_+A}\downarrow\cat{Fin}_{\bot,\top}^{\text{epi}}$.

For a finite interval $A$, the category ${A}\downarrow\cat{FinInt}^{\text{epi}}$ is the poset whose objects are equivalence relations on the underlying set of $A$ whose equivalence classes are subintervals of $A$ (where the inclusion of a subinterval need not preserve endpoints). The category ${i_+A}\downarrow\cat{Fin}_{\bot,\top}^{\text{epi}}$ is the poset whose objects are equivalence relations on the underlying set of $A$. Using these descriptions, we see that the functor $i_+ \colon {A}\downarrow\cat{FinInt}^{\text{epi}} \to {i_+A}\downarrow\cat{Fin}_{\bot,\top}^{\text{epi}}$ is a coreflective embedding, whose right adjoint sends an equivalence relation on the underlying set of $A$ to the equivalence relation that relates elements $x$ and $y$ of $A$ if only if the closed subinterval spanned by these elements belongs to a single equivalence class. In particular, this forgetful functor creates the coproducts that exist in ${i_+A}\downarrow\cat{Fin}_{\bot,\top}^{\text{epi}}$, which demonstrates what we needed to show.
\end{proof}

\begin{rmk} The closely-related criterion of \cite[3.5]{Sattler:2019ic} is not strong enough to demonstrate that $i_!$ or $(i_+)_!$ preserve monomorphisms since the pullback in $\DDelta$
  \[ \begin{tikzcd} {[1]} \arrow[d, equals] \arrow[r] \arrow[dr, phantom, "\lrcorner" very near start] & {[3]} \arrow[d, two heads] \\ {[1]} \arrow[r, tail] & {[2]} \end{tikzcd}\] of the maps specified by preserving initial and terminal elements is not preserved by the inclusion into the cartesian cube category. Note however that only one of the maps in the original cospan is a monomorphism. The proof just given demonstrates that pullbacks of pairs of monomorphisms in $\DDelta_+$ exist and are preserved by $i_+$.
\end{rmk}

\begin{lem}\label{lem:i!-left-quillen} The functor $i_! \colon \sSet \to \cSet$ defines a left Quillen functor from the classical model structure to the equivariant model structure.
\end{lem}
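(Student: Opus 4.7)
The plan is to verify the two conditions defining a left Quillen functor: preservation of cofibrations and of trivial cofibrations. The first is exactly the content of the preceding lemma. For the second, since $i_!$ is cocontinuous and the trivial cofibrations in the equivariant model structure form a saturated class, it suffices to show that $i_!$ sends the generating trivial cofibrations of the Kan--Quillen model structure---the horn inclusions $\Lambda^n_k \hookrightarrow \Delta^n$---to trivial cofibrations in $\cSet$.

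First I would identify $i_!\Delta^n$. By the adjunction $i_! \dashv i^*$ and the Yoneda lemma,
\[ \cSet(i_!\Delta^n, Y) \cong \sSet(\Delta^n, i^*Y) = (i^*Y)_n = Y_{i[n]} \cong \cSet(I^n, Y), \]
so $i_!\Delta^n \cong I^n$. It then remains to show the monomorphism $i_!\Lambda^n_k \hookrightarrow I^n$ (which is a cofibration by the preceding lemma) is a weak equivalence in the equivariant model structure---since cofibrations are exactly the monomorphisms, this will make it a trivial cofibration. Now $I^n$ is weakly contractible: by Lemma \ref{lem:weakly-contractible-cube-quotients} with $G$ trivial, the vertex inclusion $1 \hookrightarrow I^n$ is a trivial cofibration. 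So by two-out-of-three it suffices to show $i_!\Lambda^n_k$ is also weakly contractible.

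I would prove this by induction on $n$. The base case $n = 1$ is immediate, as $\Lambda^1_k = \Delta^0$ and thus $i_!\Lambda^1_k = I^0$. For $n \geq 2$, observe that $\Lambda^n_k$ consists precisely of those faces of $\Delta^n$ containing the $k$-th vertex $v_k$, so that $v_k \in \Lambda^n_k$ and the inclusion $\{v_k\} \hookrightarrow \Lambda^n_k$ in $\sSet$ admits a finite cell decomposition in which each $m$-simplex of $\Lambda^n_k$ (for $m = 1, 2, \ldots, n-1$) is attached along its horn at $v_k$, i.e., as a pushout of $\Lambda^m_j \hookrightarrow \Delta^m$ for an appropriate index $j$. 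Applying the cocontinuous functor $i_!$, this exhibits the vertex inclusion $I^0 \hookrightarrow i_!\Lambda^n_k$ as an iterated pushout of maps $i_!(\Lambda^m_j \hookrightarrow \Delta^m)$ with $m < n$, each of which is a trivial cofibration in the equivariant model structure by the inductive hypothesis. Hence $I^0 \hookrightarrow i_!\Lambda^n_k$ is itself a trivial cofibration, making $i_!\Lambda^n_k$ weakly contractible, which completes the induction.

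The main obstacle is the combinatorial verification of the cell decomposition of $\{v_k\} \hookrightarrow \Lambda^n_k$: one must specify an order in which to attach the simplices of $\Lambda^n_k$ so that each new $m$-simplex (necessarily containing $v_k$) meets the already-built subcomplex in exactly its horn at $v_k$. This can be arranged by attaching cells in order of increasing dimension---first all edges containing $v_k$, then all 2-simplices containing $v_k$, and so on through $(n-1)$-simplices---since once all faces of an $m$-simplex through $v_k$ except one have been attached, the current subcomplex intersects the new $m$-simplex precisely in its $v_k$-horn. The remaining point is routine: that pushouts and transfinite compositions in $\cSet^\2$ of diagrams of trivial cofibrations yield trivial cofibrations, which holds because the trivial cofibrations in the equivariant premodel structure are the left class of a weak factorization system.
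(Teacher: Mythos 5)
Your proof is correct, but it takes a different combinatorial route from the paper's. The paper (following Sattler) works with \emph{generalized horn inclusions} --- unions of proper subsets of codimension-one faces of $\Delta^n$ --- and runs a double induction on dimension and on the number of faces, peeling off one face at a time: each generalized horn is either a single face map or a pushout of a generalized horn with one fewer face in one lower dimension, so by $2$-of-$3$ everything reduces to the weak contractibility of $i_!\Delta^n \cong I^n$. You instead use the classical cone-point decomposition of the horn itself, exhibiting $\{v_k\} \hookrightarrow \Lambda^n_k$ as a finite cell complex of lower-dimensional horn inclusions and running a single induction on dimension, again reducing to contractibility of the cubes via Lemma \ref{lem:weakly-contractible-cube-quotients} and $2$-of-$3$ (which is available since Theorem \ref{thm:equivariant-model-structure} has already been established). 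Both arguments bottom out in the same place; the paper's auxiliary class of generalized horns makes the induction essentially bookkeeping-free, while yours avoids introducing that class at the cost of a more delicate verification of the attaching maps.

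One factual slip: it is not true that $\Lambda^n_k$ consists precisely of the faces of $\Delta^n$ containing $v_k$ --- for $n \geq 3$ it contains, e.g., every edge of $\Delta^n$, most of which miss $v_k$. The correct statement, which is what your construction actually needs, is that every nondegenerate simplex $S$ of $\Lambda^n_k$ is a face of one containing $v_k$ (namely $S \cup \{k\}$, which still lies in $\Lambda^n_k$), so that attaching only the simplices through $v_k$, each along its $v_k$-horn, does exhaust $\Lambda^n_k$. With that correction your verification that each new $m$-simplex meets the previously built subcomplex in exactly its $v_k$-horn goes through, and the rest of the argument is sound.
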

\begin{proof}
    As in \cite[3.6]{Sattler:2019ic}, it suffices to show that $i_!$ carries generalized horn inclusions---inclusions of the union of a proper subset of codimension-one faces into a simplex---to trivial cofibrations. Such generalized horn inclusions either have the form of a face map $\delta \colon \Delta^{n-1} \to \Delta^n$ or are pushouts of a generalized horn inclusion with one less face and in one smaller dimension. Thus, by the 2-of-3 property and induction over the dimension and the number of faces in the generalized horn inclusion, it suffices to show that $i_!\Delta^n \cong I^n$ is weakly contractible for each $n$, which holds because the cubes are weakly contractible in the equivariant model structure by Corollary \ref{cor:equivariant-interval-endpoints}.
\end{proof}

We prove that the other left adjoint $i^* \colon \sSet \to \cSet$ is left Quillen by first demonstrating a result of independent interest: that Kan fibrations of simplicial sets are also equivariant fibrations, which we define as follows.

\begin{defn} Let $\cE$ be a locally cartesian closed category equipped with a product-preserving functor $\CCube \to \cE$ from the cartesian cube category, which restricts along the inclusion $\SSigma\subset \CCube$ to define a symmetric sequence $\II \colon \SSigma \to \cE$, specifying $k$-cubes $I^k$ in $\cE$ for all $k \geq 1$ together with automorphisms for each $\sigma \in \Sigma_k$. Then an \textbf{equivariant fibration} is a map $f \colon Y \to X$ whose image under the constant diagram functor $\Delta \colon \cE \to \cE^{\SSigma}$ is an unbiased uniform fibration, i.e., a map which enjoys the uniform lifting property as below-left defined relative to the diagram in $\cE$ below-right:
\[
    \begin{tikzcd}[row sep=large, column sep=5em]
  B  \cup_{D } D \times I^k
   \arrow[d, tail, "{\langle [\xi], d \times 1\rangle}"'] \arrow[r, "\alpha \times \sigma^{-1}"] \arrow[dr, phantom, "\lrcorner" very near start] & A  \cup_{C } C\times I^k \arrow[d, tail, "{\langle [\zeta], c \times 1\rangle}"' pos=.25] \arrow[r, "{\langle y,z \rangle}"] & Y \arrow[d, "f"] \\ B \times  I^k \arrow[r, "\alpha \times \sigma^{-1}"'] \arrow[urr, dotted, "{j_{d,\zeta}(y\alpha,z (\alpha \times \sigma^{-1}),x (\alpha \times \sigma^{-1}))}" description, pos=.25] &  A  \times I^k \arrow[r, "x"'] \arrow[ur, dotted, "{j_{c,\zeta}(y,z,x)}" description] & X
  \end{tikzcd}
\qquad \begin{tikzcd} D \arrow[d, tail, "d"'] \arrow[r, "\alpha"] & C \arrow[d, tail, "c"] \\ B \arrow[r, "\alpha"] \arrow[d, "\xi"'] & A \arrow[d, "\zeta"] \\ I^k & I^k \arrow[l, "\sigma"] \rlap{.} \end{tikzcd}\]
\end{defn}

When $\cE$ is a presheaf category, it suffices to consider uniform lifting against monomorphisms with representable codomain.

\begin{prop}\label{prop:kan-is-equivariant} Kan fibrations of simplicial sets are equivariant fibrations.
\end{prop}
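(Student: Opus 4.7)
The plan is to exhibit the equivariant fibration structure on $f$ by reducing, via an analog of Theorem~\ref{thm:uniform-fibrations} in the presheaf topos $\sSet^{\SSigma}$, to the statement that certain \emph{open box inclusions} in $\sSet$ are trivial cofibrations in the Kan--Quillen model structure.

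First, I would observe that the proof of Theorem~\ref{thm:uniform-fibrations} uses only the slice adjunctions $\Sigma \dashv -\times \II$ and $- \hat\times \delta \dashv \widehat{\{\delta,-\}}_{\II}$ together with the presheaf topos structure of the ambient category; these are all available in $\sSet^{\SSigma}$ for the symmetric interval $\II = (I^k)_{k \geq 1}$ with $I^k = (\Delta^1)^k$. Thus an equivariant fibration structure on $f$ corresponds to a uniform trivial fibration structure on $\ev \hat\circ \Delta f$ in $\sSet^{\SSigma}$. By Proposition~\ref{prop:uniform-trivial-fibrations} applied in $\sSet^{\SSigma}$ (an elementary topos), such a structure exists if and only if $\ev \hat\circ \Delta f$ lifts against all monomorphisms, and by Proposition~\ref{prop:relative-+-algebras-generation} it suffices to check lifts against monomorphisms with representable codomain. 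Representables in $\sSet^{\SSigma}$ at level $k$ take the form $\Sigma_k \times \Delta^n$ with the regular $\Sigma_k$-action, and their subobjects are $\Sigma_k \times D$ for $D \rightarrowtail \Delta^n$ in $\sSet$ (by the analog of Lemma~\ref{lem:symmetric-subobjects}). By freeness of the $\Sigma_k$-action on such representables, $\Sigma_k$-equivariant lifts reduce to ordinary lifts in $\sSet$, which via the exponential adjunction $-\times I^k \dashv (-)^{I^k}$ correspond further to lifting problems for $f$ against the open box inclusion $\Delta^n \cup_D D \times I^k \to \Delta^n \times I^k$.

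Second, I would verify that these open box inclusions are trivial cofibrations in Kan--Quillen. Since $I^k$ is weakly contractible, the projection $\pi \colon \Delta^n \times I^k \to \Delta^n$ is a trivial fibration, so for any $\zeta \colon \Delta^n \to I^k$ the graph $[\zeta] \colon \Delta^n \to \Delta^n \times I^k$ is a monomorphic section and hence a trivial cofibration. This graph factors as $\Delta^n \to \Delta^n \cup_D D \times I^k \to \Delta^n \times I^k$, where the first leg is the pushout of the trivial cofibration $D \to D \times I^k$ along $d \colon D \rightarrowtail \Delta^n$ and so is itself a trivial cofibration. By 2-of-3, the remaining leg $\Delta^n \cup_D D \times I^k \to \Delta^n \times I^k$ is a weak equivalence and, being a monomorphism, a trivial cofibration. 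Since $f$ is a Kan fibration, it lifts against these maps.

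The main subtlety is in the first step: carefully tracking that the correspondence between $\Sigma_k$-equivariant lifts in $\sSet^{\SSigma_k}$ and ordinary lifts in $\sSet$ is compatible with the uniformity in pullback squares required of the $\mathscr{TF}$-algebra structure on $\ev \hat\circ \Delta f$, and that the level-wise structures assemble coherently across $k \in \SSigma$. Once this is in place, the levelwise existence of lifts established in Step~2 yields a uniform trivial fibration structure on $\ev \hat\circ \Delta f$ in $\sSet^{\SSigma}$, which the Leibniz adjunction of Theorem~\ref{thm:uniform-fibrations} repackages as the desired equivariant fibration structure on $f$.
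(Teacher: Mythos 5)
Your proposal is correct in substance but takes a genuinely different route from the paper. The paper starts from the fact that a Kan fibration admits a (non-equivariant) unbiased uniform fibration structure $i_{c,\zeta}$ and then explicitly manufactures an equivariant one $j_{c,\zeta}$ by composing two applications of $i$ along the min-connection $\gamma_\wedge \colon I^k\times I\to I^k$ available on triangulated cubes; the bulk of that proof is a direct computation verifying the equivariance equation, using that $\gamma_\wedge$ commutes with coordinate permutations. You instead exploit the fact that admitting an equivariant fibration structure is a mere lifting \emph{property}: by the Leibniz adjunction of Theorem \ref{thm:uniform-fibrations} (which transfers verbatim to $\sSet^{\SSigma}$), it is equivalent to $\ev\hato\Delta f$ being a trivial fibration, and by Proposition \ref{prop:uniform-trivial-fibrations} the uniform structure on a trivial fibration comes for free from the partial map classifier once plain lifts against monomorphisms exist. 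This collapses the problem to showing that each open box $\Delta^n\cup_D D\times I^k\to\Delta^n\times I^k$ is a trivial cofibration in Kan--Quillen, which your 2-of-3 argument handles. Your route is shorter and more conceptual; what it gives up is the explicit formula for the equivariant filling operation (which matters for the type-theoretic reading in the appendix), and it is more thoroughly classical, relying on the identification of trivial cofibrations with monic weak equivalences and on the cell-complex presentation of arbitrary monomorphisms of $\sSet^{\SSigma}$ by subobjects of representables --- acceptable here, since \S\ref{sec:classical} is avowedly non-constructive.

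Two corrections. First, $\pi\colon\Delta^n\times I^k\to\Delta^n$ is \emph{not} a trivial fibration: $(\Delta^1)^k$ is not even a Kan complex (the order-reversing map $\partial\Delta^1\to\Delta^1$ does not extend over $\Delta^1$). It is a weak equivalence, which is all your 2-of-3 argument needs to conclude that the graph $[\zeta]$ is a trivial cofibration. Second, the ``main subtlety'' you flag at the end is not actually there: once you have routed through Proposition \ref{prop:uniform-trivial-fibrations}, no uniformity or cross-level coherence needs to be tracked --- plain existence of lifts against all monomorphisms of $\sSet^{\SSigma}$ (checked levelwise, using freeness of the representables) already produces the relative $+$-algebra structure. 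The one point genuinely requiring care is the passage from ``lifts against subobjects of representables'' to ``lifts against all monomorphisms,'' which is not literally Proposition \ref{prop:relative-+-algebras-generation} (that statement concerns the structured categories) but the standard transfinite cell-attachment presentation of monomorphisms in a presheaf topos.
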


\begin{proof}
 Since the classical model structure on simplicial sets is cartesian closed,
   any Kan fibration $f \colon Y \fto X$ admits the structure of a biased uniform fibration, as in Definition \ref{defn:interval-fibrations}\ref{itm:biased-fibrations} with respect to the interval $\Delta^1$; see \cite[\S 9]{GambinoSattler:2017fc}. In fact, when $f \colon Y \fto X$ is a Kan fibration, it also admits the structure of an unbiased uniform fibration by \cite[4.22--23]{CavalloSattler:2022re}.

   Unpacking, this means that a Kan fibration
  $f \colon Y \fto X$ can be equipped with a uniform lifting function $i_{c, \zeta}$ as below:
\[
\begin{tikzcd}[row sep=4em, column sep=5em] B \underset{D}{\cup} D \times I  \arrow[r, "{\alpha \cup_\alpha \alpha \times I}"] \arrow[d, tail, "{\langle [\zeta\alpha], d \times 1 \rangle}"'] \arrow[dr, phantom, "\lrcorner" pos=.01] &A   \underset{C }{\cup}C \times I \arrow[d, tail, "{\langle [\zeta], c \times 1 \rangle}"' pos=.25] \arrow[r, "{\langle y,z \rangle}"] & Y \arrow[d, two heads, "f"] \\ B \times I \arrow[r, "\alpha \times I"'] \arrow[urr, dashed, "{i_{d,\zeta\alpha }(y\alpha,z (\alpha\times I),x(\alpha\times I))}" description, pos=.25] & A \times I \arrow[r, "x"'] \arrow[ur, dashed, "{i_{c,\zeta}(y,z,x)}" description] & X \rlap{.}
\end{tikzcd}
\]

Our task is to equip a uniform fibration $(f \colon Y \twoheadrightarrow X, i_{c,e})$ with the structure of an equivariant fibration.  To do so, we make use of a map
\[ \gamma_\wedge \colon I^k \times I \to I^k \qquad \gamma_\wedge(x_1,\ldots, x_k,e) \coloneq (x_1 \wedge e, \ldots, x_k \wedge e),\] that restricts along $\{0\} \rightarrowtail I$ to the constant map at $\vec{0} \in I^k$ and restricts along $\{1 \} \rightarrowtail I$ to the identity. This ``min connection'' exists because we are working with triangulated cubes in the category of simplicial sets, rather than with cartesian cubes.\footnote{We could equally use the ``max connection'' to obtain a map that restricts along $\{0\} \rightarrowtail I$ to the identity  and restricts along $\{1 \} \rightarrowtail I$ to the constant map at $\vec{1} \in I^k$.}  For any $\zeta \colon A \to I^k$, the composite
\[\begin{tikzcd} \gamma_\wedge\zeta \coloneq A \times I \arrow[r, "\zeta \times I"] & I^k \times I \arrow[r, "\gamma_\wedge"] & I^k
\end{tikzcd}
\]
defines a homotopy from the constant map $\vec{0} \colon A \to I^k$ to $\zeta$. We frequently pair this contracting homotopy with the map that records the coordinates from $A$, which we abbreviate as:
\[\begin{tikzcd}[sep=huge] \vec{\gamma_\wedge}\zeta \coloneq A \times I \arrow[r, "{(\pi, \gamma_\wedge\zeta)}"] & A \times I^k \rlap{.} \end{tikzcd}
\]

The uniform fibration structure of $f$ provides a solution to the lifting problem
\[
\begin{tikzcd}[sep=large]
 { A \times \{1\}\underset{C \times \{1\}}{\cup} C \times I} \arrow[r, "{A \cup \vec{\gamma_\wedge}\zeta c}"] \arrow[d, tail, "{c \hat{\times}\partial_1}"'] &  A\underset{C}{\cup} C \times I^k  \arrow[d, tail, "{\langle [\zeta], c \times I^k\rangle}" pos=.6] \arrow[r, "{\langle y,z \rangle}"] & Y \arrow[d, "f", two heads] \\ A \times I \arrow[urr, dashed, "{i_{c,1}(z\vec{\gamma_\wedge}\zeta c, y, x \vec{\gamma_\wedge}\zeta)}" description, pos=.3]    \arrow[r, "{\vec{\gamma_\wedge}\zeta}"'] &  A \times I^k \arrow[r, "x"']& X \rlap{.}
\end{tikzcd}
\]
This gives rise to a new lifting problem
\[
\begin{tikzcd}[row sep=huge, column sep=2em]
A \underset{C}{\cup} C \times I^k \arrow[d, tail, "{\langle [\zeta], c \times I^k\rangle}"'] \arrow[r] \arrow[dr, phantom, "\lrcorner" very near start] & [-1.75em] \left( C \times I^k \times I\underset{C \times I}{\cup} A \times I  \right)
 \bigcup\limits_{{C \times I^k  \times \{0\}\underset{C \times \{0\}}{\cup}  A \times \{0\}}}  A \times I^k \times \{0\}  \arrow[r, "A \times ! \times I"] \arrow[d, hook, "{\langle [\zeta], c \times I^k \rangle \hat{\times} \partial_0}"']  & A \times I \arrow[r, "{i_{c,1}(\cdots)}"] \arrow[d, "{\vec{\gamma_\wedge}\zeta}" pos=.7] & Y \arrow[d, "f", two heads] \\ A \times I^k \times \{1\} \arrow[r, "{A \times I^k \times \partial_1}"'] & A \times I^k \times I \arrow[r, "{A \times \gamma_\wedge}"']  \arrow[urr, dashed, "{i_{{\langle c \times I^k, [\zeta] \rangle},0}(i_{c,1}(\cdots) !, x \gamma_\wedge)}" description, pos=.3] & A \times I^k \arrow[r, "x"'] & X \rlap{,}
\end{tikzcd}
\]
which restricts to the original lifting problem. Thus, we define $j_{c,\zeta}(y,z,x)$ to be the composite
\[ j_{c,\zeta}(y,z,x) \coloneq i_{{\langle c \times I^k, [\zeta] \rangle},0}(i_{c,1}(z\vec{\gamma_\wedge}\zeta c, y, x \vec{\gamma_\wedge}\zeta) !, x \gamma_\wedge)\cdot (A \times I^k \times \partial_1).\]

It remains to verify that
\[ j_{c,\zeta}(y,z,x) \cdot (\alpha \times\sigma^{-1}) = j_{d,\sigma\zeta\alpha}( y\alpha,  z(\alpha \times \sigma^{-1}), x(\alpha \times \sigma^{-1})).\]
On account of the commutative diagrams
\[
\begin{tikzcd} B \times I \arrow[r, "\sigma\zeta\alpha \times I"] \arrow[d, "\alpha \times I"'] & I^k \times I \arrow[r, "\gamma_\wedge"] \arrow[d, "\sigma^{-1} \times I" description] & I^k \arrow[d, "\sigma^{-1}"] & B \times I \arrow[d, "\alpha \times I"'] \arrow[r, "\vec{\gamma_\wedge}\sigma\zeta\alpha"] & B \times I^k \arrow[d, "\alpha \times \sigma^{-1}"] \\ A \times I \arrow[r, "\zeta \times I"'] & I^k \times I \arrow[r, "\gamma_\wedge"'] & I^k & A \times I \arrow[r, "\vec{\gamma_\wedge}\zeta"'] & A \times I^k \rlap{,}
\end{tikzcd}
\]
we see that the outer rectangles in the following lifting problems coincide:
\[
\begin{tikzcd}[row sep=huge, column sep=5.25em]
 {B \times \{1\} \!\!\! \underset{D \times \{1\}}{\cup}\!\!\!D \times I } \arrow[r, "{B \cup \vec{\gamma_\wedge}\sigma\zeta \alpha d }"] \arrow[d, tail, "{d \hat{\times}\partial_1}"'] & B \underset{D}{\cup} D \times I^k \arrow[r, " \alpha \cup_\alpha \alpha \times \sigma^{-1} "] \arrow[d, tail, "{\langle [\sigma\zeta\alpha], d \times I^k\rangle}"' pos=.2] & A\underset{C}{\cup} C \times I^k  \arrow[d, tail, "{\langle [\zeta], c \times I^k\rangle}"] \arrow[r, "{\langle y, z \rangle}"] & Y \arrow[d, "f", two heads] \\ B \times I \arrow[urrr, dashed, "\qquad\quad\qquad{i_{d,1}(y\alpha, z(\alpha \times \sigma^{-1})\vec{\gamma_\wedge}\sigma\zeta \alpha d , x(\alpha \times \sigma^{-1}) \vec{\gamma_\wedge}\sigma\zeta\alpha)}" description, pos=.4]    \arrow[r, "{\vec{\gamma_\wedge}\sigma\zeta\alpha}"'] & B \times I^k \arrow[r, "\alpha \times \sigma^{-1}"']  & A \times I^k \arrow[r, "x"']  & X
\end{tikzcd}
\]
\[
\begin{tikzcd}[row sep=huge, column sep=large]
{B \times \{1\}\underset{D \times \{1\}}{\cup} D \times I } \arrow[dr, phantom, "\lrcorner" pos=.01] \arrow[d, tail, "{d \hat{\times}\partial_1}"'] \arrow[r, "\alpha \cup_a (\alpha \times I)"]  &  {A \times \{1\} \underset{C \times \{1\}}{\cup} C \times I} \arrow[r, "{A\cup\vec{\gamma_\wedge}\zeta c }"] \arrow[d, tail, "{c \hat{\times}\partial_1}"'] &  A \underset{C}{\cup} C \times I^k \arrow[d, tail, "{\langle [\zeta], c \times I^k\rangle}" pos=.6] \arrow[r, "{\langle y, z \rangle}"] & Y \arrow[d, "f", two heads] \\ B \times I \arrow[r, "\alpha \times I"'] &  A \times I \arrow[urr, dashed, "{i_{c,1}(y,z\vec{\gamma_\wedge}\zeta c,  x \vec{\gamma_\wedge}\zeta)}" description, pos=.3]    \arrow[r, "{\vec{\gamma_\wedge}\zeta}"'] &  A \times I^k \arrow[r, "x"']& X \rlap{.}
\end{tikzcd}
\]
By uniformity of $(f,i)$ in the left-hand pullback square of the second of these diagrams,
\begin{align*}
i_{c,1}(y, z\vec{\gamma_\wedge}\zeta c,  x \vec{\gamma_\wedge}\zeta) \cdot (\alpha \times I) &= i_{d,1}(y\alpha, z\vec{\gamma_\wedge}\zeta c(\alpha\times I) , x\vec{\gamma_\wedge}\zeta(\alpha\times I)) \\
&= i_{d,1}(y\alpha, z(\alpha \times \sigma^{-1})\vec{\gamma_\wedge}\sigma\zeta \alpha d,  x(\alpha \times \sigma^{-1}) \vec{\gamma_\wedge}\sigma\zeta\alpha).
\end{align*}
By construction, the chosen lift $j_{d,\sigma\zeta\alpha}(y\alpha,  z(\alpha \times \sigma^{-1}),  x(\alpha \times \sigma^{-1}))$
is the diagonal composite
\[
\begin{tikzcd}[row sep=huge, column sep=2em]
B\underset{D}{\cup} D \times I^k  \arrow[d, tail, "{\langle [\sigma\zeta\alpha], d \times I^k \rangle}" description] \arrow[r] \arrow[dr, phantom, "\lrcorner" very near start] &[-1.5em]
\left( D \times I^k \times I\underset{D \times I}{\cup} B \times I  \right) \bigcup\limits_{{D \times I^k  \times \{0\}\underset{D \times \{0\}}{\cup}  B \times \{0\}}} B \times I^k \times \{0\}\arrow[r, "B \times ! \times I"] \arrow[d, tail, "{\langle [\sigma\zeta\alpha], d \times I^k \rangle \hat{\times} \partial_0}"']  & B \times I \arrow[r, "{i_{d,1}(\cdots)}"] \arrow[d, "{\vec{\gamma_\wedge}\sigma\zeta\alpha}" pos=.7] & Y \arrow[d, "f", two heads] \\ B \times I^k \times \{1\} \arrow[r, "{B \times I^k \times \partial_1}"'] & B \times I^k \times I \arrow[r,  "{B \times \gamma_\wedge}"']  \arrow[urr, dashed, "{i_{{\langle d \times I^k, [\sigma\zeta\alpha] \rangle},0}(i_{d,1}(\cdots) !, x (\alpha \times \sigma^{-1}) \gamma_\wedge)}" description, pos=.3] & B \times I^k \arrow[r, "x(\alpha \times \sigma^{-1})"'] & X \rlap{,}
\end{tikzcd}
\]
while $j_{c,\zeta}(y,z,x) \cdot (\alpha \times\sigma^{-1})$ is the restriction of the diagonal composite
\[
\begin{tikzcd}[row sep=huge, column sep=2em]
A \underset{C}{\cup} C \times I^k \arrow[d, tail, "{\langle [\zeta], c \times I^k\rangle}"'] \arrow[r] \arrow[dr, phantom, "\lrcorner" very near start] &[-1.5em]
\left( C \times I^k \times I\underset{C \times I}{\cup} A \times I  \right)\bigcup\limits_{{C \times I^k  \times \{0\}\underset{C \times \{0\}}{\cup}  A \times \{0\}}} A \times I^k \times \{0\}  \arrow[r, "A \times ! \times I"] \arrow[d, tail, "{\langle [\zeta], c \times I^k \rangle \hat{\times} \partial_0}"']  & A \times I \arrow[r, "{i_{c,1}(\cdots)}"] \arrow[d, "{\vec{\gamma_\wedge}\zeta}" pos=.7] & Y \arrow[d, "f", two heads] \\ A \times I^k \times \{1\} \arrow[r, "{A \times I^k \times \partial_1}"'] & A \times I^k \times I \arrow[r, "{A \times \gamma_\wedge}"']  \arrow[urr, dashed, "{i_{{\langle c \times I^k, [\zeta] \rangle},0}(i_{c,1}(\cdots) !, x \gamma_\wedge)}" description, pos=.3] & A \times I^k \arrow[r, "x"'] &X
\end{tikzcd}
\]
along $\alpha \times \sigma^{-1} \colon B \times I^k \to A \times I^k$. Observe that we can perform these two restrictions needed to compute $j_{c, \zeta}(y,z,x) \cdot (\alpha \times\sigma^{-1})$ in either order, on account of the commutative cube
\[
\begin{tikzcd}
B\underset{D}{\cup} D \times I^k  \arrow[dd, tail, "{\langle [\sigma\zeta\alpha], d \times I^k\rangle}"'] \arrow[dr, hook] \arrow[rr, "\alpha\cup_\alpha (\alpha \times \sigma^{-1}) "] & &  A\underset{C}{\cup} (C\times I^k)  \arrow[dd, tail, "{\langle [\zeta], c \times I^k \rangle}"' pos=.3] \arrow[dr, hook]  \\ & \bullet \arrow[ddrr,phantom, "\lrcorner" pos=.05] \arrow[rr, crossing over]  && \bullet  \arrow[dd, tail, "{\langle [\zeta] , c \times I^k\rangle \hat{\times} \partial_0}"]  \\ B \times I^k \times \{1\} \arrow[rr, "\alpha \times \sigma^{-1}" pos =.7] \arrow[dr, hook, "B \times I^k \times \partial_1"'] & & A \times I^k \times \{1\}\arrow[dr, hook, "A \times I^k \times \partial_1"] \\ & B \times I^k \times I \arrow[rr, "\alpha \times \sigma^{-1} \times I"']  \arrow[from=uu, crossing over, tail, "{\langle [\sigma\zeta\alpha] , d \times I^k\rangle \hat{\times} \partial_0}" description, pos=.7] & & A \times I^k \times I \rlap{.}
\end{tikzcd}
\]
Thus:
\begin{align*}
j&{}_{c,\zeta}(y,z, x) \cdot (\alpha \times\sigma^{-1})
\\ & = i_{{\langle c \times I^k, [\zeta] \rangle},0}(i_{c,1}(y, z\vec{\gamma_\wedge}\zeta c,  x \vec{\gamma_\wedge}\zeta) !, x \gamma_\wedge) \cdot (A \times I^k \times \partial_1) \cdot (\alpha \times \sigma^{-1})
\\& = i_{{\langle c \times I^k, [\zeta] \rangle},0}(i_{c,1}(y, z\vec{\gamma_\wedge}\zeta c, x \vec{\gamma_\wedge}\zeta) !, x \gamma_\wedge) \cdot  (\alpha \times \sigma^{-1} \times I) \cdot (B \times I^k \times \partial_1)
\\ \intertext{
Note further that the front face of this cube is a pullback, since it arises as the pushout product of the pullback in the back face with $\partial_1 \colon \{1\} \rightarrowtail I$. By uniformity of $(f,i)$ in this pullback square:}
& = i_{{\langle d \times I^k, [\sigma\zeta\alpha] \rangle},0}(i_{c,1}(y, z\vec{\gamma_\wedge}\zeta c, x \vec{\gamma_\wedge}\zeta) (\alpha \times I) !, x \gamma_\wedge(\alpha\times \sigma^{-1}))  \cdot (B \times I^k \times \partial_1)
\\ \intertext{By the uniformity calculation above, the domains of these lifting problems coincide. Thus:}
&= {i_{{\langle d \times I^k, [\sigma\zeta\alpha] \rangle},0}(i_{d,1}(y\alpha,  z(\alpha \times \sigma^{-1})\vec{\gamma_\wedge}\sigma\zeta \alpha d, x(\alpha \times \sigma^{-1}) \vec{\gamma_\wedge}\sigma\zeta\alpha) !, x (\alpha \times \sigma^{-1}) \gamma_\wedge)} \cdot (B \times I^k \times \partial_1)
\\&= j_{d,\sigma\zeta\alpha}( y\alpha, z(\alpha \times \sigma^{-1}), x(\alpha \times \sigma^{-1})) \rlap{,}
\end{align*}
which is the required equivariant uniformity condition.
\end{proof}

\begin{lem}\label{lem:i*-left-quillen}
  The functor $i^* \colon \cSet \to \sSet$ defines a left Quillen functor from the equivariant model structure to the classical model structure.
\end{lem}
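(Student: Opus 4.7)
The plan is to show that the right adjoint $i_* \colon \sSet \to \cSet$ preserves fibrations and trivial fibrations, which is equivalent to $i^*$ being left Quillen. The key ingredient is Proposition \ref{prop:kan-is-equivariant}, which provides the equivariant lifting structure on Kan fibrations that we need to transfer across the adjunction $i^* \dashv i_*$.

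First I will observe that $i^*$ preserves monomorphisms (it is simply restriction), so the cofibrations of the equivariant model structure are sent to cofibrations of $\sSet$. By the adjunction, this immediately gives that $i_*$ preserves trivial fibrations: any lifting problem of $i_* p$ against a monomorphism $A \hookrightarrow B$ in $\cSet$ transposes to a lifting problem of the trivial Kan fibration $p$ against the monomorphism $i^* A \hookrightarrow i^* B$ in $\sSet$, and such lifts exist by definition of trivial Kan fibration.

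Next I will show $i_*$ preserves fibrations. By the small object argument, a map is an equivariant fibration iff it has the right lifting property against the generating open boxes of Construction \ref{con:equivariant-generating-tcof}, which have the form $I^n \cup_C C \times I^k \hookrightarrow I^n \times I^k$. Given a Kan fibration $p \colon Y \twoheadrightarrow X$ and such a lifting problem against $i_* p$, I will transpose across $i^* \dashv i_*$. Since $i^*$ preserves finite limits and colimits, and $i^*(I^m) \cong (\Delta^1)^m$ by Lemma \ref{lem:restriction-is-triangulation}, the transposed problem becomes a lifting problem of $p$ against a map of the shape $(\Delta^1)^n \cup_{i^* C} i^* C \times (\Delta^1)^k \hookrightarrow (\Delta^1)^n \times (\Delta^1)^k$. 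This is precisely a generator of the equivariant structure on $\sSet$ associated to the product-preserving functor $\CCube \to \sSet$ given by triangulation, and so by Proposition \ref{prop:kan-is-equivariant} a solution to this lifting problem exists. Transposing back yields a solution to the original problem.

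The main subtlety is confirming that the generating equivariant trivial cofibrations of $\cSet$ are sent under $i^*$ into the class of generators covered by Proposition \ref{prop:kan-is-equivariant}. This reduces to checking that $i^*$ commutes with all the constructions involved (pushouts, products with $I^k$, graphs $[\zeta]$, pullback reindexing), which is routine since $i^*$ is a double adjoint and hence preserves both limits and colimits, and takes the cubes $I^k \in \cSet$ to the simplicial cubes $(\Delta^1)^k$ compatibly with the $\Sigma_k$-actions. With these two preservation statements in hand, $i^*$ qualifies as left Quillen by the usual adjoint characterization.
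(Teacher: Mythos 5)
Your proposal is correct and follows essentially the same route as the paper: show that $i_*$ carries Kan fibrations to equivariant fibrations by transposing lifting problems across $i^* \dashv i_*$, observing that $i^*$ sends the generating open boxes of Construction \ref{con:equivariant-generating-tcof} to the triangulated squares handled by Proposition \ref{prop:kan-is-equivariant}. One caution: since the equivariant fibrations are defined by a \emph{uniform} (functorial) choice of lifts against the generating category rather than by a mere right lifting property against its objects, you should phrase the transposition at the level of the whole category of generators --- which your closing paragraph about $i^*$ preserving the pullback squares between open boxes essentially does, and Proposition \ref{prop:kan-is-equivariant} indeed supplies lifts uniform in exactly those squares.
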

\begin{proof}
  To prove that triangulation is left Quillen, it suffices to show that the right adjoint $i_*$ carries Kan fibrations to equivariant fibrations of cubical sets, for which it suffices to show that Kan fibrations lift against the image of the generating category of Construction \ref{con:equivariant-generating-tcof} under the functor $i^*$. After triangulation, the objects and morphisms in this generating category have the form
  \[
    \begin{tikzcd}
  (\Delta^1)^m  \cup_{D } D \times (\Delta^1)^k
   \arrow[d, tail, "{\langle [\xi], d \times 1\rangle}"'] \arrow[r, "\alpha \times \sigma^{-1}"] \arrow[dr, phantom, "\lrcorner" very near start] & (\Delta^1)^n  \cup_{C } C\times (\Delta^1)^k \arrow[d, tail, "{\langle [\zeta], c \times 1\rangle}"]  \\ (\Delta^1)^m \times  (\Delta^1)^k \arrow[r, "\alpha \times \sigma^{-1}"'] &  (\Delta^1)^n  \times (\Delta^1)^k
  \end{tikzcd}
\] where $C$ and $D$ are triangulations of cubical subsets of the $n$-cube and $m$-cube respectively. Thus, the equivariance of Kan fibrations established in Proposition \ref{prop:kan-is-equivariant} defines uniform lifts against these squares.
\end{proof}

To prove that the left Quillen functors of Lemmas \ref{lem:i!-left-quillen} and \ref{lem:i*-left-quillen} define Quillen equivalences, we appeal to the general theory of Eilenberg--Zilber categories, which we now review.

\subsection{Eilenberg--Zilber categories}\label{ssec:EZ}

The categories $\DDelta$ and $\CCube$ are both \emph{Reedy categories}---the former in Dan Kan's original ``strict'' sense and the latter in the ``generalized'' sense of \cite{BergerMoerdijk:2008rc}---that are moreover \emph{Eilenberg--Zilber categories}, defined below.  These properties enable inductive arguments concerning the monomorphisms in the presheaf categories $\sSet$ and $\cSet$ respectively.

A Reedy category $\cA$ comes with classes of ``degree-decreasing'' and ``degree-increasing maps,'' defined relative to a degree function $\deg \colon \ob\cA \to \NN$. In the case of Eilenberg--Zilber categories, defined below, the degree-decreasing maps are the split epimorphisms, while the degree-increasing maps are the monomorphisms.

\begin{defn}[{\cite[6.7]{BergerMoerdijk:2008rc}}]\label{defn:eilenberg-zilber-category}
An \textbf{Eilenberg--Zilber} category is a small category $\cA$ equipped with a degree function $\deg \colon \ob\cA \to \NN$  so that
\begin{enumerate}
\item Isomorphisms preserve the degree, whereas non-invertible monomorphisms or split epimorphisms strictly raise and lower the degree, respectively, when moving from their domain to their codomain.
\item Every $f \in \mor{\cA}$ may be factored as a split epimorphism followed by a monomorphism.
\item Any pair of split epimorphisms with common domain has an \textbf{absolute pushout}: a pushout in $\cA$ that is preserved by the Yoneda embedding $\yo \colon \cA \hookrightarrow \Set^{\cA^\op}$.
\end{enumerate}
\end{defn}

Berger and Moerdijk observe that $\DDelta$ is an Eilenberg--Zilber category \cite[6.8]{BergerMoerdijk:2008rc}. By \cite[Theorem 8.12(1)]{Campion:2023ez}, the cartesian cube category is as well (as could also be checked by directly verifying that each pair of epimorphisms in $\CCube$ with common domain has an absolute pushout).

We review a few results from general Reedy category theory \cite{RiehlVerity:2014rc,Riehl:generalized-reedy} and then explain what is special about Eilenberg--Zilber categories. Let $\cA$ be an Eilenberg--Zilber category and write $\cA\in \Set^{\cA^\op \times \cA}$ for the hom bifunctor of arrows in $\cA$. Let
\[ \sk_n\cA \hookrightarrow\cA \in \Set^{\cA^\op \times \cA}\] denote the subfunctor of arrows of degree at most $n$, by which we mean arrows that factor through an object of degree $n$.

\begin{defn}[boundaries of representable functors]
For $a \in \cA$, write $\cA_a \in \Set^{\cA}$ and $\cA^a \in \Set^{\cA^\op}$ for the co- and contravariant representable functors.  If $a\in \cA$ has degree $n$, write
\begin{align*}
\lbound{\cA}_a &\coloneq \sk_{n-1}\cA_a \qquad \in \Set^\cA \qquad \mathrm{and} \\ \rbound{\cA}^a& \coloneq \sk_{n-1}\cA^a \qquad \in\Set^{\cA^\op}.
\end{align*}
\end{defn}

 The external (pointwise) product defines a bifunctor $\Set^\cA \times \Set^{\cA^\op} \xrightarrow{-\utimes-} \Set^{\cA^\op\times\cA}$. For any $a\in\cA$,  the exterior Leibniz product
\begin{equation}\label{eq:leibniz-inclusion} \begin{tikzcd}  \cA_a\utimes\rbound{\cA}^a  \cup \lbound{\cA}_a\utimes\cA^a \arrow[rrrr, hook, " {(\lbound{\cA}_a \hookrightarrow\cA_a) \leib\utimes (\rbound{\cA}^a\hookrightarrow\cA^a)} "] & & & & \cA_a \utimes \cA^a
\end{tikzcd}
\end{equation}
defines the subfunctor of pairs of morphisms $h\cdot g$ with $\dom(h) = \cod(g) = a$ in which at least one of the morphisms $g$ and $h$ has degree less than the degree of $a$. There is a natural ``composition'' map whose domain is the external product of the contravariant and covariant representables \begin{equation}\label{eq:composition-map} \cA_a\utimes \cA^a \xrightarrow{\circ} \cA.\end{equation} Its image is the  subfunctor of arrows in $\cA$ that factor through $a$, but \eqref{eq:composition-map} is not in general a monomorphism: e.g., this fails to be the case whenever $a$ has non-identity automorphisms.

By Definition \ref{defn:eilenberg-zilber-category}(i), the groupoid core $\cG \subset \cA$ of a Reedy category decomposes as a coproduct $\cG = \coprod_{n \in \NN} \cG(n)$, where $\cG(n)$ is the subgroupoid of isomorphisms between objects of degree $n$. Any isomorphism in $\cA$ restricts in the obvious way to a natural isomorphism between the boundaries of the corresponding representable functors, which thus assemble into profunctors
\[ \lbound{\cA}_n\hookrightarrow \cA_n \in \Set^{\cG(n)^\op \times \cA} \qquad \mathrm{and} \qquad \rbound{\cA}^n \hookrightarrow\cA^n \in \Set^{\cA^\op\times \cG(n)} .\]
When we compose these profunctors over $\cG(n)$, we obtain a profunctor from $\cA$ to $\cA$ which is the ``generalized cell'' attached to form $\sk_n\cA$ from $\sk_{n-1}\cA$ \cite[\S4]{Riehl:generalized-reedy}:

\begin{thm}\label{thm:cellular-decomposition} The inclusion $\emptyset\hookrightarrow \cA $ has a canonical presentation as a generalized cell complex:
\[
\begin{tikzcd}[column sep=small]
& &  \lbound{\cA}_n \utimes_{\cG(n)} \cA^n \cup \cA_n \utimes_{\cG(n)} \rbound{\cA}^n \arrow[dr, phantom, "\ulcorner" very near end] \arrow[d, "\circ"'] \arrow[r, hook] & \cA_n \utimes_{\cG(n)} \cA^n \arrow[d, "\circ"] \\
\emptyset \arrow[r, hook] & \sk_0\cA \arrow[r, dotted] & \sk_{n-1}\cA \arrow[r, hook] & \sk_n\cA  \arrow[r, dotted] & \colim_n \sk_n\cA \cong \cA \rlap{,}
\end{tikzcd}
\]
 i.e., a composite of pushouts of cells constructed as coends of exterior Leibniz products \[  (\lbound{\cA}_n \hookrightarrow\cA_n)\leib\utimes_{\cG(n)}(\rbound{\cA}^n\hookrightarrow\cA^n) \coloneq \int^{a \in \cG(n)} (\lbound{\cA}_a \hookrightarrow\cA_a) \leib\utimes (\rbound{\cA}^a\hookrightarrow\cA^a),\] attached at stage $n$. \qed
\end{thm}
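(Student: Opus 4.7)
The plan is to verify the three ingredients of a cellular presentation: (i) $\cA = \colim_n \sk_n \cA$ as subfunctors of the hom bifunctor; (ii) the square at stage $n$ commutes and its boundary maps into $\sk_{n-1}\cA$; (iii) each such square is a pushout. First, every morphism $f \colon a\to b$ of $\cA$ factors through its codomain (or domain), and so lies in $\sk_{\max(\deg a,\deg b)}\cA$, so the inclusions $\sk_n\cA \hookrightarrow \cA$ are jointly surjective. The base case $\sk_0\cA$ is the image of $\cA_0\utimes_{\cG(0)} \cA^0$ under composition: any morphism factoring through a degree-zero object $a$ is of the form $h\cdot g$ for $g \in \cA^a$, $h \in \cA_a$, and two such presentations $(h,g)$ and $(h',g')$ with $h\cdot g = h'\cdot g'$ are identified under the $\cG(0)$-action by the Eilenberg--Zilber epi-mono factorization (see below).

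For the inductive step at stage $n$, the boundary of the cell consists of pairs $(h,g)$ with $\cod(g)=\dom(h)=a$ of degree $n$ in which at least one of $g,h$ further factors through an object of degree strictly less than $n$; the image under composition therefore lands in $\sk_{n-1}\cA$, so the square commutes. To show that $\sk_n\cA$ is the described pushout, I would argue as follows. The composition map $\circ \colon \cA_n \utimes_{\cG(n)} \cA^n \to \cA$ descends through the coend, and its image together with $\sk_{n-1}\cA$ exhausts $\sk_n\cA$: any $f \in \sk_n\cA \setminus \sk_{n-1}\cA$ factors, by Definition \ref{defn:eilenberg-zilber-category}(ii), as a split epi $e$ followed by a monomorphism $m$; the middle object must have degree exactly $n$, for otherwise $f$ would already lie in $\sk_{n-1}\cA$, and neither $e$ nor $m$ can factor through a lower-degree object without forcing $f$ into $\sk_{n-1}\cA$ via clause (i) of Definition \ref{defn:eilenberg-zilber-category}.

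The substantive point is that the pushout is the one asserted. Concretely, given two non-boundary pairs $(h,g)$ and $(h',g')$ with $h\cdot g = h'\cdot g' = f$ and middle objects $a, a'$ of degree $n$, I must produce an isomorphism $\sigma\colon a \xrightarrow{\sim} a'$ identifying them under the $\cG(n)$-action defining the coend. This is precisely uniqueness of epi-mono factorization up to isomorphism on the middle object in an Eilenberg--Zilber category: since $g,g'$ are split epis and $h,h'$ are monos, both $(h,g)$ and $(h',g')$ present epi-mono factorizations of $f$, and so the standard construction of $\sigma$ via diagonal filling in
\[
  \begin{tikzcd}
    \cdot \ar[r, two heads, "g"] \ar[d, two heads, "g'"'] & a \ar[d, "h"] \ar[dl, dashed, "\sigma"'] \\
    a' \ar[r, "h'"'] & \cdot
  \end{tikzcd}
\]
(possible because $g'$ is a split epi and $h$ is monic) gives the required isomorphism; that $\sigma$ is invertible uses the same argument with the roles reversed, along with the fact that non-invertible monos strictly raise, and non-invertible split epis strictly lower, the degree, so that an endo-split-epi (resp.\ endo-mono) of a degree-$n$ object is automatically an isomorphism. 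This is the key point requiring the full strength of the Eilenberg--Zilber axioms (including the absolute pushout clause, which ensures that the split-epi, mono factorization system interacts correctly with the coend). The main obstacle is carrying out this uniqueness argument carefully, since in a generalized Reedy category the automorphism groupoid $\cG(n)$ may be nontrivial and one must keep track of the coend identifications as opposed to a naive quotient.
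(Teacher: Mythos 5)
The paper does not prove this theorem: it is recalled with a \qed from the general Reedy-category literature (\cite{RiehlVerity:2014rc} and \cite{Riehl:generalized-reedy}), and your sketch follows exactly the standard argument, specialized to the Eilenberg--Zilber case where the degree-lowering/degree-raising factorization is the (split epi, mono) factorization. Your reduction of the pushout claim to (i) joint surjectivity onto $\sk_n\cA$ and (ii) essential uniqueness of (split epi, mono) factorizations, with the identifying isomorphism produced by the section-and-cancellation argument, is correct. Two corrections of attribution rather than of substance. First, the absolute-pushout clause of Definition \ref{defn:eilenberg-zilber-category} plays \emph{no} role here: essential uniqueness of (split epi, mono) factorizations of a single morphism of $\cA$ follows entirely from your own diagonal-filler argument (and invertibility of $\sigma$ already follows from mono cancellation applied in both directions; the degree axioms are not needed for that step either). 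The absolute-pushout axiom is what the paper uses \emph{later}, for the presheaf-level Eilenberg--Zilber lemma (the discussion before Lemma \ref{lem:EZ-injective-reedy-monos}), where one compares two degeneracy decompositions $y\pi = y'\pi'$ of an element of an arbitrary presheaf and no diagonal filler is available. Second, note that Corollary \ref{cor:cellular-decomposition} is invoked for general Reedy categories, where the same proof goes through verbatim using the (degree-lowering, degree-raising) factorization and its essential uniqueness, which is an axiom of generalized Reedy categories rather than a consequence of a lifting argument; your proof is the E--Z instance of that. The only genuinely unstated step in your sketch is the converse inclusion that every non-boundary pair $(h,g)$ composes to an element of $\sk_n\cA\setminus\sk_{n-1}\cA$ (i.e.\ that $hg$ does not also factor through lower degree), but this follows from the same essential-uniqueness statement you already have in hand.
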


As a corollary of Theorem \ref{thm:cellular-decomposition}, any natural transformation $f \colon X \to Y \in \cE^{\cA^\op}$ valued in a cocomplete category $\cE$ admits a canonical presentation as a generalized cell complex, obtained by applying the Leibniz construction to the weighted colimit bifunctor $\ast_\cA \colon \Set^{\cA^\op \times \cA} \times \cE^{\cA^\op} \to \cE^{\cA^\op}$.

\begin{cor}\label{cor:cellular-decomposition} Let $\cA$ be a  Reedy category and let $\cE$ be bicomplete. Any morphism $f\colon X \to Y \in \cE^{\cA^\op}$ is a generalized cell complex
\[
X \to X \cup_{\sk_0X}\sk_0Y \to \cdots \to X \cup_{\sk_{n-1}X} \sk_{n-1}Y \to X \cup_{\sk_nX} \sk_nY \to \cdots \to \colim \cong Y\]
with the generalized cell
\begin{equation}\label{eq:latching-cell} (\rbound{\cA}^n\hookrightarrow \cA^n) \leib\ast_{\cG(n)} \latch{n}f\end{equation}
attached at stage $n$.
\qed
\end{cor}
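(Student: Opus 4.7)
The plan is to apply the Leibniz version of the weighted colimit bifunctor $\ast_\cA \colon \Set^{\cA^\op \times \cA} \times \cE^{\cA^\op} \to \cE^{\cA^\op}$ to the cellular presentation of $\emptyset \hookrightarrow \cA$ in $\Set^{\cA^\op \times \cA}$ given by Theorem \ref{thm:cellular-decomposition}, paired with the morphism $f \colon X \to Y$. The resulting Leibniz application $\leib\ast_\cA(-, f)$ takes a monomorphism of bimodules $U \hookrightarrow V$ and the map $f$ to the induced map from the pushout $U \ast_\cA Y \cup_{U \ast_\cA X} V \ast_\cA X \to V \ast_\cA Y$. The claim is that this procedure carries the filtration of Theorem \ref{thm:cellular-decomposition} to the advertised filtration of $f$.

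First I would record the standard co-Yoneda identifications: the hom bifunctor is the unit for its own weighted colimit, so $\cA \ast_\cA Z \cong Z$, and more generally $\sk_n\cA \ast_\cA Z \cong \sk_n Z$ naturally in $Z \in \cE^{\cA^\op}$. Consequently, applying $\leib\ast_\cA(-, f)$ to the chain of monomorphisms $\emptyset \hookrightarrow \sk_0\cA \hookrightarrow \cdots \hookrightarrow \cA$ produces the sequence $X \to X \cup_{\sk_0 X} \sk_0 Y \to \cdots \to X \cup_{\sk_n X} \sk_n Y \to \cdots$ whose colimit is $X \cup_X Y \cong Y$, as displayed in the statement. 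Since $\ast_\cA$ is cocontinuous in its first variable, the Leibniz construction carries the pushout squares attaching cells in Theorem \ref{thm:cellular-decomposition} to pushout squares in $(\cE^{\cA^\op})^\2$, so the $n$-th cell of the filtration is
\[
\bigl((\lbound{\cA}_n \hookrightarrow \cA_n) \leib\utimes_{\cG(n)} (\rbound{\cA}^n \hookrightarrow \cA^n)\bigr) \leib\ast_\cA f.
\]

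The final step is to identify this triple Leibniz construction with $(\rbound{\cA}^n \hookrightarrow \cA^n) \leib\ast_{\cG(n)} \latch{n}f$. For this I would invoke the Fubini identity $(W \utimes_{\cG(n)} V) \ast_\cA Z \cong W \ast_{\cG(n)} (V \ast_\cA Z)$ for $W \in \Set^{\cG(n)}$ and $V \in \Set^{\cA^\op \times \cG(n)}$, together with the compatibility of Leibniz constructions with composition of two-variable functors from \cite[\S4--5]{RiehlVerity:2014rc}. The inner Leibniz application $(\lbound{\cA}_n \hookrightarrow \cA_n) \leib\ast_{\cG(n)} f$, evaluated termwise, yields precisely the relative latching map $\latch{n} f$ by definition, while the outer Leibniz application of $(\rbound{\cA}^n \hookrightarrow \cA^n) \leib\ast_{\cG(n)} (-)$ to this map produces the claimed cell.

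The main obstacle is purely bookkeeping: keeping the three Leibniz operations (two boundary inclusions and the morphism $f$) properly reassociated through the Fubini rearrangement, and checking that the variance on $\cG(n)$ matches between the external product $\utimes_{\cG(n)}$ and the weighted colimit $\ast_{\cG(n)}$. Once the reassociation is carried out, the cellular presentation of $f$ follows formally from that of the hom bifunctor.
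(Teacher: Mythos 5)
Your proposal is correct and is exactly the paper's argument: the corollary is obtained by applying the Leibniz construction for the weighted colimit bifunctor $\ast_\cA$ to the generalized cell complex presentation of $\emptyset \hookrightarrow \cA$ from Theorem \ref{thm:cellular-decomposition}, paired with $f$. The co-Yoneda identifications, cocontinuity in the first variable, and the Fubini reassociation identifying the cell with $(\rbound{\cA}^n\hookrightarrow \cA^n) \leib\ast_{\cG(n)} \latch{n}f$ are precisely the details the paper leaves implicit.
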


Here $\latch{n}f \in \cE^{\cG(n)^\op}$ is the diagram formed by the Leibniz weighted colimit of $f$ and $\lbound{\cA}_n\hookrightarrow\cA_n$. Its component at $a \in \cA$ of degree $n$ is the \textbf{relative latching map}, the Leibniz weighted colimit
defined by the pushout of the map $L_af \coloneq \lbound{\cA}_a \ast_\cA f$: \[
\begin{tikzcd} & &
L_aX \arrow[r, "L_af"] \arrow[r] \arrow[d]\arrow[dr, phantom, "\ulcorner" very near end] & L_aY \arrow[d] \arrow[ddr, bend left] & &   \\ \latch{a}f \coloneq (\lbound{\cA}_a \hookrightarrow\cA_a) \leib\ast_\cA f & &  X_a \arrow[r] \arrow[drr, bend right, "f_a"'] & \ell_a f \arrow[dr, dashed, "{\latch{a}f}" description]  \\ & &  & & Y_a \rlap{.}
\end{tikzcd}
\]

We now specialize to the case $\cE=\Set$ and impose the Eilenberg--Zilber hypothesis on $\cA$.  Let $X$ be a presheaf on an Eilenberg--Zilber category $\cA$. An element $x \in X_a$ is \textbf{degenerate} if there exists a non-invertible split epimorphism $\pi \colon a \twoheadrightarrow b$ and a $y \in X_b$ so that $x = y \pi$; and \textbf{non-degenerate} otherwise. For degenerate $x$, we refer to the factorization $x = y \pi$ as an \textbf{Eilenberg--Zilber decomposition} of $x$.
As observed in \cite[6.9--10]{BergerMoerdijk:2008rc}, the axioms of Definition \ref{defn:eilenberg-zilber-category} imply that Eilenberg--Zilber decompositions are essentially unique, which implies that the latching maps $L_aX \rightarrowtail X_a$ are monomorphisms whose images are the degenerate elements. Moreover, the following relative version of this result holds:

\begin{lem}\label{lem:EZ-injective-reedy-monos}
Let $\cA$ be an Eilenberg--Zilber category. Then for all $f \colon X \to Y$ in $\Set^{\cA^{\op}}$ each relative latching map $\latch{a}f$ is a monomorphism if and only if each component $f_a \colon X_a \rightarrowtail Y_a$ is a monomorphism, and either hypothesis implies that for each $a \in \cA$, the latching square below is a pullback:
\[
\begin{tikzcd} L_aX \arrow[d, tail] \arrow[r, tail, "L_af"] \arrow[dr, phantom, "\lrcorner" very near start] & L_aY \arrow[d, tail] \\ X_a \arrow[r, tail, "f_a"'] & Y_a \rlap{.}
\end{tikzcd}
\]
\end{lem}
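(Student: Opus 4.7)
The argument rests on the key Eilenberg--Zilber property already used in the paragraph preceding the lemma: every element $x \in X_a$ has an essentially unique decomposition $x = y \cdot \pi$ with $\pi \colon a \twoheadrightarrow b$ a split epimorphism and $y \in X_b$ non-degenerate, and $x$ is degenerate precisely when $\pi$ is non-invertible. In particular, the canonical map $L_a X \to X_a$ identifies $L_a X$ with the subset of degenerate elements of $X_a$, and $L_a f$ can be identified with the restriction of $f_a$ along this inclusion.

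The crucial auxiliary observation is a preservation lemma: if $f_b$ is a monomorphism, then $f$ sends non-degenerate elements of $X_b$ to non-degenerate elements of $Y_b$. I will prove this by supposing that $y \in X_b$ is non-degenerate while $f(y) = z \cdot \rho$ for some non-invertible split epimorphism $\rho \colon b \twoheadrightarrow c$ with section $s \colon c \to b$. Naturality of $f$ together with $\rho s = \id_c$ yields $f(y) \cdot s = z$, and hence $f(y \cdot s\rho) = f(y) \cdot s\rho = z \cdot \rho = f(y)$. Monicity of $f_b$ forces $y = y \cdot s\rho$, but the right-hand side factors through the non-invertible split epi $\rho$ and is therefore degenerate, contradicting non-degeneracy of $y$.

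For the forward direction, assuming each $f_a$ is mono, the preservation lemma applied at $a$ gives $L_a X = f_a^{-1}(L_a Y)$, which is exactly the asserted pullback condition. A short case analysis on whether two elements of $\ell_a f$ with common image in $Y_a$ come from $X_a$, from $L_a Y$, or from one of each, invoking $f_a$ monic and the pullback condition in the mixed case, then shows $\latch{a} f$ is monic.

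For the backward direction I will induct on $\deg(a)$. The base case $\deg(a) = 0$ is immediate because $L_a X = \emptyset$, so $\ell_a f = X_a$ and $\latch{a}f = f_a$. For the inductive step, the main technical obstacle is showing that $L_a f$ is monic under the inductive hypothesis that $f_b$ is monic for every $b$ of degree less than $\deg(a)$. Given $x_1, x_2 \in L_a X$ with $f(x_1) = f(x_2)$, I take EZ decompositions $x_i = y_i \pi_i$ with $y_i \in X_{b_i}$ non-degenerate and $\pi_i \colon a \twoheadrightarrow b_i$ non-invertible. Applying the preservation lemma at the levels $b_i < \deg(a)$ (using the inductive hypothesis), each $f(y_i)$ remains non-degenerate in $Y_{b_i}$, so $f(y_1)\pi_1 = f(y_2)\pi_2$ is an equality of EZ decompositions in $Y_a$. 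Their essential uniqueness provides an isomorphism $\tau \colon b_1 \cong b_2$ with $\pi_2 = \tau\pi_1$ and $f(y_1) = f(y_2) \cdot \tau$; monicity of $f_{b_1}$ from the inductive hypothesis gives $y_1 = y_2 \cdot \tau$, whence $x_1 = y_2 \tau \pi_1 = y_2 \pi_2 = x_2$. With $L_a f$ known monic, the standard fact that pushouts of monomorphisms along monomorphisms in $\Set$ are monomorphisms makes $X_a \to \ell_a f$ monic, so $f_a = \latch{a}f \circ (X_a \to \ell_a f)$ is a composite of monomorphisms; the pullback property then follows by the forward direction applied at $a$.
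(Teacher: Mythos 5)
Your proof is correct. The forward direction and the pullback claim are essentially the paper's own argument: your ``preservation lemma'' is exactly the contrapositive of the paper's observation that if $f(x) = y'\cdot\epsilon$ is degenerate then $x = x\cdot\delta\cdot\epsilon$ by monicity, hence $x$ is degenerate; from the resulting pullback square of monomorphisms both you and the paper conclude that $\latch{a}f$ is the union of two subobjects of $Y_a$ and hence monic. Where you genuinely diverge is the converse. The paper dispenses with it in one line by citing the general Reedy-categorical fact that maps with monic relative latching maps are (componentwise) monomorphisms, which follows from the cellular decomposition of Corollary \ref{cor:cellular-decomposition} together with the closure properties of injective Reedy monomorphisms --- an argument that needs no Eilenberg--Zilber hypothesis. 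You instead give a self-contained induction on degree: the essential uniqueness of Eilenberg--Zilber decompositions, plus the preservation lemma at strictly lower degrees, shows $L_af$ is injective on degenerate elements, whence $X_a \to \ell_af$ is monic as a pushout of a monomorphism in $\Set$ and $f_a = \latch{a}f\circ(X_a\to\ell_af)$ is a composite of monomorphisms. Your route buys a completely elementary, element-level proof that avoids the weighted-colimit machinery and the external citation; the cost is that it leans on the EZ axioms for a direction that holds for arbitrary (generalized) Reedy categories, so it is less general than the cited result even though it fully suffices for the lemma as stated.
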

\begin{proof}
When $f \colon X \to Y$ is a monomorphism, each map in the latching square is a monomorphism, and it is easy to see that the latching square is a pullback. It suffices to show that $L_aX$ surjects onto the pullback $X_a \times_{Y_a} L_aY$. If the image of $x \in X_a$ is degenerate, with $f(x) = y' \cdot \epsilon$, then we may choose a section $\delta$ of $\epsilon$ and observe that $x$ and $x \cdot \delta \cdot \epsilon$ have the same image under $f$, proving that $x$ is degenerate. Thus the latching square is a pullback and then the relative latching map is a monomorphism, the union of the subobjects of $Y_a$.

The converse implication holds for general Reedy categories without the Eilenberg--Zilber hypothesis \cite[\S 8]{Riehl:generalized-reedy}.
\end{proof}

Lemma \ref{lem:EZ-injective-reedy-monos} may be summarized by saying that when $\cA$ is an Eilenberg--Zilber category, the injective Reedy monomorphisms, defined below, are just the pointwise monomorphisms.

\begin{defn}[Berger--Moerdijk]
  A map $f \colon X \to Y$ in $\Set^{\cA^\op}$ is an \textbf{injective Reedy monomorphism} if for all $a \in \cA$, the map $\latch{a}f$ is a monomorphism.
\end{defn}

The injective Reedy monomorphisms form the left class of a weak factorization system that is left-lifted along the left adjoint $\latch{\bullet}-$ displayed below from the (monomorphism, equivariant split epimorphism) weak factorization system on $\Set^{\cG^\op}$, which in turn is the ``injective'' or left lifting of the (monomorphism, split epimorphism) weak factorization system on $\Set^{\ob\cA}$:\footnote{Projective Reedy weak factorization systems may be defined similarly using the ``projective'' or right lifting to $\Set^{\cG}$ \cite[1.6, 1.8]{BergerMoerdijk:2008rc}.}
\[ \begin{tikzcd} {\Mono_\inj[\cA]} \arrow[d] \arrow[r] \arrow[dr, phantom, "\lrcorner" very near start] & \Mono_\inj \arrow[d]  \\ (\Set^{\cA^\op})^\2 \arrow[r, "{\latch{\bullet}-}"'] & (\Set^{\cG^\op})^\2 \rlap{.} \end{tikzcd}\]

When $\cA$ is an Eilenberg--Zilber category, Corollary \ref{cor:cellular-decomposition} tells us that any monomorphism $f$ factors as a transfinite composite of pushouts of maps of the form \eqref{eq:latching-cell} where $\latch{n}f \in \Set^{\cG(n)^\op}$ is a monomorphism. The groupoid $\cG(n)$ of isomorphisms between objects of degree $n$ is equivalent to the disjoint union of the 1-object groupoids associated to automorphism groups $\Aut(a)$, where the disjoint union is over the set of isomorphism classes of objects of degree $n$.\footnote{In both $\CCube$ and $\DDelta$ there is a unique object of degree $n$, but this is not a requirement of the Eilenberg--Zilber axioms.} So $\Set^{\cG(n)^\op}$ is equivalent to the product of categories of the form $\Set^{\Aut(a)^\op}$ where $\deg(a)=n$.

Thus, we study the (injective monomorphism, injective split epimorphism) weak factorization system on the category $\Set^{G^\op}$ of right $G$-sets, for $G$ a group. In this category, the injective monomorphisms are just the monomorphisms, while the injective split epimorphisms are the $G$-split epimorphisms: maps of right $G$-sets that admit a $G$-equivariant section.

\begin{lem}\label{lem:inj-epi-cof-gen} The monomorphisms in $\Set^{G^\op}$ are pushouts of coproducts of the maps  \[ \{ \emptyset \hookrightarrow G_{/H} \}_{H \subset G}\]
where the right $G$-sets are the sets of right cosets $G_{/H}$ of all subgroups $H$ of $G$.
\end{lem}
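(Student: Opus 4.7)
The plan is to reduce the claim to two elementary facts about right $G$-sets: that monomorphisms are complemented, and that every $G$-set decomposes as a coproduct of right cosets $G_{/H}$.

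First I would observe that any monomorphism $m \colon X \cto Y$ in $\Set^{G^\op}$ is complemented. Indeed, since the forgetful functor $\Set^{G^\op} \to \Set$ preserves and reflects monomorphisms and limits, I may regard $X$ as a subset of $Y$; its set-theoretic complement $Z \coloneq Y \smallsetminus X$ is automatically $G$-stable because $X$ is, so $Y \cong X \sqcup Z$ as right $G$-sets, with $m$ the coproduct coprojection. Consequently $m$ is the pushout of $\emptyset \cto Z$ along the unique map $\emptyset \to X$.

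Next I would invoke the orbit decomposition: every right $G$-set $Z$ canonically decomposes as the coproduct $\coprod_{\cO} \cO$ of its $G$-orbits, and any orbit $\cO$ is isomorphic to a coset $G_{/H}$, where $H$ is the stabilizer of any chosen element of $\cO$ (different choices give conjugate, hence isomorphic, cosets). Thus $Z \cong \coprod_{i \in I} G_{/H_i}$ for a suitable family of subgroups $(H_i)_{i \in I}$, and the initial map $\emptyset \cto Z$ is the coproduct of the initial maps $\emptyset \cto G_{/H_i}$.

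Combining these two steps exhibits $m \colon X \cto Y$ as the pushout
\[
\begin{tikzcd}
\coprod_{i \in I} \emptyset \ar[r] \ar[d, hook] \ar[dr, phantom, "\ulcorner" very near end] & X \ar[d, hook, "m"] \\
\coprod_{i \in I} G_{/H_i} \ar[r] & Y\rlap{,}
\end{tikzcd}
\]
as required. I expect no serious obstacle here; the only subtle point is that complementation of monomorphisms uses excluded middle, which is consistent with our restriction to classical reasoning in this section (cf.\ \S\ref{ssec:constructivity}).
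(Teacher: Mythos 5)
Your proof is correct and follows essentially the same route as the paper's: decompose into orbits, identify each orbit with a coset space $G_{/H}$ via a stabilizer, and observe that a monomorphism of $G$-sets simply attaches new orbits (your explicit complementation step is what the paper compresses into that phrase). No issues.
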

\begin{proof}
Objects in the category $\Set^{G^\op}$ of right $G$-sets decompose as coproducts of orbits, on which $G$ acts transitively. Each orbit is $G$-equivariantly isomorphic to $G_{/H}$, the right $G$-set of right cosets by a subgroup $H$. The stabilizer groups of the elements in this orbit are then conjugate to $H$. By $G$-equivariance, monomorphisms in this category attach new orbits. Thus, each monomorphism may be expressed as a pushout of maps of the form $\emptyset\hookrightarrow G_{/H}$, for each orbit with stabilizer group $H$ that is not in the image of the domain.
\end{proof}

Putting this together we arrive at the following result:

\begin{prop}\label{prop:EZ-monomorphism-decomposition} Any monomorphism $f\colon X \to Y \in \Set^{\cA^\op}$ between presheaves indexed by an Eilenberg--Zilber category $\cA$ is generalized cell complex
  \[
  X \to X \cup_{\sk_0X}\sk_0Y \to \cdots \to X \cup_{\sk_{n-1}X} \sk_{n-1}Y \to X \cup_{\sk_nX} \sk_nY \to \cdots \to \colim \cong Y\]
  where the cells attached at stage $n$ are coproducts of cells of the form
  \begin{equation}\label{eq:EZ-mono-latching-cell} \rbound{\cA}^a_{/H}\hookrightarrow \cA^a_{/H}\end{equation}
  where $\deg(a) =n$ and $H \subset \Aut(a)$.
\end{prop}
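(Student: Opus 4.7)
The plan is to combine Corollary \ref{cor:cellular-decomposition} (which already presents $f$ as a cell complex whose stage-$n$ cell is a single Leibniz weighted colimit) with Lemma \ref{lem:EZ-injective-reedy-monos} and Lemma \ref{lem:inj-epi-cof-gen} to further decompose each such stage-$n$ cell.

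First, I would apply Corollary \ref{cor:cellular-decomposition} directly to $f$ to obtain its canonical skeletal filtration
\[
X \to X \cup_{\sk_0 X} \sk_0 Y \to \cdots \to X \cup_{\sk_{n-1}X} \sk_{n-1}Y \to X \cup_{\sk_n X} \sk_n Y \to \cdots \to Y,
\]
where the stage-$n$ cell is $(\rbound{\cA}^n \hookrightarrow \cA^n) \leib\ast_{\cG(n)} \latch{n}f$. Since $f$ is a pointwise monomorphism and $\cA$ is Eilenberg--Zilber, Lemma \ref{lem:EZ-injective-reedy-monos} implies $f$ is an injective Reedy monomorphism, so $\latch{n}f \in \Set^{\cG(n)^\op}$ is a monomorphism for every $n$.

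Next, I would use the decomposition $\cG(n) \simeq \coprod_{[a]} B\Aut(a)$ indexed by isomorphism classes of degree-$n$ objects, which induces an equivalence $\Set^{\cG(n)^\op} \simeq \prod_{[a]} \Set^{\Aut(a)^\op}$. Under this equivalence, $\latch{n}f$ corresponds to the family of relative latching maps $(\latch{a}f)_{[a]}$, each a monomorphism of right $\Aut(a)$-sets. By Lemma \ref{lem:inj-epi-cof-gen}, each $\latch{a}f$ is presented as a single pushout of a coproduct of generators $\emptyset \hookrightarrow \Aut(a)_{/H}$ with $H \subset \Aut(a)$.

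Finally, I would exploit that the Leibniz weighted colimit $(\rbound{\cA}^n \hookrightarrow \cA^n) \leib\ast_{\cG(n)} -$ preserves coproducts and pushouts in its right argument, so the pushout-of-coproduct presentation of $\latch{n}f$ transports to an analogous presentation of the stage-$n$ cell. The key identification to verify is
\[
(\rbound{\cA}^n \hookrightarrow \cA^n) \leib\ast_{\cG(n)} (\emptyset \hookrightarrow \Aut(a)_{/H}) \;\cong\; \bigl(\rbound{\cA}^a_{/H} \hookrightarrow \cA^a_{/H}\bigr),
\]
which reduces, via the decomposition of $\cG(n)$, to the computation that the coend $\Aut(a)_{/H} \ast_{\Aut(a)} \cA^a$ is the $H$-orbit presheaf $\cA^a_{/H}$, and similarly for the boundary; this is where the main (but routine) bookkeeping lives. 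Absorbing the resulting pushout of a coproduct into the filtration from Corollary \ref{cor:cellular-decomposition} then realizes $f$ as a generalized cell complex in which the cells attached at stage $n$ are exactly coproducts of the inclusions $\rbound{\cA}^a_{/H} \hookrightarrow \cA^a_{/H}$ with $\deg(a)=n$ and $H \subset \Aut(a)$, as required.
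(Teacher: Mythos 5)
Your proposal is correct and follows essentially the same route as the paper's proof: apply Corollary \ref{cor:cellular-decomposition}, use Lemma \ref{lem:EZ-injective-reedy-monos} to see that $\latch{n}f$ is a monomorphism, decompose it via Lemma \ref{lem:inj-epi-cof-gen}, and push the resulting presentation through the cocontinuous Leibniz weighted colimit, identifying the generators by the coYoneda lemma. Your write-up is in fact slightly more explicit than the paper's, which leaves the appeal to Lemma \ref{lem:EZ-injective-reedy-monos} and the $\cG(n)\simeq\coprod_{[a]}\mathbb{B}\Aut(a)$ decomposition implicit.
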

\begin{proof}
  By Lemma \ref{lem:inj-epi-cof-gen}, the relative latching map $\latch{n}f \in \Set^{\cG(n)^\op}$ is a pushout of coproducts of maps of the form $\emptyset\hookrightarrow G(n)^a_{/H}$, where $a \in \cG(n)$, $H \subset \Aut(a)$, and $G(n)^a$ is the contravariant representable. By cocontinuity of the weighted colimit functor and the coYoneda lemma, the cell $(\rbound{\cA}^n \hookrightarrow \cA^n) \leib\ast_{\cG(n)} \latch{n}f$ of \eqref{eq:latching-cell} is then a pushout of coproducts of cells of the form
  \[ (\rbound{\cA}^n\hookrightarrow \cA^n) \leib\ast_{\cG(n)} (\emptyset\hookrightarrow \cG(n)^a_{/H}) \cong  \rbound{\cA}^a_{/H}\hookrightarrow \cA^a_{/H}.\] Thus, by Corollary \ref{cor:cellular-decomposition}, $X \cup_{\sk_{n-1}X}\sk_{n-1}Y \hookrightarrow X \cup_{\sk_nX}\sk_nY$ is a pushout of coproducts of cells of this form.
\end{proof}

\begin{lem}\label{lem:EZ-magic} Let $\cA$ be an Eilenberg--Zilber category. Then the monomorphisms in $\Set^{\cA^\op}$ are generated under coproduct, pushout, sequential composition, and right cancelation among monomorphisms by the maps $\emptyset \to \cA^a_{/H}$ valued in the quotient of a representable presheaf at some $a \in \cA$ by an arbitrary subgroup $H$ of its automorphism group.
\end{lem}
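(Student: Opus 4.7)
The plan is to invoke Proposition \ref{prop:EZ-monomorphism-decomposition}, which exhibits any monomorphism in $\Set^{\cA^{\op}}$ as a sequential composition of pushouts of coproducts of cells of the form $\rbound{\cA}^a_{/H} \hookrightarrow \cA^a_{/H}$. Since these operations all preserve monomorphisms and are among the four listed in the statement, it suffices to show that each boundary inclusion $\rbound{\cA}^a_{/H} \hookrightarrow \cA^a_{/H}$ lies in the closure of the generating maps $\emptyset \to \cA^b_{/K}$ under coproduct, pushout, sequential composition, and right cancellation among monomorphisms. Right cancellation is what allows us to pass from the larger cells in the proposition to the basic generators $\emptyset \to \cA^a_{/H}$.

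I would then argue by induction on $n = \deg(a)$, simultaneously establishing (i) that $\emptyset \to \rbound{\cA}^a_{/H}$ lies in the closure and (ii) that $\rbound{\cA}^a_{/H} \hookrightarrow \cA^a_{/H}$ lies in the closure, for every subgroup $H \subseteq \Aut(a)$. The base case $n = 0$ is immediate: since non-invertible split epimorphisms strictly lower degree, no arrow into an object of degree zero factors through one of smaller degree, so $\rbound{\cA}^a$ and hence $\rbound{\cA}^a_{/H}$ are empty; then (i) is vacuous and (ii) reduces to the generating map $\emptyset \to \cA^a_{/H}$ itself. For the inductive step, I prove (i) by applying Proposition \ref{prop:EZ-monomorphism-decomposition} to the monomorphism $\emptyset \hookrightarrow \rbound{\cA}^a_{/H}$: this exhibits it as a cell complex of cells $\rbound{\cA}^b_{/K} \hookrightarrow \cA^b_{/K}$ with $\deg(b) < n$, each of which is in the closure by the inductive hypothesis (ii). Statement (ii) at degree $n$ then follows by right cancellation, since the composite $\emptyset \to \rbound{\cA}^a_{/H} \hookrightarrow \cA^a_{/H}$ is precisely the generating map $\emptyset \to \cA^a_{/H}$.

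The point that requires verification---and is the main conceptual step---is that Proposition \ref{prop:EZ-monomorphism-decomposition} applied to $\emptyset \hookrightarrow \rbound{\cA}^a_{/H}$ produces only cells of degree strictly less than $n$, so that the induction is well-founded. This follows from essentially unique Eilenberg--Zilber factorization: a non-degenerate element of $\rbound{\cA}^a$ at some $b$ is an arrow $b \to a$ factoring through an object of degree $< n$ whose Eilenberg--Zilber factorization has invertible split-epi part, which forces $\deg(b) < n$. Since $H \subseteq \Aut(a)$ acts by isomorphisms, its action preserves degrees and commutes with the split-epi/mono factorization, so the $H$-quotient $\rbound{\cA}^a_{/H}$ likewise has no non-degenerate elements of degree $\geq n$. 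Consequently the stage-$k$ cells in the skeletal filtration of $\rbound{\cA}^a_{/H}$ vanish for $k \geq n$, and only strictly lower-degree cells appear, to which the inductive hypothesis applies.
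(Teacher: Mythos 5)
Your proposal is correct and follows essentially the same route as the paper's proof: both reduce via Proposition \ref{prop:EZ-monomorphism-decomposition} and right cancellation to the boundary inclusions $\emptyset \rightarrowtail \rbound{\cA}^a_{/H}$, and then handle these by induction on degree, using that $\rbound{\cA}^a_{/H}$ is $(n-1)$-skeletal so its cell decomposition involves only lower-degree cells covered by the inductive hypothesis. Your extra verification that the quotient by $H$ remains $(n-1)$-skeletal is a point the paper asserts without elaboration, and your justification of it is sound.
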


\begin{proof}
  By Proposition \ref{prop:EZ-monomorphism-decomposition}, the monomorphisms are generated under coproduct, pushout, and sequential composition by the maps $\rbound{\cA}^a_{/H} \to \cA^a_{/H}$ for $a \in \cA$ and $H \subset \Aut(a)$. Under right cancelation among monomorphisms
\[ \begin{tikzcd} \emptyset \arrow[r] \arrow[dr] & \rbound{\cA}^a_{/H} \arrow[d, "i^a_H"] \\ & \cA^a_{/H} \rlap{,} \end{tikzcd}\]
these maps are generated by monomorphisms of the form $\emptyset \rightarrowtail \cA^a_{/H}$ and $\emptyset \rightarrowtail \rbound{\cA}^a_{/H}$. We prove that the latter class are generated by the former under coproduct, pushout,  sequential composition, and right cancelation among monomorphisms by induction in the degree of the object $a \in A$.

When $a$ has degree zero, $\rbound{\cA}^a$ is empty, covering the base case of the induction. So we may suppose that $a$ has degree $n$ and our task is to show that $\emptyset \rightarrowtail \rbound{\cA}^a_{/H}$ may be generated under coproduct, pushout,  transfinite composition, and right cancelation among monomorphisms by maps of the form $\emptyset \rightarrowtail \cA^b_{/H}$ with $\deg(b) \leq n$ under the inductive hypothesis that when $\deg(b) < n$, the maps $\emptyset \rightarrowtail \rbound{\cA}^b_{/H}$ are in this class. From right cancelation, this tells us that the maps $\rbound{\cA}^b_{/H} \to \cA^b_{/H}$ are in this class when $\deg(b) < n$. The presheaf $\rbound{\cA}^a_{/H} \in \Set^{\cA^\op}$ is $(n-1)$-skeletal, so from Proposition \ref{prop:EZ-monomorphism-decomposition}, we see that $j^a \colon \emptyset \rightarrowtail \rbound{\cA}^a_{/H}$ factors as a composite of pushouts of coproducts of  the maps $\rbound{\cA}^b_{/K}\hookrightarrow \cA^b_{/K}$ for $K \subset \Aut(b)$, completing the induction.
\end{proof}

We now return to the question of proving that the left Quillen functors $i_!$ and $i^*$ are Quillen equivalences.  As the cofibrations are the monomorphisms, all objects in each of the categories $\sSet$  and $\cSet$ are cofibrant. By Ken Brown's lemma, left Quillen functors preserve weak equivalences between cofibrant objects. Consequently:

\begin{cor}
Each of the functors
\[
\begin{tikzcd}
 \sSet \arrow[r, bend left=30, "i_!"] \arrow[r, phantom, "\bot"] & \cSet  \arrow[l, bend left=30, "i^*"]
\end{tikzcd}
\]
preserves weak equivalences. \qed
\end{cor}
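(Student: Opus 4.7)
The plan is to apply Ken Brown's lemma, which gives exactly what we need once we have left Quillen functors between model categories where all objects are cofibrant. Both hypotheses are already in hand: Lemmas \ref{lem:i!-left-quillen} and \ref{lem:i*-left-quillen} establish that $i_! \colon \sSet \to \cSet$ and $i^* \colon \cSet \to \sSet$ are each left Quillen with respect to the Kan--Quillen model structure on simplicial sets and the equivariant model structure on cubical sets (Theorem \ref{thm:equivariant-model-structure}). In both model structures the cofibrations are the monomorphisms, so every object is cofibrant (as the initial object includes into every object via a monomorphism).

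Given this, the proof is immediate: Ken Brown's lemma says that any functor between model categories which sends trivial cofibrations between cofibrant objects to weak equivalences must send all weak equivalences between cofibrant objects to weak equivalences. Since every object is cofibrant, this means each of $i_!$ and $i^*$ preserves every weak equivalence.

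The only subtle point worth emphasizing is that Ken Brown's lemma requires a class of maps closed under composition and containing isomorphisms; the weak equivalences of any Quillen model structure (which we have by Theorem \ref{thm:equivariant-model-structure}) satisfy this. There is no genuine obstacle here--indeed, the setup in the preceding paragraph already spells out the argument almost in full. This corollary is best viewed as a direct consequence of the preceding two lemmas together with the observation that both model structures have all objects cofibrant.
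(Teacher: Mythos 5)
Your proof is correct and is exactly the paper's argument: since the cofibrations in both model structures are the monomorphisms, all objects are cofibrant, and Ken Brown's lemma applied to the left Quillen functors of Lemmas \ref{lem:i!-left-quillen} and \ref{lem:i*-left-quillen} gives the result.
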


To demonstrate that these functors are inverse left Quillen equivalences, it suffices to show that the total left derived functors define equivalences, for which it suffices to demonstrate that the unit $\eta \colon \id \Rightarrow i^*i_!$ and counit $\epsilon \colon i_! i^* \Rightarrow \id$ are natural weak equivalences. The advantage of working with an inverse pair of left adjoints is that we can use cocontinuity and the fact that both $\DDelta$ and $\CCube$ are Eilenberg--Zilber to reduce to checking that certain components are weak equivalences. In fact, we can treat both cases at once, by an argument we now develop.

\begin{lem}\label{lem:leibniz-magic} Let  $U,V \colon \cK \to \cM$ be a cocontinuous pair of functors valued in a model category and $\alpha \colon U \To V$ a natural transformation between them. Define the cofibrations in $\cK$ to be the maps that are sent to cofibrations under both $U$ and $V$. Define $\NMono$ to be the class of cofibrations between cofibrant objects that are sent by Leibniz pushout application with $\alpha$ to weak equivalences in $\cM$.
  Then $\NMono$ is closed under coproducts, pushouts, (transfinite) composition, and right cancelation among cofibrations.
  \end{lem}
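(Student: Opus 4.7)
The approach is to exploit cocontinuity of the Leibniz pushout application functor $\alpha \check\circ - \colon \cK^{\2} \to \cM^{\2}$, which holds because $U$, $V$, and the pushout construction are all cocontinuous. Before addressing the four closure properties, I would verify that for every cofibration $f \colon A \to B$ between cofibrant objects in $\cK$, the map $\alpha \check\circ f \colon UB \cup_{UA} VA \to VB$ is itself a map between cofibrant objects in $\cM$: $VB$ is cofibrant since $B$ is, while $UB \cup_{UA} VA$ is cofibrant as a pushout of the cofibrant objects $UB$ and $VA$ along the cofibration $Uf$. Observe also that cocontinuity forces $U$ and $V$ to preserve initial objects, so cofibrancy of an object in $\cK$ is equivalent to cofibrancy of both its $U$- and $V$-images in $\cM$.

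For coproducts, pushouts along cofibrations, and (transfinite) composition, cocontinuity of $\alpha \check\circ -$ translates the given colimit in $\cK^{\2}$ into the corresponding colimit in $\cM^{\2}$ of maps $\alpha \check\circ f_i$, each a weak equivalence between cofibrant objects. In each case the standard gluing lemma applies: a pushout of a weak equivalence along a cofibration, each between cofibrant objects, remains a weak equivalence. The required cofibrancy of the maps along which pushouts occur follows because we operate \emph{among cofibrations} in $\cK$, so the structural maps appearing in $\cM^{\2}$ after applying $U$ or $V$ are themselves cofibrations between cofibrant objects.

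For right cancelation, suppose $f \colon A \to B$ and $g \colon B \to C$ are cofibrations between cofibrant objects with $f, gf \in \NMono$. Iterated pushout decomposition yields the factorization
\[
  UC \cup_{UA} VA \;\cong\; UC \cup_{UB} (UB \cup_{UA} VA) \xrightarrow{\;\tilde h\;} UC \cup_{UB} VB \xrightarrow{\;\alpha \check\circ g\;} VC
\]
of $\alpha \check\circ (gf)$, where $\tilde h$ is the pushout of $\alpha \check\circ f$ along the cofibration $Ug \colon UB \to UC$. By the gluing lemma $\tilde h$ is a weak equivalence, so since $\alpha \check\circ (gf)$ is a weak equivalence by hypothesis the 2-of-3 property forces $\alpha \check\circ g$ to be a weak equivalence, placing $g$ in $\NMono$. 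The main technical obstacle is simply the bookkeeping of cofibrancy and pushout identifications at each stage, but these conditions are entirely controlled by cocontinuity of $U$ and $V$ together with the hypothesis defining cofibrations in $\cK$.
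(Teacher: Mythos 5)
Your proof is correct and follows essentially the same route as the paper: the coproduct, pushout, and (transfinite) composition cases are handled by cocontinuity of $\alpha\check{\circ}-$ plus the gluing lemma (exactly the content of the reference the paper cites for these cases), and your right-cancelation argument uses the identical decomposition of $\alpha\check{\circ}(gf)$ as a cobase change of $\alpha\check{\circ}f$ followed by $\alpha\check{\circ}g$, concluding by 2-of-3. The only cosmetic difference is that the paper packages composition and right cancelation together as a single ``if and only if,'' whereas you treat them separately.
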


  \begin{proof}
  The claims all follow from the proofs of \cite[\S 5]{RiehlVerity:2014rc}, except for right cancelation, which is not mentioned there. We demonstrate this together with the closure under composition, as these are the most subtle closure properties. Consider a composable pair of monomorphisms and their Leibniz applications:
  \[
  \begin{tikzcd}
  UA \arrow[d, "\alpha_A"'] \arrow[r, tail, "U g"] \arrow[dr, phantom, "\ulcorner" very near end] & UB \arrow[r, tail,  "U h"] \arrow[d] \arrow[dr, phantom, "\ulcorner" very near end] & UC \arrow[d] \arrow[dddrr, bend left, "\alpha_C"] \\
  VA \arrow[r, tail] \arrow[drr, "Vg"', tail] & \bullet \arrow[dr, "{\alpha\check{\circ}g}" description] \arrow[r, tail] \arrow[drr, phantom, "\ulcorner" pos=0.8] & \bullet \arrow[dr] \arrow[ddrr, bend left, "{\alpha\check{\circ}hg}" description] \\  & & VB \arrow[drr,  "V h"', tail] \arrow[r, tail]  & \bullet \arrow[dr, "{\alpha\check{\circ}h}" description] \\ & &  & & VC \rlap{.}
  \end{tikzcd}
  \]
  The diagram reveals that ${\alpha\check{\circ}hg}$ factors as a pushout of ${\alpha\check{\circ}g}$ followed by ${\alpha\check{\circ}h}$. When $g \in \NMono$ and $h$ is a cofibration, our hypotheses imply that the pushout of ${\alpha\check{\circ}g}$ is a pushout of a weak equivalence between cofibrant objects along a cofibration, hence again a weak equivalence. Thus, by the 2-of-3 properties for weak equivalences, $h \in \NMono$ if and only if $hg \in \NMono$.
  \end{proof}

  \begin{cor}\label{cor:EZ-magic}  Let $\cA$ be an Eilenberg--Zilber category and consider a parallel pair of functors $U,V \colon \Set^{\cA^\op} \to \cM$ valued in a model category $\cM$ together with a natural transformation $\alpha \colon U \Rightarrow V$. Suppose that $U$ and $V$ preserve colimits and send monomorphisms in $\cK$ to cofibrations in $\cM$. Then if the components of $\alpha$ at quotients of representables by subgroups of their automorphism groups are weak equivalences, then all components of $\alpha$ are weak equivalences.
  \end{cor}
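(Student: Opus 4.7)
The plan is to combine Lemmas \ref{lem:leibniz-magic} and \ref{lem:EZ-magic}: define the class $\NMono$ of monomorphisms $f$ in $\Set^{\cA^\op}$ such that the Leibniz pushout application $\alpha \check{\circ} f$ is a weak equivalence in $\cM$; show using Lemma \ref{lem:leibniz-magic} that $\NMono$ is closed under coproducts, pushouts, transfinite composition, and right cancelation among monomorphisms; then use Lemma \ref{lem:EZ-magic} to conclude that $\NMono$ contains every monomorphism. Evaluating at $\emptyset \rightarrowtail X$ will give us that $\alpha_X$ is a weak equivalence for every $X \in \Set^{\cA^\op}$.

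First I would observe that, since $U$ and $V$ preserve colimits, both $U\emptyset$ and $V\emptyset$ are initial in $\cM$, so the component $\alpha_\emptyset \colon U\emptyset \to V\emptyset$ is an isomorphism. Consequently, for any presheaf $X$ the pushout defining $\alpha \check{\circ} (\emptyset \rightarrowtail X)$ collapses and the Leibniz application is canonically identified with the component $\alpha_X \colon UX \to VX$. Thus to prove the corollary it suffices to show $\emptyset \rightarrowtail X \in \NMono$ for every $X$.

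Next I would verify that Lemma \ref{lem:leibniz-magic} applies: we take $\cK = \Set^{\cA^\op}$, and the lemma's class of cofibrations in $\cK$ (maps sent to cofibrations by both $U$ and $V$) contains every monomorphism by hypothesis, while every object is cofibrant (as $\emptyset \rightarrowtail X$ is a monomorphism). Lemma \ref{lem:leibniz-magic} then tells us that $\NMono$ is closed under coproducts, pushouts, transfinite composition, and right cancelation among monomorphisms.

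Finally, by hypothesis every component $\alpha_{\cA^a_{/H}}$ is a weak equivalence, so each generating monomorphism $\emptyset \rightarrowtail \cA^a_{/H}$ of Lemma \ref{lem:EZ-magic} lies in $\NMono$. That lemma shows the generating class exhausts all monomorphisms under precisely the closure operations $\NMono$ enjoys, so every monomorphism lies in $\NMono$, finishing the proof. There is no real obstacle here beyond the initial-preservation bookkeeping that identifies $\alpha_X$ with $\alpha \check{\circ} (\emptyset \rightarrowtail X)$; once that identification is made, the two preceding lemmas do all the work.
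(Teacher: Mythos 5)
Your proof is correct and follows exactly the paper's argument: identify $\alpha_X$ with the Leibniz pushout application of $\alpha$ at $\emptyset \rightarrowtail X$, then combine the closure properties of Lemma \ref{lem:leibniz-magic} with the generation result of Lemma \ref{lem:EZ-magic}. The only difference is that you spell out the bookkeeping identifying $\alpha_X$ with $\alpha\check{\circ}(\emptyset \rightarrowtail X)$ via preservation of the initial object, which the paper leaves implicit.
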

  \begin{proof}

  Note that the components of $\alpha$ at a presheaf $X$ are obtained by Leibniz application of $\alpha$ at the monomorphism $\emptyset \to X$. The result now follows by combining Lemma \ref{lem:EZ-magic}, which says that the monomorphisms in $\Set^{\cA^\op}$ are generated under coproduct, pushout, transfinite composition, and right cancelation among monomorphisms by the maps $\emptyset \to \cA^a_{/H}$, with Lemma \ref{lem:leibniz-magic}, which says that the class of monomorphisms whose Leibniz applications are weak equivalences has these closure properties.
  \end{proof}

  \begin{cor}\label{cor:derived-equivalence} Let $\cA$ be an Eilenberg--Zilber category for which $\Set^{\cA^\op}$ admits a model structure whose cofibrations are the monomorphisms in which the quotients $\cA^a_{/H}$ of representables by subgroups of their automorphism groups are weakly contractible. Then if $U,V \colon \Set^{\cA^\op} \to \cM$ define a pair of left Quillen functors that preserve the terminal object, then any natural transformation $\alpha \colon U \To V$ is a natural weak equivalence.
  \end{cor}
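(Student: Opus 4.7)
The plan is to deduce this from Corollary \ref{cor:EZ-magic} together with the two-of-three property for the weak equivalences in $\cM$. First I would verify the hypotheses of Corollary \ref{cor:EZ-magic}: as left adjoints, $U$ and $V$ preserve colimits, and as left Quillen functors they preserve cofibrations. Since the cofibrations in $\Set^{\cA^\op}$ are the monomorphisms, this means that $U$ and $V$ send monomorphisms to cofibrations in $\cM$. Thus Corollary \ref{cor:EZ-magic} reduces our task to showing that the component $\alpha_{\cA^a_{/H}} \colon U(\cA^a_{/H}) \to V(\cA^a_{/H})$ is a weak equivalence in $\cM$ for each $a \in \cA$ and each subgroup $H \subset \Aut(a)$.

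Next I would use that every object of $\Set^{\cA^\op}$ is cofibrant (since the initial object includes into any presheaf as a monomorphism) to invoke Ken Brown's lemma: the left Quillen functors $U$ and $V$ preserve weak equivalences, as all source objects are cofibrant. By hypothesis, the unique map $!\colon \cA^a_{/H} \to 1$ is a weak equivalence in $\Set^{\cA^\op}$, so its images $U(!)$ and $V(!)$ are weak equivalences in $\cM$. Combining this with the hypothesis that $U$ and $V$ preserve the terminal object, we obtain weak equivalences $U(\cA^a_{/H}) \wto 1_\cM$ and $V(\cA^a_{/H}) \wto 1_\cM$.

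Finally, the naturality of $\alpha$ at the map $!$ together with uniqueness of maps into the terminal object $1_\cM$ forces the triangle
\[
\begin{tikzcd}
U(\cA^a_{/H}) \arrow[rr, "\alpha_{\cA^a_{/H}}"] \arrow[dr, wearrow, "U(!)"'] & & V(\cA^a_{/H}) \arrow[dl, uwearrow, "V(!)"] \\ & 1_\cM &
\end{tikzcd}
\]
to commute. Two of the edges are weak equivalences, so by the two-of-three property the third edge $\alpha_{\cA^a_{/H}}$ is a weak equivalence as well, completing the proof. There is no real obstacle here; the main conceptual work has been front-loaded into Corollary \ref{cor:EZ-magic} and the cellular decomposition of Proposition \ref{prop:EZ-monomorphism-decomposition}, and this corollary is essentially a packaging result showing how the hypothesis on cubes and simplices transports to arbitrary natural transformations between terminal-preserving left Quillen functors.
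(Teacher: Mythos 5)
Your proof is correct and follows essentially the same route as the paper's: apply Ken Brown's lemma (using that all objects are cofibrant since cofibrations are monomorphisms) to see that $U$ and $V$ carry the weakly contractible objects $\cA^a_{/H}$ to objects weakly equivalent to the terminal object, collapse the naturality square at $!\colon \cA^a_{/H}\to 1$ to a triangle, conclude by 2-of-3 that $\alpha_{\cA^a_{/H}}$ is a weak equivalence, and then invoke Corollary \ref{cor:EZ-magic}. The only difference is that you spell out the verification of the hypotheses of Corollary \ref{cor:EZ-magic} in slightly more detail, which the paper leaves implicit.
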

  \begin{proof}
By Ken Brown's lemma, left Quillen functors from $\Set^{\cA^\op}$ that preserve the terminal object preserve weakly contractible cofibrant objects. Now from the naturality square associated to a weakly contractible cofibrant object $X$
\[ \begin{tikzcd} UX \arrow[d, wearrow, "!"'] \arrow[r, "\alpha_X"] & VX \arrow[d, uwearrow, "!"] \\ U\ast \arrow[r, equals] & V\ast \end{tikzcd}
\] and the 2-of-3 property, we see that the component $\alpha_X$ is a weak equivalence. By Corollary \ref{cor:EZ-magic}, if the components of $\alpha$ at quotients of representables are weak equivalences, then $\alpha$ is a natural weak equivalence. So the result follows.
  \end{proof}

  Note that $i^*$ preserves the terminal object, as a right adjoint, as does $i_!$, since in both domain and codomain it is representable and $i[0] \coloneq [0,1]^0$.

  \begin{prop}\label{prop:triangulation-quillen-equivalences} The functors
      \[ \begin{tikzcd} \cSet \arrow[r, bend right, "i^*"'] \arrow[r, phantom, "\bot"] & \sSet \arrow[l, bend right, "i_!"'] \end{tikzcd}\]
      are left Quillen equivalences.
  \end{prop}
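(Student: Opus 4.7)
The plan is to derive both Quillen equivalences from Corollary \ref{cor:derived-equivalence}, applied to the unit and counit of the adjunction $i_! \dashv i^*$.

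First I would prove that $i_! \dashv i^*$ is a Quillen equivalence. Both model structures have all objects cofibrant, so by Ken Brown's lemma the left Quillen functors $i_!$ and $i^*$ preserve weak equivalences. It therefore suffices to show the (non-derived) unit $\eta \colon \id_{\sSet} \Rightarrow i^* i_!$ and counit $\epsilon \colon i_! i^* \Rightarrow \id_{\cSet}$ are natural weak equivalences. For this I would invoke Corollary \ref{cor:derived-equivalence} twice: for the unit, take $\cA = \DDelta$ (which is Eilenberg--Zilber by \cite[6.8]{BergerMoerdijk:2008rc}) with target $\cM = \sSet$ and the parallel pair $\id_{\sSet}, i^* i_! \colon \sSet \to \sSet$; for the counit, take $\cA = \CCube$ (Eilenberg--Zilber by \cite[8.12(1)]{Campion:2023ez}) with target $\cM = \cSet$ and the parallel pair $i_! i^*, \id_{\cSet} \colon \cSet \to \cSet$. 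In each case the functors are left Quillen (a composite of left Quillen functors is left Quillen, using Lemmas \ref{lem:i!-left-quillen} and \ref{lem:i*-left-quillen}) and preserve the terminal object (as $i[0]$ is the terminal object of $\CCube$, both $i_!$ and $i^*$ preserve terminal objects).

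What remains is to verify the contractibility hypothesis of Corollary \ref{cor:derived-equivalence}: the quotients of representables by subgroups of their automorphism groups must be weakly contractible. In $\DDelta$ every automorphism is the identity (morphisms in $\DDelta$ preserve the linear order), so every such quotient $\Delta^n_{/H}$ is simply $\Delta^n$, which is contractible in the Kan--Quillen model structure. In $\CCube$ we have $\Aut(I^n) = \Sigma_n$, and the quotients $I^n_{/H}$ for $H \subset \Sigma_n$ are weakly contractible in the equivariant model structure precisely by Lemma \ref{lem:weakly-contractible-cube-quotients} --- this being the step where the whole point of imposing equivariance on fibrations pays off.

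Having established that $\eta$ and $\epsilon$ are natural weak equivalences and both $i_!$ and $i^*$ preserve weak equivalences, the derived adjunction between homotopy categories has invertible unit and counit, and is therefore an equivalence; hence $i_! \dashv i^*$ is a Quillen equivalence. Finally, $i^*$ being the right adjoint of this Quillen equivalence induces an equivalence of homotopy categories. Since an adjunction of categories is an equivalence whenever either functor is, the derived adjunction associated to $i^* \dashv i_*$ is an equivalence, so $i^* \dashv i_*$ is also a Quillen equivalence. The substance of the argument is thus concentrated in the Eilenberg--Zilber reduction of Corollary \ref{cor:derived-equivalence} together with Lemma \ref{lem:weakly-contractible-cube-quotients}; no further homotopical work on the triangulation functor itself is required.
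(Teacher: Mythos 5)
Your proof is correct and follows essentially the same route as the paper: both the paper and you deduce that the unit and counit of $i_! \dashv i^*$ are natural weak equivalences by applying Corollary \ref{cor:derived-equivalence} to left Quillen, terminal-object-preserving endofunctors of presheaves on the Eilenberg--Zilber categories $\DDelta$ and $\CCube$, with the contractibility hypothesis supplied by triviality of automorphisms in $\DDelta$ and by Lemma \ref{lem:weakly-contractible-cube-quotients} in $\CCube$. You merely spell out details the paper leaves implicit (Ken Brown's lemma, the passage from invertible derived unit/counit to the Quillen equivalence for $i^* \dashv i_*$), all of which are accurate.
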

  \begin{proof}
The unit and counit of these adjunctions each define natural transformations between left Quillen adjoints that preserve the terminal object. As the domain and codomain of these functors are categories of presheaves for Eilenberg--Zilber categories equipped with model structures for which all objects are cofibrant and quotients of representables are contractible, Corollary \ref{cor:derived-equivalence} applies to prove that both the unit and counit are natural weak equivalences.
\end{proof}

\subsection{The equivariant model structure is the test model structure}\label{ssec:test}

Finally, we show that the equivariant model structure coincides with the test model structure.

The cartesian cube category is a \emph{strict test category} \cite{BuchholtzMorehouse:2017vo}, so cartesian cubical sets admits a model structure, conjectured to exist by Grothendieck \cite{Grothendieck:pursuing} and established at this level of generality by Cisinski \cite{Cisinski:2006lp}, that presents classical homotopy theory. In Cisinski's model structure on presheaves over a test category---referred to as a \textbf{test model structure} below---the cofibrations are the monomorphisms and the weak equivalences are those maps of presheaves $f \colon X \to Y$ such that the map of simplicial sets defined by applying the functor $N\elf$, which takes the nerve of the category of elements, is a weak homotopy equivalence.

\begin{defn} A category is \textbf{aspherical} if its nerve is weakly contractible in Quillen's model structure.
  A functor $u \colon \cA \to \cB$ between small categories is \textbf{aspherical} if the comma category $u \downarrow b$ is aspherical for every $b \in \cB$.
  A presheaf over a small category is \textbf{aspherical} if its category of elements is aspherical.
\end{defn}

Note that, by definition, a presheaf over a test category is aspherical if and only if it is weakly contractible in the test model structure.

\begin{rmk}[{\cite[7.14]{CavalloSattler:2022re}}] \label{rmk:test-simplicial-is-kan} The test model structure on $\sSet$ is the Kan--Quillen model structure.
  In particular, a simplicial set is aspherical if and only if it is weakly contractible in the Kan--Quillen model structure.
\end{rmk}

Now we can use the machinery of aspherical functors to relate the test model structure on $\cSet$ to the Kan--Quillen model structure.

\begin{prop}[{\cite[4.2.24]{Cisinski:2006lp}}]
  \label{prop:aspherical-functor-quillen-equivalence}
  Let $u \colon \cA \to \cB$ be an aspherical functor between test categories. Then the adjunction
  \[
    \begin{tikzcd}
      \Set^{\cB^\op} \arrow[rr, phantom, "\bot"] \arrow[rr, bend left, "u^*"] && \Set^{\cA^\op} \arrow[ll, bend left, "u_*"]
    \end{tikzcd}
  \]
  defines a Quillen equivalence between test model structures.
\end{prop}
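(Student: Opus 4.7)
The plan is to deduce the Quillen equivalence from the characterization of weak equivalences in the test model structure via the nerve of the category of elements functor $N\elf$, using Quillen's Theorem A as the main technical ingredient. The proof divides into verifying (i) that $u^* \dashv u_*$ is a Quillen adjunction, and (ii) that the derived left adjoint is an equivalence of homotopy categories.

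First, I would check the Quillen adjunction. Cofibrations in either test model structure are the monomorphisms, and since $u^*$ is computed pointwise, it obviously preserves these. To handle trivial cofibrations, it suffices to show that $u^*$ preserves all weak equivalences. For this, fix a presheaf $X$ on $\cB$ and consider the natural functor $\bar{u}_X \colon \elf(u^*X) \to \elf(X)$ sending $(a, x \in X(ua)) \mapsto (ua, x)$. A direct check shows that for each object $(b, y) \in \elf(X)$, the comma category $\bar{u}_X \downarrow (b,y)$ is isomorphic to $u \downarrow b$ — the additional datum $x = \phi^* y$ is uniquely determined. By hypothesis, each $u \downarrow b$ is aspherical, so $\bar{u}_X$ is an aspherical functor and hence by Quillen's Theorem A the map $N\bar{u}_X \colon N\elf(u^*X) \to N\elf(X)$ is a weak homotopy equivalence. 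This construction is natural in $X$, so for any map $f \colon X \to Y$ the naturality square and 2-of-3 give that $N\elf(u^*f)$ is a weak equivalence if and only if $N\elf(f)$ is; in particular $u^*$ preserves (and reflects) weak equivalences.

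For the Quillen equivalence itself, I would use that for a test category $\cC$, the nerve--of--elements functor $N\elf \colon \Set^{\cC^\op} \to \sSet$ descends to an equivalence of homotopy categories $\mathrm{Ho}(\Set^{\cC^\op}) \simeq \mathrm{Ho}(\sSet_{KQ})$ (this is essentially the content of Grothendieck's conjecture for test categories, which is how ``test category'' is defined here; note Remark \ref{rmk:test-simplicial-is-kan}). Applying this to both $\cA$ and $\cB$, the natural weak equivalence $N\elf(u^*(-)) \simeq N\elf(-)$ established in step two shows that the triangle
\[
  \begin{tikzcd}
    \mathrm{Ho}(\Set^{\cB^\op}) \arrow[rr, "u^*"] \arrow[dr, "N\elf"'] & & \mathrm{Ho}(\Set^{\cA^\op}) \arrow[dl, "N\elf"] \\
    & \mathrm{Ho}(\sSet_{KQ}) &
  \end{tikzcd}
\]
commutes up to natural isomorphism. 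Since both legs $N\elf$ are equivalences, so is $u^*$; and since $u^*$ already preserves weak equivalences, the left derived functor coincides with $u^*$ itself. A Quillen adjunction whose total left derived functor is an equivalence is a Quillen equivalence.

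The main obstacle is the application of Quillen's Theorem A in step two — identifying the comma categories of $\bar{u}_X$ with $u \downarrow b$ and recognizing that the asphericity of $u$ transports through the functor $\elf$ to the asphericity of $\bar{u}_X$ uniformly in $X$. Everything else is formal once one accepts the fundamental theorem of test categories that $N\elf$ induces an equivalence to $\mathrm{Ho}(\sSet_{KQ})$.
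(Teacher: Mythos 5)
Your proof is correct. The paper does not prove this proposition itself—it simply cites Cisinski [4.2.24]—and your argument is a faithful reconstruction of the standard one behind that citation: the identification $\bar{u}_X \downarrow (b,y) \cong u \downarrow b$ (the same computation the paper uses, for representable $X$, to prove the characterization of aspherical functors in the following proposition), Quillen's Theorem A to see that $N\elf(u^*X) \to N\elf(X)$ is a weak homotopy equivalence, and the fundamental theorem of test categories identifying both homotopy categories with $\Ho(\sSet)$ via $N\elf$.
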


\begin{prop}[{\cite[4.2.23]{Cisinski:2006lp}}]
  \label{prop:aspherical-functor-characterization}
  A functor $u \colon \cA \to \cB$ between small categories is aspherical if and only if $u^*(\yo b)$ is aspherical for every $b \in \cB$.
\end{prop}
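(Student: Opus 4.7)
The plan is to reduce the statement to an essentially tautological identification of two categories by unfolding the definitions of ``aspherical presheaf'' and ``aspherical functor''.

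First I would spell out both sides in terms of their defining comma/element categories. By definition, $u \colon \cA \to \cB$ is aspherical when, for each $b \in \cB$, the comma category $u \downarrow b$ has weakly contractible nerve. Objects of $u \downarrow b$ are pairs $(a, \phi)$ with $a \in \cA$ and $\phi \colon u(a) \to b$ in $\cB$, and a morphism $(a, \phi) \to (a', \phi')$ is a map $f \colon a \to a'$ in $\cA$ with $\phi' \circ u(f) = \phi$. On the other hand, the presheaf $u^*(\yo b) \in \Set^{\cA^\op}$ sends $a \in \cA$ to $\cB(u(a),b)$, so an element over $a$ is precisely a morphism $\phi \colon u(a) \to b$. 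Hence the category of elements $\elf(u^*(\yo b))$ has the same objects as $u \downarrow b$, and a morphism $(a,\phi) \to (a',\phi')$ in $\elf(u^*(\yo b))$ is a map $f \colon a \to a'$ with $u^*(\yo b)(f)(\phi') = \phi$, i.e.\ $\phi' \circ u(f) = \phi$.

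Next I would observe that this matching of objects and morphisms is strictly functorial in $b$ and yields a natural isomorphism of categories
\[
\elf(u^*(\yo b)) \cong u \downarrow b.
\]
Taking nerves and applying the definition that a presheaf is aspherical if and only if its category of elements has weakly contractible nerve, this isomorphism immediately gives: $u^*(\yo b)$ is aspherical if and only if $u \downarrow b$ is aspherical.

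Finally I would conclude by quantifying over $b \in \cB$. The right-hand condition of the proposition, that $u^*(\yo b)$ is aspherical for every $b \in \cB$, is therefore equivalent to the statement that $u \downarrow b$ is aspherical for every $b \in \cB$, which is precisely the definition of $u$ being an aspherical functor. There is no substantive obstacle here; the only care required is in correctly matching the variance and composition condition when identifying $\elf(u^*(\yo b))$ with $u \downarrow b$, after which the proposition is immediate from the definitions.
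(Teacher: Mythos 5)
Your proposal is correct and is exactly the paper's argument: the paper's entire proof is the one-line observation that the category of elements of $u^*(\yo b)$ is equivalent to the comma category $u \downarrow b$, which you simply spell out in more detail before unwinding the two definitions of asphericity.
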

\begin{proof}
  The category of elements of $u^*(\yo b)$ is equivalent to the comma category $u \downarrow b$.
\end{proof}

\begin{cor}
  \label{cor:delta-to-cube-aspherical}
  The functor $i \colon \DDelta \to \CCube$ is aspherical.
\end{cor}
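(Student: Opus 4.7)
The plan is to invoke Proposition \ref{prop:aspherical-functor-characterization}, which reduces the problem of asphericity of $i \colon \DDelta \to \CCube$ to showing that $i^*(\yo b)$ is aspherical for every $b \in \CCube$. Thus I need only verify asphericity of the presheaves $i^*(I^n) \in \sSet$ for each $n \geq 0$.

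By Lemma \ref{lem:restriction-is-triangulation}, the restriction functor $i^*$ agrees with the triangulation functor $T \colon \cSet \to \sSet$. Since $T$ is cocontinuous and product-preserving (as noted in the proof of that lemma), and it sends the cubical interval $I^1$ to the simplicial interval $\Delta^1$, it sends $I^n \cong (I^1)^n$ to $(\Delta^1)^n$. Hence $i^*(\yo[n]) \cong (\Delta^1)^n$ as simplicial sets.

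Next I need to see that $(\Delta^1)^n$ is aspherical. By Remark \ref{rmk:test-simplicial-is-kan}, asphericity of a simplicial set coincides with weak contractibility in the Kan--Quillen model structure. Since $\Delta^1$ is weakly contractible and the Kan--Quillen model structure is cartesian (so that finite products preserve weak equivalences between fibrant objects, and all simplicial sets are cofibrant), the finite product $(\Delta^1)^n$ is weakly contractible; alternatively, one may exhibit a direct deformation retraction of $(\Delta^1)^n$ onto a vertex using the product of the contracting homotopies on each factor.

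No serious obstacle arises here: the result is a three-line deduction from the earlier identification of $i^*$ with triangulation together with the characterization of aspherical functors.
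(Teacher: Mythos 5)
Your proposal is correct and follows exactly the paper's own proof: apply Proposition \ref{prop:aspherical-functor-characterization} to reduce to asphericity of $i^*(I^n)$, identify $i^*(I^n)\cong(\Delta^1)^n$ via Lemma \ref{lem:restriction-is-triangulation}, and conclude by Remark \ref{rmk:test-simplicial-is-kan} since $(\Delta^1)^n$ is weakly contractible. The only difference is that you spell out the contractibility of $(\Delta^1)^n$, which the paper takes as evident.
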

\begin{proof}
  By Proposition \ref{prop:aspherical-functor-characterization}, we want to show that $i^*I^n \in \sSet$ is an aspherical presheaf for each $n \in \cN$.
  By Remark \ref{rmk:test-simplicial-is-kan}, this means showing $i^*I^n$ is contractible in the Kan--Quillen model structure.
  We have $i^*I^n \cong (\Delta^1)^n$ by Lemma \ref{lem:restriction-is-triangulation}, so this is indeed the case.
\end{proof}

\begin{thm}
  The equivariant model structure on $\cSet$ coincides with the test model structure.
\end{thm}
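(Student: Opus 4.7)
The plan is to show that the equivariant and test model structures on $\cSet$ share both cofibrations and weak equivalences, which suffices since in any model structure the fibrations are characterized by right lifting against trivial cofibrations. Cofibrations agree on the nose: both the test model structure on a presheaf topos over a test category (by construction) and the equivariant model structure (Theorem \ref{thm:equivariant-model-structure}) have the monomorphisms as cofibrations. So the whole task reduces to comparing weak equivalences.

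To compare weak equivalences, I would channel both classes through $\sSet$ via the triangulation functor $i^*$. Corollary \ref{cor:delta-to-cube-aspherical} shows that $i \colon \DDelta \to \CCube$ is aspherical, so by Proposition \ref{prop:aspherical-functor-quillen-equivalence}, the adjunction $i^* \dashv i_*$ is a Quillen equivalence between the test model structures on $\cSet$ and $\sSet$. By Remark \ref{rmk:test-simplicial-is-kan}, the test model structure on $\sSet$ is the Kan--Quillen model structure. On the other hand, Proposition \ref{prop:triangulation-quillen-equivalences} already established that the same adjunction $i^* \dashv i_*$ is a Quillen equivalence between the equivariant model structure on $\cSet$ and the Kan--Quillen model structure on $\sSet$.

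Now I would invoke \cite[1.3.16]{Hovey:1999}: for a Quillen equivalence whose source has all objects cofibrant, the left adjoint both preserves and reflects weak equivalences. Both model structures on $\cSet$ have all objects cofibrant (cofibrations being the monomorphisms), so in each model structure a map $f$ of cubical sets is a weak equivalence if and only if $i^* f$ is a weak equivalence in the Kan--Quillen model structure. Hence the two classes of weak equivalences coincide, completing the proof. I do not anticipate any real obstacle; the argument is essentially bookkeeping on top of Corollary \ref{cor:delta-to-cube-aspherical} and Proposition \ref{prop:triangulation-quillen-equivalences}, both of which are the substantive results.
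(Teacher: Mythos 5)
Your proposal is correct and follows essentially the same route as the paper: reduce to comparing weak equivalences (the cofibrations being the monomorphisms in both cases), then use Proposition \ref{prop:triangulation-quillen-equivalences} on the equivariant side and Proposition \ref{prop:aspherical-functor-quillen-equivalence} together with Corollary \ref{cor:delta-to-cube-aspherical} on the test side, concluding via the fact that a left Quillen equivalence preserves and reflects weak equivalences between cofibrant objects. The paper's proof is the same argument, stated slightly more tersely.
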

\begin{proof}
  These two model structures have the same cofibrations, so it suffices to show they have the same weak equivalences.
  Recall that a left Quillen equivalence preserves and reflects weak equivalences between cofibrant objects.
  Thus, by Proposition \ref{prop:triangulation-quillen-equivalences}, a map $f$ is a weak equivalence in the equivariant model structure if and only if $i^*f$ is a weak equivalence.
  But by Proposition \ref{prop:aspherical-functor-quillen-equivalence} and Corollary \ref{cor:delta-to-cube-aspherical}, $f$ is also a weak equivalence in the test model structure if and only if $i^*f$ is a weak equivalence.
  Thus the weak equivalences of the equivariant and test model structures coincide.
\end{proof}

%auto-ignore
\newcommand{\PAIR}[1]{\langle #1\rangle}

\newcommand{\textformal}[1]{\texttt{#1}}
\newcommand{\formalurl}{https://ecavallo.github.io/equivariant-cartesian}
\newcommand{\formalmodule}[1]{\href{\formalurl/#1.html}{\textformal{#1}}}
\newcommand{\formaldef}[3]{\href{\formalurl/#1.html\##3}{\textformal{#1.#2}}}

\newcommand{\Elem}{\mathsf{Elem}}
\newcommand{\Univx}{\mathcal{V}}
\newcommand{\Flat}{\flat}

\newcommand{\Ival}{\mathsf{I}}
\newcommand{\UnivCof}{{\Phi}}
\newcommand{\toptt}{\mathsf{tt}}

\newcommand{\Contr}{\mathsf{Contr}}
\newcommand{\LocalFillStr}{\mathsf{LocalFill}}
\newcommand{\Comp}{\mathsf{Fill}}
\newcommand{\Transp}{\mathsf{Transp}}
\newcommand{\Equivariant}{\mathsf{Equivariant}}
\newcommand{\FibStr}{\mathsf{Fib}}

\newcommand{\PathTy}{\mathsf{Path}}
\newcommand{\EquivTy}{\mathsf{Equiv}}
\newcommand{\GlueTy}{\mathsf{Glue}}
\newcommand{\WeakGlueTy}{\mathsf{WeakGlue}}
\newcommand{\isContrTy}{\mathsf{isContr}}
\newcommand{\Univf}{\mathcal{U}}

\newcommand{\draShut}{\mathsf{shut}}
\newcommand{\draOpen}{\mathsf{open}}

\appendix

\section{Type-theoretic development and formalization}\label{app:type-theoretic}

\subsection{Introduction}

This appendix provides a description of the equivariant cartesian cubical set model in the language of dependent type theory.
The category of presheaves on any index category models an {\em extensional} dependent type theory, such as the one introduced by Martin-L\"of \cite{ml79}, as observed by Hofmann \cite[\S4]{hofmann97} and detailed by Awodey, Gambino, and Hazratpour \cite{AGH:2021}.
Briefly, contexts are interpreted as presheaves, and
a type $A$ in context $\Gamma$ is interpreted as a map $A : \Gamma \to \mathcal{V}$, where $\dot{\mathcal{V}} \to \mathcal{V}$ is a classifier for small maps of presheaves as in \S\ref{ssec:realignment} above.\footnote{Alternatively, in the style of Hofmann, a type $A$ in context $\Gamma$ is interpreted as a presheaf on the category of elements $\int{\Gamma}$. Small types (presheaves valued in small sets) then correspond to maps $\Gamma \to \mathcal{V}$ as above.} Starting from type-theoretic axioms that capture the basic structure of cartesian cubical sets (e.g.\ an interval object), we can construct a translation, or \emph{internal model}, of HoTT in extensional type theory, in such a way that the usual functorial, or \emph{external}, notions are recovered under the interpretation into presheaves, again as detailed in \emph{op.cit}.  This was the approach used in the formalization by Orton and Pitts \cite{OrtonPitts:2018af}.

The internal homotopical model interprets contexts as types of the extensional theory, while types are interpreted as type families equipped with \emph{equivariant filling structure}. Most of the required axioms can be formulated within plain extensional type theory, augmented by the cubical axioms; however, in order to interpret (univalent) universes, we follow Licata et al.\ \cite{LOPS18} in using a modal operator to refer to the set of global sections of a presheaf, an external notion that falls outside the type theory of the category of presheaves itself.

This approach has the practical advantage that uniformity conditions on filling structures need not be stated
and checked explicitly, as such conditions are in effect built into the presheaf interpretation (see \cite[\S\S7--8]{AGH:2021}). It has the theoretical benefit that the results can be interpreted in models of extensional type theory other than cubical sets.
For example, Uemura \cite{Uemura18} constructs a model of HoTT in cubical assemblies in this way.  Finally, this approach is amenable to direct formalization in a proof assistant. Beginning from an axiomatization similar to the one in \cite{OrtonPitts:2018af,LOPS18}, all of the material presented in this appendix has been formalized in the proof assistant Agda \cite{agda}.  The formal development can be found at \cite{formalization}, and we include references to relevant definitions from the formalization in our summary below.

Variations on this kind of internal model construction have been presented in detail elsewhere
\cite{OrtonPitts:2018af,LOPS18,Uemura18,Boulier20,CMS20,ABCFHL}, so we limit ourselves to a high level description and
some points that are not stressed in those references. For the sake of concision, we start from simpler but more
restrictive axioms than in the formal development; the additional generality is not principally motivated
by applications, but by ease of formalization. We refer readers interested in a more parsimonious
axiomatization to the documentation at \cite{formalization}.

\subsubsection{Metatheory}

The metatheory can be classical set theory with Grothendieck universes, or a constructive version such as Aczel's constructive set theory with universes \cite{aczel:relate}. For each Grothendieck universe in the metatheory, we have a Hofmann--Streicher universe $\Univx$ in the extensional type theory that reflects all type forming operations (as in \cite{ml79}).
The notions of fibred structure represented by these universes satisfy a relative acyclicity property (as used in \S\ref{sec:fibred}) which can be expressed inside the type theory (\formalmodule{axiom.realignment}); it is called the ``strictness axiom'' by Orton and Pitts \cite[Theorem 8.4]{OrtonPitts:2018af} and ``realignment'' by Gratzer, Shulman, and Sterling \cite[Definition~1.1.4]{GratzerSterlingShulman:2022}.

\subsubsection{Comparison with external proofs}

Since we are working in the internal language of cubical \emph{sets}, rather than cubical \emph{species}, we cannot transfer constructions from the latter to the former as is done in the external development (beginning in \S\ref{sec:symmetric}).  This means that we must check equivariance conditions explicitly:\ e.g.\ compare the proof of the Frobenius condition in Proposition \ref{prop:equivariant-frobenius} to that in Proposition \ref{prop:internal-frobenius} below.  It might be possible to instead work internally to cubical species and then transfer the results to cubical sets by working in a type theory with modalities based on the adjoints $\Leftadj \dashv \Delta \dashv \Gamma$ of \S\ref{ssec:species-to-sets}, but we leave this for
future work.

\subsubsection{Quillen model structure and fibrant replacement}

We formalize and describe here only an interpretation of HoTT; we do not build an associated Quillen model
structure. Boulier and Tabareau \cite{Boulier20} have extended Orton and Pitts' type-theoretic model of HoTT
\cite{OrtonPitts:2018af,LOPS18} (which axiomatizes cubical sets with connections) to obtain a model
structure on the category of types in the universe $\Flat \Univx$ of global presheaves (see \S\ref{ssec:tt-universes} below for a discussion of the $\Flat$ modality). We conjecture that their work adapts to the equivariant cartesian case.

One difference is in the definition of a fibrant replacement, or more generally the factorization for the
(trivial cofibration, fibration) factorization system. In our external development, this is obtained via the algebraic small object argument from a generating category transferred
from an algebraic weak factorization system on cubical species (\S
\ref{ssec:cylindrical-premodel-equivariant}). Boulier and Tabareau derive their fibrant replacement from a
postulated \emph{quotient inductive type (QIT)} \cite{AltenkirchCDKF18}. In our formalization we postulate a
similar QIT for (trivial cofibration, fibration) factorization (\formalmodule{axiom.fibrant-replacement}) and
derive a universal property (\formalmodule{fibration.fibrant-replacement}), though we do not need
this construction for the interpretation of HoTT. It is worth noting that unlike fibrant replacement in
non-equivariant cubical models, equivariant fibrant replacement does not seem to be expressible as a
\emph{W-type with reductions} in the sense of Swan \cite{swan18wtypes}. The construction of fibrant
replacement as a subset of an upper approximation in Coquand, Huber, and M\"ortberg's constructions of higher
inductive types \cite{CHM18} has to be replaced by a quotient by a partial equivalence relation on this
upper approximation.\footnote{We note that, in the case of the cartesian model, the upper approximation can be
described by a {\em finitary} inductive definition, so choice is not needed for proving the required property
of this quotient.}

\subsection{Judgments of the homotopical interpretation}
\label{ssec:tt-judgments}

From here forward, we work inside an extensional type theory, which we will call the \emph{ambient}
theory. We will introduce the necessary axioms as we go along, but first we can set up the judgmental
structure of the homotopical interpretation.  A \emph{context} in the homotopical model is a type of the
ambient theory. A \emph{substitution} between contexts is a function between types. The unit type $1$ serves
as the empty context. A \emph{type} of the homotopical model over a context $\Gamma$ is a family over $\Gamma$
paired with an equivariant filling structure, which we will define in \S\ref{ssec:app-fibration} below.
The \emph{terms} of a type over $\Gamma$ in the model are the global sections of the family $A$ underlying the
type, i.e.\ the elements of $\Elem~\Gamma~A \coloneq \Pi_{\gamma:\Gamma}A~\gamma$.

If $A$ is a family of types over $\Gamma$, we write $\Gamma.A$ for the type $\Sigma_{\gamma:\Gamma}A~\gamma$.
Thus an element of $\Gamma.A$ is a pair $\gamma,a$ with $\gamma$ in $\Gamma$ and $a$ in $A~\gamma$. If $A$ is
the underlying family of a type in the model, then we take $\Gamma.A$ as the interpretation of the context
extension of $\Gamma$ by that type.

\subsection{Cubes and cofibrations}

We assume as given two special types: an \emph{interval} type $\Ival$ with two distinct elements $0\neq 1$ (\formaldef{axiom.shape}{$\mathbbe{i}$}{\%f0\%9d\%95\%9a}) and a type of \emph{cofibrations} $\UnivCof$ (\formalmodule{axiom.cofibration}). These types are in all universes: we have $\UnivCof:\Univx$ and $\Ival:\Univx$. For
each $n : \NN$, the $n$-cube $\Ival^n : \Univx$ is then the cartesian product of $n$ copies of the interval. To
each cofibration $\psi$ is associated a proposition $[\psi] : \Univx$, where a type $A$ is a {\em proposition} if
it is a subsingleton, i.e.\ we have $a_0 = a_1 : A$ for any $a_0$ and $a_1$ in $A$.

We assume that $[-]$ embeds $\UnivCof$ as a sublattice of the lattice of propositions. In particular, for
$\psi,\phi : \UnivCof$ we have $\psi \lor \phi : \UnivCof$ such that $[\psi \lor \phi]$ is the union of $[\psi]$ and
$[\phi]$, and we have a true cofibration $\top : \UnivCof$ inhabited by some $\toptt : [\top]$. In this summary, we
assume \emph{cofibration extensionality}: if $[\psi]$ and $[\phi]$ are logically equivalent then
$\psi = \phi$.\footnote{In the formal development, we assume cofibration extensionality only to define Swan's
  strict identity types \cite[\S9.1]{CCHM:2018ctt} (\formalmodule{type-former.swan-identity}).} In particular,
given $x : [\psi]$ we have $\psi = \top$ and thus $x = \toptt$.

The model in cartesian cubical sets described in the main article corresponds to taking the representable
1-cube as the interval and the subobject classifier $\Omega$ as the type of cofibrations; the decoding
function $[-] : \Omega \to \Univx$ is the classifying map $\top \colon 1 \to \Omega$ regarded as a type family over
$\Omega$. If working constructively, we can instead take the classifier for {\em levelwise decidable}
subobjects, those monomorphisms $m \colon A \rightarrowtail B$ in $\cSet$ such that the component
$m_k \colon A_k \rightarrowtail B_k$ is decidable for each $k \in \NN$.

\begin{rmk}[\formalmodule{axiom.shape}]
  In the formal development, we do not work with cubes defined explicitly as products of an interval. Instead,
  we assume an abstract type $\mathsf{Shape}$ and a decoding function giving $\langle S \rangle : \Univx$ for
  each $S : \mathsf{Shape}$. We require that the interval $\Ival$ is coded by a shape, but not that every shape
  is a power of $\Ival$, nor that $\Ival^n$ is coded by a shape for $n \neq 1$. To obtain the equivariant fibration model, we would instantiate with
  $\mathsf{Shape} \coloneq \NN$ and $\langle n \rangle \coloneq \Ival^n$. We can also recover the
  non-equivariant model by taking $\Ival$ to be the only shape.
\end{rmk}

\subsection{Partial elements and contractible types}
\label{sec:tt-trivial-fibration}

The notion of {\em partial} elements and {\em contractible} types
play a crucial role in this internal description. Both definitions use only the type of cofibrations $\UnivCof$
and not the interval type $\Ival$.

\begin{defn}[\formaldef{cofibration}{$\_^+$}{_\%e2\%81\%ba}]
  To each type $A$ we associate a type $A^+ \coloneq \Sigma_{\psi:\UnivCof}A^{[\psi]}$ of partial elements of $A$.
  A partial element of $A$ is thus a pair $\psi,u$ where $u$ is in $A^{[\psi]}$.  The operation $A\mapsto A^+$
  on types is reflected in all universes and so defines a function $\Univx\rightarrow\Univx$.
\end{defn}

There is a canonical injection $i_A:A\rightarrow A^+$ which to any $a : A$ associates the element
$\top,u$ in $A^+$ with $u~x \coloneq a$. Viewed externally, $i_A$ is the partial map classifier introduced in
\S\ref{ssec:trivial-fibrations}, taken relative to the ambient context.

\begin{defn}[\formaldef{fibration.trivial}{Contr}{Contr}]
  For any type $A$, we can consider the type $\Contr(A)$ of {\em contractibility structures} on $A$. This is
  the type of operations $c_A$ which take a partial element $\psi,u$ in $A^+$ and build an element
  $c_A\,(\psi,u)$ in $A$ such that $[\psi]$ implies $c_A\,(\psi,u) = u~\toptt$.
\end{defn}

\begin{rmk}
  Any contractibility structure $c_A$ is a left inverse of $i_A$: we have $c_A\,(i_A~a) = a$ for any $a$ in
  $A$. Maybe surprisingly, the converse also holds: any left inverse $c_A$ of $i_A$ is in $\Contr(A)$, because
  if $c_A$ is a left inverse of $i_A$ then for any $\psi,u$ in $A^+$ we have that $[\psi]$ implies
  $(\psi,u) = i_A\,(u~\toptt)$ and thus $c_A\,(\psi,u) = c_A\,(i_A\,(u~\toptt)) = u~\toptt$.
\end{rmk}

\begin{defn}[\formaldef{fibration.trivial}{TFibStr}{TFibStr}]
  A \emph{trivial fibration structure} on a family of types $A$ over $\Gamma$ then consists of a family of
  contractibility structures on $A~\gamma$ for each $\gamma : \Gamma$.
\end{defn}

Viewed externally, such a family corresponds to a uniform trivial fibration structure in the sense of
Definition \ref{defn:uniform-tfib-structure}.

\subsection{Filling and equivariant filling}
\label{ssec:app-fibration}

Next we finish defining the interpretation of types by defining equivariant filling structures. We first
generalize the definition of fibration used by Angiuli et al.\ \cite{ABCFHL}, replacing the interval by an
arbitrary type.

\begin{defn}[\formaldef{fibration.fibration}{LocalFillStr}{LocalFillStr}]
  \label{defn:local-fill}
  Let $S$ be a type and $A$ be a family of types over $S$; we define the type $\LocalFillStr_S~A$ of {\em
    local $S$-filling structures} on $A$. These are operations $c_A$ which take as argument $r_0:S$ and
  $a_0:A~r_0$ and a partial section $\psi,u:(\Pi_{r:S}A~r)^{+}$ compatible with $a_0$, i.e.\ such that
  $[\psi]$ implies $u~\toptt~r_0 = a_0$, and produce an element $c_A~r_0~a_0~(\psi,u)$ in $\Pi_{r:S}A~r$ which
  extends $\psi,u$ and such that $c_A~r_0~a_0~(\psi,u)~r_0 = a_0$.
\end{defn}

\begin{defn}[\formaldef{fibration.fibration}{FillStr}{FillStr}]
  Let $S$ be a type and $A$ be a family of types over $\Gamma$. An \emph{$S$-filling structure} $c_A$ on $A$
  consists of a local $S$-filling structure $c_A~\gamma : \LocalFillStr_S~(A \circ \gamma)$ for every
  $\gamma : \Gamma^S$. We write $\Comp_S~A$ for the type of $S$-filling structures on $A$.
\end{defn}

In the cartesian cubical set model of Angiuli et al.\ \cite{ABCFHL}, a type is a family paired with an
$\Ival$-filling structure. To define {\em equivariant} filling structures, we use the case where $S = \Ival^n$ for
some $n : \NN$. In this case the symmetric group $\Sigma_n$ acts in a canonical way on $S$.  It then acts on
$\Gamma^S$ by precomposition, with $\gamma\sigma \coloneq \gamma\circ \sigma$ for $\gamma : \Gamma^S$ and
$\sigma : \Sigma_n$. We likewise have an action on partial elements: given $(\psi,u) : (\Pi_{r:S}A~r)^+$
define $(\psi,u)\sigma : (\Pi_{r:S}A~(\sigma~r))^+$ by $(\psi,u)\sigma \coloneq (\psi,u')$ where
$u'~x~r \coloneq u~x~(\sigma~r)$ for $x : [\psi]$ and $r : S$.

\begin{defn}[\formaldef{fibration.fibration}{FibStr}{FibStr}]
  An \emph{equivariant filling structure} on a family of types $A$ over $\Gamma$ is a family of operations
  $c^n_A$ in $\Comp_{\Ival^n}~\Gamma~A$ for $n : \NN$ each of which is {\em equivariant}, meaning that for any
  $\sigma$ in $\Sigma_n$, we have the {\em equivariance equation}
  \begin{equation}\label{eq:tt-equivariance}
    c^n_A~\gamma~(\sigma~r_0)~a_0~a~(\sigma~r_1) = c^n_{A}~(\gamma\sigma)~r_0~a_0~(a\sigma)~r_1
  \end{equation}
  for every $\gamma : \Gamma^{S}$, $r_0:S$, $a_0:A~(\sigma~(\gamma~r_0))$, compatible partial section $a:(\Pi_{r:S}A~(\gamma~r))^{+}$, and $r_1 : S$.

  We write $\Comp~\Gamma~A$ for the type of all equivariant filling structures on $A$.  These types of
  structure are reflected in each universe, so we have e.g.\
  $\Comp : \Pi_{\Gamma:\Univx}~(\Gamma\rightarrow\Univx)\rightarrow\Univx$.
\end{defn}

\begin{defn}[\formaldef{fibration.fibration}{$\_{\vdash}^{\mathtt{f}}\mathtt{Type}\_$}{_\%e2\%8a\%a2\%e1\%b6\%a0Type_}]
  An \emph{equivariant fibration} over $\Gamma$ is a family of types $A$ over $\Gamma$ paired with an
  equivariant filling structure.
\end{defn}

In this setting, building the model of HoTT consists in showing that each operator on types lifts to an
operator on equivariant filling structures, checking in each case that the output structure satisfies the
equivariance equation (\ref{eq:tt-equivariance}). Let us check for instance that we can interpret substitution
in types; the corresponding property in the external development is the stability of equivariant fibration
structures under pullback.

\begin{defn}[\formaldef{fibration.fibration}{$\_{\circ^{\mathtt{fs}}}\_$}{_\%e2\%88\%98\%e1\%b6\%a0\%cb\%a2_}]
  Let $A$ be a family of types over $\Gamma$ and let $\alpha:\Delta\rightarrow\Gamma$. Given $c_A$ in
  $\Comp~\Gamma~A$, we define $c_A\circ\alpha$ in $\Comp~\Delta~(A\circ\alpha)$ by
  $$
  (c_A\circ\alpha)^n~\gamma~r_0~a_0~a~r_1 \coloneq c^n_A~(\alpha\circ\gamma)~r_0~a_0~a~r_1
  $$
  and it is then clear that $c_A\circ \alpha$ is equivariant if $c_A$ is equivariant.
\end{defn}

\subsection{The Frobenius condition}
\label{ssec:tt-frobenius}

Proving the Frobenius condition, Definition \ref{defn:frobenius}, amounts to defining the interpretation of
$\Pi$-types. The corresponding result in the external, equivariant development is Proposition \ref{prop:equivariant-frobenius}.
A more detailed comparison between external and type-theoretic proofs of the Frobenius condition can be found in \cite[Appendix B]{HazratpourRiehl:2022tc}.

\begin{defn}
  Given a type family $A$ over $\Gamma$ and a type family $B$ over $\Gamma.A$, write $\Pi_AB$ for the
  family of types over $\Gamma$ defined by
  $$
  (\Pi_AB)\gamma \coloneq \Pi_{a:A\gamma}B(\gamma,a) \rlap{.}
  $$
\end{defn}

To prove the Frobenius condition in this setting is to show that, given filling structures on $A$ and $B$, we
have a filling structure on $\Pi_AB$. In fact the hypothesis of a filling structure on $A$ can be weakened: we
only need a \emph{transport structure} in the following sense.

\begin{defn}[\formaldef{fibration.transport}{TranspStr}{TranspStr}]
  Given a type $S$ and family of types $A$ over $\Gamma$, the type $\Transp_S~\Gamma~A$ of {\em $S$-transport
    structures} on $A$ is the type of operations $t_A$ which take $r_0:S$ and $a_0:A~(\gamma~r_0)$ and produce
  an element $t_A~\gamma~r_0~a_0$ in $\Pi_{r:S}A~(\gamma~r)$ such that $t_A~\gamma~r_0~a_0~r_0 = a_0$.

  An \emph{equivariant transport structure} on $A$ is a family of operations $t^n_A : \Transp_n~\Gamma~A$ for
  $n : \NN$ each of which satisfies the equivariance equation
  \[
    t^n_A~\gamma~(\sigma~r_0)~a_0~(\sigma~r_1)
    = t^n_{A}~(\gamma\sigma)~r_0~a_0~r_1
  \]
  for every $\gamma : \Gamma^{S}$, $r_0:S$, $a_0:A~(\sigma~(\gamma~r_0))$, and $r_1 : S$. We write
  $\Transp~\Gamma~A$ for the type of equivariant transport structures on $A$.
\end{defn}

\begin{rmk}[\formaldef{fibration.transport}{transpAndFiberwiseToFibStr}{transpAndFiberwiseToFibStr}]
  It is immediate that any (equivariant) filling structure on a type induces an (equivariant) transport
  structure by restricting to the partial section whose cofibration is $\bot$. As in \cite{ABCFHL}, one can
  conversely construct an equivariant filling structure on $A$ given an equivariant transport structure on $A$
  and an equivariant filling structure on the constant family $A~\gamma$ for every $\gamma : \Gamma$. This
  decomposition would be the key to interpreting higher inductive types following \cite{CHM18,CH19}, but we do
  not pursue this here.
\end{rmk}

\begin{prop}[Frobenius, \formaldef{type-former.pi}{$\Pi$FibStr}{\%ce\%a0FibStr}]
  \label{prop:internal-frobenius}
  Given a family of types $A$ over $\Gamma$ and $B$ over $\Gamma.A$, we have an operation
$$
\Transp~\Gamma~A\rightarrow\Comp~(\Gamma.A)~B\rightarrow \Comp~\Gamma~(\Pi_AB) \rlap{.}
$$
\end{prop}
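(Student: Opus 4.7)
The plan is to reduce filling in $\Pi_AB$ pointwise to filling in $B$, using the transport in $A$ to pick out, for each argument $a_1$ in the target fiber, a section of $A$ over the whole cube whose value at $r_1$ is $a_1$. Concretely, given $\gamma : \Gamma^{\Ival^n}$, $r_0 : \Ival^n$, $f_0 : (\Pi_AB)(\gamma\,r_0)$, a compatible partial section $(\psi,u) : (\Pi_{r:\Ival^n} (\Pi_AB)(\gamma\,r))^+$, and $r_1 : \Ival^n$, I must produce a function of $a_1 : A(\gamma\,r_1)$; I will set
$$
c^n_{\Pi_AB}\,\gamma\,r_0\,f_0\,(\psi,u)\,r_1\,a_1 \coloneq c^n_B\,\langle\gamma,a\rangle\,r_0\,b_0\,(\psi,u')\,r_1 \rlap{,}
$$
where $a \coloneq t^n_A\,\gamma\,r_1\,a_1 : \Pi_{r:\Ival^n} A(\gamma\,r)$, $\langle\gamma,a\rangle : \Ival^n \to \Gamma.A$ is the pairing, $b_0 \coloneq f_0(a\,r_0)$, and $u'\,x\,r \coloneq u\,x\,r\,(a\,r)$.

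The boundary conditions for $c^n_{\Pi_AB}$ reduce to those for $c^n_B$. Compatibility of $(\psi,u')$ with $b_0$ at $r_0$ follows from compatibility of $(\psi,u)$ with $f_0$. When $r_1 = r_0$, the equation $a\,r_0 = a_1$ provided by the transport structure gives $b_0 = f_0(a_1)$, so the output agrees pointwise with $f_0$; similarly, on $[\psi]$, the output agrees pointwise with $u\,\toptt\,r_1$. These pointwise agreements upgrade to equalities of functions via extensionality, which holds in the ambient extensional type theory.

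The main technical step, where both equivariance hypotheses are used, is verifying the equivariance equation~\eqref{eq:tt-equivariance} for $c^n_{\Pi_AB}$. Fix $\sigma \in \Sigma_n$ and apply both sides of the equation pointwise at $a_1 : A(\gamma(\sigma r_1))$. Equivariance of $t_A$ gives
$$
t^n_A\,\gamma\,(\sigma r_1)\,a_1 \circ \sigma = t^n_A\,(\gamma\sigma)\,r_1\,a_1 \rlap{,}
$$
which identifies the section $a$ built on the left-hand side, once precomposed with $\sigma$, with the section built on the right. Consequently $\langle\gamma,a\rangle\sigma = \langle\gamma\sigma, a\circ\sigma\rangle$, the element $b_0$ agrees on both sides, and $(\psi,u')$ transforms under $\sigma$ into the partial section used on the right. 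Equivariance of $c_B$ applied to the reindexed base $\langle\gamma,a\rangle$ then yields the desired equality. I expect this bookkeeping of the $\sigma$-action through the pullback $\langle\gamma,a\rangle$ to be the main obstacle: the construction itself is forced, but one must check that every ingredient transforms correctly so that the single application of equivariance of $c_B$ closes the argument.
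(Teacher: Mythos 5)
Your construction is exactly the paper's: the same definition of the filler for $\Pi_AB$ via $\tilde a \coloneq t^n_A\,\gamma\,r_1\,a_1$, the same reduction of the boundary conditions to those of $c_B$ using $\tilde a\,r_0 = a_1$ at $r_1 = r_0$ (resp.\ $\tilde a\,r_1 = a_1$ on $[\psi]$), and the same verification of equivariance by one application of the equivariance of $t_A$ (to identify $\tilde a \circ \sigma$ with the section built from $\gamma\sigma$) followed by one application of the equivariance of $c_B$ at the reindexed base $\langle\gamma,\tilde a\rangle$. The proposal is correct and takes essentially the same approach as the paper.
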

\begin{proof}
  Let us write $T$ for $\Pi_AB$. Given $t_A$ in $\Transp~\Gamma~A$ and $c_B$ in $\Comp~(\Gamma.A)~B$,
  we define $c_T$ in $\Comp~\Gamma~T$ by
  \begin{equation}\label{eq:tt-frobenius}
    c^n_T~\gamma~r_0~f_0~(\psi,f)~r_1~a_1 \coloneq
    c^n_B ~\PAIR{\gamma,\tilde{a}}~r_0~b_0~(\psi,b)~r_1
  \end{equation}
  where
  \[
    \begin{array}{lclcl}
      \tilde{a} &\coloneq& t_A^n~\gamma~r_1~a_1 &\text{in}& \Pi_{r:S}A~(\gamma~r) \\
      \PAIR{\gamma,\tilde{a}}~r &\coloneq& (\gamma~r,\tilde{a}~r) &\text{in}& (\Gamma.A)^S \\
      b~x~r &\coloneq& f~x~r~(\tilde{a}~r) &\text{in}& (\Pi_{r:S}B(\gamma~r,\tilde{a}~r))^{[\psi]} \\
      b_0 &\coloneq& f_0~(\tilde{a}~r_0) &\text{in}& B~(\gamma~r_0,\tilde{a}~r_0) \rlap{.}
    \end{array}
  \]
  So far this is only a slight generalization of \cite{ABCFHL}, having replaced $\Ival$ by $S = \Ival^n$.

  It remains to check the equivariance equation (\ref{eq:tt-equivariance}) for the operation $c_T$, assuming
  that the operations $t_A$ and $c_B$ are equivariant. Let $\sigma$ be an element of $\Sigma_n$. Write
  $\tilde a, b, b_0$ for the auxiliary definitions associated to
  $c^n_T~\gamma~(\sigma~r_0)~t_0~(\psi,t)~(\sigma~r_1)$ and $\tilde a', b', b_0'$ for those associated to
  $c_T^n~\gamma\sigma~r_0~f_0~(\psi,f)\sigma~r_1$. Then we have
  \begin{align*}
    c^n_T~\gamma~(\sigma~r_0)~t_0~(\psi,t)~(\sigma~r_1)~a_1
    &\coloneq c^n_B ~\PAIR{\gamma,\tilde{a}}~(\sigma~r_0)~b_0~(\psi,b)~(\sigma~r_1) \\
    (\text{equivariance of $c_B$}) &= c^n_B ~\PAIR{\gamma,\tilde{a}}\sigma~r_0~b_0~(\psi,b)\sigma~r_1 \\
    (\text{equivariance of $t_A$}) &= c^n_B ~\PAIR{\gamma\sigma,\tilde{a}'}~r_0~b_0'~(\psi,b')~r_1 \\
    &=: c_T^n~\gamma\sigma~r_0~f_0~(\psi,f)\sigma~r_1~a_1
  \end{align*}
  where we use equivariance of $t_A$ to see that
  $\tilde{a}~(\sigma~r) = t_A~\gamma~(\sigma~r_1)~a_1~(\sigma~r) = t_A~\gamma\sigma~r_1~a_1~r = \tilde{a}'~r$.
\end{proof}

The core of the argument for Frobenius in this type-theoretic setting is thus the defining equation
(\ref{eq:tt-frobenius}).

\begin{rmk}
  \label{rmk:tt-subst-pi}
  To interpret the law $(\Pi_A B)[\rho] = \Pi_{A[\rho]} B[\rho.A]$ for computing a substitution applied to a
  $\Pi$-type, it is also necessary to check that the operation defined in Proposition
  \ref{prop:internal-frobenius} commutes with reindexing along any $\rho : \Delta \to \Gamma$; see
  \formaldef{type-former.pi}{reindex$\Pi$FibStr}{reindex\%ce\%a0FibStr} in the formalization.
\end{rmk}

\subsection{Other type formers}
\label{ssec:tt-type-formers}

We can follow the pattern of the proof of Proposition \ref{prop:internal-frobenius} to lift the other
type-theoretic operations to filling structures: take the corresponding definition of Angiuli et al.\
\cite{ABCFHL}, replace $\Ival$ by $S = \Ival^n$, and check the equivariance equation.

For instance (\formalmodule{type-former.sigma}), we define the $\Sigma$-type $\Sigma_AB$ of families $A$ over
$\Gamma$ and $B$ over $\Gamma.A$ by $(\Sigma_AB)\gamma = \Sigma_{a:A\gamma}B(\gamma,a)$ and build an element
of type
\[
  \Comp~\Gamma~A\rightarrow\Comp~(\Gamma.A)~B\rightarrow \Comp~\Gamma~(\Sigma_AB) \rlap{.}
\]
This corresponds to the closure of fibrations under composition in the external development.

To interpret identity types, we first define path types (\formalmodule{type-former.path}) as an instance of
extension types (\formalmodule{type-former.extension}) \`a la Riehl and Shulman
\cite{RiehlShulman17}. Extension types correspond externally to the closure of fibrations under Leibniz
exponentiation by cofibrations (Proposition \ref{prop:equivariant-sm7}). Path types suffice to interpret
identity types with a propositional computational rule for the eliminator. To interpret identity types with a
judgmental computation rule, we can apply a modification due to Swan to path types \cite[\S9.1]{CCHM:2018ctt}
(\formalmodule{type-former.swan-identity}).

We establish fibrancy and univalence of universes using the $\GlueTy$ types introduced in \cite[\S6]{CCHM:2018ctt} and adapted to cartesian cubical type theory in \cite[\S2.11]{ABCFHL} (\formalmodule{type-former.glue}).
Preliminary $\WeakGlueTy$ types correspond to the equivalence extension property for the equivariant premodel structure proven in Proposition \ref{prop:equivariant-EEP}.
The $\GlueTy$ types and associated properties (\formalmodule{universe.univalence}) correspond to univalence of the universe of equivariantly fibrant types proven in Proposition \ref{prop:equivariant-univalent}.
Mirroring the forward direction of Proposition \ref{prop:EEP-univalence}, this uses realignment for the universe of equivariantly fibrant types (\formalmodule{fibration.realignment}), which is deduced from realignment for the universe of the extensional type theory (\formalmodule{axiom.realignment}) and relative acyclicity of equivariant filling structures (\formaldef{fibration.realignment}{realignFibStr}{realignFibStr}); compare Proposition \ref{prop:has-universes}.

\subsection{Tiny interval and universes}
\label{ssec:tt-universes}

To interpret (univalent) universes, we follow Licata, Orton, Pitts, and Spitters \cite{LOPS18} and work in an
extension of extensional type theory by a modal type operator $\Flat$.  For the purposes of this summary, it
suffices to understand the motivating semantics in cubical sets: if $A$ is a presheaf, then $\Flat A$ is the
constant presheaf of global sections of $A$. We refer to the documentation of the formalization for a precise
description of this setting. We will sometimes refer to an element of $\Flat A$ as a ``global element of
$A$''. In particular, we read $\Flat \Univx$ as the type of external small presheaves. We leave the inclusion
$\Flat A \to A$ implicit in the following.

The use of this modality is to express internally that the interval is tiny, i.e.\  that exponentiation by the
interval $(-)^\Ival$ has a right adjoint \emph{root functor} $\sqrt[\Ival]{-}$ on (external) presheaves, as used in the proof of Lemma \ref{lem:symmetric-interval-tiny}.  Specifically, we require as an
axiom a functorial operator $\sqrt[\Ival]{-} : \Flat \Univx \to \Flat \Univx$ and an isomorphism
\[
  \Flat (A^\Ival \to B) \cong \Flat (A \to \sqrt[\Ival]{B})
\]
natural in $A,B : \Flat \Univx$, exhibiting $\sqrt[\Ival]{-}$ as right adjoint to exponentiation $(-)^\Ival$
(\formalmodule{axiom.tiny}). The restriction to global types is necessary for this axiom to be consistent
\cite[Theorem 5.1]{LOPS18}. By iterating, we also have a right adjoint $\sqrt[S]{-} : \Flat \Univx \to \Flat \Univx$ to
exponentiation by each cube $S = \Ival^n$.

\subsubsection{Dependent right adjoints (\formalmodule{tiny.dependent})}

To construct the universe, it is useful to observe that each right adjoint $\sqrt[S]{-}$ induces a \emph{dependent right adjoint} (spelled out in \cite[Lemma 2.2]{CoquandHuberSattler22}; see also Birkedal et al.\ \cite{Birkedal2020} and \cite[\S5]{LOPS18}).
Note the appearance of similar structure in Lemma \ref{lem:leibniz-pullback-application-loc-rep-fibred-structure} of the external development, which is likewise used to construct universes.

Briefly, for each
$\Gamma : \Flat \Univx$ and global type family $B : \Flat(\Gamma^S \to \Univx)$ we have a family $\sqrt[S]{B}$ over
$\Gamma$ and an isomorphism between \emph{dependent} function types
\[
  \draShut_S : \Flat (\Elem~\Gamma^S~B) \cong \Flat (\Elem~\Gamma~\sqrt[S]{B}) : \draOpen_S
\]
which is natural in $\Gamma$ and $B$ in an appropriate sense.

\begin{rmk}
  Riley \cite{Riley:2024} describes a type theory with a primitive dependent right adjoint of this kind and shows that this structure suffices to carry out the \cite{LOPS18} universe construction without relying on a $\flat$ modality \cite[\S5]{Riley:2024}.
  We use the same style of argument below; although our dependent right adjoint is not primitive, it remains a convenient abstraction, especially in the equivariant case where the universe construction is more involved than in \cite{LOPS18}.
\end{rmk}

\subsubsection{Universe of non-equivariant fibrations}

We now transpose the family $\LocalFillStr_S : \Flat(\Univx^S \to \Univx)$ from Definition \ref{defn:local-fill} to
obtain $\sqrt[S]{\LocalFillStr_S}$ over $\Univx$ with the property that for any global family
$A : \Flat(\Gamma \to \Univx)$ we have
\begin{equation}\label{eq:non-equivariant-fib}
  \Flat (\Elem~\Gamma~(\sqrt[S]{\LocalFillStr_S} \circ A))
  \cong \Flat (\Elem~\Gamma^S~(\LocalFillStr_S \circ A^S))
  = \Flat (\Comp_S~A) \rlap{.}
\end{equation}

\begin{defn}
  Define $\Univf_S \coloneq \Sigma_{A:\Univx}\sqrt[S]{\LocalFillStr_S}~A$.
\end{defn}

From (\ref{eq:non-equivariant-fib}), we have an isomorphism for $\Gamma : \Flat \Univx$ between global families
$\Gamma \to \Univf_S$ and global $\Univx$-small families over $\Gamma$ paired with $S$-filling structures. Note
that the type $\Univf_\Ival$ is exactly the universe defined in \cite{LOPS18}.
\begin{defn}
  \label{defn:get-local-filling}
  Leaving the first projection $\pi_1 : \Univf_S \to \Univx$ implicit, we transpose the projection
  $\pi_2 : \Pi_{A : \Univf_S} \sqrt[S]{\LocalFillStr_S}~A$ to yield a map
  $\draOpen_S~\pi_2 : \Pi_{A : (\Univf_S)^S} \LocalFillStr_S~A$ that associates a local filling structure
  to every $S$-cell in the universe.
\end{defn}

\subsubsection{Universe of equivariant fibrations}

To further restrict to universes of \emph{equivariant} fibrations, we introduce a another predicate on
elements of $\Univf_S$.

\begin{defn}[\formaldef{universe.core}{LocalEquivariance$\surd$}{LocalEquivariance\%e2\%88\%9a}]
  Fix $S = \Ival^n$ and let $A : (\Univf_S)^S$. Per Definition \ref{defn:get-local-filling}, we have
  $\draOpen_S~\pi_2~A : \LocalFillStr_S~A$. For each $\sigma$ in $\Sigma_n$, we also have
  $\draOpen_S~\pi_2~(A\sigma) : \LocalFillStr_S~(A\sigma)$. We say \emph{$A$ is equivariant} when for each
  $\sigma$ in $\Sigma_n$, we have
  \[
    \draOpen_S~\pi_2~A~(\sigma~r_0)~a_0~a~(\sigma~r_1) = \draOpen_S~\pi_2~(A\sigma)~r_0~a_0~(a\sigma)~r_1
  \]
  for all $r_0:S$, $a_0:A~(\sigma~r_0)$, partial sections $a:(\Pi_{r:S}A~r)^{+}$ compatible with $a_0$, and
  $r_1 : S$.

  We write $\Equivariant_S~A$ for the type of proofs that $A$ is equivariant.
\end{defn}

\begin{defn}[\formaldef{universe.core}{$\mathcal{U}$}{\%f0\%9d\%91\%bc}]
  Given $A : \Univx$, we define the type of \emph{equivariant fibration structures} on $A$ by
  \[
    \FibStr~A \coloneq \textstyle\prod_{n:\NN}\sum_{F:\sqrt[\Ival^n]{\LocalFillStr_{\Ival^n}}~A} \prod_{\sigma:\Sigma_n} \sqrt[\Ival^n]{\Equivariant_{\Ival^n}}~(A,F) \rlap{.}
  \]
  The \emph{universe of equivariant fibrations} is then $\Univf \coloneq \textstyle\sum_{A:\Univx} \FibStr~A$.
\end{defn}

\begin{prop}[\formalmodule{universe.core}]
  We have for each $\Gamma : \Flat \Univx$ an isomorphism
  \begin{equation}\label{eq:tt-fib-predicate}
    \Flat (\Elem~\Gamma~(\FibStr \circ A)) \cong \Flat (\Comp~\Gamma~A)
  \end{equation}
  and therefore an isomorphism between global families $\Gamma \to \Univf$ and global $\Univx$-small equivariant
  fibrations over $\Gamma$.
\end{prop}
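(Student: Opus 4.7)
The plan is to unfold the definition of $\FibStr$ and commute the global-sections functor $\Flat \Elem~\Gamma~(-)$ past each type former, applying the dependent right adjoint isomorphism for $\sqrt[\Ival^n]{-}$ at each stage, in direct analogy to how the isomorphism (\ref{eq:non-equivariant-fib}) was established for the non-equivariant case. First I would note that $\Flat$ and $\Elem~\Gamma~(-)$ commute with $\Pi$-types and $\Sigma$-types over types lying in $\Flat \Univx$; since $\NN$ and each $\Sigma_n$ are (discrete and hence) global, I may distribute $\Flat \Elem~\Gamma~(\FibStr \circ A)$ to obtain a $\prod_{n:\NN}$ of $\Sigma$-types, which further decomposes into a pair of a family $(F_n)_{n:\NN}$ with $F_n$ in $\Flat(\Elem~\Gamma~(\sqrt[\Ival^n]{\LocalFillStr_{\Ival^n}} \circ A))$ together with, for each $\sigma : \Sigma_n$, an element of $\Flat(\Elem~\Gamma~(\sqrt[\Ival^n]{\Equivariant_{\Ival^n}} \circ (A, F_n)))$.

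Next, for each $n$, I would apply the already-established isomorphism (\ref{eq:non-equivariant-fib}) with $S = \Ival^n$ to transpose $F_n$ into an $\Ival^n$-filling structure $c^n_A \coloneq \draOpen_{\Ival^n} F_n$ in $\Flat(\Comp_{\Ival^n}~\Gamma~A)$. By the defining property of $\draOpen_{\Ival^n}~\pi_2$ from Definition \ref{defn:get-local-filling}, this local filling structure is precisely the one that would be consulted when, for some global $\gamma : \Gamma^{\Ival^n}$, one evaluates $F_n$ at $\gamma$. I would then apply the same dependent right adjoint isomorphism once more, this time to the predicate $\Equivariant_{\Ival^n}$, to reindex the equivariance datum: a global section of $\sqrt[\Ival^n]{\Equivariant_{\Ival^n}} \circ (A, F_n)$ over $\Gamma$ corresponds to a global section of $\Equivariant_{\Ival^n} \circ ((A, F_n) \circ (-))$ over $\Gamma^{\Ival^n}$. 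Unfolding the definition of the predicate $\Equivariant_{\Ival^n}$ gives exactly the equivariance equation (\ref{eq:tt-equivariance}) for the transposed filling operation $c^n_A$. Assembling the results for all $n$ yields a global equivariant filling structure, and the whole construction is an isomorphism since each step is.

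The final claim, about the correspondence between global maps $\Gamma \to \Univf$ and global $\Univx$-small equivariant fibrations over $\Gamma$, follows formally: a global map into the $\Sigma$-type $\Univf = \Sigma_{A:\Univx} \FibStr~A$ is, by the universal property of $\Flat \Univx$, a pair of a global family $A : \Flat(\Gamma \to \Univx)$ together with a global section of $\FibStr \circ A$, which by the isomorphism just constructed is the same as a global equivariant filling structure on $A$.

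The main obstacle will be verifying the coherence of the transposition in the second step: the family $\sqrt[\Ival^n]{\Equivariant_{\Ival^n}}$ is applied to the pair $(A, F_n)$, so when one transposes $F_n$ via the first use of the adjunction, one must check that the resulting equivariance condition on $\draOpen_{\Ival^n} F_n$ agrees on the nose with the equivariance equation (\ref{eq:tt-equivariance}) under the second use of the adjunction. This reduces to naturality of $\draOpen_{\Ival^n}$ in its type argument, applied to the second projection $\pi_2$, but the bookkeeping around the simultaneous occurrence of $A$ and $F$ inside $\Equivariant_{\Ival^n}$ needs to be done carefully to confirm that no further coherence conditions intervene.
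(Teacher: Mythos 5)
Your proposal is correct and follows the intended argument (the paper itself gives no written proof, deferring to the formalization module, but this is exactly the evident route): distribute $\Flat\,\Elem~\Gamma$ over the $\Pi$ and $\Sigma$ in the definition of $\FibStr$, transpose the first component via \eqref{eq:non-equivariant-fib}, and transpose the equivariance component via the dependent right adjoint. The coherence worry you flag at the end is resolved precisely because the predicate $\Equivariant_S$ is itself stated in terms of the globally defined $\draOpen_S~\pi_2$, so naturality of $\draOpen_S$ gives $\draOpen_S~\pi_2 \circ (A,F)^{S} = \draOpen_S F$ on the nose and no further coherence conditions arise.
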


The existence of such a predicate $\FibStr$ corresponds to the local representability of equivariant fibrations
(Lemma \ref{lem:equiv-loc-rep-fib}): for a family $A$ over $\Gamma$, the family $\FibStr \circ A$ over $\Gamma$
corresponds to the representing morphism $\psi_\pi$ for the projection $\pi \colon \Gamma.A \to \Gamma$. In
the external development, local representability of equivariant fibrations is derived from local
representability of fibrations in cubical species (Lemma \ref{lem:loc-rep-fib}), which uses the tininess of the
symmetric interval (Lemma \ref{lem:symmetric-interval-tiny}) and thus, like the construction here, ultimately
depends on the tininess of the cubes $\Ival^n$.

\subsubsection{Fibrancy of the universe (\formalmodule{universe.fibrant})}

As with the other type formers, we construct a fibrancy structure on the universe by generalizing the
definition of Angiuli et al.\ \cite[\S2.12]{ABCFHL} from $\Ival$ to $\Ival^n$ and checking that this
satisfies the equivariance equation. The construction relies on the $\GlueTy$ types mentioned in Section
\ref{ssec:tt-type-formers}. The corresponding argument in the external development is in \ref{ssec:Ufib},
based on the same definition of Angiuli et al.; there it is conducted in cubical species and then transferred
to cubical sets in Proposition \ref{prop:equivariant-FEP}.

When we have a larger universe $\Univx_1$ with $\Univx : \Univx_1$, we can repeat the definitions above to define a
predicate $\FibStr_1$ and universe of $\Univx_1$-small fibrations $\Univf_1 \coloneq \textstyle\sum_{A:\Univx} \FibStr_1~A$;
the fibrancy of $\Univx$ then implies that $\Univf_1$ contains a code for $\Univf$. More generally, a hierarchy of
universes $\Univx_n$ in the extensional type theory gives rise to a corresponding hierarchy of universes $\Univf_n$
in the homotopical interpretation.

\subsubsection{Type formers (\formalmodule{universe.type-former})}

Using the closure properties of the operation $\Comp$ established in Sections \ref{ssec:tt-frobenius} and
\ref{ssec:tt-type-formers} and the bijection (\ref{eq:tt-fib-predicate}), we can build operations of types

\begin{equation}
  \label{eq:tt-universe-type-formers}
  \begin{array}{l}
    \Pi_{A:\Univx}\Pi_{B:A\rightarrow\Univx}\FibStr~A\rightarrow (\Pi_{a:A}\FibStr~(B~a))\rightarrow \FibStr~(\Pi_AB)\\
    \Pi_{A:\Univx}\Pi_{B:A\rightarrow\Univx}\FibStr~A\rightarrow (\Pi_{a:A}\FibStr~(B~a))\rightarrow \FibStr~(\Sigma_AB)\\
    \Pi_{A:\Univx}\Pi_{a_0:A}\Pi_{a_1:A}\FibStr~A\rightarrow \FibStr~(\PathTy~A~a_0~a_1) \rlap{.}\\
  \end{array}
\end{equation}

From these, we deduce that $\Univf$ is closed under $\Pi$-types, $\Sigma$-types, and $\PathTy$-types. We also have
an alternative, isomorphic definition of the judgments of the homotopical interpretation: we can interpret
types over $\Gamma$ as maps $\Gamma \to \Univf$ rather than as families over $\Gamma$ with equivariant filling
structures. Because the type formers can then be defined pointwise by the operators shown in
(\ref{eq:tt-universe-type-formers}), the laws for computing substitutions such as mentioned in Remark
\ref{rmk:tt-subst-pi} become automatic; this is the same technique exploited by Voevodsky to solve the coherence problem in the simplicial model \cite{KapulkinLumsdaine:2021sm}.

\subsubsection{Univalence (\formalmodule{universe.univalence})}

Finally, the closure of the universe under $\GlueTy$-types provides an element of
\[
  \Pi_{A:\Univf}\Contr(\Sigma_{X:\Univf}\EquivTy~A~X)
\]
where $\EquivTy~A~X$ is the type of homotopy equivalences and $\Contr$ is the type of contractibility structures
from Section \ref{sec:tt-trivial-fibration}. This corresponds to the equivalence extension property for
$\Univx$-small fibrations. Any family with a trivial fibration structure is contractible as a type in the
homotopical interpretation, in the sense of \cite[\S3.11]{UF:2013}
(\formaldef{type-former.hlevels}{TFibToIsContr}{TFibToIsContr}). Thus the homotopical interpretation satisfies
the axiom
\[
  \Pi_{A:\Univf}\isContrTy(\Sigma_{X:\Univf}\EquivTy~A~X)
\]
where $\isContrTy~A \coloneq \Sigma_{a_0:A} \Pi_{a:A} (\PathTy~A~a_0~a)$. This is an equivalent formulation of the
univalence axiom (as observed by Escard\'o \cite{escardo-univalence14}); compare the derivation of univalence
from the equivalence extension property and Frobenius condition at the start of Section \ref{ssec:univalence}.

%%% Local Variables:
%%% mode: LaTeX
%%% TeX-master: t
%%% End:

\raggedright
\printbibliography
%\bibliographystyle{alpha}
%\bibliography{accrs}

\end{document}